\author[]{James Farre}
\author[]{Beatrice Pozzetti}
\author[]{Gabriele Viaggi}
\newcommand{\Addresses}{{
  \bigskip
  \footnotesize  
  \noindent James Farre, \textsc{Max Planck Institute for Mathematics in the Sciences, Leipzig}\par\nopagebreak
  \textit{E-mail address}: \texttt{james.farre@mis.mpg.de}
  
  \noindent Beatrice Pozzetti, \textsc{Mathematical Institute, Heidelberg University, Heidelberg}\par\nopagebreak
  \textit{E-mail address}: \texttt{pozzetti@mathi.uni-heidelberg.de}
  
  \noindent Gabriele Viaggi, \textsc{Mathematical Institute, Sapienza University of Rome, Rome}\par\nopagebreak
  \textit{E-mail address}: \texttt{gabriele.viaggi@uniroma1.it}
  }}
\DeclareMathOperator{\Area}{Area}
\newcommand{\C}{\mathbb C}
\renewcommand{\P}{\mathbb P}
\newcommand{\R}{\mathbb R}
\newcommand{\calF}{\mathcal F}
\newcommand{\SL}{{\rm SL}}
\newcommand{\Gr}{{\rm Gr}}
\newcommand{\G}{\Gamma}
\newcommand{\HH}{\mathbb H}
\newcommand{\g}{\gamma}
\newcommand{\calT}{\mathcal T}
\newcommand{\cal}{\mathcal }
\newcommand{\bP}{\mathbb {P}}
\newcommand{\deG}{\partial\Gamma}
\renewcommand{\H}{{\mathbb H}}
\newcommand{\calA}{\mathcal A}
\newcommand{\calL}{\mathcal L}
\newcommand{\qc}{\mathcal{QC}}
\newcommand{\CC}{\mathbb C}
\newcommand{\inverse}{^{-1}}
\DeclareMathOperator{\Mod}{Mod}
\newcommand{\CP}{\mathbb{CP}^1}
\newcommand{\RP}{\mathbb{RP}^1 }
\newcommand{\dH}{\partial \mb{H}^2}
\newcommand{\Aut}{{\rm Aut}}
\newcommand{\bR}{{\mb{R}}}
\newcommand{\bH}{{\mb{H}}}
\newcommand{\bC}{{\mb{C}}}
\DeclareMathOperator{\DE}{DE}
\DeclareMathOperator{\Homeo}{Homeo}
\DeclareMathOperator{\tr}{tr}
\newcommand{\MG}{\mathcal{MG}}
\newcommand{\Hom}{{\rm Hom}}
\newcommand{\deH}{\partial\HH^2}
\newcommand{\fL}{\mathcal L}
\newcommand{\fCP}{\CP \times \dH}
\newcommand{\fCPL}{\CP \times \dH-\fL}
\newcommand{\fCPgL}{\CP \times \dH-g(\fL)}
\newcommand{\fCPgLprime}{\CP \times \dH-g'(\fL)}
\newcommand{\fCPLmG}{\quotient{\fCPL}{ \iota(\Gamma)}}
\newcommand{\fM}{\mathcal{LM}}
\newcommand\quotient[2]{
	\mathchoice
	{
		\text{\raise.5ex\hbox{$#1$}\big/\lower.5ex\hbox{$#2$}}%
	}
	{
		\text{\raise.25ex\hbox{$#1$}\big/\lower.25ex\hbox{$#2$}}%
	}
	{
		#1\,/\,#2
	}
	{
		#1\,/\,#2
	}
}
\newcommand{\Chark}{\Xi^k(\G,\PSL(d,\C))}
\newcommand{\pD}{\mathbb D}
\newcommand{\ep}{
\epsilon
}
\newcommand{\mc}[1]{
\mathcal{#1}
}
\newcommand{\mb}[1]{
\mathbb{#1}
}
\newcommand{\T}{
\mc{T}
}
\newcommand{\PSL}{{\rm PSL}}
\newcommand{\Charn}{\Xi^{\rm hyp}(\G,\PSL(d,\C))}
\newcommand{\Char}{\Xi(\G,\PSL_3(\C))}
\newtheorem*{thm*}{Theorem}
\newtheorem*{cor*}{Corollary}
\newtheorem*{prop*}{Proposition}
\newtheorem{thmA}{Theorem}
\newtheorem{thm}{Theorem}[section]
\newtheorem{cor}[thm]{Corollary}
\newtheorem{lemma}[thm]{Lemma}
\newtheorem{lem}[thm]{Lemma}
\newtheorem{prop}[thm]{Proposition}
\theoremstyle{definition}
\newtheorem{claim}[thm]{Claim}
\newtheorem*{defi*}{Definition}
\newtheorem{dfn}[thm]{Definition}
\newtheorem{remark}[thm]{Remark}
\theoremstyle{remark}
\newcommand{\nocontentsline}[3]{}
\newcommand{\tocless}[2]{\bgroup\let\addcontentsline=\nocontentsline#1{#2}\egroup}
\title{Geometry of hyperconvex representations of surface groups}
\begin{document}

\begin{abstract}
We study the geometry of hyperconvex representations of surface groups in ${\rm PSL}(d,\mb{C})$ and their deformation spaces: We produce a natural holomorphic extension of the classical Ahlfors--Bers map to a product of Teichmüller spaces of a canonical Riemann surface lamination and prove that the limit set of a hyperconvex representation in the full flag space has Hausdorff dimension 1 if and only if the representation is conjugate in ${\rm PSL}(d,\mb{R})$. 
\end{abstract}

\maketitle

\tableofcontents

\section{Introduction}
A quasi-Fuchsian representation of the fundamental group $\pi_1(\Sigma)$ of a (closed, oriented) surface is a discrete and faithful representation $\rho:\pi_1(\Sigma)\to{\rm PSL}(2,\mb{C})$ that preserves a Jordan curve $\Lambda\subset\mb{CP}^1$ on the Riemann sphere. These objects lie at the crossroad of several different areas of mathematics such as complex dynamics, Teichmüller theory, and 3-dimensional hyperbolic geometry. They have been fruitfully studied from all these perspectives displaying a very rich structure. 

From a dynamical point of view, an important invariant associated with a quasi-Fuchsian representation is the Hausdorff dimension of the invariant Jordan curve $\Lambda$. It is elementary to show that this number is always at least 1 and at most 2. One expects that generically $\Lambda$ is a complicated non-rectifiable fractal curve. A celebrated result of Bowen \cite{BowenQC} shows that this is indeed the case: The Hausdorff dimension is 1 if and only if the quasi-Fuchsian representation is Fuchsian, that is, it is conjugate in ${\rm PSL}(2,\mb{R})$.     

Our first contribution is to show that this phenomenon persists in a suitable form in the much larger class of {\em (fully) hyperconvex} representations of surface groups in ${\rm PSL}(d,\mb{C})$ with $d\ge 2$ arbitrary. 

Let us briefly introduce these objects: The prototype of a hyperconvex curve in $\mb{CP}^{d-1}$ is the image $\nu^1(\mb{CP}^1)$ of the Veronese embedding $\nu^1:\mb{CP}^1\to\mb{CP}^{d-1}$. This curve comes together with an osculating frame 
$\nu^1(p)\subset\cdots\subset \nu^{d-1}(p)$, where $\nu^k(p)\subset\mb{C}^d$ is the unique $k$-dimensional plane whose projectivization is tangent to the Veronese curve at $\nu^1(p)$ with order $k-1$. These planes satisfy the following transversality property: For every $p\in\mb{CP}^1$ and $k\le d-2$ we have that the projection 
\begin{equation}\label{e.Veronese}
\begin{array}{ccc}
\mb{CP}^1\setminus\{p\}&\to&\mb{P}\left(\nu^{k+1}(p)/\nu^{k-1}(p)\right)\\
q&\mapsto &[\nu^{d-k}(q)\cap\nu^{k+1}(p)]
\end{array}
\end{equation}
is injective. 
Mimicking this picture, we say that a representation $\rho:\pi_1(\Sigma)\to{\rm PSL}(d,\mb{C})$ is \emph{$k$-hyperconvex} if it admits an equivariant dynamics preserving boundary map $\xi_\rho:\partial\pi_1(\Sigma)\to\mc{F}_{k-1,k+1,d-k}(\mb{C}^d)$ in the partial flag manifold of $\mb{C}^d$ consisiting of subspaces of dimensions $\{k-1,k+1,d-k\}$ satisfying the  transversality property \eqref{e.Veronese} (see also \S\ref{subsec: hyperconvex}). Here $\partial\pi_1(\Sigma)\cong \mb{S}^1$ denotes the Gromov boundary of the word-hyperbolic group $\pi_1(\Sigma)$. We furthermore say that a representation is \emph{fully hyperconvex} if it is $k$-hyperconvex for all $1\leq k\leq d-1$, in this case its boundary map naturally has image in the full flag manifold $\mc F(\C^d)$.  The image $\Lambda\subset\mc{F}(\mb{C}^d)$, called the limit set, is a $\rho$-invariant topological circle. Generalizing Bowen's result, we prove:

\begin{thmA}\label{thmINTRO:Hff}
Let $\rho:\pi_1(\Sigma)\to{\rm PSL}(d,\mb{C})$ be a {\rm fully hyperconvex} representation with {\rm limit set} $\Lambda\subset{\calF}(\mb{C}^d)$. Then ${\rm Hdim}(\Lambda)= 1$ if and only if $\rho$ is conjugate into ${\rm PSL}(d,\mb{R})$.
\end{thmA}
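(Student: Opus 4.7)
The plan is to treat the two directions separately. The ``if'' direction is the easier one: a fully hyperconvex representation valued (after conjugation) in $\mathrm{PSL}(d,\mathbb{R})$ lies in the Hitchin component of $\mathrm{PSL}(d,\mathbb{R})$, so by Labourie's regularity theorem the boundary map $\xi:\partial\pi_1(\Sigma)\to\mathcal{F}(\mathbb{R}^d)\hookrightarrow\mathcal{F}(\mathbb{C}^d)$ is $C^1$, and so its image $\Lambda$ is a rectifiable closed curve, yielding ${\rm Hdim}(\Lambda)=1$.

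For the ``only if'' direction, I would first project $\Lambda$ onto the line coordinate. The natural forgetful map $\pi:\mathcal{F}(\mathbb{C}^d)\to\mathbb{CP}^{d-1}$ is Lipschitz, and $\{1,2\}$-hyperconvexity (which follows from full hyperconvexity) makes $\pi|_{\Lambda}$ injective onto $\xi^1(\partial\pi_1(\Sigma))$, where $\xi^1:=\pi\circ\xi$. Therefore
\[ {\rm Hdim}\bigl(\xi^1(\partial\pi_1(\Sigma))\bigr)\leq{\rm Hdim}(\Lambda)=1. \]
By the Pozzetti--Sambarino--Wienhard Hausdorff dimension formula for $\{1,2\}$-hyperconvex representations,
\[ {\rm Hdim}\bigl(\xi^1(\partial\pi_1(\Sigma))\bigr)=h_{\alpha_1}(\rho), \]
where $h_{\alpha_1}(\rho)$ is the topological entropy of the geodesic flow reparameterized by the first simple-root cocycle $\sigma_1-\sigma_2$. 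Combined with the universal lower bound $h_{\alpha_1}(\rho)\geq 1$ (which holds for any projective Anosov surface group representation because $\xi^1(\partial\pi_1(\Sigma))$ is a topological circle), this forces $h_{\alpha_1}(\rho)=1$.

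The main obstacle is then a Bowen-type rigidity step: to show that $h_{\alpha_1}(\rho)=1$, combined with full hyperconvexity, forces $\rho$ to be conjugate into $\mathrm{PSL}(d,\mathbb{R})$. My plan is to emulate Bowen's original argument. The entropy equality forces the first-root Patterson--Sullivan measure on $\partial\pi_1(\Sigma)$ to be absolutely continuous, with smooth Radon--Nikodym derivative, with respect to the Lebesgue class coming from any Fuchsian uniformization of $\Sigma$; by thermodynamic formalism this promotes to cohomological rigidity of the first-root cocycle and then to $C^1$ regularity of $\xi^1$ as an embedding into $\mathbb{CP}^{d-1}$. The osculating flag structure from full hyperconvexity (the transversality property \eqref{e.Veronese}) lifts this regularity from $\xi^1$ to the whole flag map $\xi$, and a bootstrap using the holomorphic cross-ratios of the hyperconvex structure improves $C^1$ to real-analytic. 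An analytic fully hyperconvex boundary curve whose cross-ratios are constrained to be real---as they are when $1$-dimensional Hausdorff measure is locally finite along the curve---is the orbit of a real form of $\mathrm{PSL}(d,\mathbb{C})$, so $\rho$ is conjugate into $\mathrm{PSL}(d,\mathbb{R})$. The hardest single step is this analytic bootstrap, passing from absolute continuity of boundary measures to real-analyticity of the flag map: in Bowen's $d=2$ setting this is Sullivan's conformal density machinery, while the higher-rank hyperconvex analog requires an extension adapted to the cross-ratio geometry on $\mathcal{F}(\mathbb{C}^d)$.
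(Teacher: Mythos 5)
Both directions of your proposal miss the paper's actual mechanisms, and the hard direction has a genuine gap.

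\textbf{The ``if'' direction is based on a false premise.} You claim that a fully hyperconvex representation valued in $\mathrm{PSL}(d,\R)$ lies in the Hitchin component. The paper explicitly says otherwise: Guichard's characterization of Hitchin representations uses a \emph{strictly stronger} hyperconvexity condition, and the two only coincide when $d=3$ (see the discussion after Theorem A in the introduction, and note that Theorem A is phrased as ``conjugate into $\mathrm{PSL}(d,\R)$'', not ``Hitchin''). So Labourie's $C^1$ regularity of Hitchin Frenet curves does not apply. What actually gives ${\rm Hdim}(\Lambda)=1$ for hyperconvex representations into $\mathrm{PSL}(d,\R)$ is the Lipschitz/Hausdorff-dimension estimate of Pozzetti--Sambarino--Wienhard \cite{PSW:HDim_hyperconvex}, which does not pass through $C^1$ regularity at all.

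\textbf{The ``only if'' direction: right reduction, wrong engine.} The reduction ${\rm Hdim}(\Lambda)=1\Rightarrow h^1(\rho)=1$ via the Lipschitz forgetful map and the PSW entropy formula is fine, but it gives less than the paper needs: the argument of the paper extracts $h^k(\rho)=1$ \emph{for every} $k$ (using the Pozzetti--Sambarino formula ${\rm Hdim}(\Lambda)=\max_k h^k(\rho)$ and the fact that every $\Lambda^k_\rho$ is a topological circle), and then, for each $k$, concludes only that the $k$-th root spectrum is real (Theorem \ref{thm:IntroReal}); it needs \emph{all} of these conclusions, fed into the algebraic Theorem \ref{thm:real gaps real} (via Acosta's real-trace rigidity theorem), to conclude that $\rho$ lands in $\mathrm{PSL}(d,\R)$. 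Your plan to go directly from $h^1=1$ to real conjugacy is a much stronger claim for which the paper provides no evidence, and the algebraic endgame (real traces $\Rightarrow$ real or quaternionic form, and ruling out the quaternionic case) is entirely missing from your sketch.

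More seriously, the Bowen-style engine you propose for the step ``$h^1=1 \Rightarrow$ rigidity'' --- absolute continuity of a first-root Patterson--Sullivan measure, then $C^1$ regularity of $\xi^1$, then a cross-ratio/analytic bootstrap --- is exactly the thermodynamic route taken in \cite{BPSW22}, which the paper explicitly contrasts with its own method and which only yields \emph{local} rigidity near the Hitchin locus. There is no known way to make the assertions ``entropy $=1\Rightarrow$ absolute continuity'' or ``$C^1$ regularity of $\xi^1\Rightarrow$ analyticity of the full flag curve $\xi$'' work globally in the hyperconvex setting, and you flag the latter as ``the hardest single step'' without supplying an argument. The paper's innovation is precisely to sidestep this: instead of thermodynamic formalism for the first-root cocycle, it builds, for each $k$, a hyperbolic Riemann surface lamination from Benoist--Hulin harmonic fillings of the laminated limit set, proves a Deroin--Tholozan-style \emph{strict} domination $\log|L^k_\rho(\cdot)|\le\kappa^{-1}\ell_E(\cdot)$ unless $\mathrm{AB}^k(\rho)$ lies on the diagonal (Lemmas \ref{lem:harmonic dominate} and \ref{lem:harmonic tot geo}), and combines this with the entropy lower bound $h(E)\ge1$ (Theorem \ref{thm: orbit growth rate}) to force $h^k(\rho)>1$. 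None of that machinery appears in your proposal.
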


The fact that the limit set $\Lambda\subset\mc{F}(\mb{R}^d)$ of hyperconvex representations in ${\rm PSL}(d,\mb{R})$ has always Hausdorff dimension 1 comes from work of Pozzetti-Sambarino-Wienhard \cite{PSW:HDim_hyperconvex}. A {\em local} rigidity statement for the Hausdorff dimension of hyperconvex representations in ${\rm PSL}(d,\mb{C})$ near the Hitchin locus was proven by Bridgeman-Pozzetti-Sambarino-Wienhard \cite{BPSW22} with completely different techniques. Our result is instead of global nature.

Special examples of fully hyperconvex representations are given by any (small perturbation of a) {\em Hitchin homomorphism} $\rho:\pi_1(\Sigma)\to{\rm PSL}(d,\mb{R})<{\rm PSL}(d,\mb{C})$. Recall that these are arbitrary continuous deformations of the composition of a Fuchsian representation $\pi_1(\Sigma)\to{\rm PSL}(2,\mb{R})$ with the irreducible embedding ${\rm PSL}(2,\mb{R})\to{\rm PSL}(d,\mb{R})$. By work of Labourie \cite{Labourie:Anosov} and Guichard \cite{Guichard}, Hitchin representations are characterized by more refined hyperconvexity assumptions, which are implied by our hyperconvexity assumptions if $d=3$. This implies that in this low dimension Theorem \ref{thm:IntroReal} can be strengthened to say that ${\rm Hdim}(\Lambda)= 1$ if and only if $\rho$ is a Hitchin homomorphism. 

One of the main novelties of our approach is the fact that we bring back tools from 2-dimensional quasi-conformal analysis and 3-dimensional hyperbolic geometry to the realm of Anosov representations contributing to the problem raised in \cite[Section 13]{WieICM}. Key to the proof is the notion of a {\em hyperbolic surface lamination} which is a generalization of a { hyperbolic surface}. The relevance of hyperbolic surface laminations in complex dynamics and Anosov representations of surface groups has been highlighted by the pioneering work of Sullivan \cite{Sullivan} and  Tholozan \cite{Tholozan:notes}.

Roughly speaking, a hyperbolic surface lamination is a compact space with a local product structure $U\times X$ where $U$ is a subset of  $\mb{H}^2$ and $X$ is a transverse space. A basic example is the following: choose a torsion-free cocompact Fuchsian realization $\Gamma<{\rm PSL}(2,\mb{R})$ of $\pi_1(\Sigma)$. $\Gamma$ acts on $\mb{H}^2\times\partial\mb{H}^2$ by homeomorphism preserving the product structure and isometrically with respect to the slices $\mb{H}^2\times\{t\}$. The quotient $M_\Gamma=\mb{H}^2\times\partial\mb{H}^2/\Gamma$ is a hyperbolic surface lamination whose leaves, the projections of the slices $\mb{H}^2\times\{t\}$, can either be isometric copies of $\mb{H}^2$ or annuli $L_{[\gamma]}=\mb{H}^2/\langle\gamma\rangle\times\{\gamma^+\}$ corresponding to conjugacy classes $[\gamma]$ of primitive elements of $\Gamma$ (here $\gamma^+$ is the attracting fixed point of $\gamma$ on $\partial\mb{H}^2$). 

We briefly outline the proof of Theorem \ref{thmINTRO:Hff}. By \cite{PSW:HDim_hyperconvex} and \cite{PSamb}, we can interpret the Hausdorff dimension of $\Lambda$ as 
\[
{\rm Hdim}(\Lambda)=\max_{1\le k\le d-1}\{h^k(\rho)\}
\]
where 
\[
h^k(\rho):=\limsup_{R\to\infty}\frac{\log|\{[\gamma]\text{ conjugacy class of }\pi_1(\Sigma)|\,\log|L_\rho^k(\gamma)|\le R\}|}{R}
\]
is the $k$-th root entropy of the representation $\rho$, defined as follow. Every element $\rho(\gamma)$ is diagonalizable with eigenvalues $\lambda^1,\cdots,\lambda^d\in\mb{C}$ ordered in decreasing order according to their modulus, and we denote by $L_\rho^k(\gamma)=\lambda^{k}(\rho(\gamma))/\lambda^{k+1}(\rho(\gamma))$ the $k$-th eigenvalue gap.

\begin{thmA}
\label{thm:IntroReal}
Let $\rho:\pi_1(\Sigma)\to{\rm PSL}(d,\mb{C})$ be  $k$-hyperconvex. If $h^k(\rho)=1$ then the $k$-th root spectrum of $\rho$ is real.
\end{thmA}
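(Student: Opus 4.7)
My plan is to adapt Bowen's classical rigidity argument to the higher-rank setting by combining the thermodynamic formalism for $k$-hyperconvex representations developed by Pozzetti--Sambarino--Wienhard with a pressure rigidity argument. Fix an auxiliary torsion-free cocompact Fuchsian representation $\rho_0:\pi_1(\Sigma)\to\mathrm{PSL}(2,\mb{R})$ with translation length $\ell_0$ and topological entropy $1$. The function $\ell^k_\rho(\gamma):=\log|L^k_\rho(\gamma)|$ is a positive H\"older length function on conjugacy classes whose exponent of convergence equals $h^k(\rho)$, so by Patterson--Sullivan/thermodynamic formalism it corresponds to a H\"older potential over the Gromov geodesic flow of $\pi_1(\Sigma)$.

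The first step, \emph{modulus rigidity}, establishes that $|L^k_\rho(\gamma)|=e^{\ell_0(\gamma)}$ for every $\gamma$. I would study the Manhattan curve
\[
\mc{M}=\{(a,b)\in\mb{R}_{\ge 0}^2\,:\,P(-a\,\ell^k_\rho-b\,\ell_0)=0\},
\]
which is convex, passes through $(h^k(\rho),0)=(1,0)$ and $(0,1)$, and is strictly convex unless $\ell^k_\rho$ and $\ell_0$ are Livsic cohomologous. Combined with the Pozzetti--Sambarino--Wienhard lower bound $h^k\ge 1$ (attained already by $\rho_0$), the hypothesis $h^k(\rho)=1$ forces $\mc{M}$ to degenerate onto the diagonal $a+b=1$; Livsic's theorem then gives that the two cocycles are cohomologous and hence equal on every period.

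The second, more delicate, step upgrades the modulus identity to reality of each individual eigenvalue ratio. The phase $\theta(\gamma):=\arg L^k_\rho(\gamma)\in\mb{R}/2\pi\mb{Z}$ defines a H\"older cocycle coming from the holonomy of the flat $\rho$-bundle along the $k$-th eigendirection. I would consider the complex length $\log L^k_\rho=\ell^k_\rho+i\theta$ and exploit analyticity of the complex pressure function near the extremal point $s=0$, combined with the modulus rigidity just proved, to conclude that $\theta$ is Livsic cohomologous to a homomorphism $\pi_1(\Sigma)\to\mb{R}/2\pi\mb{Z}$. Hyperconvexity---precisely the transversality property \eqref{e.Veronese} for the boundary map $\xi_\rho$ at the $k$-th level---then rules out a nontrivial such character, since a nonzero phase would rotate the eigenlines at attracting/repelling fixed points of $\rho(\gamma)$ in a way incompatible with the rigid configuration of flags prescribed by $\xi_\rho$, forcing $L^k_\rho(\gamma)\in\mb{R}_{>0}$.

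The main obstacle is this second step. In Bowen's quasi-Fuchsian case the transition from modulus to phase rigidity follows effortlessly from planar quasi-conformal theory (a topological circle of Hausdorff dimension $1$ must be round), but in the higher-rank setting there is no ambient M\"obius rigidity available, and one must extract reality of complex eigenvalue ratios from the subtle interplay between pressure rigidity and the hyperconvex boundary map. Arranging this so that it works uniformly across all $1\le k\le d-1$ is where the technical heart of the proof will lie.
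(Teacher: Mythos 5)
Your proposal takes a genuinely different route from the paper, but Step 1 contains a fatal gap. You claim that because both $\ell^k_\rho$ and the auxiliary Fuchsian length $\ell_0$ have entropy $1$, the Manhattan curve $\mc{M}$ is forced to degenerate onto the line $a+b=1$. This does not follow. The Manhattan curve is convex and lies on or \emph{below} the chord joining its endpoints $(h_1,0)$ and $(0,h_2)$; knowing those endpoints are $(1,0)$ and $(0,1)$ gives $\mc{M} \subset \{a+b\le 1\}$, which is the wrong direction for the argument. Sharp's rigidity theorem says $\mc{M}$ is strictly convex unless the normalized lengths are Liv\v{s}ic cohomologous, so strict convexity is entirely compatible with both endpoints lying on the diagonal. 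Indeed any two non-conjugate Fuchsian representations $\rho_1,\rho_2$ have entropy $1$, so their Manhattan curve passes through $(1,0)$ and $(0,1)$ yet is strictly convex; your reasoning pattern would erroneously conclude $\ell_{\rho_1}\sim\ell_{\rho_2}$. Put differently: $h^k(\rho)=1$ is a statement about one axis intercept, and pressure convexity alone provides no mechanism to propagate this to degeneration of the whole curve. One needs an input of the \emph{opposite} sign --- a domination inequality showing the curve lies above the chord --- and that is precisely the geometric content that thermodynamic formalism by itself cannot supply.

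This is what the paper does, and it is where your proposal and the paper part ways most sharply. The paper proceeds geometrically in the spirit of Deroin--Tholozan: to the $k$-hyperconvex $\rho$ one associates a laminated conformal action on $\CP\times\deH$, solves leafwise a Benoist--Hulin harmonic filling problem $f_t:\H^2\to\H^3$ for the laminated limit set, and pulls back the $\H^3$-metric to obtain a leafwise metric on $M_\Gamma$. The pulled-back metric has curvature $\le -1$ and $1$-dominates $\log|L^k_\rho|$; Wan's uniformization then produces a laminated hyperbolic metric $g_\Phi$ which, via the strong maximum principle (Lemma \ref{fact.LH}), \emph{strictly} dominates the pulled-back metric unless all the harmonic fillings are totally geodesic planes. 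Totally geodesic means the action is mirror-symmetric (the Ahlfors--Bers parameter lies on the diagonal), which is exactly the real case. Otherwise compactness gives a uniform $\kappa<1$ with $f_t^*g_{\H^3} < \kappa\, g_{\Phi_t}$, hence $\kappa\,\ell_E(\cdot)\ge\log|L^k_\rho(\cdot)|$, and the independently-proved laminated entropy bound $h(E)\ge 1$ (Theorems \ref{t.Tholozan} and \ref{thm: orbit growth rate}) forces $h^k(\rho)\ge 1/\kappa>1$. The strict domination coming from the maximum principle is the geometric ingredient that replaces your Step 1 and collapses your two steps into one: the dichotomy ``totally geodesic or strictly dominated'' directly separates the real case from the non-real case, so there is no separate ``phase rigidity'' step to worry about. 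Your Step 2, as written, is in any case only a hope ("I would exploit analyticity of the complex pressure function\dots"), and I see no mechanism by which hyperconvexity alone rules out a nontrivial argument character once the modulus has been fixed.

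In short: a thermodynamic argument is not out of the question, but it cannot start from "both entropies equal $1$, hence the Manhattan curve is a line." You would need to first prove an inequality of the form $\ell_W(\cdot)\ge\log|L^k_\rho(\cdot)|$ with $\ell_W$ a genuine hyperbolic length function of entropy $\le 1$, and strictness of that inequality when the spectrum is non-real; at that point you are essentially rederiving the paper's harmonic-filling domination in thermodynamic language.
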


Thus, if $h^k(\rho)=1$ for every $k$, the $k$-th root spectrum of $\rho$ is real for every $k$. We show (Theorem \ref{thm:real gaps real}) that this implies that $\rho$ must be conjugate into ${\rm PSL}(d,\mb{R})$.

The proof of Theorem \ref{thm:IntroReal} is inspired by work of Deroin-Tholozan \cite{DT16}. Indeed, we show that unless the $k$-th root spectrum of $\rho$ is real, we can find a hyperbolic surface lamination $E$ (topologically isomorphic to $M_\Gamma$) and a number $\kappa>1$ such that we have the following strict domination
\[
\ell_E(\cdot)\ge\kappa\cdot\log|L_\rho^k(\cdot)|.
\]
The conclusion then follows from the fact that the entropy of a hyperbolic Riemann surface lamination is bounded below by one, a result that was proven by Tholozan in \cite{Tholozan:notes} and of which we give an independent elementary proof.

\begin{thmA}
Let $E$ be a hyperbolic surface lamination topologically isomorphic to $M_\Gamma$. Then
\[
h(E):=\limsup_{R\to\infty}{\frac{\log|\{L_{[\gamma]}\text{ \rm annular leaf }|\,\ell_E(\gamma)\le R\}|}{R}}\ge 1.
\]
\end{thmA}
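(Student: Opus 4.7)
The plan is to reduce the counting of annular leaves to the closed-orbit count of the leafwise geodesic flow $\phi_t\colon T^1 E\to T^1 E$ on the sphere bundle along leaves of $E$, and to use the leafwise hyperbolic geometry (constant curvature $-1$) to show that its topological entropy is at least $1$.

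The flow $\phi_t$ is continuous on the compact space $T^1 E$ since the leafwise hyperbolic metrics vary continuously transversally. Its periodic orbits are in bijection with pairs $(L_{[\gamma]},n)$, where $L_{[\gamma]}$ is an annular leaf and $n\geq 1$; the orbit through $L_{[\gamma]}$ with multiplicity $n$ has period $n\,\ell_E(\gamma)$, and restricting to primitive orbits ($n=1$) recovers exactly the quantity defining $h(E)$. The entropy satisfies $h_{\rm top}(\phi_t)\geq 1$: on any single leaf the flow is the geodesic flow on a complete hyperbolic surface, and the volume growth $\mathrm{vol}\,B_R\sim \pi e^R$ in $\mathbb H^2$ gives leafwise entropy $1$, which lifts to a lower bound on $h_{\rm top}(\phi_t)$ via the variational principle, applied to an invariant measure obtained as the product of the leafwise Liouville measure with a transverse Patterson--Sullivan measure inherited from a cocompact Fuchsian uniformization $\Gamma<\mathrm{PSL}(2,\mathbb R)$ realizing $\pi_1(\Sigma)$ and identifying $E$ with $M_\Gamma$ up to homeomorphism. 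The conclusion then follows from a Margulis-type closed-orbit count: for $\phi_t$ on the compact lamination $T^1 E$, the number of closed orbits of period $\leq R$ grows at least as $e^{h_{\rm top}(\phi_t)R}/R$, and combined with the entropy bound this yields $h(E)\geq 1$.

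The principal obstacle is the Margulis-type counting in the laminated setting: the flow $\phi_t$ is not globally Anosov (transverse hyperbolicity is absent), so the classical Margulis argument does not directly apply. One route, pursued by Tholozan, is to construct a leafwise Bowen--Margulis--Sullivan measure adapted to the lamination and invoke an ergodic-theoretic counting argument. A more elementary alternative works in the universal cover $\tilde E$, topologically $\mathbb H^2\times\partial\mathbb H^2$ (with each leaf $\tilde F_t$ carrying the pulled-back hyperbolic metric): choose a continuous basepoint section $t\mapsto z_t\in \tilde F_t$ contained in a compact fundamental domain for the $\Gamma$-action, and use cocompactness of $E$ together with hyperbolic volume growth to show
\[
\#\bigl\{\gamma\in\Gamma\colon d_{\tilde F_{t_0}}(z_{t_0},\gamma\cdot z_{\gamma^{-1}t_0})\leq R\bigr\}\,\gtrsim\,e^R,
\]
uniformly in $t_0\in\partial\mathbb H^2$. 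Pigeonholing this bound over primitive conjugacy classes at $t_0=\gamma_0^+$, using the identity $d_{\tilde F_{t_0}}(z_{t_0},\gamma_0^n\cdot z_{t_0})=n\,\ell_E(\gamma_0)+O(1)$ that comes from the fact that $\gamma_0$ fixes $t_0$ and acts on $\tilde F_{t_0}\cong \mathbb H^2$ as a hyperbolic isometry of translation length $\ell_E(\gamma_0)$, then yields $h(E)\geq 1$. The delicate point in either route is ensuring that distinct primitive conjugacy classes produce sufficiently separated contributions so that the pigeonholed sum $\sum_{[\gamma]}R/\ell_E(\gamma)$ genuinely controls the orbital count from below; constant leafwise curvature $-1$ combined with cocompactness of $E$ is what makes this possible.
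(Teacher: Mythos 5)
Your overall plan — lower-bound the topological entropy of a leafwise flow and then pass to the closed-orbit count — is the right skeleton and matches the paper's structure at a high level. However, both halves of your argument differ from the paper's, and the second half has a genuine gap that the paper's machinery is specifically designed to close.

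\textbf{Entropy bound.} You propose to apply the variational principle with a measure of the form (leafwise Liouville)$\,\times\,$(transverse Lebesgue). This could work, but the entropy of such a measure for the reparameterized flow is \emph{not} automatically $1$: by Abramov's formula, if $\psi$ is conjugate to $\phi^r$, then the Liouville measure pushed forward gives entropy $1/\int r\,d\mu_{\rm Liou}$, and there is no a priori bound $\int r\,d\mu_{\rm Liou}\le 1$. The paper avoids this altogether by a direct Manning-type argument with $(t,\delta)$-separated sets: it flows a small leafwise box, uses that its $g_W$-area grows like $e^t$ (constant curvature $-1$), and packs it with $\delta$-separated points to get $N(t,\delta)\gtrsim e^t$. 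This is more elementary and gives the bound $H(W)\ge 1$ without exhibiting any maximizing measure.

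\textbf{Orbit count: the genuine gap.} You correctly flag the core difficulty — the flow on $T^1E$ has no transverse hyperbolicity, so the Bowen/Margulis correspondence between entropy and closed-orbit growth cannot be invoked directly. But your proposed \emph{elementary alternative} does not fix this. The lattice estimate
\[
\#\bigl\{\gamma\in\Gamma\colon d_{\tilde F_{t_0}}(z_{t_0},\gamma\cdot z_{\gamma^{-1}t_0})\leq R\bigr\}\gtrsim e^R
\]
is a count inside a \emph{single} leaf $\tilde F_{t_0}$. Your pigeonhole uses the identity $d_{\tilde F_{t_0}}(z_{t_0},\gamma_0^n z_{t_0})=n\,\ell_E(\gamma_0)+O(1)$, but that identity holds only when $t_0=\gamma_0^+$, i.e., only for elements of $\langle\gamma_0\rangle$, which fix $t_0$. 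A generic $\gamma$ contributing to the lattice count at $t_0$ does not fix $t_0$; its closed orbit lives in the \emph{different} leaf $\tilde F_{\gamma^+}$, and the leafwise distance $d_{\tilde F_{t_0}}(z_{t_0},\gamma z_{\gamma^{-1}t_0})$ is not related to $\ell_E(\gamma)=$ the translation length in $\tilde F_{\gamma^+}$ by any obvious comparison. So the "pigeonhole over primitive conjugacy classes" has nothing to pigeonhole against: the $\sim e^R$ elements of the $R$-ball in a fixed leaf do not decompose into contributions indexed by conjugacy classes with controlled $\ell_E$-length, and the sum $\sum_{[\gamma]}R/\ell_E(\gamma)$ does not control the lattice count from above or below. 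This is precisely the non-Anosov obstruction you identified, reasserting itself.

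\textbf{What the paper does instead.} The crucial idea you are missing is to realize the leafwise flow $\psi$ on $W$ as a \emph{time-reparameterization of the Anosov geodesic flow} $\phi$ on $T^1(\H^2/\Gamma)$: Lemma \ref{lem: orbit equivalence} gives an orbit equivalence $g\colon M_\Gamma\to W$, and Lemma \ref{lem: reparam conj} upgrades it to a topological conjugacy $\psi\cong\phi^r$ for a continuous potential $r>0$, using a Liv\v{s}ic-cohomology averaging trick. For H\"older $r$, the Anosov Markov coding (Bowen, Pollicott) gives $h(\phi^r)=H(\phi^r)$ (orbit growth $=$ topological entropy). The paper then proves continuity of both $h$ and $H$ in the potential (Proposition \ref{prop: entropy continuous} and equation \eqref{eqn: orbit growth cont}) and concludes by density of H\"older potentials in continuous ones. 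This detour through the ambient Anosov flow and the thermodynamic formalism is exactly what supplies the Margulis-type counting that your sketch leaves unjustified.
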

\noindent Hence, directly from the definitions, we get $h^k(\rho)\ge\kappa\cdot h(E)\ge\kappa>1$. 

The main observation behind our proof is that the hyperbolic surface lamination $E$ has a natural leaf-preserving flow which exponentially expands the Lebesgue measure on the leaves. Using this property, we can estimate the topological entropy (which coincides with the exponential growth rate of the closed orbits) using simple ideas from Manning \cite{Manning:entropy}.

We construct the hyperbolic Riemann surface lamination $E$ by doing foliated hyperbolic geometry extracting effective estimates from the work of Benoist--Hulin \cite{BH17} on the existence of harmonic maps at bounded distance from isometries. More specifically, we first construct, using hyperconvexity, an action of $\pi_1(\Sigma)$ on $\mb{CP}^1\times\partial\pi_1(\Sigma)$ with the following properties: 
\begin{itemize}
    \item{Every element $\gamma$ preserves $\mb{CP}^1\times\{\gamma^+\}$ (here $\gamma^+\in\partial\pi_1(\Sigma)$ is the attracting fixed point for $\gamma$) on which it acts as a loxodromic transformation with dilation $\lambda^{k}/\lambda^{k+1}(\rho(\gamma))$. }
    \item{There is an equivariant family of uniform marked quasicircles $\xi_t:\partial\pi_1(\Sigma)\to\Lambda_t\subset\mb{CP}^1\times\{t\}$, one for every $t\in\partial\pi_1(\Sigma)$.}
\end{itemize} 

Then, using work of Benoist--Hulin \cite{BH17}, we produce an equivariant family of harmonic fillings $f_t:\tilde{\Sigma}\to\mb{H}^3$ of $\xi_t$. Pulling back the metric of $\mb{H}^3$ via $f_t$ we produce an invariant metric on $\tilde{\Sigma}\times\partial\pi_1(\Sigma)$. By construction, such a metric 1-dominates $\lambda^{k}/\lambda^{k+1}$. We conclude by showing that the Riemann uniformization strictly dominates the metric. 

Our second main contribution connects hyperconvex representations to Teichmüller theory providing a natural holomorphic extension of the Ahlfors--Bers map. Recall that every quasi-Fuchsian representation leaves invariant a Jordan curve $\Lambda\subset\mb{CP}^1$ whose complement $\Omega\sqcup\Omega'=\mb{CP}^1-\Lambda$ is a union of two topological disks. The action on $\Omega,\Omega'$ is properly discontinuous, free, and biholomorphic so that the quotients $E=\Omega/\rho(\pi_1(\Sigma))$ and $F=\Omega'/\rho(\pi_1(\Sigma))$ are two Riemann surfaces that come equipped with an isomorphism between their fundamental groups and $\pi_1(\Sigma)$. The classical Ahlfors--Bers map is the map ${\rm AB}:\rho\to (E,F)\in\T(\Sigma)\times\T(\overline \Sigma)$.  Bers' Simultaneous Uniformization \cite{Bers60} establishes that ${\rm AB}$ is a biholomorphism between the space of quasi-Fuchsian representations up to conjugacy and $\T(\Sigma)\times\T(\overline \Sigma)$.

In the context of hyperconvex representations we consider the Teichmüller space $\T(M_\Gamma)$ of abstract marked hyperbolic surface laminations $h:M_\Gamma\to E$ ($h$ is a homeomorphism sending leaves to leaves; see \S\ref{s:surlam}) up to the Teichmüller equivalence relation that identifies $E$ and $F$ when there exists a homeomorphism $\phi:E\to F$ sending leaves to leaves isometrically and such that $h\phi$ is homotopic to $h'$ (through maps that send leaves to leaves). The classical Teichmüller space $\T(\mb{H}^2/\Gamma)$ canonically embeds in $\T(M_\Gamma)$.
The following structural result is due to Sullivan.

\begin{thm*}[{\cite{Sullivan}}]
The space $\T(M_\Gamma)$ has a natural structure of complex Banach manifold, realized as a bounded domain in a complex Banach space. 
\end{thm*}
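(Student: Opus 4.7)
The plan is to adapt the classical Bers embedding to the setting of hyperbolic surface laminations, producing a holomorphic homeomorphism of $\T(M_\G)$ onto a bounded open subset of a complex Banach space of transversely continuous leafwise holomorphic quadratic differentials. First, I would model $\T(M_\G)$ by Beltrami differentials. A Beltrami differential $\mu$ on $M_\G$ is a transversely continuous section of the bundle of leafwise $(-1,1)$-forms with $\|\mu\|_\infty < 1$. Lifting to $\mb{H}^2 \times \de\pi_1(\Sigma)$, this becomes a $\G$-equivariant family $\{\mu_t\}_{t \in \de\pi_1(\Sigma)}$ of measurable Beltrami forms on $\mb{H}^2$, continuous in $t$ in the $L^\infty$ norm. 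The set $\mathcal{B}(M_\G)$ of such $\mu$ is the open unit ball in a complex Banach space $B(M_\G)$. Standard arguments identify $\T(M_\G)$ with the quotient of $\mathcal{B}(M_\G)$ by the relation $\mu \sim \mu'$ whenever the leafwise normalized quasi-conformal solutions of the Beltrami equations have the same boundary values on $\de\mb{H}^2$, up to a homotopy-through-laminations adjustment of markings.

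Second, I would define the Bers embedding leaf by leaf. Fix a round copy of $\mb{H}^2 \subset \CP$ with complementary disk $\mb{H}^2_-$. For each $t$, extend $\mu_t$ by zero on $\mb{H}^2_-$ and solve the Beltrami equation on $\CP$ to obtain a normalized quasi-conformal homeomorphism $f^{\mu_t} \colon \CP \to \CP$ which is holomorphic on $\mb{H}^2_-$. Let $\Phi(\mu)_t := S(f^{\mu_t}|_{\mb{H}^2_-})$ denote its Schwarzian derivative, a holomorphic quadratic differential on $\mb{H}^2_-$, measured in the hyperbolic $L^\infty$ norm. The resulting map
\begin{equation*}
\Phi \colon \mathcal{B}(M_\G) \longrightarrow Q(M_\G),\qquad \mu \mapsto (t \mapsto \Phi(\mu)_t),
\end{equation*}
takes values in the Banach space $Q(M_\G)$ of transversely continuous, $\G$-equivariant families of bounded holomorphic quadratic differentials on $\mb{H}^2_-$.

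Third, the core analytic input is the Ahlfors--Bers theorem on the holomorphic dependence of $f^\mu$ on $\mu$ in $L^\infty$, together with Nehari's estimate $\|S(f)\|_\infty \le 3/2$ for univalent functions on the disk. Applied leafwise and uniformly in $t$, these yield: (i) $\Phi$ descends to a well-defined map $\wt \Phi \colon \T(M_\G) \to Q(M_\G)$; (ii) $\wt\Phi$ is holomorphic; (iii) the image lies in the ball of radius $3/2$. Injectivity of $\wt\Phi$ follows because the boundary values of $f^{\mu_t}$ on $\de\mb{H}^2$ determine the marking of the deformed leaf by the uniqueness of normalized quasi-conformal extensions. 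Openness is established via the Ahlfors--Weill quasi-conformal reflection: any $\phi \in Q(M_\G)$ with $\|\phi\|_\infty < 1/2$ arises as $\Phi(\mu)$ for an explicit $\mu$ built leafwise from $\phi$, and this construction preserves transverse continuity by inspection. Together these realize $\wt\Phi$ as a biholomorphism onto a bounded open subset of $Q(M_\G)$, which by transport of structure endows $\T(M_\G)$ with its complex Banach manifold structure.

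The main technical obstacle is ensuring transverse continuity throughout: one must show that all the classical one-variable estimates (Ahlfors--Bers, Nehari, Ahlfors--Weill) can be run with uniform constants over the compact transverse parameter space, so that continuous families of Beltrami differentials produce continuous families of Schwarzians in the operator norm of $Q(M_\G)$, and conversely. This is where the compactness of the transverse space, inherited from the compactness of $M_\G$, plays a decisive role, converting the pointwise-in-$t$ classical statements into their Banach-valued analogs required for the manifold structure.
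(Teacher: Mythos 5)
Your proposal reproduces the laminated Bers embedding exactly as the paper recalls it from Sullivan's work: extend the leafwise Beltrami coefficient by zero to $\overline{\mb H}^2\times\partial\mb H^2$, solve the Beltrami equation leaf by leaf, take the Schwarzian of the holomorphic restriction, invoke Nehari for boundedness and Ahlfors--Weill for openness, and rely on compactness of the transverse direction to upgrade the pointwise-in-$t$ classical estimates to the Banach-valued setting. This is essentially the same argument as in \S\ref{sec:Bersembedding} and Theorems \ref{thm: Sullivan Bers embedding}--\ref{thm: Bers holomorphic}, so there is nothing to add.
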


\begin{thmA}
\label{thm:IntroAB}
Let $\Xi^k(\G,\PSL(d,\C))$ be the space of $k$-hyperconvex representations. For every $1\le  k\le d-1$, there is a canonical $k$-th Ahlfors--Bers map 
\[
{\rm AB}^k:\Chark\to\T(M_\Gamma)\times\T(\overline M_\Gamma).
\]
with the following properties:
\begin{enumerate}
\item{${\rm AB}^k$ is holomorphic.}
\item{If ${\rm AB}^k(\rho)=(E^k_\rho,F^k_\rho)$, then $2\min\{\ell_{E^k_\rho}(\cdot),\ell_{F^k_\rho}(\cdot)\}\ge\log|L^k_\rho(\cdot)|$.}
\item{When $\rho$ is the composition $\rho=\iota_d\circ \sigma$ of a quasi-Fuchsian representation $\sigma:\Gamma\to{\rm PSL}(2,\mb{C})$ with the irreducible embedding $\iota_d:{\rm PSL}(2,\mb{C})\to{\rm PSL}(d,\mb{C})$, the Ahlfors--Bers parameters ${\rm AB}^k(\rho)$ coincide with the classical ones of $\sigma$.}
\end{enumerate}
\end{thmA}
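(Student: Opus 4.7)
\emph{Construction.} The plan is to build a $\G$-equivariant family of quasicircles $\{\Lambda_t\}_{t\in\partial\pi_1(\Sigma)}$ in $\CP$, take the complements fiberwise, and identify the two quotient Riemann surface laminations with points of $\T(M_\G)\times\T(\overline M_\G)$. The quasicircles come from a cross-ratio formula modelled on \eqref{e.Veronese}: for each $t \in \partial\pi_1(\Sigma)$, set
\[
\xi_t(s) := \bigl[\xi^{d-k}_\rho(s) \cap \xi^{k+1}_\rho(t)\bigr] \in \mathbb{P}\bigl(\xi^{k+1}_\rho(t)/\xi^{k-1}_\rho(t)\bigr) \cong \CP,
\]
for $s \ne t$, extended continuously at $s = t$. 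The $k$-hyperconvex transversality in \eqref{e.Veronese} makes $\xi_t$ a topological embedding, and the fiberwise $\G$-action $\gamma\cdot(z,s) = (g^t_\gamma z, \gamma s)$, with $g^t_\gamma$ the Möbius map induced by $\rho(\gamma)$ on $\mathbb{P}(\xi^{k+1}_\rho(\gamma t)/\xi^{k-1}_\rho(\gamma t))$, permutes the quasicircles $\Lambda_t := \xi_t(\partial\pi_1(\Sigma))$. Crucially, for $t = \gamma^+$ the $\rho(\gamma)$-invariance of $\xi^{k\pm 1}_\rho(\gamma^+)$ forces $g^{\gamma^+}_\gamma$ to act on the fiber as a loxodromic Möbius transformation with multiplier exactly $L^k_\rho(\gamma)$.

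\emph{The lamination.} A uniform quasicircle estimate --- the one informally stated in the paper's outline of Theorem \ref{thmINTRO:Hff} (``uniform marked quasicircles''), provable from Anosov cross-ratio bounds à la Pozzetti--Sambarino--Wienhard --- then yields that $\Lambda_t$ is $K$-quasisymmetric with $K$ independent of $t$. The complements $\Omega_t, \Omega'_t$ are simply connected disks varying Hausdorff-continuously in $t$, so $\Omega := \bigsqcup_t \Omega_t \times \{t\}$ and $\Omega'$ are $\G$-invariant open subsets of $\CP \times \partial\pi_1(\Sigma)$ on which the $\G$-action is proper and free (by fiberwise contraction/expansion). The quotient $\mathcal E^k_\rho := \Omega/\G$ carries a Riemann surface lamination structure whose generic leaves are the disks $\Omega_t$ and whose annular leaves $\Omega_{\gamma^+}/\langle g^{\gamma^+}_\gamma\rangle$ are indexed by primitive conjugacy classes $[\gamma]$, matching the leaf structure of $M_\G$ and inheriting a canonical $\G$-marking; similarly $\mathcal F^k_\rho := \Omega'/\G$ gives a point of $\T(\overline M_\G)$.

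\emph{The three properties.} For (1), $\rho \mapsto \xi_\rho$ is holomorphic by standard Anosov stability, so the leafwise Beltrami coefficients obtained by pulling the complex structure of $\Omega$ back to a fixed Fuchsian reference lamination are holomorphic in $\rho$; Sullivan's realization of $\T(M_\G)$ as a bounded domain in the Banach space of such coefficients then makes $\mathrm{AB}^k$ holomorphic. For (2), uniformize the annular leaf $\Omega_{\gamma^+}/\langle g^{\gamma^+}_\gamma\rangle$: since $g^{\gamma^+}_\gamma$ is loxodromic with $\H^3$-translation length $\log|L^k_\rho(\gamma)|$, the Bers inequality (a Schwarz--Pick comparison on the cyclic simultaneous uniformization) gives $\ell_{\mathcal E^k_\rho}(\gamma) \ge \log|L^k_\rho(\gamma)|$, which is strictly stronger than the factor-$2$ bound stated. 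For (3), when $\rho = \iota_d \circ \sigma$ the boundary map $\xi_\rho$ is the Veronese lift of the boundary map of $\sigma$; a direct computation shows $\xi_t$ is conjugate to the boundary map of $\sigma$ for every $t$, and the entire construction collapses to Bers' simultaneous uniformization of $\sigma$, landing in the canonical Fuchsian slice $\T(\H^2/\G) \hookrightarrow \T(M_\G)$.

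\emph{Main obstacle.} The delicate step is joint control of the family $t \mapsto \xi_t$: producing the uniform quasicircle estimate and promoting it to a continuous (in fact holomorphic in $\rho$) family of $K$-quasisymmetric maps compatible with Sullivan's Banach structure on $\T(M_\G)$, and then identifying the resulting quotient as a \emph{marked} Riemann surface lamination isomorphic to $M_\G$. Fiberwise Anosov bounds give the estimates on one fiber; transverse continuity together with the Sullivan-type identification is where most of the technical work will go.
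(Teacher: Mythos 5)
Your construction follows the paper's route essentially verbatim: the fiberwise tangent projections $\xi_t^k$ (the paper's Proposition~\ref{p.tangentproj}), the uniform quasicircle estimate (Proposition~\ref{p.qc} cited from \cite{FPV2}), the laminated conformal action $\rho^k$, and the holomorphic motion argument for part~(1). However, your argument for Property~(2) is wrong. You assert that a Schwarz--Pick comparison yields $\ell_{E^k_\rho}(\gamma)\ge\log|L^k_\rho(\gamma)|$, ``strictly stronger than the factor-$2$ bound stated.'' This inequality is false. The sharp example is the slit plane: take $\Lambda$ through $0,\infty$ degenerating to $(-\infty,0]$, so that one complementary domain is $\Omega^+=\C\setminus(-\infty,0]$. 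Uniformizing $\Omega^+$ by $w\mapsto w^2$ from the right half-plane conjugates $z\mapsto Lz$ to $w\mapsto\sqrt{L}\,w$, whose translation length is $\tfrac12\log L$. Thus $\log|L|=2\,\ell_{E}(\gamma)$ is achieved in the limit, and the factor $2$ cannot be removed. No Schwarz--Pick argument applies here: the inclusion $\Omega^+/\langle g\rangle\hookrightarrow\C^\ast/\langle g\rangle$ lands in a complex torus, which carries no hyperbolic metric, so there is nothing to contract against. The correct route is the paper's Proposition~\ref{p.bound}, which bounds $|u'|/|u|$ via the Koebe $1/4$-theorem; the factor $2$ there is intrinsic (it comes from $|w_z'(0)|=2|z|$).

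A second genuine gap is the marking. You obtain a Riemann surface lamination $\mathcal{E}^k_\rho=\Omega/\G$ and assert it inherits ``a canonical $\G$-marking,'' but an element of $\T(M_\G)$ is an equivalence class of \emph{marked} laminations, and producing a smooth lamination equivalence $M_\G\to\mathcal{E}^k_\rho$ in the correct leafwise homotopy class is exactly the content of the paper's Propositions~\ref{prop:DE} and~\ref{prop:qcconjugacy}: one must show $\rho^k$ lies in $\mathcal{QC}(\iota)$ by exhibiting a laminated quasi-conformal homeomorphism of $\fCP$, smooth away from $\fL$, conjugating $\iota$ to $\rho^k$. The paper achieves this via a laminated Douady--Earle extension of $\xi^k_\cdot$; without this, the target of $\mathrm{AB}^k$ and even the properness of the $\G$-action on $\Omega$ (which you invoke) are not established. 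Your closing ``main obstacle'' paragraph correctly identifies this as the hard part, but leaving it open means the map is not actually constructed.
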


Property (2) generalizes the classical Ahlfors' Lemma (see \cite[Lemma 5.1.1]{Otal:hyperbolization}) which is important to control the geometry of the deformations of quasi-Fuchsian representations. As above, $L_\rho^k(\cdot)$ is the $k$-th root length spectrum of $\rho$. 

When a representation is \emph{fully hyperconvex}, namely $k$-hyperconvex for all $1\leq k\leq d-1$, combining all the Ahlfors--Bers parameters ${\rm AB}^k$ we get a map 
\[
{\rm AB}=({\rm AB}^1,\cdots,{\rm AB}^{d-1}).
\]

\begin{thmA}
\label{thm:ab intro 2}
Let $\Charn$ be the space of fully hyperconvex representations. Then
\[
{\rm AB}:=({\rm AB}^1,\cdots,{\rm AB}^{d-1}):\Charn\to(\T(M_\Gamma)\times\T(\overline M_\Gamma))^{d-1}
\]
is holomorphic, injective, and closed. Moreover  
\[
{\rm AB}^{-1}(\Delta\times\cdots\times\Delta)=\Xi^{\rm hyp}(\G,\PSL(d,\R))
\]
where $\Delta\subset\T(M_\Gamma)\times\T(\overline M_\Gamma)$ is the (diagonal) set of pairs which are mirror images of one another.
\end{thmA}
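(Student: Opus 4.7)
The plan is to establish the four assertions (holomorphicity, injectivity, closedness, and the identification of the fiber over the diagonal) in turn. Holomorphicity is immediate: a map into a product of complex manifolds is holomorphic iff each coordinate map is, and each ${\rm AB}^k$ is holomorphic by Theorem \ref{thm:IntroAB}(1). The substantive content lies in the other three statements.

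For injectivity, my plan is to reconstruct the flag boundary map $\xi_\rho : \deG \to \mc{F}(\mb{C}^d)$ from the tuple $({\rm AB}^1(\rho), \ldots, {\rm AB}^{d-1}(\rho))$, so that the standard rigidity of Anosov representations—which are determined up to conjugation by their equivariant boundary maps—concludes. By construction ${\rm AB}^k(\rho) = (E^k_\rho, F^k_\rho)$ consists of the marked surface laminations obtained by quotienting the two complementary components of an equivariant family of quasicircles $\Lambda^k_t \subset \mb{CP}^1 \times \{t\}$, a family which encodes the $k$-th osculating projection \eqref{e.Veronese} of $\xi_\rho$. By Sullivan's extension of Bers' simultaneous uniformization to hyperbolic surface laminations, the pair $(E^k_\rho, F^k_\rho)$ determines $\{\Lambda^k_t\}$ up to an equivariant $\PSL(2,\mb{C})$-cocycle on the fibers, and this ambiguity can be normalized using fixed points of two non-commuting hyperbolic elements. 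Ranging over all $k$ and invoking that a full flag in $\mb{C}^d$ is reconstructed from its osculating projections, we recover $\xi_\rho$ up to a global conjugation.

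For closedness, take a sequence $\rho_n \in \Charn$ with $({\rm AB}(\rho_n))$ converging in the target. Ahlfors' lemma (Theorem \ref{thm:IntroAB}(2)) yields
\[
\log|L^k_{\rho_n}(\gamma)| \le 2\min\bigl\{\ell_{E^k_{\rho_n}}(\gamma),\,\ell_{F^k_{\rho_n}}(\gamma)\bigr\}
\]
for every $k$ and $\gamma$, with a right-hand side bounded in $n$ because the ${\rm AB}^k(\rho_n)$ converge. This uniform control on every $k$-th root length spectrum translates, via Anosov boundedness estimates, to uniform boundedness of $\rho_n$ in the $\PSL(d,\mb{C})$-character variety, so up to a subsequence $\rho_n \to \rho_\infty$. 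The quasicircles $\Lambda^k_{\rho_n,t}$ have uniformly bounded quasisymmetric distortion (also controlled by the converging AB data), hence subconverge to quasicircles; the limit flag boundary map is therefore still transverse, $\rho_\infty \in \Charn$, and ${\rm AB}(\rho_\infty)$ equals the limit by continuity of the whole construction. Combined with injectivity, this gives closedness.

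For the diagonal identification, the inclusion $\Xi^{\rm hyp}(\G, \PSL(d, \mb{R})) \subset {\rm AB}^{-1}(\Delta \times \cdots \times \Delta)$ holds because complex conjugation on $\mb{C}^d$ preserves the flag boundary map of a real representation, hence preserves each $\Lambda^k_t$ and swaps its two complementary components, exhibiting $F^k_\rho$ as the mirror image $\overline{E^k_\rho}$. For the converse, if $(E^k_\rho, F^k_\rho) \in \Delta$ for all $k$, then the two leaf-wise uniformizations of $\Lambda^k_t$ are mirror images, and the classical Bers dichotomy (applied leaf by leaf) forces $\Lambda^k_t$ to be a round circle in $\mb{CP}^1 \times \{t\}$. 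The $k$-th projection of $\xi_\rho$ therefore lands in a real projective line for every $k$, so $\xi_\rho$ takes values in the real flag variety $\mc{F}(\mb{R}^d)$, and a standard Zariski-density argument for Anosov representations puts $\rho$ into $\PSL(d, \mb{R})$ up to conjugation. I expect the main obstacle to be injectivity: matching the $(d-1)$ leaf-wise Möbius ambiguities compatibly across different $k$ requires the full (not just single-$k$) hyperconvexity hypothesis to guarantee that the osculating frames fit together consistently.
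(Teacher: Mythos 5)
Your overall organization matches the paper's (holomorphicity, injectivity, closedness, diagonal identification in turn), and the forward diagonal inclusion is handled correctly. However, two of your arguments have the same fundamental gap, and the closedness sketch suppresses a nontrivial ingredient.

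\textbf{Injectivity.} You propose to reconstruct the boundary map $\xi_\rho:\deG\to\calF(\C^d)$ from the tuple of Ahlfors--Bers parameters, invoking that ``a full flag in $\C^d$ is reconstructed from its osculating projections.'' This is the crux, and it does not hold as stated: the $k$-th tangent projection $\xi^k_z$ takes values in $\P(z^{k+1}/z^{k-1})$, which is an abstract $\CP$ with no canonical identification to a subvariety of $\calF(\C^d)$; knowing all the $\xi^k_z$ up to fiberwise M\"obius ambiguity tells you nothing about how the subspaces $z^{k\pm1}$ sit inside $\C^d$. You flag this difficulty yourself in your final sentence, but the proposed fix (normalizing using fixed points of hyperbolic elements) does not recover this lost ``extension'' data. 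The paper side-steps the whole issue and works \emph{algebraically}: from ${\rm AB}(\rho)={\rm AB}(\rho')$ one gets $L^k_\rho=L^k_{\rho'}$ for all $k$ (via Proposition \ref{p.ABinjective} and \ref{p.Clength}), and Theorem \ref{thm:same gaps conjugate} then converts equality of all eigenvalue gaps into conjugacy by lifting to $\SL(d,\C)$ on a degree-$d$ finite index subgroup (Lemma \ref{lem:lift}), showing traces agree on a further finite index subgroup, using that traces coordinatize the $\SL(d,\C)$-character variety, and finally promoting conjugacy back to all of $\G$ (Proposition \ref{claim:from fi to ambient}) using the Anosov splittings.

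\textbf{Diagonal, converse direction.} The same gap reappears: you infer from roundness of each $\Lambda^k_t$ (which is indeed what the diagonal condition gives, as in Proposition \ref{prop:real dilation}) that ``$\xi_\rho$ takes values in the real flag variety $\calF(\R^d)$.'' But the quasicircle is round with respect to a \emph{chosen} trivialization of the $\CP$-bundle $\mathcal B^k_\rho$, not with respect to a real structure on $\C^d$; and again, real-ness of the osculating projection into $\P(z^{k+1}/z^{k-1})$ for each $z$ and $k$ does not by itself locate the flag in $\calF(\R^d)$. The paper's Theorem \ref{thm:real gaps real} instead deduces from real gaps that traces on a finite-index subgroup are real, then invokes Acosta's theorem to place the representation in $\SL(d,\R)$ or $\SL(d/2,{\bf H})$, rules out the quaternionic case using distinctness of eigenvalue moduli, and finally promotes to all of $\G$. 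The Zariski-density hand-wave in your sketch does not substitute for this.

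\textbf{Closedness.} The strategy (uniform control of $k$-th root length spectra from bounded Teichm\"uller data, then extract a limit, then show the limit is hyperconvex) is the right one, and the paper's argument (Theorem \ref{t.properness}) follows the same outline. But ``translates, via Anosov boundedness estimates, to uniform boundedness of $\rho_n$ in the character variety'' conceals the step that requires care: control of eigenvalue gaps (hence stable translation lengths in the symmetric space) must be converted into control of minimal joint displacement of a generating set. The paper does this via the Breuillard--Fujiwara inequality (Proposition \ref{pro:bf}); also note that boundedness of the \emph{product} $\lambda^1(\rho(\gamma))^d=L^1_\rho(\gamma)^{d-1}\cdots L^{d-1}_\rho(\gamma)$ is what is needed, requiring all gaps simultaneously. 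This is worth making explicit. Your subsequent use of compactness of normalized $K$-quasi-conformal homeomorphisms to show the limit is hyperconvex agrees with the paper.
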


Let us briefly sketch the main steps of the proof, beginning with injectivity. It follows from our construction that the Ahlfors--Bers parameter ${\rm AB}^j(\rho)$ captures the $j$-th root gap $\lambda^j/\lambda^{j+1}$ of  the matrices $\rho(\gamma)\in{\rm PSL}(d,\mb{C})$. Given this input, the rest of the argument is mostly algebraic: we show that the collection of all such gaps distinguishes points in $\Charn$ (see Theorem \ref{thm:same gaps conjugate}). This rests on the fact that the trace functions distinguish points on the character variety $\Xi(\Gamma,{\rm SL}(d,\mb{C}))$.

As for closedness, we actually show a stronger property. The space $(\T(M_\Gamma)\times\T(\overline  M_\Gamma))^{d-1}$ is endowed with a natural (product) Teichmüller metric, also introduced by Sullivan \cite{Sullivan}. We prove that the pre-image under ${\rm AB}$ of a bounded set in such a metric is pre-compact in $\Charn$. This property implies closedness.

Lastly, we discuss the relation between the diagonal and real hyperconvex representations. Again, it follows from our construction that if ${\rm AB}^j(\rho)\in\Delta$, then the $j$-th root gap $\lambda^j/\lambda^{j+1}$ is real for all matrices $\rho(\gamma)\in{\rm PSL}(d,\mb{C})$. Once we observe this, again the rest of the proof is mostly algebraic. We prove that such a representation must be conjugate into ${\rm PSL}(d,\mb{R})$ (Theorem \ref{thm:real gaps real}). We use a result of Acosta \cite{Aco19}  that if all the trace functions of a representation in ${\rm SL}(d,\mb{C})$ are real then the representation is conjugate in ${\rm SL}(d,\mb{R})$ or ${\rm SL}(d/2,{\bf H})$ (where ${\bf H}$ are the real quaternions). 

We conclude with a couple of words on the complex geometry of $\Charn$. As bounded domains in complex Banach spaces (such as $(\T(M_\Gamma)\times\T(\overline M_\Gamma))^{d-1}$, thanks to Sullivan's work) are well-studied objects, Theorems \ref{thm:IntroAB} and \ref{thm:ab intro 2} allow us to deduce that the space of hyperconvex representations has many good properties from the point of view of its complex geometry. For example, it is Kobayashi hyperbolic and the Kobayashi metric is complete.

\subsection*{Acknowledgements}
We thank Yves Benoist, Dominique Hulin, Peter Smillie, and Nicolas Tholozan for useful discussions. This work was funded through the DFG Emmy Noether project 427903332 of Beatrice Pozzetti. Beatrice Pozzetti acknowledges additional support by the DFG under Germany’s Excellence Strategy EXC-2181/1-390900948.
James Farre acknowledges support from DFG – Project-ID 281071066 – TRR 191. Gabriele Viaggi acknowledges the support of the funding RM123188D816D4A5 of the Sapienza University of Rome. 

\section{Riemann surface laminations}
\label{sec:rsl}

In this section, we introduce smooth hyperbolic and Riemann surface laminations.
We discuss a Riemann surface lamination structure $M_\Gamma$ on $T^1S$ whose leaves are the weakly unstable leaves for the geodesic flow associated to $\bH^2/\Gamma$, a hyperbolic surface homeomorphic to a closed surface $S$.

The main result of this section is Theorem \ref{thm: orbit growth rate}:  the growth rate of closed orbits of a certain (geodesic) flow defined on a smooth (hyperbolic) Riemann surface lamination smoothly equivalent to $M_\Gamma$ is bounded below by $1$.
Theorem \ref{thm: orbit growth rate} follows from a geometric bound on the topological entropy of this flow (Theorem \ref{t.Tholozan}) after a technical excursion into thermodynamical formalism, relegated to Appendix \ref{sec:appendix thermo}.

Theorem \ref{thm: orbit growth rate} was established in unpublished notes of Tholozan \cite[Theorem 0.4]{Tholozan:notes} using different but related techniques.  
Our proof supplies a geometric alternative to some of the more sophisticated dynamical arguments given in \cite{Tholozan:notes}.
 
\subsection{Surface laminations}\label{s:surlam}
A surface lamination is a topological space that locally looks like the product of a topological surface and a transversal metric space. 
We begin by discussing the basic definitions of surface laminations with additional structure, mostly following Candel \cite[Section 1]{Candel}.

\begin{dfn}[Surface lamination]
A ({\em Riemann} or {\em hyperbolic}) {\em surface lamination} is a topological space $M$ together with an atlas of charts
\[
\calA=\{u_\alpha: U_\alpha\to D_\alpha\times T_\alpha\}_{\alpha\in J}
\]
which are homeomorphisms from open subsets $U_\alpha\subset M$ to the product of an open subset $D_\alpha\subset\H^2$ of the hyperbolic plane and an open subset $T_\alpha$ of a compact metric space  whose change of charts have expression, choosing coordinates $(z,t)$ on $D_\alpha \times T_{\alpha}$,
\[
u_\beta\circ u_\alpha^{-1}(z,t)=(f_{\alpha\beta}(z,t),g_{\alpha\beta}(t))
\]
where: 
\begin{itemize}
    \item{The component $f_{\alpha\beta}$ is a ({\em holomorphic} or {\em isometric}) {\em diffeomorphism} for every $t\in T_\alpha$.}
    \item{In the transverse direction we have that $t\mapsto \frac{\partial^nf_{\alpha\beta}}{\partial x^j\partial y^{n-j}}(x+iy,t)$ is continuous for all $n\ge 0$ and $z=x+iy$.}
\end{itemize}
\end{dfn}

By the structure of the change of charts, the fibers $D_\alpha\times\{t\}\subset D_\alpha\times T_\alpha$ glue together in a unique way to form a decomposition of $M$ into a disjoint union of connected (Riemann or hyperbolic) surfaces, the {\em leaves} of the lamination.

\begin{dfn}[Morphisms of laminations]\label{d.autsurlam}
A {\em morphism} of surface laminations is a continuous map $f:M\to M'$ that respects the lamination structure, that is, if $u:U\subset M\to D\times T$ and $u':U'\subset M'\to D'\times T'$ are lamination charts, then $u'\circ f\circ u\inverse(z,t)=(h(z,t),q(t))$. Furthermore, we say that: 
\begin{itemize}
\item{It is {\em smooth} if for every $t\in T$ the map $h(\cdot,t):D\to D'$ is smooth and its derivatives vary continuously in $t\in T$.}
\item{If $M,M'$ are Riemann surface laminations then $f$ is {\em holomorphic} if for every $t\in T$ the map $h(\cdot,t):D\to D'$ is holomorphic and its derivatives vary continuously in $t\in T$.}
\item{If $M,M'$ are hyperbolic surface laminations then $f$ is {\em locally isometric} if for every $t\in T$ the map $h(\cdot,t):D\to D'$ is locally isometric and its derivatives vary continuously in $t\in T$.}
\end{itemize} 

An {\em isomorphism} between (Riemann or hyperbolic) surface laminations is a smooth (holomorphic or locally isometric) morphism with a smooth (holomorphic or locally isometric) inverse.  
\end{dfn}

A Riemannian metric $g$ on a smooth surface lamination is a Riemannian metric on the leaves that varies smoothly in the transversal direction.
A Riemannian metric $g$ on a Riemann surface lamination is \emph{conformal} if the conformal structure induced by $g$ on the leaves coincides with the leafwise complex structure.
The following theorem of Candel is an analog of the classical uniformization theorem in the smooth, laminated setting.
\begin{thm}[\cite{Candel}]\label{thm: candel}
Every Riemann surface lamination has a conformal  Riemannian metric $g$ with constant curvature.
\end{thm}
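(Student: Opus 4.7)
The plan is to carry out the classical uniformization theorem leafwise while ensuring that the resulting conformal factor depends continuously on the transverse parameter. I would proceed in three main steps.

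First, construct a smooth conformal reference Riemannian metric $g_0$ on $M$. Cover $M$ by finitely many lamination charts $u_\alpha : U_\alpha \to D_\alpha \times T_\alpha$, choose a subordinate partition of unity $\{\varphi_\alpha\}$, and set $g_0 = \sum_\alpha \varphi_\alpha\, u_\alpha^*(\eta_\alpha)$, where each $\eta_\alpha$ is a fixed Riemannian metric on $D_\alpha$ compatible with its complex structure. Because transition maps are fiberwise biholomorphic with derivatives continuous in the transverse parameter, $g_0$ is a smooth laminated metric conformal to the leafwise complex structure, with leafwise Gaussian curvature $K_0$ a continuous function on the compact space $M$. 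After a preliminary leafwise conformal adjustment, one may arrange $K_0 < 0$ uniformly.

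Second, reduce to a Liouville equation. For any conformal deformation $g = e^{2u} g_0$, the leafwise Gaussian curvature transforms as $K_g = e^{-2u}(K_0 - \Delta_{g_0} u)$. Hence producing a metric of constant leafwise curvature $-1$ amounts to finding a function $u$ on $M$, smooth along leaves with continuous transverse dependence of all leafwise derivatives, solving the leafwise Liouville equation
\[
\Delta_{g_0} u - e^{2u} = K_0.
\]
Since $K_0$ is bounded and strictly negative on $M$, constants $c_- \le c_+$ with $e^{2c_-} \le -K_0 \le e^{2c_+}$ provide uniform constant sub- and super-solutions, and the leafwise maximum principle yields the a priori bound $c_- \le u \le c_+$ on any complete solution.

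Third, construct the solution leafwise and establish transverse continuity. On each leaf of hyperbolic conformal type, exhaust by relatively compact subsurfaces and solve the Dirichlet problem with boundary values equal to $c_+$; monotonicity together with the uniform barriers produces the unique complete leafwise solution $u$ in the limit. Interior Schauder estimates combined with the uniform $C^0$ bound give uniform $C^{k,\alpha}$ leafwise control, independent of the leaf. For a convergent sequence $x_n \to x$ in $M$, reading the leafwise solutions in lamination chart coordinates yields a $C^\infty_{\mathrm{loc}}$-convergent subsequence whose limit is a complete solution on $L_x$; uniqueness of the complete solution forces this limit to equal $u_x$, giving transverse continuity of $u$ together with all its leafwise derivatives. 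It follows that $g = e^{2u} g_0$ is the desired smooth laminated conformal metric of constant curvature $-1$.

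The main obstacle is the requirement that every leaf be of hyperbolic conformal type; without this, no complete leafwise metric of curvature $-1$ exists on the exceptional leaves. Leaves isomorphic to the sphere, to $\C$, to $\C^*$, or to tori are ruled out by structural input on the lamination. In Candel's original argument this is analyzed via harmonic measures in the spirit of Garnett, showing that for a compact Riemann surface lamination the non-hyperbolic leaves form a distinguished invariant set on which a metric of constant non-negative curvature is constructed separately. A second technical point is promoting the transverse continuity of $u$ to continuity of all transverse partial derivatives required by Definition \ref{d.autsurlam}; this is handled by bootstrapping elliptic estimates against the smoothness of $g_0$ and $K_0$ in the transverse direction.
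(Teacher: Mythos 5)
The paper does not prove this theorem; it is cited from Candel's paper, so I am judging your sketch on its own merits rather than against a proof in the text.

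The central step of your argument --- ``After a preliminary leafwise conformal adjustment, one may arrange $K_0<0$ uniformly'' --- is a genuine gap. First, it is false in general: if the lamination contains a compact leaf of genus $0$ or $1$, Gauss--Bonnet forbids any negatively curved metric on that leaf, so no conformal adjustment can achieve $K_0<0$ there. You acknowledge later that non-hyperbolic leaves are an obstacle, but by then you have already relied on the assertion, and your recourse (``Candel handles this via harmonic measures'') simply defers the issue to the theorem you are trying to prove; the characterization you attribute to Candel is also not quite his statement. Second, even under the hypothesis that every leaf is hyperbolic (which is the situation relevant to this paper, where the laminations are $M_\Gamma$ and its quasi-conformal deformations), the claim is unjustified and essentially circular: producing a transversely continuous conformal metric with uniformly negative leafwise curvature on a compact lamination is already close to the content of the theorem. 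On a single noncompact hyperbolic leaf you can of course find such a metric --- the hyperbolic metric itself --- but doing so with the required transverse regularity is exactly the difficulty, and a partition-of-unity metric $g_0$ built from arbitrary $\eta_\alpha$ gives no control on the sign of $K_0$ at all.

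Once $K_0<0$ is granted, the rest of your outline (constant barriers $c_\pm$, Perron/exhaustion to produce the complete leafwise solution, interior Schauder estimates plus Arzel\`a--Ascoli and uniqueness of the complete hyperbolic metric to get transverse continuity and smoothness) is a reasonable PDE route and is plausibly correct, modulo the details of passing Dirichlet solutions to a complete limit. This is, however, a different strategy from Candel's, whose argument proceeds via normal-family/Montel arguments for leafwise uniformizing maps and the Ahlfors--Schwarz lemma rather than via a direct leafwise Liouville PDE. To make your route work you would need to supply, without circularity, a transversely continuous conformal metric of uniformly negative leafwise curvature --- this is the missing ingredient, and it is not minor.
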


\subsection{Laminations from hyperbolic surfaces}

We will be interested in a special type of Riemann surface laminations coming from hyperbolic surfaces. 

\begin{dfn}[Hyperbolic surface]
A closed orientable {\em hyperbolic surface} is a quotient $\mb{H}^2/\Gamma$ of the hyperbolic plane $\mb{H}^2$ by a discrete, torsion free, cocompact subgroup $\Gamma<{\rm Isom}^+(\mb{H}^2)={\rm PSL}(2,\mb{R})$.
\end{dfn}

The diagonal action
$\Gamma\curvearrowright\mb{H}^2\times\partial\mb{H}^2$
is free and properly discontinuous as it is so on the first factor. Since it preserves the product structure, the quotient 
\[M_\Gamma:=\mb{H}^2\times\partial\mb{H}^2/\Gamma\] is a Riemann surface lamination whose leaves,  the projection to $M_\Gamma$ of the sets $\mb{H}^2\times\{t\}$, are of two types:
\begin{itemize}
\item{If $t\in\partial\mb{H}^2$ is the fixed point of some  hyperbolic element $\gamma\in\Gamma$, then $\mb{H}^2\times\{t\}$ is stabilized by the cyclic group $\langle\gamma\rangle$. The image of the projection in $M_\Gamma$ is the hyperbolic annulus $\mb{H}^2\times\{t\}/\langle\gamma\rangle$ whose core geodesic has length $\ell(\gamma)$.}
\item{If $t\in\partial\mb{H}^2$ is not one of the (countably) many fixed points of primitive hyperbolic elements of $\Gamma$, then $\mb{H}^2\times\{t\}$ has trivial stabilizer and the projection to $M_\Gamma$ is injective so that the corresponding leaf is a copy of $\mb{H}^2$.}
\end{itemize}
Every leaf of $M_\Gamma$ is dense since $\Gamma$ acts {\em minimally} on $\partial\mb{H}^2$, namely with dense orbits.
\begin{dfn}[Associated lamination]\label{d.MGamma}
We call $M_\Gamma$ the {\em Riemann surface lamination associated with the hyperbolic surface} $\mb{H}^2/\Gamma$. 
\end{dfn}

Note that the unit tangent bundle $T^1\bH^2/\Gamma$ has a \emph{weakly unstable} foliation for the geodesic flow, where the leaf passing through the point $(p,v)$ consists of the points $(q,w)$ whose backward geodesic trajectories are asymptotic.
There is a smooth map  
\[\Phi:T^1\mb{H}^2/\Gamma\to M_\Gamma\] 
defined by $\Phi(p,v)=[(\tilde p,t)]$, where $\tilde p\in\mb{H}^2$ is a lift of $p$ and $t\in\partial\mb{H}^2$ is the point at infinity of the corresponding lift of the geodesic ray $\gamma:[0,\infty)\to\mb{H}^2/\Gamma$ starting at $p$ with velocity $-v$.
Then $\Phi$ is an isomorphism of Riemann surface laminations that identifies the leaf of the unstable foliation for the geodesic flow corresponding to $t$ with  $[\mb{H}^2\times\{t\}]\subset M_\Gamma$.

\begin{dfn}[Marking]\label{def: marking}
    Let $W$ be a Riemann surface lamination.  A smooth lamination equivalence $f: M_\Gamma \to W$ that is leafwise orientation preserving is called a \emph{marking} or \emph{marked Riemann surface lamination}.
\end{dfn}

\subsection{Entropy}\label{subsec: entropy}

Let $f: M_\Gamma \to W$ be a marked Riemann surface lamination.
Using Candel's Theorem \ref{thm: candel}, $W$ has a conformal Riemannian metric $g_W$ (in the category of laminations) with constant curvature equal to $-1$, which we call the Poincar\'e metric.

Consider the covers $\H^2 \times \partial \H^2$ and $\widetilde W$ corresponding to $\Gamma$ and $f_*\Gamma$, respectively.
Then $f$ lifts to a smooth lamination map $\tilde f$, which is equivariant with respect to the covering actions by conformal lamination automorphisms.
Topologically, $\widetilde W$ is a disk bundle $\zeta$ over $\partial \Gamma$, and $g_W$ gives each disk fiber its Poincar\'e metric.  

\begin{lemma}\label{lem: uniform bi-Lipschitz on leaves}
    The map $\tilde f $ is uniformly bi-Lipschitz on leaves.
    The ideal boundary of $\zeta\inverse (s)$ is canonically identified with $\partial \Gamma$, and is pointed by $s \in \partial \Gamma$.
\end{lemma}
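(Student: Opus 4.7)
The plan is to derive both assertions from compactness of $M_\Gamma$ together with Candel's uniformization (Theorem \ref{thm: candel}), which gives every leaf of $\widetilde W$ a complete metric of constant curvature $-1$ and hence identifies each such leaf with $\H^2$.

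For the bi-Lipschitz bound, I would first choose a finite atlas of lamination charts for $M_\Gamma$ whose images under $f$ are contained in lamination charts of $W$. In such coordinates, the smoothness of the lamination morphism in the sense of Definition \ref{d.autsurlam} asserts that the leafwise derivative of $f$ is continuous in both the leafwise and transverse directions; the same holds for $f^{-1}$ since $f$ is a lamination equivalence. By compactness of $M_\Gamma$, the leafwise differentials of $f$ and $f^{-1}$, compared with the respective Poincaré metrics, are uniformly bounded. This yields a universal leafwise bi-Lipschitz constant $K\ge 1$ for $f$. Since the covering group actions on $\H^2\times\partial\H^2$ and on $\widetilde W$ act by leafwise isometries and $\tilde f$ is equivariant with respect to them, the same $K$ controls $\tilde f$ on every leaf.

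For the second assertion I would then argue as follows. Each fiber $\zeta^{-1}(s)$ is by construction a leaf of $\widetilde W$, hence a simply connected Riemann surface that, by Theorem \ref{thm: candel}, carries a complete conformal metric of curvature $-1$; thus $\zeta^{-1}(s)$ is isometric to $\H^2$, in particular Gromov hyperbolic with a canonical circle at infinity. The leafwise restriction
\[
\tilde f\colon \H^2\times\{s\}\longrightarrow \zeta^{-1}(s)
\]
is $K$-bi-Lipschitz by the first part, hence a quasi-isometry between two copies of $\H^2$, and so extends to a canonical homeomorphism of their Gromov boundaries. Since the ideal boundary of $\H^2\times\{s\}$ is $\partial\H^2=\partial\Gamma$, this provides the asserted identification $\partial\zeta^{-1}(s)\cong \partial\Gamma$. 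The distinguished basepoint is then $s\in\partial\Gamma$, directly read off the second coordinate of the leaf $\H^2\times\{s\}\subset \H^2\times\partial\H^2$. The canonicity amounts to observing that any other equivariant lift of $f$ differs from $\tilde f$ by a deck transformation acting by leafwise isometries on both sides, so the induced boundary identification changes by the corresponding element of $\Gamma$ acting on $\partial\Gamma$; since the stabilizer of the leaf through $s$ fixes $s$, the pointing is unambiguous.

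The only genuine point of care is producing a leafwise bi-Lipschitz constant that does not degenerate as the transverse parameter varies; this is the main (and essentially only) obstacle, and it is resolved by the compactness of $M_\Gamma$ together with continuity of leafwise derivatives of smooth lamination morphisms. Everything else reduces to standard quasi-isometric boundary extension and Candel's uniformization.
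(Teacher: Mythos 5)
Your proof is correct and follows essentially the same approach as the paper: compactness of $M_\Gamma$ together with continuity of the leafwise derivatives of the smooth lamination equivalence (and of Candel's metric) yields the uniform bi-Lipschitz constant, and the bi-Lipschitz leafwise maps then extend to boundary homeomorphisms identifying $\partial\zeta^{-1}(s)$ with $\partial\Gamma$. Your write-up merely spells out the finite-chart argument and the deck-transformation check of canonicity more explicitly than the paper's terse version.
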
 
\begin{proof}
    Since $f$ is smooth, it is bi-Lipschitz on compact subsets of leaves.
    Every leaf of $M_\Gamma$ is dense and the holomorphic structure on the leaves is transversely constant.
    Compactness implies that $f$ is uniformly bi-Lipschitz on every leaf.
    Thus the map $\tilde f$ on leaves extends continuously to a bi-H\"older map between ideal boundaries of leaves, and the ideal boundary of every leaf of $M_\Gamma$ is canonically identified with $\partial \Gamma$. 
\end{proof}

We define a continuous, leaf-preserving flow $\psi= \{\psi_t\}$ on $W$ using the Poincar\'e metric $g_W$ as follows.
The leaf $\zeta\inverse (s)$ is a copy of the hyperbolic plane, foliated by oriented geodesics emanating from $s$ on its ideal boundary.
The leaves of $W$ are similarly foliated by oriented complete $g_W$-geodesics.
We define $\psi_t(z,s)$ as the point in the same fiber obtained by traveling distance $t$ from $z$ in the future along the relevant geodesic line.

\begin{dfn}[Entropy]\label{def:entropy RS}
    The \emph{entropy} $H(W)$ of the marked hyperbolic surface lamination $f: M_\Gamma \to W$ is the topological entropy of the flow $\psi$ defined above.
\end{dfn}

The topological entropy $H(\phi)$ of a continuous flow $\phi$ on a compact metric space $(X,d)$ can be computed as follows:
A $(t,\delta)$-separated set $Q\subset X$ is a finite subset such that for every $x,x'\in Q$ there exists $s\in[0,t]$ such that $d(\phi_s(x),\phi_s(x'))\ge\delta$. Denote by $N(t,\delta)$ the maximal cardinality of a $(t,\delta)$-separated set. The topological entropy is computed by
\begin{equation}\label{eqn: separated sets entropy}
    H(\phi)=\lim_{\delta\to 0^+}\limsup_{t\to\infty}\frac{\log N(t,\delta)}{t}.
\end{equation}
The topological entropy of a flow does not depend on the choice of metric on $X$ inducing the topology (see \cite[\S3]{Manning:entropy} or \cite[\S9]{VO:ergodic}).
\medskip

The following theorem states, in our setting, that the entropy is bounded from below by 1.
Manning used a similar argument to show that the topological entropy of the geodesic flow of a compact Riemannian manifold is bounded below by the \emph{volume entropy}, i.e., the exponential growth rate of metric balls in the universal cover of the manifold \cite[Theorem 1]{Manning:entropy}.

\begin{thm}\label{t.Tholozan}
For every marked hyperbolic surface lamination $f:M_\Gamma\to W$ we have $H(W)\ge1$.
\end{thm}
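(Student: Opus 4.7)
The plan is to adapt Manning's classical lower bound $h_{\mathrm{top}}(\mathrm{geod.\ flow})\ge h_{\mathrm{vol}}$ on a closed negatively curved manifold to the lamination setting. The underlying geometric input is the classical fact that in $\H^2$, horocycles centered at an ideal point $s$ are expanded by the factor $e^t$ under the unit speed flow along geodesics emanating from $s$.

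First, I equip $W$ with Candel's leafwise hyperbolic metric $g_W$ (Theorem \ref{thm: candel}) and fix any metric $d_W$ on $W$ inducing its topology; by \eqref{eqn: separated sets entropy} the value of $H(W)$ is independent of the choice of $d_W$. I then pick a leaf $L\subset W$ isometric to $\H^2$: such a leaf exists because only countably many leaves of $M_\G$ are annular (one for each conjugacy class of primitive hyperbolic elements), while the remaining uncountable family of leaves are isometric to $\H^2$, and these two classes are preserved by the lamination equivalence $f$. Under the canonical identification of the ideal boundary of $L$ with $\deG$, it is pointed by the label $s$ of the leaf, and $\psi\vert_L$ is the unit speed flow along the oriented geodesics emanating from $s$.

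Given $\delta>0$ small and $T>0$ large, I pick a horocycle arc $\alpha\subset L$ at $s$ of fixed length $c>0$ and place $N=\lfloor c e^T/\delta\rfloor$ points $y_1,\dots,y_N$ on $\alpha$ at consecutive arc-length spacing $\delta e^{-T}$. By the geometric fact above, at time $T$ the images $\psi_T(y_i)$ lie on the horocycle $\psi_T(\alpha)$ with consecutive arc-length spacing exactly $\delta$, hence at leafwise hyperbolic distance at least $\delta/2$ once $\delta$ is smaller than a universal threshold. Since $L$ is simply connected, the $y_i$ produce $N$ distinct $\psi$-orbits in $W$. A standard compactness argument (building $d_W$ locally from a finite lamination atlas as the maximum of leafwise $g_W$-distance and transverse distance) yields a uniform constant $C>0$ with $d_W(x,y)\ge C^{-1}d_L(x,y)$ for $x,y$ in a common leaf at leafwise distance at most $1$. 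Hence $\{y_i\}$ is a $(T,\delta/(2C))$-separated set of size $N$ for $d_W$, so
\[
\limsup_{T\to\infty}\frac{\log N(T,\delta/(2C))}{T}\ge 1
\]
for every small $\delta>0$. Letting $\delta\to 0^+$ in \eqref{eqn: separated sets entropy} gives $H(W)\ge 1$.

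The main obstacle I anticipate is making the bi-Lipschitz comparison between $d_W$ and the leafwise hyperbolic distance uniform in $T$, so that the separation constant does not degenerate as we count more and more points. Everything else reduces to the single explicit horocycle-expansion computation in $\H^2$, together with the choice of a simply connected leaf, on which the arc $\alpha$ projects injectively and the points $y_i$ automatically give distinct orbits.
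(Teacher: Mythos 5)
Your argument is the same Manning-type entropy bound the paper uses --- both rest on the fact that the flow $\psi$ expands the horocyclic direction by $e^t$, both compare $d_W$ to the leafwise hyperbolic metric via compactness, and both count separated sets. The packaging differs: you flow a one-dimensional horocycle arc forward and count equispaced points, whereas the paper flows a two-dimensional box forward, takes a maximal $\delta$-separated subset inside the stretched box (with an area estimate forcing the cardinality), and flows that set back. The paper's choice side-steps the issue your write-up runs into: a maximal separated set is pairwise $\delta$-separated by construction, so verifying that its pullback is $(t,\delta)$-separated only requires a single lifting step via the injectivity radius of $W$.

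The gap in your proposal is at the sentence ``Hence $\{y_i\}$ is a $(T,\delta/(2C))$-separated set of size $N$ for $d_W$.'' You established that \emph{consecutive} images $\psi_T(y_i),\psi_T(y_{i+1})$ are at leafwise distance at least $\delta/2$, and your comparison $d_W\ge C^{-1}d_L$ (correctly restricted to $d_L\le 1$ --- on a dense leaf a large leafwise distance carries no information about $d_W$) yields $d_W\ge \delta/(2C)$ for those pairs. But for $|i-j|\ge 2$ the leafwise distance between $\psi_T(y_i)$ and $\psi_T(y_j)$ exceeds $1$, so at time $T$ the comparison does not apply, and $d_W(\psi_T(y_i),\psi_T(y_j))$ could a priori be tiny; separation of consecutive pairs in $d_W$ does not propagate to all pairs as it would along a line in a normed space. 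The fix is to invoke the comparison at an intermediate time: take the arc-length $c\le 1$; if $|i-j|\le e^T$, at $s^*=T-\log|i-j|\in[0,T]$ the arc-length gap between $\psi_{s^*}(y_i)$ and $\psi_{s^*}(y_j)$ is exactly $\delta$, so the leafwise distance lies in $[\delta/2,1]$; if $|i-j|>e^T$, at $s^*=0$ the arc-length gap lies in $(\delta, c]\subset(\delta,1]$, and again the leafwise distance is at most $1$. In either case $d_W\ge\delta/(2C)$ at time $s^*$, and the $(T,\delta/(2C))$-separation follows. The ``main obstacle'' you flag --- the bi-Lipschitz comparison --- is the same compactness fact the paper uses (there phrased as ``$d(p,q)<2\delta$ implies $d_L(p,q)<\delta'$'') and your chart/Lebesgue-number sketch is the standard way to prove it.
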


\begin{proof}
The hyperbolic metric on $\H^2/\Gamma$ induces a natural  \emph{Sasaki Riemannian metric} $h_\Gamma$ on $M_\Gamma \cong T^1\H^2/\Gamma$. The Sasaki metric has the property that the restriction to leaves of the unstable foliation is a Riemannian metric $g_\Gamma$ in the category of laminations and the tangent projection $M_\Gamma \to \H^2/\Gamma$ is a locally isometric covering on each leaf. In other words, $g_\Gamma$ is the laminated Poincar\'e metric on $M_\Gamma$.

We can compute entropy with respect to any distance on $W$ that induces its topology, e.g., for the distance $d$ associated with $f_* h_\Gamma$.
Denote by $B(z,r)$ the $d$-ball of radius $r$ about $z\in W$.

By Lemma \ref{lem: uniform bi-Lipschitz on leaves}, the restriction $d_L$ of $d$ to a leaf $L$ of $W$ is (uniformly) $C$-bi-Lipschitz equivalent to the Poincar\'e metric $g_W$ on $L$.
Denote by $B_L(q,r)$ be the $d_L$-disk of radius $r$ around $q \in L$.
Note that $d_L$ is a hyperbolic metric and has a naturally associated notion of area coming from $f_*h_\Gamma$ (equivalently $f_*g_\Gamma$).

Let $\epsilon>0$ be given, and find a positive $\delta$ much smaller than the injectivity radius of $(W,d)$ such that 
\[H(W) \ge \limsup_{t\to \infty} \frac{\log |N(t,\delta)|}{t}-\epsilon \]
holds.

Now we work in the cover $\widetilde W$ corresponding to $f_* \Gamma$ with metric $\tilde d$ and flow $\tilde \psi$.
Choose $L\subset \widetilde W$, a leaf with distinguished point $\xi$ on the ideal boundary, and assume that $L$ maps injectively into $W$ under the covering projection.
We identify $L$ with the upper half plane $\H^2$ using $g_W$ and place $\xi$ at infinity.
In these coordinates, we have $\tilde \psi_t(x+iy) = x+ie^{-t}y \in L$.

Consider a box $\tau = \{x +i y : x \in [0,1] \text{ and } y \in [1,2]\}\subset L$ so that 
\[\tilde \psi_t(\tau) = \{x+iy: x\in [0,1] \text{ and } y \in [e^{-t}, 2e^{-t}]\}.\]
A computation shows that the $g_W$-area  of $\tilde \psi_t(\tau)$ is $e^t/2$, so the  $d_L$-area (coming from $f_*h_\Gamma$ or equivalently $f_*g_\Gamma$) satisfies
\[\frac{e^t}{2C^2}\le \Area(\tilde \psi_t(\tau)) \le \frac{C^2e^t}2.\]

Let $Q_t'\subset \tilde \psi_t(\tau)$ be a maximal set of points such that $\tilde d(q_i, q_j) \ge \delta$ for all $q_i\not = q_j \in Q_t'$.  
Then 
\[ \bigcup_{q_i \in Q_t'}B(q_i,\delta) \subset \tilde \psi_t(\tau) \subset \bigcup_{q_i \in Q_t'}B(q_i,2\delta). \]

The inclusion of $L$ into $\widetilde W$ is $1$-Lipschitz, so we have
\[B(q,r) \cap L \supset B_L(q,r)\]
for all $q \in L$.
By continuity of $g_W$ and compactness of $W$, there is a $\delta'\ge 2\delta$ such that $d(p,q)<2\delta$ implies that $d_L(p,q)<\delta'$ for all $p,q\in L$.
In particular,
we have 
\[B(q,2\delta)\cap L \subset B_L(q,\delta')\]
for all $q \in L$, and hence
\[\tilde \psi_t(\tau) \subset \bigcup _{q_i\in Q_t'}B_L(q_i,\delta').\]

Thus 
\[\sum \Area(B_L(q_i,\delta')) \ge \Area \tilde \psi_t (\tau).\]
Since the area of a hyperbolic disk (in the $d_L$ metric) of radius $\delta'$ is $\pi\sinh(\delta')$, 
we obtain 
\begin{equation}\label{eqn: lower bound Q_t'}
    |Q_t'| \ge \frac{e^t}{2C^2\pi\sinh(\delta')}.
\end{equation}

Let $Q_t$ be the projection of $\psi_{-t}(Q_t')$ to $W$, and note that $|Q_t'| = |Q_t|$.
We claim that $Q_t$ is $(t,\delta)$ separated for $\psi$.
Indeed, suppose points $q_i$ and $q_j \in Q_t$ satisfy $d(\psi_s(q_i), \psi_s(q_j))<\delta$ for all $s\in [0,t]$.
Since $\delta$ is smaller than the injectivity radius of $W$, the corresponding paths $s\in [0,t]\mapsto \tilde \psi_s(\tilde q_i)$ and $s\in [0,t]\mapsto \tilde \psi_s (\tilde q_j)$ stay $\delta$ close in $L$.
Since pairs of points in $Q_t'$ are $\delta$-separated and $\tilde \psi_t (\tilde q_i), \tilde \psi_t (\tilde q_j) \in Q_t'$, it follows then that $q_i = q_j$, which proves the claim.

Equation \eqref{eqn: lower bound Q_t'} then gives a bound $N(\delta, t) \ge |Q_t| \ge e^t/2C^2\pi\sinh(\delta')$.
In conclusion, we have
\[
H(W)=H(\psi)\ge\limsup_{t\to\infty}{\frac{\log|Q_t|}{t}}\ge 1-\ep.
\]
As $\ep$ is arbitrary, the theorem follows.
\end{proof}

\subsection{Length spectrum and orbital growth rate}
Every marked hyperbolic surface lamination $f:M_\Gamma\to W$ has an associated length spectrum. Denote by $[\Gamma]$ the set of conjugacy classes of elements in $\Gamma$.

\begin{dfn}[Length spectrum]\label{d.reallength}
The {\em length spectrum} of $f:M_\Gamma\to W$ is the function $\ell_W:[\Gamma]-\{1\}\to(0,\infty)$ that associates to an element $[\gamma]\in[\Gamma]-\{1\}$ the length of the closed geodesic of the leaf $f([\mb{H}^2\times\{\gamma^+\}/\langle\gamma\rangle])$ where $\gamma^+\in\partial\mb{H}^2$ is the {\em attracting} fixed point of $\gamma$.
\end{dfn}

By definition, the periods of the closed orbits of $\psi$ correspond exactly to the (marked) length spectrum.
Note that $\ell_W(\gamma)$ can be different from $\ell_W(\gamma^{-1})$, and that $\ell_W(\gamma)$ can be read as the logarithm of the ratio of the eigenvalues of any matrix representing the projective action of $\gamma$ on the leaf of $W$ corresponding to $\bH^2 \times \{\gamma^+\}$.

We will deduce the following theorem from Theorem \ref{t.Tholozan} by observing that the topological entropy $H(W)$ coincides with the exponential growth rate of closed orbits for the flow $\psi$ defined in the previous subsection.

\begin{thm}\label{thm: orbit growth rate}
Let $f:M_\Gamma\to W$ be a marked hyperbolic surface lamination. Then
\[
{h}(W):=\limsup_{R\to\infty}{\frac{\log \left|\{[\gamma]\in [\Gamma]\left|\,\ell_W(\gamma)\le R\right.\}\right|}{R}}\ge 1.
\]
\end{thm}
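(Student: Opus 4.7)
The plan is to deduce the bound on $h(W)$ from the entropy estimate of Theorem \ref{t.Tholozan} by identifying $h(W)$ with the exponential growth rate of closed orbits of the flow $\psi$, and then showing that this growth rate is at least the topological entropy $H(W)$. The starting observation is that closed orbits of $\psi$ correspond bijectively to conjugacy classes $[\gamma]\in[\Gamma]-\{1\}$: the orbit sitting inside the leaf $f([\H^2\times\{\gamma^+\}/\langle\gamma\rangle])$ traces the core geodesic of that hyperbolic annulus, and its period equals $\ell_W(\gamma)$ by Definition \ref{d.reallength}. Hence
\[
h(W)=\limsup_{R\to\infty}\frac{\log\#\{\text{closed }\psi\text{-orbits of period }\leq R\}}{R}.
\]

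The main work is to prove the Bowen-type inequality $h(W)\geq H(W)$, which together with Theorem \ref{t.Tholozan} yields $h(W)\geq 1$. I would establish this via thermodynamic formalism, using the following structural features of $\psi$. The flow $\psi$ is leafwise the hyperbolic geodesic flow for the Poincaré metric $g_W$; via the uniformly bi-Lipschitz equivalence of Lemma \ref{lem: uniform bi-Lipschitz on leaves} lifted to $\widetilde W$, the leafwise dynamics is uniformly hyperbolic, with the weakly unstable direction corresponding to the leaves of $W$ themselves and the strongly stable direction being the leafwise geodesic foliation orthogonal to orbits. The transverse direction is flow-invariant and neutral. This makes $\psi$ behave like a partially hyperbolic flow whose center--unstable decomposition fits perfectly with the laminated structure.

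With these features in place, the classical strategy of Bowen can be imported: produce a $\psi$-invariant measure $\mu$ of maximal entropy (equilibrium state for the zero potential), verify a leafwise specification or Anosov closing property using the hyperbolicity of the leafwise geodesic flow, and conclude that closed orbits equidistribute toward $\mu$ at exponential rate $H(W)$. This is exactly the content deferred to Appendix \ref{sec:appendix thermo}; my proposal would consist of invoking the appendix's conclusion that the exponential growth rate of closed orbits of $\psi$ bounds the topological entropy from below.

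The main obstacle is precisely the passage from the geometric lower bound on entropy (Theorem \ref{t.Tholozan}) to a lower bound on the number of closed orbits: on a general compact metric space, topological entropy is not controlled by periodic orbit counts, so one genuinely needs to exploit the leafwise hyperbolic structure of $\psi$ rather than just the separated-set estimate used in the proof of Theorem \ref{t.Tholozan}. Once this Bowen-type inequality is in hand, the theorem follows immediately by combining it with $H(W)\geq 1$.
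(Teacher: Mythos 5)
Your plan correctly identifies the target: show that the orbital growth rate $h(W)$ dominates the topological entropy $H(W)$, and combine this with Theorem \ref{t.Tholozan}. You also correctly note that the separated-set argument used in that theorem gives no information about closed orbits, so additional hyperbolic structure must enter. But the proposal as written has two real problems.

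First, the dynamical picture is wrong. You describe $\psi$ as a partially hyperbolic flow whose strong stable direction is the leafwise direction orthogonal to orbits and whose transverse direction is ``flow-invariant and neutral.'' In fact, under the isomorphism $M_\Gamma \cong T^1\H^2/\Gamma$, the leaves of $M_\Gamma$ are the \emph{weakly unstable} leaves of the geodesic flow $\phi$; the orbit-orthogonal direction inside a leaf (the horocyclic foliation centered at the distinguished ideal point) is the \emph{strong unstable} direction, and the transverse-to-leaf direction is the \emph{strong stable} direction, which is contracting, not neutral. Since $\psi$ is topologically conjugate to a time-reparameterization $\phi^r$ of $\phi$, it is conjugate to a reparameterized Anosov flow, not a partially hyperbolic flow with a nontrivial center. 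Getting this right matters, because the strategy you would need (specification / Anosov closing along a conjectured ``center-unstable'' lamination) is aimed at the wrong structure.

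Second, and more seriously, the proposal does not engage with the real obstruction. The Candel metric $g_W$, and hence the flow $\psi$, is smooth only along leaves and merely \emph{continuous} transversely; equivalently, the potential $r$ in the reparameterization $\phi = \psi_\beta$ is only continuous, not H\"older. Markov codings, specification, and the Bowen--Pollicott equality of topological entropy and orbital growth rate for reparameterized Anosov flows all require H\"older regularity of the roof function, so they cannot be invoked directly for $\psi$. You acknowledge that you would ``invoke the appendix's conclusion,'' which means the proof is not actually being supplied. The paper resolves this by a different route: after conjugating $\psi$ to $\phi^r$ with $r$ continuous (Lemmas \ref{lem: orbit equivalence} and \ref{lem: reparam conj}), one approximates $r$ uniformly by H\"older potentials $r_n$, uses the classical equality $h(\phi^{r_n}) = H(\phi^{r_n})$ for H\"older $r_n$, and then transfers the conclusion to $r$ by proving continuity of the topological entropy in the potential (via the variational principle and Abramov's formula, Proposition \ref{prop: entropy continuous}) and continuity of the orbital growth rate in the potential (by a direct comparison of periodic orbit periods, which are integrals of $r_n$ along closed $\phi$-orbits). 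This limiting argument is what lets one handle a flow that is merely continuous, and it is exactly what is missing from your proposal.
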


The proof of the theorem is somewhat technical, borrowing some tools from thermodynamical formalism.  It is carried out in Appendix \ref{sec:appendix thermo}.

\medskip
Here is an outline of the argument:
The first step is to observe that $\psi$ is conjugate to a reparameterization of the geodesic flow $\phi$ on $M_\Gamma \cong T^1\H^2/\Gamma$ (Lemmas \ref{lem: orbit equivalence} and \ref{lem: reparam conj}).

Classical results of Bowen and Pollicott imply that the orbital growth rate $h$ and the topological entropy $H$ coincide when $\psi$ is a H\"older reparameterization of $\phi$.
We show that $h$ and $H$ are continuous in the reparameterization potential (\S\ref{subsec: entropy of reparam}) and conclude the proof by density of H\"older reparameterizations in continuous reparameterizations.

\section{Teichmüller spaces of Riemann surface laminations}

In this section we discuss Sullivan's construction \cite{Sullivan} of the Teichm\"uller space $\T(M_\Gamma)$ of the Riemann surface lamination $M_\Gamma$.
We review some features and tools from classical Teichm\"uller theory, and spend some time discussing (smooth) quasi-conformal maps, Beltrami differentials, and the Measurable Riemann Mapping Theorem \ref{thm:mrm}.
We conclude by explaining how a foliated analog of Bers' embedding (due to Sullivan) gives $\T(M_\Gamma)$ a natural complex structure that makes it biholomorphic to a bounded domain in a complex Banach space.

\subsection{Teichm\"uller equivalence}
We fix a closed hyperbolic surface $\Sigma :=\H^2/\Gamma$ and consider the associated Riemann surface lamination $M_\Gamma$ (Definition \ref{d.MGamma}). 

Recall from Definition \ref{def: marking} that a marking $f: M_\Gamma \to W$ of a Riemann surface lamination $W$ is a leafwise orientation preserving smooth lamination equivalence.

\begin{dfn}[Teichmüller equivalence relation]
We say that two marked Riemann surface laminations $f:M_\Gamma\to W$ and $f':M_\Gamma\to W'$ are {\em Teichmüller equivalent} if there exists an isomorphism of Riemann surface laminations $\phi:W\to W'$ such that $\phi \circ f$ is {\em leafwise homotopic} to $f'$. This means that there is a continuous map $H:M_\Gamma\times[0,1]\to W'$ such that:
\begin{itemize}
    \item{$H(\cdot,0)= \phi\circ f $ and $H(\cdot,1)=f'$.}
    \item{For every $t\in[0,1]$, the map $H(\cdot,t):M_\Gamma\to W'$ is a morphism of surface laminations.}
\end{itemize}

Denote by $\T(M_\Gamma)$ the  Teichm\"uller set of equivalence classes of marked Riemann surface laminations $f:M_\Gamma\to W$.
\end{dfn}

The \emph{classical} Teichm\"uller space $\T(\Sigma)$ consists of equivalence classes of \emph{marked Riemann surfaces}, i.e., orientation preserving diffeomorphisms $f: \Sigma \to \Sigma'$ where $\Sigma'$ is a Riemann surface.\footnote{Since $\Sigma$ is closed, any orientation preserving diffeomorphism is quasi-conformal.}  Two diffeomorphisms $f_i : \Sigma \to \Sigma_i$, $i =1,2$ are \emph{Teichm\"uller equivalent} if there is a biholomorphism $\phi: \Sigma_1 \to \Sigma_2$ such that $\phi\circ f_1$ is homotopic to $f_2$.

\begin{prop}\label{prop: classical includes}
There is an inclusion 
$\T(\Sigma)\to \T(M_\Gamma).$
\end{prop}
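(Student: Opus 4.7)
The plan is to send $[f: \Sigma \to \Sigma']$ to the class of an induced marking $F: M_\Gamma \to M_{\Gamma'}$, where $\Sigma' = \H^2/\Gamma'$. To construct $F$, lift $f$ to a $\Gamma$-equivariant diffeomorphism $\tilde f: \H^2 \to \H^2$ conjugating $\Gamma$ to $\Gamma'$. Since $\Sigma$ is compact, $\tilde f$ is a quasi-isometry and extends to a $\Gamma$-equivariant boundary homeomorphism $\partial \tilde f: \partial \H^2 \to \partial \H^2$. The product map $(z,t) \mapsto (\tilde f(z), \partial \tilde f(t))$ is $\Gamma$-equivariant on $\H^2 \times \partial \H^2$ and descends to a lamination map $F: M_\Gamma \to M_{\Gamma'}$ that is leafwise smooth; continuity of leafwise derivatives in the transverse parameter is automatic because the leafwise component does not depend on $t$. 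The same is true of the inverse, so $F$ is a smooth lamination isomorphism, i.e., a marking.

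Well-definedness follows from lifting biholomorphisms. If $\phi: \Sigma_1 \to \Sigma_2$ is a biholomorphism with $\phi\circ f_1 \simeq f_2$, its lift $\tilde\phi$ is a M\"obius transformation conjugating $\Gamma_1$ to $\Gamma_2$, and $(z, t) \mapsto (\tilde\phi(z), \partial \tilde\phi(t))$ descends to a holomorphic lamination isomorphism $\Phi: M_{\Gamma_1} \to M_{\Gamma_2}$. Combining an equivariant homotopy $\tilde\phi\circ\tilde f_1 \simeq \tilde f_2$ on $\H^2$ with the identity $\partial\tilde\phi\circ\partial\tilde f_1 = \partial \tilde f_2$ (equivariantly homotopic lifts of maps between closed hyperbolic surfaces are at bounded distance, hence share boundary extensions) assembles into a leafwise homotopy $\Phi\circ F_1 \simeq F_2$.

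The main step is injectivity, and this is where I expect the subtleties to concentrate. Suppose $F_1, F_2$ are Teichm\"uller equivalent via a holomorphic lamination isomorphism $\Phi: M_{\Gamma_1} \to M_{\Gamma_2}$. Lifting to $\H^2 \times \partial \H^2$ and using the product form of lamination charts, one can write $\tilde \Phi(z, t) = (\psi_t(z), \tau(t))$ with each $\psi_t$ a biholomorphism of $\H^2$, hence a M\"obius transformation varying continuously in $t$. A leafwise homotopy $\Phi\circ F_1 \simeq F_2$ gives, after identifying transverse coordinates, a quasi-isometric homotopy $\psi_t \circ \tilde f_1 \simeq \tilde f_2$ on each leaf; passing to boundary extensions yields $\partial \psi_t \circ \partial \tilde f_1 = \partial \tilde f_2$. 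Since $\partial \tilde f_1$ and $\partial \tilde f_2$ are independent of $t$, so is $\partial \psi_t$, and therefore so is the M\"obius transformation itself, call it $\psi$. Equivariance then forces $\psi \Gamma_1 \psi^{-1} = \Gamma_2$, so $\psi$ descends to a biholomorphism $\phi: \Sigma_1 \to \Sigma_2$; the equivariant homotopy $\psi \circ \tilde f_1 \simeq \tilde f_2$ descends to $\phi \circ f_1 \simeq f_2$ on $\Sigma$.

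The hardest part of the argument is the last paragraph: one must verify that the smooth lamination structure genuinely constrains the leafwise family $\psi_t$ to be a single M\"obius transformation. The key technical input is the rigidity of boundary extensions of quasi-isometries of $\H^2$ — equivariantly homotopic lifts induce the same boundary map. This is used twice, first to identify the transverse components of $\Phi \circ F_1$ and $F_2$, and then to eliminate any $t$-dependence of $\psi_t$, thereby reducing the lamination statement to a classical one.
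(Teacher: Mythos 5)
Your construction and well-definedness argument follow the paper's route exactly: lift the marking, take the boundary extension, descend the product map, and use a lift of the biholomorphism $\phi$ together with a straight-line leafwise homotopy to check the Teichm\"uller equivalence is respected. The paper simply declares the homotopy verification ``an exercise in covering space theory,'' so here you are more explicit but not different.

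Where you genuinely diverge is the injectivity argument, which the paper does not prove. The paper's proof ends once well-definedness is established; injectivity is only recoverable much later, implicitly, from Sullivan's restriction map $\T(M_\Gamma)\to\T(L)$ to a dense leaf (mentioned at the end of \S3), since the composite $\T(\Sigma)\to\T(M_\Gamma)\to\T(L)$ is the classical embedding of $\T(\Sigma)$ into universal Teichm\"uller space. Your argument is direct: lift $\Phi$ to $\tilde\Phi(z,t)=(\psi_t(z),\tau(t))$, observe that each $\psi_t$ is M\"obius, use the homotopy to pin down the boundary maps, and deduce that $\psi_t$ is constant in $t$, descending to a biholomorphism $\Sigma_1\to\Sigma_2$. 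This is a worthwhile addition and in spirit it is sound, with one imprecision. For the transverse identification $\tau\circ\partial\tilde f_1=\partial\tilde f_2$ you invoke ``rigidity of boundary extensions of quasi-isometries of $\H^2$,'' but the transverse coordinate lives on $\partial\H^2$, not $\H^2$, so that rigidity does not apply verbatim. What is actually needed is the following: the lifted homotopy produces a continuous family $\sigma_s:\partial\H^2\to\partial\H^2$ of $(\Gamma,\Gamma_2)$-equivariant maps with $\sigma_0=\tau\circ\partial\tilde f_1$ and $\sigma_1=\partial\tilde f_2$; since equivariance forces $\sigma_s(\gamma^\pm)=\alpha(\gamma)^\pm$ at the dense set of hyperbolic fixed points and each $\sigma_s$ is continuous, the family is constant. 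Your second use of the ``rigidity'' --- leafwise, to get $\partial\psi_t\circ\partial\tilde f_1=\partial\tilde f_2$ from the bounded leafwise homotopy --- is the genuine quasi-isometry fact and is fine. Worth tightening the first instance if you write this up.
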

\begin{proof}
Let $f: \Sigma \to \Sigma'$ be a marked Riemann surface.
Then $\Sigma' \cong \H^2/\Gamma'$ for a discrete group $\Gamma' = f_*\Gamma \le \PSL(2,\R)$.
Choose a lift 
\[\widetilde f : \H^2 \to \H^2.\]
Then $\widetilde f$ is smooth and bi-Lipschitz, hence extends to a $(\Gamma, \Gamma')$-equivariant homeomorphism 
\[\partial \widetilde f: \partial \H^2 \to \partial \H^2.\]
The map 
\[(\widetilde f, \partial \widetilde f):\H^2 \times \partial \H^2 \to \H^2 \times \partial \H^2\]
is a $(\Gamma, \Gamma')$-equivariant equivalence of smooth surface laminations.
Since $\Gamma$ and $\Gamma'$ act by Riemann surface automorphisms on $\H^2 \times \partial \H^2$, we obtain a smooth lamination equivalence
\[F: M_\Gamma \to M_{\Gamma'} \]
on the orbit spaces.

To see that this assignment respects the Teichm\"uller equivalence relations, suppose that $\phi: \Sigma_1 \to \Sigma_2$ is a bi-holomorphism and $f_1: \Sigma \to \Sigma_1$ and $f_2 : \Sigma \to \Sigma_2$ are markings such that $\phi\circ f_1 \sim f_2$. 
As in the first paragraph, we obtain a corresponding maps 
\[F_i: M_\Gamma \to M_{\Gamma_i},\]
where $\Sigma_i = \H^2/\Gamma_i$ for $i = 1$, $2$.
By a similar argument, we obtain a (leafwise conformal) Riemann surface lamination equivalence 
\[\Phi: M_{\Gamma'} \to M_{\Gamma'},\]
and our claim is that $\Phi \circ F_1$ and $F_2$ are homotopic as lamination maps.
The proof is an exercise in covering space theory and is left to the reader.
\end{proof}

The symmetry group of $M_\Gamma$ as a smooth surface lamination defines an action on the Teichm\"uller set.

\begin{dfn}[Mapping class group of $M_\Gamma$]
We define $\Mod(M_\Gamma)$ as the group of leaf-preserving homotopy classes of orientation preserving smooth lamination equivalences $M_\Gamma \to M_\Gamma$.
\end{dfn}

The group $\Mod(M_\Gamma)$ acts on $\T(M_\Gamma)$ as follows. If $[\phi]\in{\rm Mod}(M_\Gamma)$ and $[f:M_\Gamma\to W]\in\T(M_\Gamma)$ then we set
\[
[\phi]\cdot[f:M_\Gamma\to W]:=[f\circ\phi^{-1}:M_\Gamma\to W].
\]
It is a routine check to see that the action is well-defined.

Similarly, the mapping class group $\Mod(\Sigma)$ can be defined as the group of homotopy classes of orientation preserving diffeomorphisms $\Sigma \to \Sigma$ \cite{FM:primer}.
\begin{lem}\label{lem: classical Mod includes}
There is a canonical injective homomorphism ${\rm Mod}(\Sigma)\to{\rm Mod}(M_\Gamma)$.    
\end{lem}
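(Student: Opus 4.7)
My plan is to reuse the construction of Proposition~\ref{prop: classical includes} applied to self-diffeomorphisms. Given an orientation preserving diffeomorphism $\phi:\Sigma\to\Sigma$, I pick a lift $\widetilde\phi:\H^2\to\H^2$; it is equivariant with respect to some automorphism $\phi_*\in\Aut(\Gamma)$, bi-Lipschitz, and so extends continuously to an equivariant boundary homeomorphism $\partial\widetilde\phi:\partial\H^2\to\partial\H^2$. The pair $(\widetilde\phi,\partial\widetilde\phi)$ descends to a smooth lamination equivalence $F_\phi:M_\Gamma\to M_\Gamma$, and a different choice of lift changes $F_\phi$ only by post-composition with a deck translation, which is leaf-preservingly homotopic to the identity.

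To verify that $[\phi]\mapsto[F_\phi]$ is well-defined on $\Mod(\Sigma)$, I take a smooth isotopy $\phi_s$ between two representatives and lift it to a continuous family $\widetilde\phi_s$ with associated equivariance automorphisms $\psi_s\in\Aut(\Gamma)$. Since $\psi_s(\gamma)=\widetilde\phi_s\circ\gamma\circ\widetilde\phi_s^{-1}$ varies continuously in $\PSL(2,\R)$ and $\Gamma$ is discrete there, $\psi_s$ is constant in $s$; the boundary extensions $\partial\widetilde\phi_s$ then vary continuously as well, so the family $F_{\phi_s}$ supplies the required leaf-preserving homotopy. Multiplicativity, $F_{\phi\circ\phi'}\simeq F_\phi\circ F_{\phi'}$ up to leaf-preserving homotopy, follows by choosing compatible lifts of $\phi$ and $\phi'$.

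The crux is injectivity. Suppose $F_\phi$ is leaf-preservingly homotopic to the identity through $H_s$. I lift $H_s$ to a continuous family $\widetilde H_s$ of smooth lamination equivalences of $\H^2\times\partial\H^2$, each with some equivariance automorphism $\psi_s\in\Aut(\Gamma)$; the same discreteness argument forces $\psi_s$ to be constant, so $\phi_*=\psi_0=\psi_1$. Because $\widetilde H_1$ is a lift of the identity of $M_\Gamma$, it acts by a single deck transformation $\eta\in\Gamma$, whose equivariance automorphism is $\mathrm{Inn}(\eta)$. Thus $\phi_*$ is inner, and the Dehn--Nielsen--Baer theorem for closed orientable surfaces of genus at least two concludes that $[\phi]=1$ in $\Mod(\Sigma)$.

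The main obstacle I foresee is the rigidity used in the discreteness argument: one has to check carefully that a lift of a leaf-preserving, and a priori only continuous in $s$, homotopy of $M_\Gamma$ gives rise to a genuinely continuous path $s\mapsto\psi_s(\gamma)$ in $\PSL(2,\R)$, so that discreteness of $\Gamma$ can be invoked to force $\psi_s$ to be independent of $s$. Once this is in place, the rest of the argument is bookkeeping together with a standard appeal to Dehn--Nielsen--Baer.
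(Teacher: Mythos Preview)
Your construction and well-definedness arguments follow exactly the paper's approach: lift, extend to the boundary, descend, and argue that homotopic diffeomorphisms give leafwise homotopic lamination maps. The paper's own proof in fact stops there --- it does not spell out injectivity at all, simply labeling the whole thing ``an exercise in covering space theory.'' So your proposal is more complete than the paper's, not less.

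Your injectivity argument via Dehn--Nielsen--Baer is correct, and the ``main obstacle'' you flag is routine. Since $\H^2\times\partial\H^2\to M_\Gamma$ is a genuine covering (the $\Gamma$-action is free and properly discontinuous), the homotopy lifting property produces a continuous family $\widetilde H_s$ once $\widetilde H_0$ is fixed. For any basepoint $p$ and any $\gamma\in\Gamma$, both $s\mapsto\widetilde H_s(p)$ and $s\mapsto\widetilde H_s(\gamma\cdot p)=\psi_s(\gamma)\cdot\widetilde H_s(p)$ are continuous; proper discontinuity then forces $\psi_s(\gamma)$ to be locally constant in $s$, hence constant on $[0,1]$. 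From there your conclusion $\phi_*=\psi_0=\psi_1=\mathrm{Inn}(\eta)$ and the appeal to Dehn--Nielsen--Baer are sound.
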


\begin{proof}
The proof is essentially contained in the proof of Proposition \ref{prop: classical includes}.
Briefly, suppose $\phi: \Sigma \to \Sigma$ is an orientation preserving diffeomorphism.
Choose a lift $\phi: \H^2 \to \H^2$ with boundary extension $\partial \phi$.
Then $\widetilde \phi \times \partial \widetilde \phi$ is $\Gamma$-equivariant and so descends to a smooth lamination equivalence 
\[\Phi: M_\Gamma \to M_\Gamma,\]
as before.
That homotopic maps $\phi_1$ and $\phi_2$ give rise to leafwise homotopic lamination mappings $\Phi_1$ and $\Phi_2$ is an exercise in covering space theory.
\end{proof}

\subsection{Smooth quasi-conformal maps and Teichm\"uller distance}

For an orientation preserving smooth diffeomorphism $f : U \to V $ between domains $U ,V \subset \CP$, the \emph{complex dilatation} $\mu_f$ at a point $z\in U$ is 
\begin{equation}\label{eqn: complex dilitation}
    \mu_f(z) = \frac{\partial_{\overline z}f(z)}{\partial_z f(z)},
\end{equation}
or more compactly
\[\mu_f = \frac{f_{\overline z}}{f_z}.\]

The \emph{conformal distortion} of $f: U \to V$ at $z \in U$  is defined as
\[K_f(z) = \frac{1+|\mu_f(z)|}{1-|\mu_f(z)|}.\]
This is the ratio between the sizes of the major and minor axes of the ellipse $f_* C \subset T_{f(z)}V$, where $C$ is a circle in $T_zU$.
This quantity is invariant under pre- and post-composition by conformal mappings and satisfies $1\le K_f(z) <\infty$. 

An orientation preserving smooth homeomorphism $\phi: X \to Y$ between Riemann surfaces is {\em $K$-quasi-conformal} if 
\[K_\phi  =\sup_{z \in \Sigma} K_\phi(z) \le K.\]
Similarly, an orientation preserving smooth lamination equivalence 
$\phi: W \to W'$ 
of Riemann surface laminations is $K$-quasi-conformal if is leafwise $K$-quasi-conformal.
Denote by $K_\phi$ the supremum of quasi-conformal distortion of $\phi$ over the leaves of $W$.  

\begin{dfn}[Teichmüller distance]\label{d.Teichdis}
The {\em Teichmüller distance} $d_\T(\cdot,\cdot)$ on $\T(M_\Gamma)$ is defined to be
\[
d_\T\left([f:M_\Gamma\to W],[f':M_\Gamma\to W']\right):=\frac{1}{2}\inf\{\log(K_\phi)\left|\,\phi:W\to W', \phi\circ  f\sim f'\right.\}
\]
where the infimum is taken over all leafwise orientation preserving smooth lamination equivalences $\phi:W\to W'$ such that $\phi \circ  f$ is leafwise homotopic to $f'$.
\end{dfn}
That $d_\T$ is a distance is immediate from basic properties of the conformal distortion, namely, that $K_{\phi} \equiv 1$ if and only if  $\phi$ is conformal, $K_{f\inverse}(f(z)) = K_f(z)$, and $K_{f\circ g}(z) \le K_f(g(z))K_g(z)$.

Observe that the classical Teichm\"uller space $\T(\Sigma)$, equipped with its Teichm\"uller metric, includes (Proposition \ref{prop: classical includes}) as a totally geodesic subspace of $\T(M_\Gamma)$.

\subsection{Smooth Beltrami differentials}
Suppose $f$ and $g$ are orientation preserving smooth maps between domains in $\CP$ on which $g\circ f$ is defined.
We record here a useful formula for the complex dilatation for composite maps \cite[Chapter I.C]{Ahlfors:lectures}
\begin{equation}\label{eqn: beltrami composite}
    \mu_g\circ f  ~\frac{\overline {f_z}}{f_z} =\frac{\mu_{g\circ f} - \mu_f}{1-\overline{\mu_f}\mu_{g\circ f}}.
\end{equation}
If $g$ is conformal, we obtain
\[\mu_{g\circ f} = \mu_f, \]
and if $f$ is conformal, we get
\[\mu_g \circ f ~\frac{\overline {f'}}{f'} = \mu_{g\circ f}.\]
In particular, if $g: U\to V$ is equivariant with respect to (discrete, torsion free) groups $G\le \Aut(U)$ and $G' \le \Aut(V)$, then 
for all $\gamma \in G$,
 we have 
\begin{equation}\label{eqn: invariant Beltrami}
    \mu_g \circ \gamma ~\frac{\overline {\gamma'}}{\gamma'} = \mu_g.
\end{equation}
Thus, the expression 
\[\mu_g \frac{d\overline z}{dz}\] 
is invariant by $G$ and defines a $(-1, 1)$-differential form on the Riemann surface $U/G$.

For a smooth map $\phi: X \to Y$ 
between Riemann surfaces, 
there is a $(-1,1)$-\emph{Beltrami differential} $\mu_\phi d\overline z/ dz$ on $X$ recording the complex dilatation of $\phi$ as in \eqref{eqn: complex dilitation}.

In a similar fashion, we can define the Beltrami differential associated with a smooth map of Riemann surface laminations.
Indeed, a marked Riemann surface lamination $f: M_\Gamma \to W$ defines a {\em leafwise Beltrami differential} that can be written in local coordinates as
\begin{equation}\label{eqn: laminated Beltrami differential}
\mu_f(\cdot,t):=\frac{\partial f/\partial\overline{z}}{\partial f/\partial z}(\cdot,t)\frac{d\overline z}{dz}.    
\end{equation}
The rules \eqref{eqn: beltrami composite} apply leafwise for composite maps.

\subsection{Quasi-conformal homeomorphisms}\label{subsec: qc homeo}
The class of smooth quasi-conformal maps is too small for our purposes. 
Ahlfors gives a geometric definition of quasi-conformal homeomorphisms as orientation preserving homeomorphisms that distort the conformal modulus of quadrilaterals by a bounded multiplicative factor \cite{Ahlfors:lectures}.
Here is another (equivalent) formulation of quasi-conformality.
\begin{dfn}[Quasi-conformal homeomorphism]
\label{dfn:quasi-conformal}
An orientation preserving homeomorphism $f: U \to V$ between domains $U$ and $V\subset \CP$ is $K$-quasi-conformal for some constant $K\ge 1$ if in all affine charts it satisfies the following 
\begin{equation}\label{eqn: def qc}
   \limsup_{r\to 0}{\frac{\sup_{|z-w|=r}{|f(z)-f(w)|}}{\inf_{|z-w|=r}{|f(z)-f(w)|}}}\le K
\end{equation}
for every $z$ in the affine chart. If the domain $K\subset \CP$ of $f$ is instead an arbitrary subset $K\subset \CP$ we say that $f$ is \emph{$K$-quasi-M\"obius}.
\end{dfn}

A $1$-quasi-conformal map is conformal \cite[Chapter II.A]{Ahlfors:lectures}.
The space of normalized quasi-conformal homeomorphisms of $\CP$ is compact:
any accumulation point of a sequence of $K$-quasi-conformal homeomorphisms of $\CP$ fixing three distinct points is a $K$-quasi-conformal homeomorphism \cite[Chapter II.C]{Ahlfors:lectures}.

\begin{dfn}[Quasi-circle]\label{def: quasicircle}
    A \emph{$K$-quasi-circle} $\mathcal C \subset \CP$ is the image of $\RP$ under a $K$-quasi-conformal homeomorphism $\CP \to \CP$.
    A \emph{marking} of $\mathcal C$ is a homeomorphism $\RP \to \mathcal C$.
\end{dfn} 

We will need the following removability criterion for quasi-conformal maps. See  \cite[\S8.3]{LehtoVirtanen}.
\begin{prop}\label{prop: quasicircle removable}
    Suppose $\mathcal C_1 \subset \CP$ and $\mathcal C_2\subset \CP$ are quasi-circles and 
    \[f: \CP\setminus \mathcal C_1 \to \CP\setminus \mathcal C_2\] is $K$-quasi-conformal and extends continuously to homeomorphism $g: \CP \to \CP$.
    Then $g$ is a $K$-quasi-conformal homeomorphism.
\end{prop}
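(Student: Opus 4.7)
The plan is to verify the analytic characterization of $K$-quasi-conformality for $g$ directly, rather than working through the metric definition \eqref{eqn: def qc}. Concretely, I would show that $g$ is absolutely continuous on lines (ACL) on all of $\CP$ and that its complex dilatation $\mu_g$ satisfies $|\mu_g|\leq k:=(K-1)/(K+1)$ almost everywhere. Granted this, the equivalence between the analytic, metric, and geometric definitions of $K$-quasi-conformality yields the conclusion.

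First I would record that the quasi-circle $\mathcal C_1$ has two-dimensional Lebesgue measure zero. This is a standard consequence of the Ahlfors three-point characterization of quasi-circles together with area-distortion estimates for quasi-conformal self-maps of $\CP$: the image of $\RP$ under any $K$-quasi-conformal homeomorphism is porous in a quantitative sense, and in particular a Lebesgue null set. With this in hand, the agreement $g=f$ on the full-measure open set $\CP\setminus\mathcal C_1$ shows that $\mu_g$ exists and satisfies $|\mu_g|\le k$ almost everywhere on $\CP$, since $f$ is $K$-quasi-conformal on its domain.

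The heart of the argument is upgrading to ACL across the quasi-circle. Working in an affine chart and invoking Fubini together with the previous step, for almost every horizontal or vertical line $L$ the intersection $L\cap\mathcal C_1$ has linear measure zero. On the complement $L\setminus\mathcal C_1$ the restriction $g|_L=f|_L$ is absolutely continuous, because $f$ is ACL on $\CP\setminus\mathcal C_1$. To promote this to absolute continuity of $g|_L$ on all of $L$, I would use the Ahlfors three-point condition for $\mathcal C_1$: any sub-arc of $\mathcal C_1$ cut out by two points $p,q$ has diameter controlled by $|p-q|$. Together with the analogous control on $\mathcal C_2$ and the pointwise dilatation bound for $f$, this lets me estimate the total variation of $g|_L$ across intervals that meet $\mathcal C_1$ in terms of the total variation of $g|_L$ on their complement, ruling out any extra singular mass concentrated on the null set $L\cap\mathcal C_1$.

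The main obstacle is precisely this last step. Arbitrary Jordan curves are not removable for quasi-conformal mappings, so the quasi-circle hypothesis must be used essentially. The Ahlfors three-point condition is the quantitative input that prevents the absolute continuity property—and hence the sharp dilatation constant $K$—from being destroyed as one crosses $\mathcal C_1$; losing this condition would allow oscillations accumulating along $\mathcal C_1\cap L$ that would spoil the ACL property even though they live on a one-dimensional null set.
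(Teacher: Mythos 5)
The paper does not prove this proposition; it only cites Lehto--Virtanen \cite[\S8.3]{LehtoVirtanen}, so there is no internal argument to compare against. Evaluating your sketch on its own: you correctly identify the structure (verify ACL and the a.e.\ dilatation bound), correctly note that $|\mathcal C_1|=0$ immediately gives $|\mu_g|\le (K-1)/(K+1)$ a.e., and correctly pinpoint the promotion of absolute continuity across $L\cap\mathcal C_1$ as the heart of the matter. But the mechanism you propose for that step does not work as described, and I believe there is a genuine gap.

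The obstruction is precisely the Cantor-function phenomenon. A continuous function $u$ on a line segment can be absolutely continuous on the complement of a compact null set $E$, have $u'\in L^1$, and still carry a nonzero singular BV part supported on $E$; the map $g(x,y)=(x+c(x),y)$ with $c$ the Cantor function is a homeomorphism of the plane that is conformal off $E\times\R$ but is not quasi-conformal and not ACL. You need to rule out exactly this for $g|_L$ with $E=L\cap\mathcal C_1$, and the three-point condition does not accomplish this. The bounded-turning property controls diameters of sub-arcs of $\mathcal C_1$ and of $\mathcal C_2$, but the curve $g|_L$ is transverse to $\mathcal C_2$: it merely touches $\mathcal C_2$ at the countably many points $g(L\cap\mathcal C_1)$, while in between it wanders into $\CP\setminus\mathcal C_2$. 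Knowing that $\mathcal C_2$-arcs between $g(a_k)$ and $g(b_k)$ are small gives no bound on $\sup_P \sum |g(t_{i+1})-g(t_i)|$ over partitions of $[p,q]$ with nodes on $\mathcal C_1$, which is what controls the singular part. Even a uniform porosity of $L\cap\mathcal C_1$ in $L$ (if it held) would not help, since the Cantor middle-thirds set is porous.

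A correct route is the following two-step argument, which also explains where the quasi-circle hypothesis is really used. First, choose a quasi-conformal homeomorphism $h$ of $\CP$ with $h(\RP)=\mathcal C_1$. Then $g\circ h$ is a homeomorphism which is quasi-conformal off $\RP$, and removability of $\RP$ with some constant is elementary: a.e.\ vertical line $L$ satisfies $\int_L |D(g\circ h)|^2<\infty$ by Fubini, $L$ meets $\RP$ in exactly one point, and absolute continuity on each half-line with $L^1$ derivative plus continuity at the single bad point upgrades to absolute continuity on $L$ (no Cantor-function issue can occur at a single point). This gives that $g\circ h$, hence $g$, is quasi-conformal for some non-sharp constant. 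Second, having established $g$ is ACL with $L^2$ derivative, observe $\mu_g=\mu_f$ a.e.\ because $|\mathcal C_1|=0$, so $\|\mu_g\|_\infty\le (K-1)/(K+1)$, and $g$ is $K$-quasi-conformal. The cited argument in Lehto--Virtanen can also be phrased via extremal length: quasi-circles are null sets for extremal distance (NED sets), which is again seen by transporting $\RP$ by $h$, since NED is a quasi-conformal quasi-invariant.
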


An important result in the theory of quasi-conformal maps asserts that quasi-conformal solutions $f: \CP \to \CP$ to the \emph{Beltrami differential equation}
\begin{equation}\label{eqn: beltrami}
    f_{\overline z} \mu = f_z
\end{equation}
exist, are essentially unique, and vary nicely in $\mu$.
The \emph{Beltrami coefficient} 
\[\mu: \CP \to \mathbb D\] 
in \eqref{eqn: beltrami} is only required to be defined on a full measure set, i.e., $\mu \in L^\infty (\CP)$  and $\|\mu\|_\infty <1$.

The following is known as the Measurable Riemnann Mapping Theorem; see \cite[Chapter V]{Ahlfors:lectures}.

\begin{thm}[Measurable Riemann Mapping]
\label{thm:mrm}
For every essentially bounded Beltrami coefficient $\mu\in L^\infty(\mb{CP}^1)$ with $\|\mu\|_\infty<1$, there exists a unique quasi-conformal map $g^\mu:\mb{CP}^1\to\mb{CP}^1$ fixing $0$, $1$, and $\infty$ such that $\mu_{g^\mu}=\mu$, a.e. Furthermore, $g^\mu$ depends holomorphically\footnote{Holomorphic dependence of solutions $g^\mu$ on $\mu$ means the following.
First, a map \[t\in \mathbb D \mapsto \mu(t) \in L^\infty(\CP)\] 
is holomorphic if for all $t \in \mathbb D$ and sufficiently small $s \in \mathbb D$, we have
    \[\mu(s+t)(z) = \mu(t)(z) +s \nu(t)(z) + s\epsilon(s,t)(z) \]
    for some $\nu(t), \epsilon(s,t) \in L^\infty(\CP)$ with $\|\epsilon(s,t)\|_\infty \to 0$ as $s\to 0$.
    In other words, for almost every $z \in \CP$, the map $t\mapsto \mu(t)(z)$ is holomorphic.} on $\mu$.
\end{thm}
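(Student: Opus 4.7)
The plan is to follow the classical Ahlfors--Bers approach, converting the Beltrami equation $f_{\overline z} = \mu f_z$ into a fixed-point problem for a singular integral operator that can be inverted by Neumann series. The key analytic tools are the Cauchy transform $T\phi(z) = -\frac{1}{\pi}\int_\C \frac{\phi(\zeta)}{\zeta-z}\,dA(\zeta)$ and the Beurling transform $S\phi(z) = -\frac{1}{\pi}\text{PV}\!\int_\C \frac{\phi(\zeta)}{(\zeta-z)^2}\,dA(\zeta)$, which satisfy $\overline\partial T = \text{id}$ and $\partial T = S$ in the sense of distributions.

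First, I reduce to the case where $\mu$ is supported in a bounded region of $\C$. Splitting $\mu = \mu_0 + \mu_\infty$ with $\mu_0$ supported in $\{|z|\le 1\}$ and $\mu_\infty$ in its complement, I solve the equation for $\mu_0$, then use the inversion $z\mapsto 1/\overline z$ to convert $\mu_\infty$ into another compactly supported Beltrami coefficient and compose. Second, for compactly supported $\mu$ with $\|\mu\|_\infty = k < 1$, I seek $f(z) = z + T\phi(z)$ with $\phi \in L^p(\C)$. The Beltrami equation then becomes $\phi = \mu(1 + S\phi)$, i.e.
\[
(\text{id} - \mu S)\phi = \mu.
\]

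Next, I invoke Calder\'on--Zygmund theory for the Beurling transform: $S$ is an isometry on $L^2(\C)$ and is bounded on $L^p$ for $1 < p < \infty$, with operator norm $\|S\|_{p\to p}\to 1$ as $p \to 2$. Hence for $p$ sufficiently close to $2$ (depending only on $k$), the operator $\mu S$ has $L^p$-norm strictly less than $1$, and the Neumann series
\[
\phi \;=\; \sum_{n\ge 0}(\mu S)^n\mu
\]
converges in $L^p(\C)$. Standard continuity of $T$ from $L^p(\C)$ ($p > 2$) into the H\"older space $C^{1-2/p}(\C)$ shows that $f = \text{id} + T\phi$ is a well-defined continuous map; a direct distributional computation verifies it solves the Beltrami equation. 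Normalizing via post-composition by a M\"obius transformation produces $g^\mu$ fixing $0$, $1$, $\infty$. For the fact that $g^\mu$ is a homeomorphism one exploits that the pointwise dilatation is bounded by $(1+k)/(1-k)$ and invokes the geometric definition of quasi-conformality (Definition~\ref{dfn:quasi-conformal}).

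For uniqueness, if $f$ and $g$ both solve the Beltrami equation with coefficient $\mu$ and the same normalization, then $h = f\circ g^{-1}$ has dilatation $0$ almost everywhere, so is $1$-quasi-conformal; by Weyl's lemma applied to distributional derivatives, $h$ is conformal, and the normalization forces $h = \text{id}$. Holomorphic dependence on $\mu$ is manifest from the Neumann series formula: for each fixed $z$, the map $\mu \mapsto \phi(z) = \sum (\mu S)^n\mu(z)$ is a uniformly convergent sum of polynomial (hence holomorphic) functions of $\mu \in L^\infty$, with the convergence uniform on sets of the form $\{\|\mu\|_\infty \le k\}$ for $k < 1$; the Cauchy transform $T$ is continuous and linear, preserving holomorphic dependence. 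The main obstacle is the Calder\'on--Zygmund estimate establishing continuity of $\|S\|_{p\to p}$ at $p = 2$, a genuinely nontrivial fact from real-variable harmonic analysis; everything else is either formal manipulation or a normalization argument.
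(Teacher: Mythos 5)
The paper cites this theorem to Ahlfors \cite[Chapter V]{Ahlfors:lectures} and gives no proof of its own, so there is nothing in the text to compare your argument against line by line; your sketch does, however, follow exactly the classical Ahlfors--Bers route that the cited reference uses (Cauchy transform $T$, Beurling transform $S$, Neumann series for $(\mathrm{id}-\mu S)^{-1}$ in $L^p$ for $p$ near $2$, H\"older regularity of $T\phi$, Weyl's lemma for uniqueness, and holomorphic dependence read off from the power series in $\mu$).

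There is one genuine gap. After producing the continuous map $f=\mathrm{id}+T\phi$ solving the Beltrami equation, you assert that it is a homeomorphism by appealing to the bound on the pointwise dilatation and ``the geometric definition of quasi-conformality (Definition~\ref{dfn:quasi-conformal}).'' This is circular: Definition~\ref{dfn:quasi-conformal} (and equally the geometric modulus-of-quadrilaterals definition) takes as a hypothesis that $f$ is already a homeomorphism, so it cannot be invoked to establish that property. In the classical proof this is the one genuinely delicate step: one approximates $\mu$ by smooth coefficients $\mu_n$, shows that the corresponding normal solutions $f_n$ are orientation-preserving diffeomorphisms (via $J_{f_n}=|(f_n)_z|^2(1-|\mu_n|^2)>0$ and a degree argument at $\infty$ using the normalization $f_n(z)=z+O(1/z)$), proves $f_n\to f$ locally uniformly from the Neumann series estimates, and then uses the compactness of normalized $K$-quasi-conformal homeomorphisms to conclude that the limit $f$ is itself a homeomorphism. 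Some such argument must be supplied; the dilatation bound alone does not give injectivity.

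Two smaller remarks. The reduction of a globally supported $\mu$ to a compactly supported one is not a simple ``split and compose'': you cannot solve separately for $\mu_0=\mu\chi_{\{|z|\le1\}}$ and $\mu_\infty=\mu\chi_{\{|z|>1\}}$ and then compose, because the Beltrami coefficient of a composition is governed by formula~\eqref{eqn: beltrami composite}, not by addition. The correct procedure is: solve for $\mu_0$ to get $f_0$, then use~\eqref{eqn: beltrami composite} to determine the unique $\nu$ with $\mu_{g\circ f_0}=\mu$ (so $\nu$ is supported in $f_0(\{|z|>1\})$, a neighborhood of $\infty$, and depends on $(f_0)_z$ as well as on $\mu_\infty$), conjugate $\nu$ by the inversion, solve, and compose. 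Finally, you identify as the ``main obstacle'' the continuity of $p\mapsto\|S\|_{L^p\to L^p}$ at $p=2$; but once $L^{p_0}$-boundedness of $S$ is known for some $p_0>2$, Riesz--Thorin interpolation with the $L^2$ isometry immediately gives $\|S\|_p\to1$ as $p\to2$. The genuinely nontrivial input is the Calder\'on--Zygmund $L^p$-boundedness of $S$ itself, not the limiting behavior of the norm.
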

In particular, if $\mu_n\to \mu$ is a convergent sequence of Beltrami coefficients in $L^\infty(\CP)$, then $g^{\mu_n}\to g^\mu$ uniformly on $\CP$ (\cite[Chapter V.B]{Ahlfors:lectures}).
Theorem \ref{thm:mrm} asserts that if $\mu: \mathbb D \to L^\infty (\CP)$ is holomorphic with $\|\mu(t)\|_\infty \le k <1$ for all $t$, then for all $z\in \CP$,  \[t \in \mathbb D\mapsto g^{\mu(t)}(z) \in \CP\] is holomorphic.

\subsection{Bers' embedding and complex structure}\label{sec:Bersembedding}
Bers constructed a biholomorphism from $\T(\Sigma)$ to a bounded domain in the complex vector space  $Q(\overline \Sigma)$ of holomorphic quadratic differentials on $\overline{\Sigma}$.
We recall Bers' construction and explain Sullivan's observation that uniform continuity of solutions of the Beltrami equation \eqref{eqn: beltrami} allow us to carry out a similar construction to embed $\T(M_\Gamma)$ as a bounded domain in a complex Banach space \cite{Sullivan}.

\subsubsection{Holomorphic quadratic differentials}

Let $Q(\Sigma)$ be the complex vector space of holomorphic quadratic differentials on $\Sigma$.
Lifting to $\H^2$,  $q \in Q(\Sigma)$ is the data of a holomorphic function $\varphi: \H^2 \to \C$ satisfying 
\[\varphi(\gamma(z)) \gamma'(z)^2 = \varphi(z)\] for all $z\in \H^2$ and $ \gamma \in \Gamma$, i.e., $q$ is a holomorphic $(2,0)$-differential form on $\Sigma$.

Denote by $\rho(z) |dz|^2$ the Poincar\'e metric on $ \H^2$.  Then the ratio $|\varphi|/\rho$ is invariant by $\Gamma$, hence descends to a function on $\Sigma$. 
We endow $Q(\Sigma)$ with the norm\footnote{Some authors appear to prefer to divide by $\rho/4$, rather than $\rho$; in particular, the bound  \eqref{eqn: nehari krauss} and in \cite{Sullivan} differ by a factor of $4$.} 
\[\|q\|_\infty = \sup \frac{|\varphi(z)|}{\rho(z)}.\]

There is a natural analog of leafwise holomorphic quadratic differential in the category of Riemann surface laminations:
An element $q \in Q(M_\Gamma)$ is a holomorphic $(2,0)$-differential form on the leaves of $M_\Gamma$ that varies transversely continuously.  Note that $C^0$-transverse continuity and holomorphicity guarantee that all (complex) derivatives automatically vary transversely continuously. 
We similarly endow the infinite dimensional $\C$-vector space $Q(M_\Gamma)$ with the leafwise $\|\cdot\|_\infty$-norm obtained by dividing by the Poincar\'e metric on the leaves of $M_\Gamma$.
Then $Q(M_\Gamma)$ is a complex Banach space with this norm.

\subsubsection{Classical Bers' embedding}
Our discussion of Bers' embedding follows  \cite[\S6.1]{IT:ItroTeich}.
Throughout, we denote by $\overline \Sigma$ and $\overline \H^2$  the mirror images of $\Sigma$ and $\H^2$, respectively.

To an orientation preserving diffeomorphism $f: \Sigma \to \Sigma'$, we assign  $\beta(f) \in Q(\overline \Sigma)$, as follows.
Choose a lift $\tilde f: \H^2 \to \H^2$, and define
\[\mu = \begin{cases}
\mu_{\tilde f}(z), & z \in \H^2\\
0, & z \in \overline \H^2
\end{cases},\]
so that $\mu\frac{d\overline z}{dz}$ is $\Gamma$-invariant. 
The normalized solution $g^\mu$ of the Beltrami equation \eqref{eqn: beltrami} is quasi-conformal on $\CP$ and conformal on $\overline \H^2$.
Let $G^\mu: \overline \H^2 \to g^\mu(\overline \H^2)$ be the restriction of $g^\mu$ to $\overline \H^2$.
 
We associate to $G^\mu$ a holomorphic quadratic differential on $\overline \Sigma$ via the \emph{Schwarzian derivative}, which is defined, for a conformal mapping $f : U \to V$, by 
\begin{equation}\label{eqn: def schwarz}
    \mathcal S(f) (z) = \frac{f'''(z)}{f'(z)} - \frac32 \left( \frac{f''(z)}{f'(z)}\right)^2.
\end{equation}
Using $\Gamma$-invariance of $\mu\frac{d\overline z}{dz}$, a computation verifies that
\[\mathcal S(G^\mu)(\gamma(z)) \gamma'(z)^2 = \mathcal S (G^\mu)(z)\]
for all $\gamma \in \Gamma$ and $z \in \overline \H^2$.
Thus $\mathcal S(G^\mu)$ defines a holomorphic quadratic differential $\beta(f) \in Q(\overline \Sigma)$.

The assignment $f\mapsto \beta(f)$ \emph{respects the Teichm\"uller equivalence relation}.
Thus $\beta$ defines a continuous injection 
\[\beta: \T(\Sigma) \to Q(\overline \Sigma) \cong \C^{3g-3}\]
called \emph{Bers' embedding}.

Classical estimates on the Schwarzian derivative for univalent functions due to Nehari and Krauss give 
\begin{equation}\label{eqn: nehari krauss}
    \|\beta([f])\|_\infty \le 3/2,
\end{equation}
for all $[f: \Sigma \to \Sigma'] \in \T(\Sigma)$; see \cite[\S6.1.4]{IT:ItroTeich}.

The complex structure on $\T(\Sigma)$ may be defined by pulling back the complex structure along $\beta$:
for a different basepoint $\Sigma'$ and Bers' embedding $\beta' : \T(\Sigma) \to Q(\overline \Sigma ')$, 
the map $\beta'\circ \beta\inverse: \beta(\T(\Sigma) ) \to \beta'(\T(\Sigma))$ is a bi-holomorphism \cite[\S6.2]{IT:ItroTeich}

\subsubsection{Laminated Bers' embedding}
We conclude the section outlining Sullivan's construction of a complex structure on $\T(M_\Gamma)$ via a laminated analog of Bers' embedding.

Let $f: M_\Gamma \to W$ be a marked Riemann surface lamination with Beltrami differential $\mu_f$ defined as in \eqref{eqn: laminated Beltrami differential}.
We lift $\mu_f$ to a $\Gamma$-invariant leafwise Beltrami coefficient on $\mb{H}^2\times\partial\mb{H}^2$ which we still denote by $\mu_f$. Extend $\mu_f$ to a leafwise Beltrami coefficient $\mu$ defined on the Riemann surface lamination $\mb{CP}^1\times\partial\mb{H}^2$ by setting it to be 0 on $\overline \H^2\times\partial\mb{H}^2$. 

Solving the Beltrami equation 
\[
\frac{\partial g}{\partial\overline{z}}(\cdot,t)=\mu(\cdot,t)\frac{\partial g}{\partial z}(\cdot,t)
\]
leaf by leaf using Theorem \ref{thm:mrm} gives a quasi-conformal continuous lamination equivalence 
\[g^\mu: \CP \times \partial \H^2 \to \CP \times \partial \H^2.\]

The restriction $G^\mu$ of $g^\mu$ to the lower hemisphere $\overline \H^2 \times\partial\mb{H}^2$ is leafwise {\em holomorphic} as $\mu \equiv 0$ on $\overline \H^2 \times\partial\mb{H}^2$. 
By continuity of solutions to the Beltrami differential equation, the leafwise Schwarzian derivative $\beta(f) : =\mathcal S(G^\mu)$ defines a holomorphic quadratic differential on $\overline M_\Gamma$ (the Riemann surface lamination $M_\Gamma$ with the orientation of each leaf reversed).  
As before, $\|\beta(f)\|_\infty \le 3/2$.

We record here some more of the properties, due to Sullivan \cite[\S5]{Sullivan}, of the construction outlined above.

\begin{thm}\label{thm: Sullivan Bers embedding}
    The assignment $f: M_\Gamma \to W \mapsto \beta(f)\in Q(\overline M_\Gamma)$ respects the Teichm\"uller equivalence relation, hence defines a map 
    \[\beta : \T(M_\Gamma) \to Q(\overline M_\Gamma)\]
    called the \emph{Bers embedding}
    which is continuous with respect to the topology on $\T(M_\Gamma)$ defined by the Teichm\"uller distance (Definition \ref{d.Teichdis}).  Moreover, $\beta$ is injective with bounded image and $\beta(\T(M_\Gamma))$ contains $\{q \in Q(\overline M_\Gamma) : \|q \|_\infty < 1/2\}$.
\end{thm}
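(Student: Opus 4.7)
The plan is to adapt the classical Bers construction leafwise, with Theorem \ref{thm:mrm} ensuring transverse continuity of the solutions and of the Schwarzian derivative. First I would check well-definedness on Teichm\"uller equivalence classes: given equivalent markings $f_1: M_\Gamma \to W_1$ and $f_2: M_\Gamma \to W_2$ with a leafwise holomorphic lamination isomorphism $\phi: W_1 \to W_2$ satisfying $\phi \circ f_1 \sim f_2$, the composition rule \eqref{eqn: beltrami composite} shows $\mu_{\phi \circ f_1} = \mu_{f_1}$ leafwise because $\phi$ is leafwise conformal, and a leafwise homotopy does not change the asymptotic values at infinity on each leaf. Consequently the two $\Gamma$-invariant extensions to $\CP \times \partial \H^2$ have the same Beltrami coefficient on the upper hemisphere and zero on the lower hemisphere, so the normalized leafwise solutions $g^{\mu_1}, g^{\mu_2}$ agree on $\overline \H^2 \times \partial \H^2$ up to a leafwise M\"obius transformation. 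Since $\mathcal S(M \circ G) = \mathcal S(G)$ for every M\"obius $M$, the Schwarzians coincide. For the bound $\|\beta(f)\|_\infty \le 3/2$, the classical Nehari--Kraus estimate \eqref{eqn: nehari krauss} applies to each univalent map $G^\mu(\cdot, t): \overline \H^2 \to \CP$ and passes leafwise to the $\sup$-norm on $Q(\overline M_\Gamma)$.

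For continuity with respect to the Teichm\"uller distance, I would select, for classes $d_\T$-close to a basepoint, representatives whose leafwise Beltrami differentials differ by a small amount in $L^\infty$ (using the definition of $K_\phi$ and the relation between complex dilatation and conformal distortion). Theorem \ref{thm:mrm} then yields uniform convergence of $g^{\mu_n}$ to $g^{\mu}$ on $\CP$ leafwise, and interior Cauchy estimates on the lower hemisphere propagate uniform convergence to all derivatives up to order three, hence to the Schwarzian in the norm on $Q(\overline M_\Gamma)$; compactness of the transversal ensures that the leafwise estimates can be made uniform.

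Injectivity is the key rigidity step: if $\beta(f_1) = \beta(f_2)$, then on each leaf of $\overline M_\Gamma$ the univalent maps $G^{\mu_1}$ and $G^{\mu_2}$ agree up to a M\"obius transformation $M(t)$, because the Schwarzian determines a conformal map up to post-composition by M\"obius. Normalizing, e.g., by prescribing values at three points of $\overline \H^2$, continuity of the solutions in the transverse direction (again by Theorem \ref{thm:mrm}) forces $t \mapsto M(t)$ to be continuous. The assignment $M$ then induces a leafwise conformal lamination isomorphism $W_1 \to W_2$ which identifies the two markings up to leafwise homotopy, so $[f_1] = [f_2]$ in $\T(M_\Gamma)$.

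Finally, to show the image contains $\{q \in Q(\overline M_\Gamma) : \|q\|_\infty < 1/2\}$, I would invoke a leafwise Ahlfors--Weill section: for $q \in Q(\overline M_\Gamma)$ with $\|q\|_\infty < 1/2$, define a leafwise Beltrami coefficient on $\H^2 \times \partial \H^2$ by the explicit Ahlfors--Weill formula $\mu(q)(z,t) = -2\,\mathrm{Im}(z)^2\,\overline{q(\bar z,t)}$ in the upper half plane model, extended by zero on $\overline \H^2 \times \partial \H^2$. Since $\|q\|_\infty < 1/2$ we have $\|\mu(q)\|_\infty < 1$, transverse continuity of $q$ transfers to $\mu(q)$, and Theorem \ref{thm:mrm} produces a quasiconformal continuous lamination equivalence $g^{\mu(q)}$ whose restriction $G^{\mu(q)}$ to the lower hemisphere satisfies $\mathcal S(G^{\mu(q)}) = q$ on each leaf by the classical identity; quotienting the upper hemisphere by the twisted action yields a marked lamination $f_q: M_\Gamma \to W_q$ with $\beta(f_q) = q$. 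The main obstacle throughout is that every step which is a priori only leafwise must be checked to respect transverse continuity; the holomorphic dependence clause in Theorem \ref{thm:mrm} is tailored for this, but careful bookkeeping is required to propagate transverse continuity through the Schwarzian derivative, the M\"obius normalizations appearing in the injectivity argument, and the Ahlfors--Weill section.
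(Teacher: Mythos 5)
The paper does not reprove this theorem; it is cited directly from Sullivan \cite[\S5]{Sullivan}, so there is no internal proof to compare against. Evaluated on its own, your proposal follows the right overall strategy (classical Bers' construction done leafwise, with the holomorphic-dependence clause of the Measurable Riemann Mapping Theorem \ref{thm:mrm} supplying transverse continuity, and a laminated Ahlfors--Weill section for the image containing the $1/2$-ball), which is essentially Sullivan's approach. But there is a genuine logical gap in the well-definedness step, and a smaller slip in the Ahlfors--Weill formula.

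The gap in well-definedness: you assert that since $\mu_{\phi\circ f_1}=\mu_{f_1}$ and a leafwise homotopy does not change asymptotic boundary values, ``the two $\Gamma$-invariant extensions to $\CP\times\partial\H^2$ have the same Beltrami coefficient on the upper hemisphere.'' This is false: leafwise homotopic maps $\phi\circ f_1$ and $f_2$ generally have \emph{different} Beltrami coefficients on $\H^2\times\partial\H^2$; agreement of boundary extensions does not give pointwise agreement of $\mu$'s. (Your next sentence is also internally inconsistent---if the Beltrami coefficients were literally equal, the normalized solutions $g^{\mu_1}$ and $g^{\mu_2}$ would agree \emph{everywhere} by uniqueness, not merely ``on $\overline\H^2\times\partial\H^2$ up to a leafwise M\"obius transformation.'') The correct argument is boundary-based: the lifts of $\phi\circ f_1$ and $f_2$ to each leaf are at uniformly bounded hyperbolic distance, hence induce the \emph{same} boundary map on $\RP\times\{t\}$; with the three-point normalization built into $g^\mu$, the restrictions $g^{\mu_1}|_{\RP\times\dH}$ and $g^{\mu_2}|_{\RP\times\dH}$ then coincide. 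Since $G^{\mu_1}$ and $G^{\mu_2}$ are leafwise conformal on $\overline\H^2\times\partial\H^2$ and their Carath\'eodory extensions to $\RP\times\dH$ agree, uniqueness of the Riemann map with prescribed boundary values forces $G^{\mu_1}=G^{\mu_2}$ on $\overline\H^2\times\partial\H^2$, hence equal Schwarzians. The rest of your argument (continuity via $L^\infty$-control of Beltrami differentials plus interior estimates on the lower hemisphere; injectivity via the Schwarzian determining a univalent map up to M\"obius plus transverse continuity of the normalization; surjectivity onto the ball via Ahlfors--Weill) is sound in outline. One minor correction in the Ahlfors--Weill formula: with the paper's norm (dividing by $\rho$, $\rho(z)=\mathrm{Im}(z)^{-2}$ in the half-plane model) the Beltrami coefficient should be $\mu(q)(z,t)=-2\,\mathrm{Im}(z)^2\,q(\bar z,t)$ \emph{without} the outer complex conjugation; the extraneous $\overline{\phantom{q}}$ you wrote destroys the identity $\mathcal S(G^{\mu(q)})=q$.
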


There is a Bers embedding $\beta_W : \T(M_\Gamma) \to  Q(\overline W)$ associated to any point $[f: M_\Gamma \to W] \in \T(M_\Gamma)$.
The following theorem provides a complex analytic structure on $\T(M_\Gamma)$; again see \cite[\S5]{Sullivan}.
\begin{thm}\label{thm: Bers holomorphic}
    Bers' embedding $\beta$ is holomorphic, i.e., 
    \[\beta_W \circ \beta\inverse: \beta(\T(M_\Gamma)) \to \beta_W (\T(M_\Gamma))\]
    is a bi-holomorphism for any $[f: M_\Gamma \to W] \in \T(M_\Gamma)$.
\end{thm}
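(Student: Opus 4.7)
The plan is to factor the Bers embedding through the complex Banach ball of leafwise Beltrami coefficients and to exploit the holomorphic-dependence clause of the Measurable Riemann Mapping Theorem \ref{thm:mrm}. Let $L^\infty_\Gamma$ denote the complex Banach space of $\Gamma$-invariant leafwise Beltrami coefficients on $\mb{H}^2 \times \partial \mb{H}^2$ satisfying \eqref{eqn: invariant Beltrami}, let $\mathcal B \subset L^\infty_\Gamma$ be its open unit ball, and extend each $\mu \in \mathcal B$ by zero on the lower hemisphere $\overline{\mb{H}^2} \times \partial \mb{H}^2$. Solving the leafwise Beltrami equation produces a smooth marking $f_\mu : M_\Gamma \to W_\mu$, yielding a surjection $\pi : \mathcal B \to \T(M_\Gamma)$ whose fibers are Teichm\"uller equivalence classes. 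By construction (\S\ref{sec:Bersembedding}) the composite $\widetilde \beta := \beta \circ \pi$ is given by $\widetilde \beta(\mu) = \mathcal S(G^\mu) \in Q(\overline M_\Gamma)$.

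The main analytic step is to show that $\widetilde \beta$ is holomorphic as a map between open subsets of complex Banach spaces. A locally bounded Banach-valued map is Fr\'echet holomorphic if and only if it is Gateaux holomorphic, and local boundedness is supplied by the Nehari–Krauss bound \eqref{eqn: nehari krauss}. For Gateaux holomorphicity I would fix $\mu_0 \in \mathcal B$, $\nu \in L^\infty_\Gamma$, and examine $\mu_t := \mu_0 + t\nu$ for $t$ in a small disk $\Delta$. Theorem \ref{thm:mrm} gives that $t \mapsto g^{\mu_t}(z)$ is holomorphic for each $z \in \CP$; since $g^{\mu_t}$ is leafwise holomorphic in $z$ on the lower hemisphere, Cauchy's integral formula applied to a loop contained in a single lower-hemisphere leaf recovers $\partial_z^k g^{\mu_t}(z)$ (for $k = 1, 2, 3$) as uniformly bounded holomorphic functions of $t$. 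The Schwarzian \eqref{eqn: def schwarz} is a rational expression in these derivatives with nonvanishing denominator, so $t \mapsto \widetilde \beta(\mu_t)(z)$ is pointwise holomorphic with uniform bounds on $\|\widetilde \beta(\mu_t)\|_\infty$, which yields Gateaux and hence Fr\'echet holomorphicity.

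To transfer this to transitions, consider a second basepoint $[f_0 : M_\Gamma \to W_0] \in \T(M_\Gamma)$. The change-of-basepoint operator $T_{f_0} : \mathcal B \to \mathcal B_{W_0}$ sending $\mu_f$ to $\mu_{f \circ f_0^{-1}}$ is, pointwise in the leaf coordinate, a M\"obius transformation in $\mu_f$ with uniformly bounded coefficients by \eqref{eqn: beltrami composite}, hence holomorphic. Thus $\widetilde \beta_{W_0} \circ T_{f_0} : \mathcal B \to Q(\overline W_0)$ is holomorphic and equals $\beta_{W_0} \circ \pi$. Given $q_0 \in \beta(\T(M_\Gamma))$ and $\mu_0 \in \widetilde \beta^{-1}(q_0)$, the fact that $\widetilde \beta(\mathcal B)$ contains a Banach-open neighborhood of $0$ (Theorem \ref{thm: Sullivan Bers embedding}) combined with the Banach-space implicit function theorem produces a holomorphic local right inverse $\sigma : U \to \mathcal B$ of $\widetilde \beta$ with $\sigma(q_0) = \mu_0$. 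On $U$ we then obtain
\[
\beta_{W_0} \circ \beta^{-1} \;=\; \widetilde \beta_{W_0} \circ T_{f_0} \circ \sigma,
\]
a composition of holomorphic maps; swapping the roles of $M_\Gamma$ and $W_0$ handles the inverse.

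The main obstacle I expect is the existence of the holomorphic local right inverse $\sigma$. This requires verifying that the Gateaux derivative $D\widetilde \beta(\mu_0) : L^\infty_\Gamma \to Q(\overline M_\Gamma)$ is a bounded surjection admitting a bounded right inverse, which is a splitting question for the kernel (the infinitesimal Teichm\"uller class through $\mu_0$). The openness statement $\{q : \|q\|_\infty < 1/2\} \subset \widetilde \beta(\mathcal B)$ of Theorem \ref{thm: Sullivan Bers embedding}, applied infinitesimally at $\mu_0$, supplies this ingredient and is the key point at which the uniform estimates across the leaves of $M_\Gamma$ provided by the Measurable Riemann Mapping Theorem (as opposed to pointwise ones) are essential.
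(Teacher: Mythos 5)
The paper does not prove this theorem: it is attributed to Sullivan~\cite[\S5]{Sullivan}, so there is no in-paper proof to compare against. On its own terms, the core of your proposal is sound and follows the standard Sullivan strategy: factoring $\beta$ through the unit ball $\mathcal B\subset L^\infty_\Gamma$ of invariant leafwise Beltrami coefficients, deriving Fr\'echet holomorphy of $\widetilde\beta$ from Gateaux holomorphy plus the Nehari--Krauss local bound via the Measurable Riemann Mapping Theorem, and writing the change-of-basepoint operator $T_{f_0}$ as a pointwise M\"obius transformation in $\mu_f$ via~\eqref{eqn: beltrami composite} are all correct.

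The gap is exactly where you flag it, in producing the holomorphic local right inverse $\sigma$. Invoking the Banach implicit function theorem requires $D\widetilde\beta(\mu_0)$ to be a bounded surjection with topologically complemented kernel, and the claim that the inclusion $\{q : \|q\|_\infty<1/2\}\subset\widetilde\beta(\mathcal B)$ ``applied infinitesimally'' supplies this does not go through. That openness statement is established only at $0$, and propagating it to a general $\mu_0$ already presupposes that the transition maps are open, which is what you are trying to prove. Moreover, openness of a holomorphic map near a point does not force surjectivity of the derivative there (consider $z\mapsto z^2$ at $0$), and in infinite dimensions surjectivity of $D\widetilde\beta(\mu_0)$ still would not yield a bounded right inverse without a separate argument that the kernel splits. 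The correct repair, and what Sullivan actually does, is to use the explicit Ahlfors--Weill section: for $\varphi\in Q(\overline M_\Gamma)$ of small norm, the leafwise Ahlfors--Weill formula produces a $\Gamma$-invariant leafwise Beltrami coefficient $\mu_\varphi\in\mathcal B$ depending complex-linearly (hence holomorphically) on $\varphi$ with $\widetilde\beta(\mu_\varphi)=\varphi$. This is a concrete holomorphic local right inverse near $0\in Q(\overline M_\Gamma)$, with no implicit function theorem needed; combined with the change-of-basepoint operator of your third step it gives holomorphy of transitions at every $q_0=\beta([W_1])$. The uniform leafwise estimates coming from Theorem~\ref{thm:mrm} are truly needed precisely to show that this pointwise formula defines a transversely continuous, $\Gamma$-invariant Beltrami coefficient with $\|\mu_\varphi\|_\infty<1$ uniformly on leaves, rather than in the abstract surjectivity argument you sketch.
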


Finally, we point out that every leaf $L \subset M_\Gamma$ is dense in $M_\Gamma$. The restriction map recording the conformal structure on $L$ up to bounded homotopy defines a continuous injection $\T(M_\Gamma) \to \T(L)$ \cite[\S3]{Sullivan}.

\section{Laminated quasi-Fuchsian theory}

In this section we develop a laminated quasi-Fuchsian theory for laminated actions of a surface group $\Gamma$ on $\mb{CP}^1\times\partial\mb{H}^2$ that parallels in many aspects the classical one. Every such action comes together with a pair of laminated Ahlfors--Bers parameters that are a pair of marked Riemann surface laminations. Generalizing the classical results of quasi-Fuchsian theory, we show that any pair of parameters is realized (Theorem \ref{thm:ab parametrization}). We then consider the complex dilation spectrum of a laminated action and prove two properties: a generalization of Ahlfors' Lemma (Proposition \ref{p.bound}) and a characterization of the preimage of the diagonal (Proposition \ref{prop:diag real}).

\subsection{Automorphisms of $\CP$-laminations}
We consider the Riemann surface lamination $\fCP$, which is, in particular, also a surface lamination in the smooth and topological categories.

Denote by $\Aut^0(\fCP)$ the group of continuous lamination automorphisms of $\fCP$.
Recall that $g\in \Aut^0(\fCP)$ is a homeomorphism 
\[g : \fCP\to \fCP,\] 
which has the form 
\begin{equation}\label{eqn:automorphism}
g(z,t) = (h_t(z), f(t)).   
\end{equation} 
For each $t$, it holds that
\begin{itemize}
    \item $h_t:\CP\to \CP$ is a homeomorphism;
    \item the maps $h_t$ vary continuously in $t$; and
    \item $f: \dH\to \dH$ is a homeomorphism.
\end{itemize} 

We also consider subgroups that preserve more structure.
Namely, $\Aut^\infty(\fCP)$ is the group of smooth lamination automorphisms, where in \eqref{eqn:automorphism}, for each $t$, we require $h_t$ to be smooth and vary continuously in the $C^\infty$ topology. See Definition \ref{d.autsurlam}.

\begin{dfn}[Laminated M\"obius group]
    The subgroup $\MG\le \Aut^\infty(\fCP)$ preserving the structure of $\fCP$ as a Riemann surface lamination and where the map $f$ in \eqref{eqn:automorphism} is M\"obius is called the \emph{laminated M\"obius group}.
\end{dfn}
An element $g\in \mathcal {MG}$ is of the form $g(z,t)= (h_t(z), f(t))$, where $h_\cdot: \partial \mb H^2 \to \Aut (\mb{CP}^1) \cong \PSL(2,\mb C)$  is continuous, and $f\in \PSL(2,\mb R)$.
The choice of the standard $\RP \subset \CP$ fixes a diagonal action of $\PSL(2,\bR)$ on $\fCP$, which induces an embedding 
\[\iota:\PSL(2,\bR)\hookrightarrow\MG.\]
By fiat, $\PSL(2,\bR)$ is identified with its image in
$\MG$.
Summarizing, we have
\begin{equation}\label{eqn:group inclusions}
    \PSL(2,\bR)\le \MG\le \Aut^\infty(\fCP)\le \Aut^0(\fCP).
\end{equation}
We will be interested in the space 
$$\Hom(\Gamma, \MG)$$
of \emph{laminated conformal actions}: these give rise to actions of $\G$ on the Riemann surface lamination $\fCP$ which are conformal when restricted to leaves. Restricting the standard embedding $\iota$ to $\G$ we get an example of one such action.

\subsection{Quasi-conformal deformations}
Let $\Gamma \le \PSL(2,\bR)$ be a uniform lattice, so that $\bH^2/\Gamma$ is a closed hyperbolic surface, and let $M_\Gamma=\quotient{\mb{H}^2\times\partial\mb{H}^2}{\Gamma}$ its associated Riemann surface lamination (see Definition \ref{d.MGamma}). 

We call \emph{standard laminated limit set} the torus
\begin{equation}\label{eqn:fL}
    \fL = \RP\times \dH \subset \fCP,
\end{equation} 
 which is preserved by $\iota(\Gamma)$.
Then $\iota(\Gamma)$ also preserves the disconnected Riemann surface lamination $\fCPL$, acting by Riemann surface lamination automorphisms.

Each component of 
\[\fCPLmG\]
can be identified with $M_\Gamma$ by an isomorphism of smooth surface laminations that is leafwise either holomorphic or anti-holomorphic.

We would like to consider deformations of \[\Gamma \hookrightarrow \PSL(2,\bR) \xrightarrow{\iota} \MG\] by laminated quasi-conformal maps.

\begin{dfn}[Laminated quasi-conformal map]\label{def:qc map}
    A map $g \in \Aut^0(\fCP)$ is $K$-quasi-conformal if
    \begin{itemize}
        \item For all $t\in \dH$, the map $z \mapsto h_t(z)$ is $K$-quasi-conformal.
        \item If $g(z,t) = (h_t(z), f(t))$, then $f$ is M\"obius.
    \end{itemize}
\end{dfn}

In order to stay inside of the category of (pairs of) smooth Riemann surface laminations, we will restrict our attention to conjugations of $\iota$ by certain special laminated quasi-conformal homeomorphisms that are smooth outside of a (quasi)-circle in each leaf.

\begin{dfn}[Quasi-conformal deformation]\label{def:qc deformation}
    A homomorphism $\rho: \Gamma \to \MG$ is a \emph{$K$-quasi-conformal deformation} of $\iota: \Gamma \to \MG$ if there is a  $K$-quasi-conformal $g \in \Aut^0(\fCP)$ such that $\rho = g \iota g\inverse$ and the restriction 
    \[g:\fCPL\to\fCPgL\] 
    is an isomorphism of \emph{smooth} surface laminations. 
\end{dfn}
While it follows from the definition that the restriction $g|_{\fCPL}$ of a quasi-conformal deformation to the complement of the laminated limit set is smooth and all its derivatives depend continuously on the transversal direction, we can, in general, only expect that the restriction of $g$ to the laminated limit set is continuous. 

We will want to consider quasi-conformal deformations only up to suitable equivalence.
Say that $\rho_1$ is \emph{equivalent} to $\rho_2$ if there is a $g\in \MG$ satisfying $\rho_2 = g\rho_1g\inverse$.

\begin{dfn}[Quasi-conformal deformation space]
    Denote by 
    \[
    \mathcal {QC}(\iota)\subset \Hom(\Gamma, \MG)/\MG
    \]
    the \emph{quasi-conformal deformation space of $\iota$}, i.e.,
$[\rho]\in \mathcal {QC}(\iota)$ if $\rho$ is a $K$-quasi-conformal deformation of $\iota$ for some $K\ge 1$. 
\end{dfn}

Note that $\Hom(\Gamma, \MG)$ is equipped with the compact-open topology, $\Hom(\Gamma, \MG)/\MG$ has the quotient topology, and $\mathcal {QC}(\iota)$ is equipped with the subspace topology.

\begin{prop}
    Let $\mathcal {QF}$ denote the classical quasi-conformal deformation space of $\Gamma \hookrightarrow \PSL(2,\bR)\hookrightarrow\PSL(2,\bC)$.
    There is a continuous embedding $\mathcal {QF}\to \mathcal {QC}(\iota)$.
\end{prop}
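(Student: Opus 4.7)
The plan is to build the embedding by letting a classical qc conjugator act in the $\CP$-direction only, keeping the $\dH$-direction fixed. Specifically, given $[\sigma]\in\mathcal{QF}$, I will choose a quasi-conformal homeomorphism $\phi: \CP\to\CP$ satisfying $\sigma(\gamma)=\phi\,\gamma\,\phi\inverse$ for all $\gamma\in\G$ and, moreover, smooth on the complement of $\RP$. Such a $\phi$ is supplied by Bers' simultaneous uniformization: the pair of marked Riemann surfaces corresponding to $\sigma$ can be realized by smooth diffeomorphisms, lifted to the two hemispheres of $\CP$, and pasted along $\RP$. Then the leafwise-constant map $g(z,t):=(\phi(z),t)$ is a $K$-quasi-conformal element of $\Aut^0(\fCP)$ in the sense of Definition \ref{def:qc map} (each $h_t\equiv\phi$ is $K$-qc and $f=\Id$ is M\"obius), and its restriction to $\fCPL$ is a smooth lamination equivalence onto $\fCPgL$ because $\phi$ is smooth off $\RP$ and the $t$-dependence is trivial. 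Setting $\rho:=g\iota g\inverse$, a direct computation yields $\rho(\gamma)(z,t)=(\sigma(\gamma)z,\gamma t)$, so $\rho\in\Hom(\G,\MG)$ is a qc deformation of $\iota$ and $[\rho]\in\mathcal{QC}(\iota)$.

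Well-definedness on equivalence classes is straightforward: if $\sigma'=m\sigma m\inverse$ for $m\in\PSL(2,\bC)$, take $\phi':=m\phi$, so that $g' = g_m\circ g$ with $g_m(z,t):=(m(z),t)$ lying in $\MG$, and hence $\rho' = g_m \rho g_m\inverse$, giving $[\rho']=[\rho]$ in $\mathcal{QC}(\iota)$. Continuity is also routine: if $\sigma_n\to\sigma$ in the compact-open topology, then $\rho_n(\gamma)(z,t)=(\sigma_n(\gamma)z,\gamma t)$ converges to $\rho(\gamma)(z,t)$ uniformly on compact subsets of $\fCP$, so the induced map on quotients is continuous.

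The main obstacle will be injectivity, because two laminated representations could a priori be conjugate in $\MG$ via a family $\{H_t\}$ of M\"obius maps varying with $t\in\dH$. My plan is to exploit the rigidity of this relation. Assume $\rho_2=G\rho_1 G\inverse$ for some $G(z,t)=(H_t(z),F(t))$ in $\MG$. Reading off the $\dH$-component of the equation $\rho_2(\gamma)=G\rho_1(\gamma)G\inverse$ forces $F$ to commute with $\gamma$ on $\dH$ for every $\gamma\in\G$, so $F$ centralizes $\G$ in $\PSL(2,\bR)$ and must be trivial. The $\CP$-component then yields
\[
\sigma_2(\gamma) = H_{\gamma t}\,\sigma_1(\gamma)\,H_t\inverse
\]
for all $\gamma\in\G$ and $t\in\dH$. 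The key step is to specialize $t$ to the attracting fixed point $\gamma^+ \in \dH$ of $\gamma$ (every non-trivial element of the torsion-free cocompact $\G$ is hyperbolic), so that $H_{\gamma t}=H_t$ and the identity simplifies to $\sigma_2(\gamma) = H_{\gamma^+}\sigma_1(\gamma) H_{\gamma^+}\inverse$. Taking traces gives $\tr\sigma_2(\gamma)=\tr\sigma_1(\gamma)$ for every $\gamma\in\G$, and since quasi-Fuchsian representations are irreducible, the character determines the $\PSL(2,\bC)$-conjugacy class, so $[\sigma_1]=[\sigma_2]$ in $\mathcal{QF}$, completing the argument.
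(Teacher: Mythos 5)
Your proof is correct and follows essentially the same approach as the paper's: constant-in-$t$ quasi-conformal conjugation in the $\CP$-factor, well-definedness via $\widehat{g_m}\in\MG$, and injectivity by forcing the $\dH$-component of a conjugating $G\in\MG$ to centralize $\G$ (hence be trivial) and then evaluating the $\CP$-component at $t=\gamma^+$ to recover the trace functions. The only minor inaccuracy is that in $\PSL(2,\C)$ one should work with $\tr^2$ rather than $\tr$, but this does not affect the argument.
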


\begin{proof}
 The classical quasi-Fuschian space is realized as an open subset of the character variety $\Hom(\Gamma, \PSL(2,\bC))/\!\!/\PSL(2,\bC)$.
    Let $[\rho]$ be as such; by definition, there is a quasi-conformal homeomorphism $g_1$ of $\CP$ satisfying $\rho(\Gamma) = g_1\Gamma g_1\inverse \le \PSL(2,\bC)$.
    We may assume that $g_1$ is smooth outside of $\mathbb {RP}^1$.\footnote{See also \S\ref{s.invAB}, where we do this in higher generality.}
    
    Then $\widehat g_1 : \CP \times \dH \to \fCP$ defined by $\widehat g_1 (z,t) = (g_1(z), t)$ is a laminated quasi-conformal homeomorphsim that restricts to  an isomorphism of smooth surface laminations 
    \[\fCPL \to \CP \times \dH-g_1(\fL).\]
    In particular, 
    \[\widehat \rho : = \widehat g_1\iota\widehat g_1\inverse: \Gamma \to \MG\]
    is a quasi-conformal deformation of $\iota$ (in the sense of Definition \ref{def:qc deformation}), and 
    \[\widehat \rho (\gamma) (z,t) = (\rho(\gamma)z, \gamma t),\]
    for $\gamma \in \Gamma\le \PSL(2,\bR)$.
    
    For $g_2\in \PSL(2,\bC)$, we have 
    \[\widehat {g_2\rho g_2\inverse}(\gamma)(z,t) = (g_2\rho(\gamma)g_2\inverse z, \gamma t).\]
    Thus $\PSL(2,\bC)$ maps into $\MG$, acting trivially on the circle factor and $\widehat \rho$ is conjugated to $\widehat {g_2\rho g_2\inverse}$ by  $\widehat g_2\in \MG_0$. So the map $[\rho] \mapsto [\widehat \rho]$ is well-defined.

    To see that this assignment is injective, we suppose that $[\widehat \rho_1] = [\widehat \rho_2]$.  Then there exists $g \in \MG$ of the form $g(z,t) = (h_t(z), f (t))$ conjugating $\widehat \rho_1$ to $\widehat \rho_2$ where $f \in \PSL(2,\bR)$.
    The action $\widehat \rho_1$   on the circle factor is $t \mapsto \gamma t$, while the $g\widehat \rho_2 g\inverse$ action on the circle factor is $t\mapsto f \gamma f\inverse t$ for all $\gamma \in \Gamma$.
    This implies that $f$ commutes with $\Gamma$ and is hence trivial.
    
    Then 
    \[g \widehat \rho_2 g\inverse(\gamma) (z,t) = (h_{\gamma t} \rho_2(\gamma) h_t\inverse z, \gamma t),\]
    while 
    \[\widehat \rho_1(\gamma)(z,t) = (\rho_1(\gamma)z, \gamma t).\]
    Taking $t = \gamma^+$, the attracting fixed point of $\gamma$, we see that 
    \[ h_{\gamma^+} \rho_2(\gamma) h_{\gamma^+}\inverse = \rho_1(\gamma)\]
    for all $\gamma\in \Gamma$.
    Since $h_{\gamma^+}\in \PSL(2,\bC)$, we have that $\tr^2(\rho_1(\gamma)) = \tr^2(\rho_2(\gamma))$ for all $\gamma\in \Gamma$.
    This implies that $\rho_1$ and $\rho_2$ are conjugate in $\PSL(2,\bC)$.

    Continuity follows immediately from the definitions.
    This completes the proof.
\end{proof}

We will see in Section \ref{sec:marking} that hyperconvex representations $\Gamma \to \PSL(d,\C)$ give rise to exotic quasi-conformal deformations of $\iota$.

\begin{remark}
    Our proof of injectivity of the Ahlfors--Bers parameters associated with certain \emph{fully hyperconvex} representations follows the same strategy of the proof given above essentially by recovering eigenvalue ratios (hence traces).  See \S\ref{subsubsection: injectivity}.
\end{remark}

\subsection{Ahlfors--Bers parameters and double uniformization}\label{s.ABuniversal}

From a quasi-conformal deformation $\rho=g\iota g^{-1}$, we extract a pair $(E_\rho, F_\rho)$ of marked Riemann surface laminations $M_\Gamma\to E_\rho$ and $\overline M_\Gamma \to F_\rho$. 

Namely, $\fCPL$ consists of two components $\bH^2\times \dH$ and $\overline \bH^2\times \dH$.
By definition, the laminated quasi-conformal map $g$ restricts to a pair of smooth lamination isomorphisms
\[\bH^2\times \dH \to g(\bH^2\times \dH)
\]and 
\[\overline\bH^2\times \dH \to g(\overline \bH^2\times \dH),\]
and the image of $\rho = g\iota g\inverse$ preserves the images acting by Riemann surface lamination automorphisms.

Define Riemann surface laminations 
\[E_\rho := g(\bH^2\times \dH)/\rho(\Gamma) \]
and 
\[ F_\rho: = g(\overline \bH^2\times \dH)/\rho(\Gamma).\]
Since $g\iota (\gamma) = \rho(\gamma) g$ for all $\gamma \in \Gamma$,  $g$ induces smooth markings \[M_\Gamma \to E_\rho\] 
and 
\[ \overline M_\Gamma \to F_\rho.\]
Thus $[ M_\Gamma \to E_\rho]$ defines a point in $\mathcal T( M_\Gamma)$ and similarly $[ \overline M_\Gamma \to E_\rho]\in \mathcal T(\overline M_\Gamma )$.
\begin{dfn}[Ahlfors--Bers parameters]
    The \emph{Ahlfors--Bers} parameters associated to $[\rho]\in \mathcal {QC}(\iota)$ are 
    \[\mathcal {AB}([\rho]) :=([E_\rho], [F_\rho]) \in  \T(M_\Gamma)\times\T(\overline{M}_\Gamma).\]
\end{dfn}

We must verify that our definition respects the corresponding equivalence relations.
\begin{prop}
    The map
    \[\mathcal{AB}:\mathcal {QC}(\iota) \to \T(M_\Gamma)\times\T(\overline{M}_\Gamma)\]
    is well-defined.
\end{prop}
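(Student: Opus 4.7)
The proposition requires two independence checks: first, that $(E_\rho, F_\rho)$ does not depend on the choice of $g \in \Aut^0(\fCP)$ realizing $\rho = g\iota g\inverse$; second, that the resulting Teichm\"uller classes depend only on the $\MG$-conjugacy class of $\rho$.

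For the first check, suppose $\rho = g\iota g\inverse = g'\iota g'\inverse$, and set $\phi := g\inverse g' \in \Aut^0(\fCP)$. Then $\phi$ commutes with $\iota(\Gamma)$; writing $\phi(z,t) = (k_t(z), e(t))$, the transverse component $e$ centralizes $\Gamma \le \Homeo(\dH)$. Since the attracting fixed points of loxodromic elements of $\Gamma$ are dense in $\dH$ and each is fixed by $e$, we conclude $e = \mathrm{id}$. The centralizer relation then becomes the equivariance $k_{\gamma t} = \gamma k_t \gamma\inverse$. On fibers over loxodromic fixed points---where $\rho(\gamma)$ preserves the fiber and the intersection with $g(\fL)$ is the unique $\rho(\gamma)$-invariant quasi-circle through the images of its fixed points---the sets $g(\fL)$ and $g'(\fL)$ agree, so by density and continuity $g(\fL) = g'(\fL)$; leafwise orientation preservation then forces $g(\bH^2 \times \dH) = g'(\bH^2 \times \dH)$. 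Consequently $E_\rho(g) = E_\rho(g')$ and $F_\rho(g) = F_\rho(g')$ as Riemann surface laminations, and the markings satisfy $f_{g'} = f_g \circ \bar\phi$, where $\bar\phi : M_\Gamma \to M_\Gamma$ is the smooth lamination self-equivalence descending $\phi|_{\bH^2 \times \dH}$.

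To establish $[f_g] = [f_{g'}]$ in $\T(M_\Gamma)$, I would construct a transversely continuous leafwise isotopy $\{\bar\phi_s\}_{s \in [0,1]}$ of smooth lamination self-equivalences from $\mathrm{id}_{M_\Gamma}$ to $\bar\phi$. Scaling the $\Gamma$-invariant leafwise Beltrami differential $\mu_{\bar\phi}$ by $s$, extending by zero to $\fCP$, and solving the leafwise Beltrami equation using Theorem \ref{thm:mrm} yields a family of $\Gamma$-equivariant quasi-conformal lamination automorphisms varying holomorphically in $s$ and continuously in the transverse direction; each $\bar\phi_s$ is leafwise smooth by elliptic regularity since the relevant Beltrami coefficient is leafwise smooth, and descends to $M_\Gamma$ after a consistent normalization. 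Taking $\Phi = \mathrm{id}_{E_\rho}$ as the required Riemann-surface-lamination isomorphism, the family $f_g \circ \bar\phi_s$ realizes the leafwise homotopy from $f_g$ to $f_{g'}$ through lamination morphisms.

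For the second check, if $\rho_2 = h\rho_1 h\inverse$ with $h \in \MG$ and $\rho_1 = g_1 \iota g_1\inverse$, then $hg_1$ exhibits $\rho_2$ as a quasi-conformal deformation, and since $h$ is leafwise biholomorphic, its restriction induces biholomorphisms of Riemann surface laminations $E_{\rho_1} \to E_{\rho_2}$ and $F_{\rho_1} \to F_{\rho_2}$ intertwining the markings. The main technical obstacle lies in the first check---specifically, ensuring that the fiberwise Beltrami solutions assemble into a genuine smooth lamination automorphism with uniformly controlled transverse behavior---which is handled by the holomorphic dependence and the transverse continuity statements in Theorem \ref{thm:mrm} together with uniform quasi-conformality along the path $s\mu_{\bar\phi}$.
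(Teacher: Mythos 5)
Your decomposition into two independence checks is more explicit than the paper's one-line argument, which effectively only addresses the case $g_2 = kg_1$ with $k \in \MG$. But both parts of your first check (independence of the choice of $g$ with $\rho$ fixed) have genuine gaps.

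First, the claim that the intersection of $g(\fL)$ with the leaf over $\gamma^-$ is ``the unique $\rho(\gamma)$-invariant quasi-circle through the images of its fixed points'' is false: a loxodromic M\"obius transformation has uncountably many invariant quasicircles through its two fixed points (e.g.\ logarithmic-spiral-type perturbations), so uniqueness of the invariant curve cannot be the mechanism. What is actually rigid is the value of the map, not the curve: $g(\gamma^+, \gamma^-)$ must equal the attracting fixed point of $\rho(\gamma)$ on the leaf $\CP \times \{f(\gamma^-)\}$, which is determined by $\rho$ alone. Since $\{(\gamma^+,\gamma^-)\}$ is dense in $\fL$ (as in Lemma \ref{l.convonL}) and $g$, $g'$ are continuous, this gives the \emph{pointwise} equality $g|_{\fL} = g'|_{\fL}$, which is stronger than the set equality $g(\fL)=g'(\fL)$ you record, and is precisely what your second step needs (it gives $\phi|_{\fL}=\mathrm{id}$).

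Second, the Beltrami-scaling isotopy does not descend to $M_\Gamma$. Solving the Beltrami equation with coefficient $s\mu_{\bar\phi}$ and a three-point normalization produces $g^{s\mu}$ intertwining $\iota$ with the generally distinct conformal action $\rho_s = g^{s\mu}\iota g^{-s\mu}$; it does not commute with $\iota(\Gamma)$, so there is no quotient map of $M_\Gamma$. The phrase ``after a consistent normalization'' is the crux, and no normalization making each $g^{s\mu}$ $\iota$-equivariant is offered (nor is one obviously available -- one would need $\rho_s=\iota$ identically). Moreover, even at $s=1$ the normalized solution only shares a Beltrami coefficient with $\phi$, so agrees with $\phi$ only up to leafwise conformal post-composition, which reintroduces the problem. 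The simpler route: once $\phi=g^{-1}g'$ is known to be $\iota(\Gamma)$-equivariant and to preserve $\bH^2\times\dH$ (which follows from $\phi|_{\fL}=\mathrm{id}$ together with leafwise orientation preservation), the leafwise straight-line geodesic homotopy $H((z,t),s)=(\gamma_{z,\phi_t(z)}(s),t)$ is continuous, leaf-preserving, and $\iota(\Gamma)$-equivariant, so descends to a leafwise homotopy from $\mathrm{id}_{M_\Gamma}$ to $\bar\phi$; taking the biholomorphism $\Phi = \mathrm{id}$ then gives $[f_g] = [f_{g'}]$. Your second check (the $\MG$-conjugacy case) is correct and agrees with the paper's argument.
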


\begin{proof}
Consider two equivalent $\rho_1,\rho_2\in{\rm Hom}(\Gamma,\mc{MG})$. By definition $\rho_j=g_j\iota g_j^{-1}$ for $j=1,2$ with $g_j$ quasi-conformal (as in Definition \ref{def:qc deformation}) and, as $\rho_1$ is equivalent to $\rho_2$, the composition $h:=g_2g_1^{-1}:\mb{CP}^1\times\partial\mb{H}^2\to\mb{CP}^1\times\partial\mb{H}^2$ belongs to $\mc{MG}$. Notice that $h$ is $(\rho_1,\rho_2)$-equivariant, conformal on each $\mb{CP}^1\times\{t\}$, and restricts to an isomorphism of Riemann surface laminations  
\[h:g_1(\fCPL)\to g_2(\fCPL).\] 
Hence, it induces an isomorphism between $M_\Gamma\sqcup\overline{M}_\Gamma\to E_{\rho_1}\sqcup F_{\rho_1}$ and $M_\Gamma\sqcup\overline{M}_\Gamma\to E_{\rho_2}\sqcup F_{\rho_2}$ in the right homotopy class with respect to the markings induced by $g_1,g_2$.
\end{proof}

Generalizing Bers' Simultaneous Uniformization \cite{Bers60}, we have the following:

\begin{thm}
\label{thm:ab parametrization}
Let $\mb{H}^2/\Gamma$ be a closed hyperbolic surface with associated lamination $M_\Gamma$.  There is a natural, i.e., mapping class group equivariant, homeomorphism
\[
\mc{AB}:\mc{QC}(\iota)\to\T(M_\Gamma)\times\T(\overline M_\Gamma).
\]
\end{thm}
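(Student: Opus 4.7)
My plan is to mimic Bers' Simultaneous Uniformization in this laminated setting. The main ingredients will be: a construction of a right inverse via the Measurable Riemann Mapping Theorem \ref{thm:mrm} applied leafwise; injectivity via removability of laminated quasi-circles and the structure of $\MG$; and bi-continuity coming from the (holomorphic, hence continuous) dependence of Beltrami solutions on the coefficient. Mapping class group equivariance is essentially formal.

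For surjectivity, given marked Riemann surface laminations $[f^+: M_\Gamma \to E]$ and $[f^-: \overline{M}_\Gamma \to F]$, I would run the construction used for Sullivan's Bers embedding (\S\ref{sec:Bersembedding}) on both sides simultaneously. Lift $f^\pm$ to obtain leafwise, $\Gamma$-equivariant Beltrami differentials on $\bH^2\times \dH$ and $\overline{\bH^2}\times \dH$, and combine them into a $\Gamma$-invariant leafwise Beltrami coefficient $\mu$ on $\fCP$ with $\|\mu\|_\infty<1$. Applying Theorem \ref{thm:mrm} leaf-by-leaf with the normalization fixing $0,1,\infty$ yields a laminated quasi-conformal automorphism $g^\mu \in \Aut^0(\fCP)$, which is smooth on each leaf of $\fCPL$ because $\mu$ is, and whose transverse continuity comes from holomorphic dependence of solutions on $\mu$. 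The $\Gamma$-invariance of $\mu$ forces $\rho := g^\mu \iota (g^\mu)^{-1}$ to take values in $\MG$, and by construction $\mc{AB}([\rho]) = ([E],[F])$.

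For injectivity, suppose $\mc{AB}([\rho_1]) = \mc{AB}([\rho_2])$ with $\rho_j = g_j\iota g_j^{-1}$. The Teichmüller equivalence provides leafwise conformal lamination isomorphisms between $g_1(\fCPL)$ and $g_2(\fCPL)$ that are $(\rho_1,\rho_2)$-equivariant and in the correct leafwise homotopy classes. Composing with $g_1$ and $g_2^{-1}$ produces a leafwise conformal, $\Gamma$-equivariant (for $\iota$ on both sides) laminated map $h: \fCPL \to \fCPL$, which extends continuously across the laminated limit set $\fL$ to a homeomorphism of $\fCP$ because it conjugates $\iota$ to itself on the boundary torus. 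Applying the quasi-circle removability result (Proposition \ref{prop: quasicircle removable}) leafwise promotes $h$ to a leafwise conformal laminated automorphism of $\fCP$, i.e., an element of $\MG$ conjugating $\rho_1$ to $\rho_2$. Continuity of $\mc{AB}$ is immediate from continuous dependence of the Beltrami coefficient (and hence of its Schwarzian derivative, via Theorem \ref{thm: Sullivan Bers embedding}) on $\rho$ in the compact-open topology; continuity of the inverse follows from Theorem \ref{thm:mrm}. Mapping class group equivariance follows by noting that the embedding $\Mod(\Sigma) \hookrightarrow \Mod(M_\Gamma)$ of Lemma \ref{lem: classical Mod includes} acts on $\mc{QC}(\iota)$ by precomposing the boundary-extension lift, which matches the action on markings defining $([E_\rho],[F_\rho])$.

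The main obstacle I anticipate is ensuring, in surjectivity, that $g^\mu$ genuinely defines a \emph{smooth} quasi-conformal deformation in the sense of Definition \ref{def:qc deformation}, not merely a continuous one. Transverse continuity is given for free by Theorem \ref{thm:mrm}, but Definition \ref{def:qc deformation} requires $g^\mu|_{\fCPL}$ to be an isomorphism of smooth surface laminations. This amounts to showing that the leafwise smoothness of $\mu$ (inherited from the smoothness of $f^\pm$) upgrades the leafwise solutions of the Beltrami equation to smooth maps with derivatives that depend continuously on the transverse parameter — a leafwise application of standard elliptic regularity for $\partial_{\bar z} - \mu \partial_z$, combined with the transverse continuity already provided by Theorem \ref{thm:mrm}.
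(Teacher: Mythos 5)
Your outline of surjectivity, injectivity, and mapping class group equivariance essentially matches the paper's route (leafwise Measurable Riemann Mapping for a right inverse; Carath\'eodory extension plus quasi-circle removability for injectivity; the lift/boundary-extension action for equivariance). The smoothness concern you flag in surjectivity is real; the paper resolves it not by invoking elliptic regularity directly but by the cleaner observation that $\Phi = (f\sqcup f')\circ G^{-1}$ has vanishing Beltrami coefficient a.e., hence is leafwise conformal by Weyl's lemma, so $G = \Phi^{-1}\circ(f\sqcup f')$ inherits smoothness from $f$, $f'$. Either device works; the paper's is shorter and avoids quantifying regularity bounds.

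However, there is a genuine gap in your continuity argument. You assert that continuity of $\mc{AB}$ is ``immediate from continuous dependence of the Beltrami coefficient on $\rho$.'' There is no canonical Beltrami coefficient attached to a point $[\rho]\in\qc(\iota)$: a representative $g$ with $\rho = g\iota g^{-1}$ is only determined up to postcomposition by leafwise quasi-conformal self-maps of $\fCP$ that are smooth off $\fL$ and restrict to the identity on $\fL$. If $\rho_n\to\rho_\infty$ in the compact-open topology and the representatives $g_n$ are normalized to fix the $\deH$-factor and three marked points, one does get pointwise convergence $g_n\to g_\infty$ on the boundary torus $\fL = \RP\times\deH$ (by north-south dynamics and density of periodic orbits), but there is no reason for the $g_n$ to converge on the complement $\fCPL$, and hence no reason for their Beltrami coefficients to converge. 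This is precisely the point the paper makes before introducing Proposition \ref{prop:DE}. The fix is the laminated Douady--Earle extension $\widehat{\DE}$: one replaces $g_n$ with the canonical extension $\widehat{\DE}(g_n|_{\fL})$ of the boundary data, which depends continuously on the boundary data, remains equivariant by conformal naturality, and is a smooth quasi-conformal extension. Once this canonical choice is made, the lamination charts of the quotients converge and continuity follows. Without some such canonical extension operator your continuity step does not go through, and this is where the main technical content of the paper's proof of Theorem \ref{thm:ab parametrization} lies.

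A smaller inaccuracy: in your injectivity step, the composite $g_2^{-1}\circ\phi\circ g_1$ is quasi-conformal but not leafwise conformal (the $g_j$ are only quasi-conformal). The paper instead keeps the leafwise conformal map $\phi:\fCPgL\to\fCPgLprime$, shows via bounded-length leafwise homotopies that $\phi$ is continuously extended by the boundary homeomorphism $\partial(g'\circ g^{-1})$, and then applies removability to conclude $\phi\in\MG$. The key ideas you name are correct, but the composition you wrote down does not have the conformality you claim for it.
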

The proof is given in \S\S\ref{subsubsec:continuity of AB} - \ref{ss.mcg} as Propositions \ref{prop:AB continuous}, \ref{p.ABinjective},  \ref{p.invAB} and \ref{p.mcg}.

\subsubsection{Continuity of $\mathcal {AB}$}\label{subsubsec:continuity of AB}
We begin the proof with an elementary observation:
\begin{lemma}\label{l.seqcontinuity}
It is enough to check sequential continuity of $\mc{AB}$.
\end{lemma}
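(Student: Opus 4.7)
The plan is to verify that both the source $\mc{QC}(\iota)$ and the target $\T(M_\Gamma)\times\T(\overline M_\Gamma)$ are first countable topological spaces; then continuity and sequential continuity coincide for any map between them by the standard topological characterization.

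First I would handle the target. Since $\T(M_\Gamma)$ carries the Teichm\"uller distance $d_\T$ of Definition \ref{d.Teichdis}, the product $\T(M_\Gamma)\times\T(\overline M_\Gamma)$ is a metric space, hence first countable.

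For the source, the topology is the subspace topology inherited from the quotient $\Hom(\Gamma,\MG)/\MG$, where $\Hom(\Gamma,\MG)$ has the compact-open topology and $\MG$ acts by conjugation. I would argue that $\Hom(\Gamma,\MG)$ is second countable and then invoke two elementary facts: second countability passes to subspaces, and it passes to quotients by group actions. The second assertion holds because the orbit projection $\pi\colon X\to X/G$ is always open (for any open $U\subset X$, the set $\pi^{-1}(\pi(U))=\bigcup_{g\in G} gU$ is open), and open continuous surjections push countable bases forward to countable bases. Picking a finite generating set $S\subset\Gamma$, evaluation on $S$ realizes $\Hom(\Gamma,\MG)$ as a closed subspace of $\MG^{|S|}$, so it suffices to treat $\MG$ itself. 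An element of $\MG$ is a pair $(h_\cdot,f)$ with $h_\cdot\in C(\dH,\PSL(2,\C))$ and $f\in\PSL(2,\R)$; since $\dH$ is compact metric and $\PSL(2,\C)$ and $\PSL(2,\R)$ are separable Lie groups, the compact-open topology on $C(\dH,\PSL(2,\C))$ is induced by the sup metric and is separable metric, hence second countable. Thus $\MG$ is second countable, and the same follows for $\Hom(\Gamma,\MG)$, its $\MG$-quotient, and the subspace $\mc{QC}(\iota)$.

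The only point that warrants any care is the passage to the quotient by conjugation, but openness of orbit projections makes it immediate. Combining these observations, both source and target are first countable, so sequential continuity of $\mc{AB}$ implies continuity, proving the lemma.
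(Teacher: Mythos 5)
Your proof is correct and lands in the same place as the paper's, namely first countability of the domain, but reaches it by a slightly different and arguably more robust route. The paper endows $\MG$ with a metric inducing the compact-open topology, observes $\Hom(\Gamma,\MG)$ is metrizable (via evaluation on a finite generating set), and then ``infimizes the distance over representatives in a conjugacy class'' to metrize $\mc{QC}(\iota)$. That last step is the delicate point: for the infimum of a metric over orbits to satisfy the triangle inequality (even as a pseudometric) one typically wants the group to act by isometries, which for the conjugation action of $\MG$ on $\Hom(\Gamma,\MG)$ requires either a bi-invariant metric or some extra argument the paper does not spell out. You avoid this issue entirely: you verify second countability of $\MG$ and hence of $\Hom(\Gamma,\MG)$, and then push a countable basis through the orbit projection, which is open for any continuous group action since $\pi^{-1}(\pi(U))=\bigcup_g gU$. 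This works unconditionally. Two small remarks: first countability of the codomain is not actually needed (sequential continuity implies continuity whenever the \emph{domain} is first countable, regardless of the target), so your paragraph on $\T(M_\Gamma)\times\T(\overline M_\Gamma)$ is harmless but superfluous; and, conversely, even a pseudometric on $\mc{QC}(\iota)$ inducing the quotient topology would suffice for first countability, so the paper's argument can also be made airtight with a little care.
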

\begin{proof}
The group $\MG$ can be endowed with a metric inducing the compact open topology.  
Then $\Hom(\Gamma, \MG)$ is metrizable by choosing a finite generating set for $\Gamma$. Infimizing the distance over representatives in a given $\MG$-conjugacy class yields a metric on $\mathcal {QC}(\iota)$, which shows the claim.
\end{proof}

 Suppose then that $[\rho_n]\to [\rho_\infty] \in \mc{QC}(\iota)\subset \Hom(\Gamma, \MG)/\MG$.
    Up to conjugating by suitable elements in $\MG$, we may assume that $\rho_n = g_n \iota g_n\inverse$, where $g_n$ is a laminated quasi-conformal homeomorphism that is a smooth lamination map away from $\fL=\mathbb {RP}^1 \times \dH$, $g_n$ is identity on $\dH$, and \[\rho_n (\gamma) \to \rho_\infty(\gamma)\]
    for all $\gamma \in \Gamma$.

\begin{lemma}\label{l.convonL}
Under the above assumptions $ g_n$ converges pointwise to $g_\infty$ on $\fL$.
\end{lemma}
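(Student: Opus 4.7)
The plan is to show $g_n \to g_\infty$ pointwise on $\fL = \RP \times \dH$. Writing $g_n(z,t) = (h_{n,t}(z),t)$ and $g_\infty(z,t) = (h_{\infty,t}(z),t)$, which is permissible since both maps are identity on $\dH$, the goal amounts to $h_{n,t}(z) \to h_{\infty,t}(z)$ for every $(z,t) \in \RP \times \dH$. My strategy is to first establish convergence on a countable dense subset of $\fL$ by combining $\iota(\G)$-equivariance with the convergence $\rho_n \to \rho_\infty$ on $\iota(\G)$-invariant slices, and then to upgrade to full pointwise convergence by a normal-family argument.

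For each hyperbolic $\gamma \in \G$ with attracting fixed point $\gamma^+ \in \dH$, the slice $\CP \times \{\gamma^+\}$ is $\iota(\gamma)$-invariant, and the relation $\rho_n = g_n \iota g_n\inverse$ restricts there to the Möbius identity
\[
\rho_n(\gamma)\big|_{\CP \times \{\gamma^+\}} \;=\; h_{n,\gamma^+}\circ \gamma \circ h_{n,\gamma^+}\inverse \quad\text{in}\quad \PSL(2,\C).
\]
Since $\rho_n(\gamma) \to \rho_\infty(\gamma)$ uniformly on compacta in $\fCP$, these elements of $\PSL(2,\C)$ converge, forcing convergence of their attracting and repelling fixed points: $h_{n,\gamma^+}(\gamma^\pm) \to h_{\infty,\gamma^+}(\gamma^\pm)$. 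Thus $g_n \to g_\infty$ at the two points $(\gamma^\pm, \gamma^+) \in \fL$. The equivariance $g_n \circ \iota(\alpha) = \rho_n(\alpha) \circ g_n$, combined with $\rho_n(\alpha) \to \rho_\infty(\alpha)$, then propagates pointwise convergence at $(z,t)$ to every point of the $\iota(\G)$-orbit $\{(\alpha z, \alpha t) : \alpha \in \G\}$. The orbit of $(\gamma^-,\gamma^+)$ is dense in $\fL$: off the diagonal because axis endpoints of $\G$-conjugates of $\gamma$ are dense in $\partial^{(2)}\dH$ for cocompact Fuchsian $\G$, and at every diagonal point via sequences $\alpha_k \in \G$ whose north--south dynamics collapse both coordinates to a common attractor.

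To upgrade dense convergence to pointwise convergence on all of $\fL$, I would argue by a normal-family/compactness argument for the family $\{g_n\}$. Writing $g_n = g_\infty \circ f_n$, the map $f_n := g_\infty\inverse g_n$ conjugates $\iota$ to $g_\infty\inverse \rho_n g_\infty \to \iota$ in $\Hom(\G,\MG)$; the convergence $[\rho_n] \to [\rho_\infty]$ in $\qc(\iota)$, combined with an additional $\MG$-normalization absorbed into the choice of $g_n$, should force the leafwise Beltrami coefficients of $f_n$ to tend to $0$ in $L^\infty$. By the leafwise Measurable Riemann Mapping Theorem~\ref{thm:mrm} this yields $f_n \to \mathrm{id}$ uniformly on $\fCP$, whence $g_n \to g_\infty$ uniformly on $\fCP$ and in particular pointwise on $\fL$. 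Alternatively, granted a uniform quasi-conformal bound $K(g_n) \le K$, the family $\{h_{n,t}\}$ is equi-Hölder in $z$ with uniform exponent, and after normalizing three periodic points of $\rho_\infty$ to standard positions, Arzelà--Ascoli produces subsequential limits which must equal $h_{\infty,t}$ on the dense set from the previous paragraph; uniqueness then yields convergence of the full sequence. The main obstacle is precisely extracting such a uniform quasi-conformal bound from the compact-open convergence $\rho_n \to \rho_\infty$: this is a Teichmüller-type compactness statement for $\qc(\iota)$ that is not immediate from the definitions and appears to require the laminated Ahlfors--Bers machinery being developed in the section.
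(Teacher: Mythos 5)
Your first half is essentially the paper's argument. The paper also shows $g_n \to g_\infty$ on the set of periodic pairs by identifying $g_n(\gamma^+,\gamma^-)$ as the attracting fixed point of $\rho_n(\gamma)$ on the leaf $\CP\times\{\gamma^-\}$ and invoking $\rho_n(\gamma)\to\rho_\infty(\gamma)$; you use the leaf over $\gamma^+$ and propagate by equivariance, which is the same mechanism with different indexing. One small point of divergence: the paper uses the full set $\{(\gamma^+,\gamma^-):\gamma\in\Gamma\}$, which is dense in $\fL$ because periodic orbits are dense for an Anosov flow, whereas you use a single $\Gamma$-orbit $\Gamma\cdot(\gamma^-,\gamma^+)$; minimality of the $\Gamma$-action on $\partial^{(2)}\H^2$ (needed for your version) is a stronger statement than density of periodic pairs and deserves a reference or a short argument, so the paper's choice of dense set is cleaner.

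For the second half, you have correctly put your finger on a real issue, but the way you frame it slightly misreads the paper. The paper does not close this step with a normal-family or quasiconformal-bound argument; it simply concludes \emph{``we have shown the convergence\,\dots\,on a dense subset of the compact metric space $\calL$, so $g_n\to g_\infty$ (uniformly) on $\fL$.''} As you observe, pointwise convergence of continuous maps on a dense subset of a compact space does not by itself yield convergence everywhere, let alone uniform convergence; some equicontinuity of $\{g_n|_\fL\}$ is needed, and the relevant dense set is \emph{not} dense leaf-by-leaf (generic leaves contain no periodic pair), so a purely transversal density argument cannot work without joint control. The most natural source of equicontinuity is exactly the uniform quasiconformal bound you identify, and indeed later in the proof of Proposition~\ref{prop:AB continuous} the paper silently assumes the $g_n$ are $K$-quasi-conformal with $K$ independent of $n$; your observation that such a bound does not come for free from convergence $[\rho_n]\to[\rho_\infty]$ in the compact-open topology is accurate, and neither the paper nor your proposal supplies it. Of your two proposed closures, the Arzel\`a--Ascoli route is the one that would match the paper's intent, provided the uniform $K$-bound is in hand (after fiberwise normalizing three points, a family of $K$-quasi-conformal homeomorphisms is equicontinuous and a subsequential limit must agree with $g_\infty$ on the dense set, hence everywhere). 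The Beltrami route is harder to make work because $\|\mu_{f_n}\|_\infty\to 0$ is not obviously a consequence of $g_\infty^{-1}\rho_ng_\infty\to\iota$ in the compact-open topology. So: your proposal is incomplete exactly where the paper is terse, and you are right that closing that gap honestly requires an extra ingredient beyond what Lemma~\ref{l.convonL}'s stated hypotheses appear to provide.
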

\begin{proof}

    Indeed, let $\gamma\in \Gamma$ be non-trivial and choose $a\in \RP\setminus \{\g^-\}$.
    Using equivariance, we obtain
    \[\rho_n(\gamma^k)g_n(a,\gamma^-) = g_n(\gamma^ka, \gamma^k\gamma^-) \to g_n(\gamma^+, \gamma^-), ~ k\to \infty.\]
    It follows that $g_n(\gamma^+, \gamma^-)$ is the attracting fixed point for the M\"obius action of $\rho_n(\gamma)$ on the leaf $\CP\times \{\gamma^-\}$. 
    Since $\rho_n (\gamma) \to \rho_\infty (\gamma)$, 
    it follows that the attracting fixed points converge, i.e., 
    $g_n(\gamma^+, \gamma^-) \to g_\infty (\gamma^+, \gamma^-)$. 
    Using the ergodicity of the geodesic flow and the closing lemma, the set  $\{(\gamma^+, \gamma^-): \gamma \in \Gamma\}$ is dense in $\calL$. 
    We have shown the convergence $g_n\to g_\infty$ of the continuous maps on a dense subset of the compact metric space $\calL$, 
    so $g_n \to g_\infty$ (uniformly) on $\fL$.
\end{proof}
In general the quasi-conformal maps $g_n$ might not converge pointwise on the complement $\fCPL$. Our strategy will be to replace them with better behaved quasi-conformal maps $\widehat{\DE}(g_n)$ extending $g_n|_\fL$. To construct the maps $\widehat{\DE}(g_n)$, we will use the Douady--Earle extension operator:
\begin{thm}[{\cite{DE86}}]
\label{t.DE}
There is a \emph{Douady--Earle extension operator}
    \[\DE: \Homeo^+ (\dH) \to {\rm Diff}^+(\bH^2)\]
    with the following properties 
    \begin{enumerate}
        \item $\DE(f)$ extends continuously to $f$ on $\dH$.
        \item $\DE(f)$ is conformally natural, i.e., for all $\alpha, \beta\in \PSL(2,\bR)$, we have $\DE(\alpha\circ f\circ \beta) = \alpha\circ \DE(f)\circ \beta$.
        \item $\DE$ is continuous with respect to the natural topologies.
        Concretely, the map 
    \begin{align*}
    \bH^2 \times \Homeo^+(\dH) &\to \bH^2 \\
    (z, f) &\mapsto \DE(f)(z)
    \end{align*}
    is continuous, as are the derivatives in $z$ of all orders.
    \item For every $K$, there is a $K^*$ such that if $f\in \Homeo^+(\dH)$ admits a $K$-quasi-conformal extension to $\bH^2$, then $\DE(f)$ is $K^*$-quasi-conformal.
    \end{enumerate}
\end{thm}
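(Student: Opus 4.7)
The plan is to construct the extension using the conformal barycenter of a measure on the boundary circle, an idea that goes back to Douady and Earle. Given $f\in \Homeo^+(\dH)$, working in the disk model $\bH^2 = \mathbb D$, I would first associate to each $z\in\bH^2$ the visual (Poisson) probability measure $\mu_z$ on $\dH$ and push it forward by $f$ to obtain $f_*\mu_z$. Define $\DE(f)(z)$ to be the unique point $w\in\bH^2$ such that, after post-composing by any Möbius transformation sending $w$ to $0\in\mathbb D$, the resulting measure on $\dH$ has vanishing Euclidean barycenter. Equivalently, $\DE(f)(z)$ is the unique zero of the vector field
\[
w\mapsto \int_{\dH} \frac{\zeta-w}{1-\bar w\zeta}\,df_*\mu_z(\zeta).
\]

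The main technical step is the \emph{barycenter lemma}: for every probability measure $\nu$ on $\dH$ with no atom of mass $\geq 1/2$, the above integral, viewed as a map $\mathbb D\to \mathbb D$ (via $w$), has a unique zero depending smoothly on $\nu$. Uniqueness is obtained by checking that the radial derivative of the integrand points inward along rays from $0$, after a reduction using conformal naturality to the case $w=0$; existence follows from a boundary degree argument, since the vector field points inward near $\dH$ whenever $\nu$ has no large atom. Since $f$ is a homeomorphism, the measures $f_*\mu_z$ are non-atomic, so $\DE(f)(z)$ is well-defined for all $z$.

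Properties (1)--(3) follow from soft arguments. Property (1) comes from the fact that as $z\to\zeta\in\dH$, the measure $\mu_z$ concentrates at $\zeta$, so $f_*\mu_z$ concentrates at $f(\zeta)$, and the barycenter of a measure concentrated at a boundary point is that point. Property (2) is automatic from the definition: the visual measure transforms equivariantly under $\PSL(2,\bR)$ and the defining integral is conformally natural, so pre- or post-composition by Möbius transformations commutes with $\DE$. Property (3) follows by applying the implicit function theorem to the defining equation, using smoothness of the Poisson kernel in $z$ together with the non-degeneracy of the derivative of the defining vector field at its zero, which was already established in the barycenter lemma. The continuous dependence in $f$ in the compact-open topology reduces to continuity of $f\mapsto f_*\mu_z$ in the weak-$*$ topology.

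Property (4) is the hardest step and requires explicit estimates on the Beltrami coefficient $\mu_{\DE(f)}$. I would use conformal naturality to reduce to estimating $\|\mu_{\DE(f)}(0)\|$ under the normalization $\DE(f)(0)=0$, then compute $\partial_z\DE(f)(0)$ and $\partial_{\bar z}\DE(f)(0)$ by differentiating the defining integral at $w=0$. The outcome is that both partials are bounded in terms of integrals of $f_*\mu_0$ against explicit kernels, and the ratio $|\partial_{\bar z}/\partial_z|$ at $0$ is bounded strictly below $1$ in terms of how concentrated $f_*\mu_0$ can be, quantified by the quasi-symmetry constant of $f$ (which in turn is controlled by any quasi-conformal extension constant $K$ via Beurling--Ahlfors). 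Globalizing via conformal naturality yields the uniform constant $K^*$ depending only on $K$, completing the proof.
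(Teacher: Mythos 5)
Your sketch is a faithful reproduction of the Douady--Earle conformal-barycenter construction, which is precisely what the paper cites from \cite{DE86} without reproducing a proof: the barycentric vector field, the no-large-atom lemma, conformal naturality from equivariance of the Poisson measure, the implicit-function-theorem argument for regularity, and the reduction of (4) to a Beltrami estimate at the origin controlled by quasi-symmetry all match the cited argument. The one point you leave implicit is that landing in $\Diff^+(\bH^2)$ for \emph{arbitrary} $f\in\Homeo^+(\dH)$ requires the pointwise strict inequality $|\partial_{\bar z}\DE(f)|<|\partial_z\DE(f)|$ (hence nonvanishing Jacobian) without any quasi-symmetry hypothesis, but this comes out of the same derivative computation at the barycenter you outline for property (4), just without insisting on a uniform constant.
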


The key step in our proof of continuity is the following laminated extension of Theorem \ref{t.DE}.
For this, we consider the space of \emph{laminated markings}
\begin{equation}\label{e.foliatedmarking}
\fM=\left\{f:\fL=\RP\times\deH\to\fCP\left|\;\begin{array}{l}
    f \text{ continuous,}\\
    f(\RP\times\{t\})\subset \CP\times \{t\}, \\
    f|_{\RP\times\{t\}} \text{ injective }\; \forall t\in\dH
\end{array} \right.\right\}.
\end{equation}
A laminated marking gives rise to a continuous family of marked Jordan curves.
\begin{prop}\label{prop:DE}
There exists an extension map 
\[\widehat{\DE}:\fM\to\Aut^0(\fCP)\]
such that, for $g \in \fM$, the extension $\widehat{\DE}(g)$ is a smooth lamination isomorphism  \[\fCPL\to \fCP-g(\fL)\] and has the following properties 
\begin{enumerate}
    \item $\widehat{\DE}(g)|_{\fL}=g$;
    \item if $g_n\to g$ pointwise, then $\widehat{\DE}(g_n)\to \widehat{\DE} (g)$ pointwise on $\fCP$;
    \item for every $K$ there exists $K^*$ such that if $g|_{\RP \times \{t\}}$ is $K$-quasi-M\"obius for every $t$, then $\widehat{\DE}(g)$ is $K^*$-quasi-conformal and smooth away from $\fL$;
    \item if $g$ is $(\iota,\rho)$ equivariant, namely $g\iota =\rho g$ for $\rho\in\Hom(\G,\MG)$, then $\widehat{\DE}(g)$ is also $(\iota,\rho)$ equivariant.
\end{enumerate}
\end{prop}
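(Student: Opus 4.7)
The plan is to define $\widehat{\DE}(g)$ leafwise using the classical Douady--Earle extension of Theorem \ref{t.DE}. For each $t \in \dH$, the slice $g_t := g(\cdot, t): \RP \to \CP$ is a continuous injection of a circle into the sphere, hence by compactness a homeomorphism onto its image $\Lambda_t$, a Jordan curve. The complement $\CP \setminus \Lambda_t$ has two topological disk components $\Omega_t^\pm$, and I would fix Riemann maps $\phi_t^\pm: \mathbb D^\pm \to \Omega_t^\pm$ from the two components $\mathbb D^\pm$ of $\CP \setminus \RP$ (identified with $\mb{H}^2$). By Carath\'{e}odory's theorem for Jordan domains these extend to homeomorphisms of the closures, so $\alpha_t^\pm := (\phi_t^\pm)^{-1} \circ g_t: \RP \to \RP$ is a homeomorphism. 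The extension is then defined by
\[
\widehat{\DE}(g)(z, t) = \begin{cases} (\phi_t^\pm \circ \DE(\alpha_t^\pm)(z), t), & z \in \mathbb D^\pm, \\ g(z, t), & z \in \RP. \end{cases}
\]

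Well-definedness (independence of the Riemann map choice) follows from conformal naturality of $\DE$ (Theorem \ref{t.DE}(2)): replacing $\phi_t^+$ by $\phi_t^+ \circ \beta$ with $\beta \in \PSL(2,\bR)$ replaces $\alpha_t^+$ by $\beta^{-1} \circ \alpha_t^+$, and then $(\phi_t^+ \circ \beta) \circ \DE(\beta^{-1} \circ \alpha_t^+) = \phi_t^+ \circ \DE(\alpha_t^+)$. Property (1) is immediate since $\DE(\alpha_t^\pm)$ continuously extends to $\alpha_t^\pm$ on $\RP$ by Theorem \ref{t.DE}(1), so the two cases of the formula agree on $\fL$. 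For the equivariance property (4), writing $\rho(\gamma)(z, s) = (h_s^\gamma(z), \gamma s)$, the equivariant choice $\phi_{\gamma t}^\pm := h_t^\gamma \circ \phi_t^\pm \circ \gamma^{-1}$ yields $\alpha_{\gamma t}^\pm = \gamma \circ \alpha_t^\pm \circ \gamma^{-1}$, and conformal naturality of $\DE$ propagates equivariance through the extension; independence of the Riemann map choice then upgrades this to equivariance regardless of the specific maps chosen.

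For the quasi-conformal bound (3), if $g_t$ is $K$-quasi-M\"obius then $\Lambda_t$ is a $K$-quasi-circle and the boundary homeomorphism $\alpha_t^\pm: \RP \to \RP$ is quasi-symmetric with distortion depending only on $K$. By classical theory such as Beurling--Ahlfors, $\alpha_t^\pm$ admits a uniformly quasi-conformal extension to $\mathbb D^\pm$, so Theorem \ref{t.DE}(4) yields that $\DE(\alpha_t^\pm)$ is $K^*$-quasi-conformal with $K^*$ depending only on $K$. Since $\phi_t^\pm$ is leafwise conformal on the interior, post-composition preserves the dilatation and $\widehat{\DE}(g)$ is $K^*$-quasi-conformal. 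Smoothness of $\widehat{\DE}(g)|_{\fCPL}$ as a morphism of smooth surface laminations, with all derivatives continuous in $t$, follows from smoothness of $\DE(\alpha_t^\pm)$ (Theorem \ref{t.DE}(3)), holomorphicity of $\phi_t^\pm$ on the interior, and continuous dependence of normalized Riemann maps on Jordan curves.

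The main obstacle is the pointwise continuity (2). If $g_n \to g$ pointwise on $\fL$, one must show the associated Riemann maps converge strongly enough to invoke Theorem \ref{t.DE}(3). The appropriate tool is the Carath\'{e}odory kernel convergence theorem: uniform convergence of the boundary parameterizations $g_{n,t} \to g_t$ (which pointwise convergence on the compact set $\RP$ upgrades to under equicontinuity of the $g_{n,t}$) gives Carath\'{e}odory convergence of the complementary domains, hence locally uniform convergence of suitably normalized Riemann maps $\phi_{n,t}^\pm \to \phi_t^\pm$. Consequently $\alpha_{n,t}^\pm \to \alpha_t^\pm$ uniformly on $\RP$, and the continuity of $\DE$ in Theorem \ref{t.DE}(3) gives pointwise convergence $\DE(\alpha_{n,t}^\pm) \to \DE(\alpha_t^\pm)$ on $\mathbb D^\pm$, which assembles leafwise to the required convergence $\widehat{\DE}(g_n) \to \widehat{\DE}(g)$ on $\fCP$.
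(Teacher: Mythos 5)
Your construction is essentially identical to the paper's: a leafwise Douady--Earle extension conjugated by the Riemann maps $\phi_t^\pm$ (the inverses of the paper's uniformizations $u_t$, $\overline u_t$), with $\alpha_t^\pm$ playing the role of the paper's welding homeomorphisms $\zeta_t$, $\overline\zeta_t$; conformal naturality of $\DE$ is used both for independence of the Riemann map choice and for equivariance; and Carath\'eodory kernel convergence handles the transverse continuity and convergence in Property (2). The only presentational difference is cosmetic: the paper fixes a normalization up front by requiring three marked points to be fiberwise fixed, whereas you argue well-definedness after the fact via conformal naturality.

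There is one genuine omission in your Property (3). Your argument that post-composing $\DE(\alpha_t^\pm)$ by the conformal map $\phi_t^\pm$ preserves the dilatation only shows that $\widehat{\DE}(g)$ is $K^*$-quasi-conformal \emph{on each leaf away from the quasi-circle} $\Lambda_t = g_t(\RP)$. To conclude that $\widehat{\DE}(g)(\cdot,t)$ is a $K^*$-quasi-conformal homeomorphism of all of $\CP$ (which is what the definition of a laminated quasi-conformal map requires) you must additionally observe that the continuous extension across the quasi-circle $\Lambda_t$ is quasi-conformal — this is exactly Proposition \ref{prop: quasicircle removable}, which the paper invokes here. The step is routine but not automatic: quasi-conformality across an arbitrary curve can fail, and it is the fact that $\Lambda_t$ is a quasi-circle (guaranteed by the quasi-M\"obius hypothesis on $g_t$) that makes the removability go through. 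Your parenthetical flag in Property (2), that equicontinuity is needed to upgrade pointwise convergence of $g_{n,t}$ on $\RP$ to Hausdorff/Carath\'eodory convergence of the domains, is a legitimate point; it is implicit but not elaborated in the paper either, and in applications it is supplied by the uniform quasi-M\"obius bound on the maps involved.
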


\begin{proof}
{\bf Step 1: Construction.}
 Choose distinct points $x,y,z \in \RP$. We can and will assume, up to conjugating both $g$ and $\widehat{\DE}(g)$ by an element of $\MG$ covering the identity on $\dH$ that  the points  $x, y,z \in \RP$ are fiberwise fixed, i.e., we assume that $g(x,t) = (x,t),\; g(y,t) = (y,t)$, and $g(z,t) = (z,t)$ for all $t\in \dH$.

We write $g=(h_t,t)$ and denote by $\Lambda_t =h_t(\RP)$ and by 
$H_t $ and $\overline H_t \subset \CP$  the two connected components of $\CP\setminus \Lambda_t$ (recall that for every $t$, $\Lambda_t$ is a Jordan curve). 
Let $u_t: H_t \to \bH^2$ and $\overline u_t: \overline H_t \to \overline\bH^2$ be  uniformizing maps.  By Carath\'eodory's extension theorem (e.g., \cite{GM:extension_thm}), since $\Lambda_t$ is a Jordan curve, $u_t$ and $\overline u_t$ have  continuous extensions $\partial u_t: \Lambda_t \to \dH$ and $\partial \overline u_t: \Lambda_t \to \overline\dH$.
    We will assume, up to postcomposing $u_t$ with an element of $\PSL(2,\R)$, that $\partial u_t$ fixes $x$, $y$, and $z$.
    Define 
    \[\zeta_t: \dH \cong \RP   \xrightarrow{h_t} \Lambda_t  \xrightarrow{\partial u_t} \dH, \]
    which also fixes $x$, $y$, and $z$, and $\overline \zeta_t$ analogously.

  We define $\widehat{\DE}(g)$ on $\fCPL$ by $\widehat{\DE}(g)(z,t) = (h_t'(z), t)$,
    where
    \[h_t'(z) = \begin{cases}
         u_t\inverse \circ \DE(\zeta_t)(z), & z \in \bH^2 \\
         \overline u_t\inverse \circ \DE(\overline \zeta_t)(z), & z\in \overline \bH^2.      
    \end{cases}\]

We need to verify that  $\widehat{\DE}(g)$ is indeed an element of $\Aut^0(\fCP)$, which induces a smooth lamination isomorphism  $\fCPL\to \fCP-g(\fL)$. The extension is leafwise smooth outside $\fL$ because of Theorem \ref{t.DE}. In view of Theorem \ref{t.DE} (3), order to verify the derivatives of $h'_t$ vary continuously with $t\in\deH$ it is enough to verify that, for $t\to t_0$ the holomorphic maps $u_{t}^{-1}$ and $\overline u_{t}^{-1}$ converge uniformly to  $u_{t_0}^{-1}$ and $\overline u_{t_0}^{-1}$  on compact subsets.

The last fact follows by Carath\'eodory's Convergence Theorem (see \cite[Theorem 3.1]{Duren:univalent} and note that in our setup, convergence of kernels is equivalent to Hausdorff convergence of the complementary domains): 
since $\Lambda_t \subset \CP$ are 
Jordan curves running through points $x, y$ and $z$, and $\Lambda_{t} \to \Lambda_{t_0}$ as $t\to t_0$ in the Hausdorff topology, the maps $(u_t\cup \partial u_t)\inverse : \bH^2 \cup \partial \bH^2\to \CP $ converge uniformly to  $(u_{t_0} \cup \partial u_{t_0})\inverse$ as $t\to t_0$.
    Similarly, $(\overline u_{t} \cup \partial \overline u_{t})\inverse\to (\overline u_{t_0} \cup \partial \overline u_{t_0})\inverse$ as $t\to t_0$.

    \noindent{\bf Step 2: First properties. }
    By construction,  $\widehat{\DE}(g)$ extends continuously to $\fCP$ and agrees with $g$ on $\fL$ (Property (1)). Property (2), namely the pointwise convergence, follows directly from the continuity property, Property (3), of the Douady-Earle extension operator $\DE$.
    Continuity of $\DE$ (and the maps $\zeta_t$ and $\overline \zeta_t$ in $t$) gives that $\widehat{\DE}(g)$ is a $K^*$-quasi-conformal isomorphism of smooth laminations on $\fCPL$.  
    Since $g(\RP \times \{t\})$ is a a quasi-circle for each $t$, $\widehat{\DE}(g)$ is $K^*$-quasi-conformal everywhere (see Proposition \ref{prop: quasicircle removable}), this shows Property (3).

    \noindent{\bf Step 3: Equivariance. }
        Property (4) 
        essentially boils down to conformal naturality of $\DE$ and $(\iota,\rho)$-equivariance of $g$, though the computation is somewhat lengthy.
        We write $\rho(\gamma)(z,t) = (\rho(\gamma,t) z, \gamma t)$, where $\rho(\gamma,t) \in \PSL(2,\bC)$. Since the uniformizations $u_t$ and $u_{\gamma t}$, as well as the map $\rho(\gamma,t)$, are bi-holomorphisms on their domains, 
        there is $\alpha(\gamma,t) \in \PSL(2,\bR)$ such that 
        \begin{equation}\label{eqn:SL2R cocycle}
            u_{\gamma t} \circ \rho(\gamma,t) = \alpha(\gamma,t) \circ u_t,
        \end{equation}
        which, using the $(\iota,\rho)$-equivariance of $g$ also implies
\begin{equation}\label{eqn:SL2R cocycle2}
            \zeta_{\gamma t} \circ \gamma = \alpha(\gamma,t) \circ \zeta_t.
        \end{equation}
$$\xymatrix{\RP\ar[r]_{h_t}\ar@/^1pc/[rr]^{\zeta_t}\ar[d]^{\g}&\Lambda_t\ar[r]_{\partial u_t}\ar[d]^{\rho(\g,t)}&\dH\ar[d]^{\alpha(\g,t)}\\
\RP\ar[r]^{h_{\g t}}\ar@/_1pc/[rr]_{\zeta_{\g t}}&\Lambda_{\g t}\ar[r]^{\partial u_{\g t}}&\dH}$$        
         We compute, for $z\in \bH^2$, 
        \[\rho(\gamma) \widehat{\DE}(g)(z, t) = \left(\rho(\gamma,t) u_{ t}\inverse \DE(\zeta_{ t})(z), t\right)\]
        Using \eqref{eqn:SL2R cocycle} and \eqref{eqn:SL2R cocycle2}, we have
        \begin{align*}
         \rho(\gamma,t) u_{ t}\inverse \DE(\zeta_{ t})(z)&=
         u_{\gamma t}\inverse \alpha(\gamma,t) \DE( \zeta_t)( z) \\
         &= u_{\gamma t}\inverse  \DE(\alpha(\gamma,t) \zeta_t)(z)\\
            &= u_{\gamma t}\inverse  \DE( \zeta_{\gamma t})(\gamma z)
               \end{align*}
        concluding the proof that $\rho(\g)\widehat{\DE}(g)(z,  t) =  \widehat{\DE}(g)(\gamma z,\gamma t)$, for $z\in \bH^2$.

    For $z\in \overline \bH^2$, one computes analogously as above and comes to the same conclusion.
    Finally, $\widehat{\DE}(g)$ is clearly equivariant on $\RP\times \dH$, because $g$ was, and the two maps coincide there.
    This completes the proof of the claim.
    \end{proof}

We can now conclude the proof of the main result of the section:
\begin{prop}\label{prop:AB continuous}
    $\mc{AB}$ is continuous.
\end{prop}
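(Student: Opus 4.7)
My plan is to leverage the Douady--Earle-type extension from Proposition \ref{prop:DE} to upgrade the pointwise convergence $g_n\to g_\infty$ on $\fL$ to uniform $C^\infty$-convergence on leaves away from the laminated limit set. By Lemma \ref{l.seqcontinuity} it suffices to work sequentially. So fix $[\rho_n]\to[\rho_\infty]$; after conjugation in $\MG$ we may assume $\rho_n=g_n\iota g_n^{-1}$ with $g_n$ as in Definition \ref{def:qc deformation}, that $g_n$ is the identity on the $\partial\H^2$ factor, and $\rho_n(\gamma)\to\rho_\infty(\gamma)$ for all $\gamma\in\G$. By Lemma \ref{l.convonL}, $g_n\to g_\infty$ uniformly on $\fL$.

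The key move is to \emph{replace} $g_n$ by the extension $\widetilde g_n:=\widehat{\DE}(g_n|_\fL)$. By Property (4) of Proposition \ref{prop:DE}, $\widetilde g_n$ is still $(\iota,\rho_n)$-equivariant (because $g_n|_\fL$ is), so it produces the same representation $\rho_n$ and the same Ahlfors--Bers parameters as $g_n$. By Property (2), $\widetilde g_n\to\widetilde g_\infty$ pointwise on $\fCP$. Moreover, using continuity in the $C^\infty$-topology of the Douady--Earle extension on $\H^2$ (Theorem \ref{t.DE}(3)) together with Carathéodory's convergence theorem for the uniformizing maps $u_{n,t}\to u_{\infty,t}$ induced by the Jordan curves $\Lambda_{n,t}=g_n(\RP\times\{t\})$, the convergence $\widetilde g_n\to \widetilde g_\infty$ is actually $C^\infty$ on leaves and uniform in $t\in\dH$, locally uniformly in the leaf variable on $\fCP-\fL$.

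Consider now the laminated maps
\[
\phi_n^E:= \widetilde g_n\circ \widetilde g_\infty^{-1}|_{\widetilde g_\infty(\H^2\times\dH)},\qquad \phi_n^F:=\widetilde g_n\circ\widetilde g_\infty^{-1}|_{\widetilde g_\infty(\overline\H^2\times\dH)}.
\]
These are $(\rho_\infty,\rho_n)$-equivariant smooth isomorphisms of Riemann surface laminations, so they descend to smooth lamination maps $\bar\phi_n^E:E_{\rho_\infty}\to E_{\rho_n}$ and $\bar\phi_n^F:F_{\rho_\infty}\to F_{\rho_n}$ that are in the correct homotopy classes relative to the Sullivan markings (by construction, both $\widetilde g_n$ and $\widetilde g_\infty$ restrict, on $\mathbb H^2\times\dH$, to smooth isomorphisms in the homotopy class of the markings coming from the original $g_n$, $g_\infty$). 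To bound the Teichmüller distance, it remains to show that the leafwise quasi-conformal dilatation $K_{\phi_n^E}$ (which equals $K_{\bar\phi_n^E}$) tends to $1$, and similarly for $F$.

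For this, pick a compact fundamental domain $D\subset \widetilde g_\infty(\H^2\times\dH)$ for the $\rho_\infty(\G)$-action, e.g.\ the image under $\widetilde g_\infty$ of $\Delta\times\dH$ where $\Delta\subset\H^2$ is a compact fundamental domain for $\G$. By continuity of $\widetilde g_\infty$ on the compact set $\Delta\times\dH$, the domain $D$ stays at positive distance from $\widetilde g_\infty(\fL)$. By the $C^\infty$-convergence established above, $\phi_n^E\to \mathrm{id}$ in $C^1$ uniformly on $D$, so the leafwise Beltrami differentials $\mu_{\phi_n^E}$ converge to $0$ uniformly on $D$. Since the $\MG$-action preserves the conformal structure on leaves, the pointwise norm $|\mu_{\phi_n^E}|$ is $\rho_\infty(\G)$-invariant, hence
\[
\|\mu_{\bar\phi_n^E}\|_\infty=\sup_{D}|\mu_{\phi_n^E}|\longrightarrow 0.
\]
The same argument applies to $\phi_n^F$. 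Therefore $d_\T([E_{\rho_\infty}],[E_{\rho_n}])\to 0$ and $d_\T([F_{\rho_\infty}],[F_{\rho_n}])\to 0$, proving continuity of $\mc{AB}$. The main technical point, and the step I expect to be most delicate, is transferring the continuity properties of $\DE$ and Carathéodory's theorem into genuinely uniform $C^1$-convergence \emph{jointly} in the leaf and transverse directions on a fundamental domain; once this is in hand, equivariance does the rest.
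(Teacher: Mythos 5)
Your proof follows the same strategy as the paper's: reduce to sequential continuity, normalize so that $g_n\to g_\infty$ on $\fL$, replace the conjugating maps by their laminated Douady--Earle extensions $\widehat{\DE}(g_n|_\fL)$, and deduce convergence of the Ahlfors--Bers parameters from convergence of the extensions. You fill in the final step in more detail than the paper (which simply asserts that pointwise convergence of $\widehat{\DE}(G_n)$ implies convergence of the lamination charts) by explicitly forming the transition maps $\phi_n^E,\phi_n^F$ and bounding their Beltrami dilatations on a compact fundamental domain via the joint $C^\infty$-continuity of $\DE$ and Carath\'eodory convergence -- this is precisely the technical content the paper leaves implicit.
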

\begin{proof}
Thanks to Lemma \ref{l.seqcontinuity} it is enough to check sequential continuity. As above, we can assume that $\rho_n = g_n\iota g_n\inverse : \Gamma \to \MG$ are $K$-quasi-conformal deformations of $\iota$ converging to $\rho_\infty = g_\infty \iota g_\infty\inverse$, with the further assumption that $g_n,g_\infty$ are the identity on $\dH$. Thanks to Lemma \ref{l.convonL} we than know that  $g_n \to g_\infty$ pointwise on $\fL$, and up to further composing $g_n,g_\infty$ with suitable elements in $\MG$ we can assume that the points $x,y,z$ are fiberwise fixed.

Since the restrictions $G_n = g_n|_{\fL}$ and $G_\infty = g_\infty|_{\fL}$ to $\fL$ induces an element of $\fM$, we can now apply the above result to replace $g_n$ and $g_\infty$ with $\widehat{\DE}(G_n)$ and $\widehat{\DE}(G_\infty)$.
    The advantage here is that the hypothesis that $g_n \to g_\infty$ pointwise on $\RP \times \dH$ now implies that $\widehat{\DE}(G_n) \to \widehat{\DE}(G_\infty)$ pointwise on $\fCP$. 
    
    By construction $\widehat{\DE}(G_n)$ is $\rho_n$-equivariant.  Since $\widehat{\DE}(G_n)$ converge to $\widehat{\DE}(G_\infty)$ pointwise, the lamination charts for the quotient Riemann surface laminations marked by $M_\Gamma$ and $\overline M_\Gamma$ converge also.
    This shows that  $[E_{\rho_n}]\to [E_{\rho_\infty}]\in \T(M_\Gamma)$ and $[F_{\rho_n}]\to [F_{\rho_\infty}]\in \T(\overline M_\Gamma)$, as $n\to \infty$ and completes the proof of the proposition.
    \end{proof}

\subsubsection{Injectivity}

\begin{prop}\label{p.ABinjective}
    $\mathcal {AB}$ is injective.
\end{prop}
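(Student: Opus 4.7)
The plan is to adapt the classical injectivity argument for Bers' simultaneous uniformization to the laminated setting. Write $\rho_j = g_j\iota g_j^{-1}$ and suppose $\mathcal{AB}([\rho_1])=\mathcal{AB}([\rho_2])$. By hypothesis there are lamination biholomorphisms $\phi^E: E_{\rho_1}\to E_{\rho_2}$ and $\phi^F: F_{\rho_1}\to F_{\rho_2}$ such that $\phi^E\circ f_1^E$ is leafwise homotopic to $f_2^E$, and similarly for $\phi^F$, where $f_j^E, f_j^F$ denote the markings induced by $g_j$. Lifting to universal covers, I obtain $(\rho_1,\rho_2)$-equivariant leafwise biholomorphisms
\[
\tilde\phi^E: g_1(\bH^2\times\dH)\to g_2(\bH^2\times\dH), \qquad
\tilde\phi^F: g_1(\overline{\bH}^2\times\dH)\to g_2(\overline{\bH}^2\times\dH),
\]
with the leafwise homotopies singling out which lifts to take. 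Each leaf of $g_j(\bH^2\times\dH)$ is a topological disk bounded by the Jordan curve $g_j(\RP\times\{t\})$, so Carath\'eodory's theorem produces continuous leafwise boundary extensions $\partial\tilde\phi^E$ and $\partial\tilde\phi^F$ to $g_1(\fL)$.

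The key step is to show that both boundary extensions coincide with $g_2\circ g_1^{-1}|_{g_1(\fL)}$. By $(\rho_1,\rho_2)$-equivariance, $\tilde\phi^E$ conjugates the loxodromic dynamics of $\rho_1(\gamma)$ on $g_1(\bH^2\times\{\gamma^-\})$ to that of $\rho_2(\gamma)$ on $g_2(\bH^2\times\{\gamma^-\})$; the extension $\partial\tilde\phi^E$ must therefore send the attracting fixed point $g_1(\gamma^+,\gamma^-)$ of $\rho_1(\gamma)$ on the leaf $\CP\times\{\gamma^-\}$ to the attracting fixed point $g_2(\gamma^+,\gamma^-)=g_2\circ g_1^{-1}(g_1(\gamma^+,\gamma^-))$ of $\rho_2(\gamma)$, and the analogous statement holds for $\partial\tilde\phi^F$. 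The set $\{(\gamma^+,\gamma^-):\gamma\in\Gamma-\{1\}\}$ is dense in $\fL$ by the ergodicity and closing lemma argument appearing in Lemma \ref{l.convonL}, so both boundary extensions agree with $g_2\circ g_1^{-1}$ on all of $g_1(\fL)$ by continuity. This allows me to glue $\tilde\phi^E$ and $\tilde\phi^F$ along $g_1(\fL)$ into a continuous lamination map $\Phi:\fCP\to\fCP$ that is leafwise holomorphic off the leafwise quasi-circle $g_1(\RP\times\{t\})$.

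By Proposition \ref{prop: quasicircle removable} applied leafwise, each $\Phi_t$ is a M\"obius transformation of $\CP$. Transverse continuity of $\Phi$ is inherited from $\tilde\phi^E$ and $\tilde\phi^F$ together with the common boundary behavior on $g_1(\fL)$ (in the spirit of Proposition \ref{prop:DE}), so $\Phi\in\MG$; the induced action on the $\dH$-factor commutes with $\Gamma$ and therefore lies in $\mathrm{PSL}(2,\bR)$, and after normalizing $g_1,g_2$ to be the identity on $\dH$, it is the identity. By construction $\Phi$ conjugates $\rho_1$ to $\rho_2$, showing $[\rho_1]=[\rho_2]$ in $\mc{QC}(\iota)$. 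The main subtlety is verifying that the two leafwise boundary extensions really do coincide on $g_1(\fL)$; once this is settled, leafwise quasi-circle removability and continuity in the transverse direction do the rest.
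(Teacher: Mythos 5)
Your overall strategy — extend the conformal equivalences to the laminated limit set via Carath\'eodory, glue along $g_1(\fL)$, and invoke quasi-circle removability to land in $\MG$ — matches the paper's proof. The point of divergence is how you identify the leafwise boundary extensions with $g_2\circ g_1^{-1}$ on $g_1(\fL)$: you argue via the dynamics of attracting fixed points plus density of $\{(\gamma^+,\gamma^-)\}$ in $\fL$, whereas the paper observes that the leafwise homotopy between $\phi$ and $g'\circ g^{-1}$ has uniformly bounded Poincar\'e-length trajectories, which forces the two maps to share a Carath\'eodory extension leaf by leaf.

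The dynamics-and-density step has a gap, and it is not a small one. The set $\{(\gamma^+,\gamma^-)\}$ is dense in the \emph{two-dimensional} space $\fL=\RP\times\dH$, but it is very far from dense in any single circle $\RP\times\{t\}$: the circle over $t$ contains at most the two points $\gamma_0^\pm$ when $t=\gamma_0^-$ is a fixed point, and no such points at all for the remaining (full-measure) set of $t$. So a leafwise density argument fails completely, and you are in fact forced to use density in the full $\fL$ — which in turn requires the union of the leafwise Carath\'eodory extensions $\partial\tilde\phi^E\colon g_1(\fL)\to g_2(\fL)$ to be \emph{jointly} continuous in $(z,t)$. That transverse continuity of the Carath\'eodory extensions is exactly the nontrivial content of this step, and you assert it ("by continuity", "in the spirit of Proposition~\ref{prop:DE}") rather than proving it. The bounded-homotopy observation in the paper's proof is what supplies it: the trajectories of the lifted leafwise homotopy project into the compact lamination $E_{\rho'}$, hence have uniformly bounded leafwise Poincar\'e length; since the leafwise Poincar\'e metric blows up toward $g'(\fL)$, $\phi$ and $g'\circ g^{-1}$ stay close in the spherical metric near the boundary, which simultaneously identifies the boundary maps and gives continuity of the glued map. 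You actually have the same homotopy hypothesis available (it is what pins down the lifts $\tilde\phi^E$, $\tilde\phi^F$), so your argument can be repaired by invoking it — but once you do, the dynamics/density step becomes superfluous.
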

\begin{proof}
    Suppose that $\mc{AB}([\rho])=\mc{AB}([\rho'])$, where $\rho = g \rho_0 g\inverse$ and $\rho' = g' \rho_0 (g')\inverse$.
    Up to conjugation in $\MG$ we may assume that $g$ and $g'$ cover the identity on the $\partial \H^2$ factor. 
    Denote by $\bar g : M_\Gamma \sqcup \overline M_\Gamma \to E_\rho \sqcup F_\rho$ the markings induced by $g$, and define $\bar g'$ similarly.
    Note that  $g'\circ g\inverse$ restricts to a homeomorphism 
\[\partial (g'\circ g\inverse): g(\RP\times \dH) \to g'(\RP\times \dH)\]
that is $(\rho, \rho')$-equivariant and identity on $\dH$.
    
    There is a conformal equivalence of Riemann surface laminations
    \[\bar \phi: 
\quotient{(\fCPgL)}{\rho(\Gamma)}\to\quotient{(\fCPgLprime)}{\rho'(\G)}
\]
such that $\bar \phi \circ \bar g $ is leafwise homotopic to $\bar g'$. 
Then $\bar \phi$ lifts to a $(\rho, \rho')$-equivariant equivalence of Riemann surface laminations 
\[\phi : \fCPgL \to \fCPgLprime\]
which is the identity on the $\dH$-factor.

We claim that $\partial (g'\circ g\inverse)$ extends $\phi$ continuously.
Indeed, the leafwise homotopy between $g'\circ g\inverse$ and $\phi$ has uniformly bounded length trajectories in the Poincar\'e metric on each leaf of $g'(\bH^2\times \dH)$.
Thus $\partial (g'\circ g\inverse)$ induces the same map as the Carath\'eodory extension of $\phi$ on each leaf.

In summary, $\phi$ induces a continuous surface lamination map $\CP \times \dH \to \CP \times \dH$ that is trivial on $\dH$ and conformal on each leaf away from a quasicircle $g(\RP\times \{t\})$, where is it quasi-conformal.
But a quasi-conformal map that is conformal outside a quasicircle is conformal (see Proposition \ref{prop: quasicircle removable}).
Hence $\phi \in \MG$, and $\phi \circ \rho(\gamma) = \rho'(\gamma)\circ \phi$ for all $\gamma \in \Gamma$.
Hence $[\rho] = [\rho']$, completing the proof of injectivity of $\mathcal {AB}$. 
\end{proof}

\subsubsection{Conformal welding: a continuous inverse to $\mc{AB}$}\label{s.invAB}
Consider two marked Riemann surface laminations 
$$[\hat f:M_\Gamma\to E], \quad [\hat f':\overline M_\Gamma\to F];$$ abusing notation, we sometimes suppress the markings for brevity and write $[E]$ and $[F]$ for the equivalence classes of those marked laminations.
Our goal is to construct  a quasi-conformal deformation $\rho_{E,F}$ of $\iota$ satisfying 
$$\mathcal {AB}([\rho_{E,F}]) = ([ E], [ F]).$$

Fix representatives $\hat f:M_\Gamma\to E$ and $\hat f': \overline M_\Gamma \to F$.
Consider the normal covering space $\bH^2\times \dH$ of $M_\Gamma$ corresponding to $\Gamma$.
We have equivariant smooth lamination maps 
$$f:\mb{H}^2\times\partial\mb{H}^2\to U, \quad f': \overline \bH^2\times\partial\mb{H}^2 \to V$$ lifting $\hat f$ and $\hat f'$, respectively, to the corresponding covers $U \to E$ and $V\to F$.
\begin{remark}
    By Candel's Uniformization Theorem \ref{thm: candel}, $E$ and $F$ have smooth laminated Riemannian metrics in their conformal classes such that the metric on each fiber is complete with sectional curvature everywhere equal to $-1$. 
    So, we could identify $U$ and $V$ with $\bH^2\times \dH$ and $\overline \bH^2\times \dH$, respectively.
\end{remark}

We define a Beltrami coefficient $\mu_{f\sqcup f'}$ leafwise on $\fCPL$ in coordinates by
\begin{equation}\label{eqn:def beltrami}
\mu_{f\sqcup f'}(z,t):=\left\{
\begin{array}{l l}
{\displaystyle\frac{\partial f/\partial\overline{z}}{\partial f/\partial z}(\cdot,t)} &\text{\rm for $z\in\mb{H}^2$},\\
{\displaystyle \frac{\partial f'/\partial\overline{z}}{\partial f'/\partial z}(\cdot,t)} &\text{\rm for $z\in\overline \bH^2$}.\\
\end{array}
\right.
\end{equation}
Since $f$ lifts $\hat f: M_\Gamma \to E$, we have 
\begin{equation}\label{eqn:automorphic}
    \mu_{f\sqcup f'}(z,t) = \mu_{f\sqcup f'}(\gamma z, \gamma t) \frac{\overline {\gamma '(z)} }{\gamma'(z)}, \text{ for all $\gamma\in \Gamma $ and $z\in \bH^2$},
\end{equation}
and similarly for $z\in \overline{\bH}^2$, because $f'$ lifts $\hat f'$; see \eqref{eqn: invariant Beltrami} and \eqref{eqn: laminated Beltrami differential}.
Another way to express this condition is to say that 
\[\mu_{f\sqcup f'}\frac{d\overline z}{dz} \text{ is invariant by $\iota$.}\]

Since $\hat f$ and $\hat f'$ are orientation preserving diffeomorphisms on leaves whose derivatives vary continuously transversally, $\mu_{f\sqcup f'}$ is smooth (in the lamination sense) on $\fCPL$.  The compactness of $M_\Gamma$ implies that $\|\mu_{f\sqcup f'}\|_\infty<1$.
By Theorem \ref{thm:mrm}, we can find a continuous, quasi-conformal isomorphism of surface laminations 
$G:\mb{CP}^1\times\partial\mb{H}^2\to\mb{CP}^1\times\partial\mb{H}^2$ satisfying
    \begin{equation}\label{eqn:beltrami equation}
\frac{\partial G}{\partial\overline{z}}(\cdot,t)=\mu_{f\sqcup f'}(\cdot,t)\frac{\partial G}{\partial z}(\cdot,t) \text{ a.e.}
\end{equation}

\begin{lemma}\label{lem: rhoEF defined}
    The rule 
    \[\gamma\in \Gamma \mapsto G\circ \gamma \circ G\inverse\]
    defines a representation $\rho_{E,F}: \Gamma \to \Hom(\Gamma, \MG)$.
\end{lemma}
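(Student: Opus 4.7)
The plan is to exploit the $\iota$-invariance of the Beltrami coefficient $\mu_{f\sqcup f'}$, combined with the uniqueness part of the Measurable Riemann Mapping Theorem \ref{thm:mrm}, to show that $G\circ\iota(\gamma)\circ G\inverse$ is leafwise M\"obius.

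First, I would fix a choice of leafwise solution: since we can solve the Beltrami equation \eqref{eqn:beltrami equation} leaf by leaf, normalize $G$ so that it preserves the $\partial\H^2$-factor, i.e. $G(z,t) = (G_t(z),t)$ for some $G_t\colon \CP\to \CP$ with Beltrami coefficient $\mu_{f\sqcup f'}(\,\cdot\,,t)$. A direct computation then gives
\[
\rho_{E,F}(\gamma)(w,s) \;=\; G\circ\iota(\gamma)\circ G\inverse(w,s) \;=\; \bigl(G_{\gamma s}\circ \gamma\circ G_s\inverse(w),\,\gamma s\bigr),
\]
so the action on $\partial\H^2$ is the M\"obius action of $\gamma\in \Gamma\le \PSL(2,\bR)$, and only the leafwise part $H_{\gamma,s}:= G_{\gamma s}\circ \gamma\circ G_s\inverse$ requires analysis.

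The key step is to show that each $H_{\gamma,s}$ is M\"obius. Since $\gamma\in\PSL(2,\bR)$ acts holomorphically on $\H^2$ and $\overline{\H^2}$, the composition rule \eqref{eqn: beltrami composite} gives
\[
\mu_{G_{\gamma s}\circ \gamma}(z) \;=\; \mu_{G_{\gamma s}}(\gamma z)\cdot \frac{\overline{\gamma'(z)}}{\gamma'(z)} \;=\; \mu_{f\sqcup f'}(\gamma z,\gamma s)\cdot \frac{\overline{\gamma'(z)}}{\gamma'(z)}.
\]
By the $\iota$-invariance \eqref{eqn:automorphic} of $\mu_{f\sqcup f'}$ (valid on each of $\H^2$ and $\overline{\H^2}$), the right-hand side equals $\mu_{f\sqcup f'}(z,s) = \mu_{G_s}(z)$. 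So $G_{\gamma s}\circ \gamma$ and $G_s$ satisfy the same Beltrami equation on $\CP$. The uniqueness part of Theorem \ref{thm:mrm} then forces $G_{\gamma s}\circ \gamma = M_{\gamma,s}\circ G_s$ for some $M_{\gamma,s}\in \PSL(2,\bC)$, whence $H_{\gamma,s} = M_{\gamma,s}$ is M\"obius.

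It remains to verify the ``laminated'' requirements. Continuity of $s\mapsto M_{\gamma,s}$ is inherited from the continuous (indeed holomorphic) dependence of the normalized solutions of the Beltrami equation on their Beltrami coefficients (Theorem \ref{thm:mrm}), combined with the transverse continuity of $\mu_{f\sqcup f'}$ guaranteed by smoothness of $f$ and $f'$ in the lamination sense. This shows $\rho_{E,F}(\gamma)\in \MG$. Finally, the homomorphism property is automatic: because $\iota$ is a homomorphism,
\[
\rho_{E,F}(\gamma\delta) \;=\; G\circ \iota(\gamma)\circ \iota(\delta)\circ G\inverse \;=\; \rho_{E,F}(\gamma)\circ\rho_{E,F}(\delta).
\]
The main obstacle is really the conceptual point in step three—recognizing that the $(\iota,\iota)$-invariance of the Beltrami coefficient is exactly the condition needed to conclude, via uniqueness in the Measurable Riemann Mapping Theorem, that conjugation by $G$ sends $\iota(\Gamma)$ into leafwise M\"obius maps; once this is isolated, everything else is formal.
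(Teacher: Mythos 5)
Your proof is correct and takes essentially the same approach as the paper: the paper's proof consists of the single sentence that leafwise conformality of $\rho_{E,F}(\gamma)$ follows from the invariance property \eqref{eqn:automorphic} and uniqueness of normalized solutions of \eqref{eqn:beltrami equation}, and you have faithfully unpacked exactly that argument, correctly using the Beltrami composition rule \eqref{eqn: beltrami composite} with $\gamma$ conformal, the $\iota$-invariance of $\mu_{f\sqcup f'}$, and the fact that two quasi-conformal maps of $\CP$ with a.e.\ equal Beltrami coefficients differ by post-composition with a M\"obius transformation. The only minor point: the normalization $G(z,t)=(G_t(z),t)$ is not an extra choice to be made but is already built in, since $G$ is by construction a lamination isomorphism covering the identity on $\partial\H^2$, obtained by solving the Beltrami equation leaf by leaf.
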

\begin{proof}
    We only need to check that $\rho(\gamma)$ is leafwise conformal.
    This follows from the invariance property \eqref{eqn:automorphic} and uniqueness of normalized solutions \eqref{eqn:beltrami equation}; see \cite[Chapter VI.B]{Ahlfors:lectures}.
\end{proof}

We will show that $G$ is smooth away from $\fL$, so that $\rho_{E,F}$ is a quasi-conformal deformation of $\iota$ with the desired properties:
\begin{prop}\label{p.invAB}
    Given $([E], [F]) \in \T(M_\Gamma)\times \T(\overline M_\Gamma)$, the representation $\rho_{E,F}$ from Lemma \ref{lem: rhoEF defined} is a quasi-conformal deformation of $\iota$ and satisfies $\mathcal {AB}([\rho_{E,F}]) = ([E], [F])$. 
    Furthermore, the assignment $([E],[F])\mapsto [\rho_{E,F}] \in\mathcal {QC}(\iota)$ is continuous.
\end{prop}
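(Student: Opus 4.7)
The plan is to verify three things: that $G$ is smooth on $\fCPL$ (so $\rho_{E,F}$ qualifies as a quasi-conformal deformation in the sense of Definition \ref{def:qc deformation}), that $\mathcal{AB}([\rho_{E,F}])=([E],[F])$, and that the assignment is continuous. The starting observation is that since $\hat f$ and $\hat f'$ are smooth lamination equivalences and $M_\Gamma$ is compact, the leafwise Beltrami coefficient $\mu_{f\sqcup f'}$ from \eqref{eqn:def beltrami} is leafwise smooth on $\fCPL$ with all leafwise derivatives varying continuously transversely, and satisfies $\|\mu_{f\sqcup f'}\|_\infty<1$.

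First, leafwise elliptic regularity for the Beltrami equation forces $G(\cdot,t)$ to be smooth on $\CP\setminus\RP$; the holomorphic dependence of solutions on the coefficient from Theorem \ref{thm:mrm}, applied together with its leafwise derivatives in parameters, delivers transverse continuity of the leafwise derivatives of $G$. Thus $G$ restricts to a smooth lamination isomorphism $\fCPL\to \fCP-G(\fL)$. Each slice $G(\RP\times\{t\})$ is a quasicircle with dilation uniformly bounded in $t$, so Proposition \ref{prop: quasicircle removable} ensures $G$ is globally $K$-quasi-conformal. Combined with Lemma \ref{lem: rhoEF defined}, this shows $\rho_{E,F}$ is a quasi-conformal deformation of $\iota$.

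Next, to identify the Ahlfors--Bers parameters, set $\phi:=f\circ G\inverse: G(\bH^2\times\dH)\to U$, where $U$ is the cover of $E$ to which $f$ lifts $\hat f$. The composition formula \eqref{eqn: beltrami composite}, combined with the identity $\mu_G=\mu_f$ on the upper hemisphere, gives $\mu_\phi\equiv 0$ leafwise, so $\phi$ is leafwise conformal. Since $G$ conjugates $\iota$ to $\rho_{E,F}$ and $f$ intertwines $\iota$ with the deck action on $U$, $\phi$ is equivariant and descends to a leafwise conformal isomorphism $\bar\phi:E_{\rho_{E,F}}\to E$ with $\bar\phi\circ g=\hat f$, where $g:M_\Gamma\to E_{\rho_{E,F}}$ is the marking induced by $G|_{\bH^2\times\dH}$. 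Hence $[E_{\rho_{E,F}}]=[E]$ in $\T(M_\Gamma)$; the symmetric argument on the lower hemisphere gives $[F_{\rho_{E,F}}]=[F]$ in $\T(\overline M_\Gamma)$.

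For continuity, given a convergent sequence $([E_n],[F_n])\to([E],[F])$ in $\T(M_\Gamma)\times\T(\overline M_\Gamma)$, choose representatives $(\hat f_n,\hat f'_n)\to(\hat f,\hat f')$ so that the corresponding $\mu_{f_n\sqcup f'_n}$ converge to $\mu_{f\sqcup f'}$ in $L^\infty(\fCP)$. Theorem \ref{thm:mrm} then yields uniform convergence of the normalized solutions $G_n\to G$ on $\fCP$, and consequently $\rho_{E_n,F_n}(\gamma)=G_n\iota(\gamma)G_n\inverse\to\rho_{E,F}(\gamma)$ for every $\gamma\in\Gamma$, giving continuity of $([E],[F])\mapsto [\rho_{E,F}]$ in $\mc{QC}(\iota)$. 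The main obstacle is the first step: leafwise elliptic regularity must be combined with a parameter-dependent version of the measurable Riemann mapping theorem to guarantee transverse continuity of all leafwise derivatives of $G$ away from $\fL$; the remainder of the argument is bookkeeping with equivariance following directly from the definition of $\rho_{E,F}$.
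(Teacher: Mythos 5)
Your proposal follows the same decomposition as the paper's proof: construct $G$, verify $\rho_{E,F}$ is a quasi-conformal deformation, identify the Ahlfors--Bers parameters via the conformal map $\phi = f\circ G^{-1}$, and deduce continuity from continuity of solutions in the Measurable Riemann Mapping Theorem. The identification step and the continuity step match the paper's argument almost verbatim, and are fine.

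The one genuine difference is in the first step, and here your route is more delicate than needed. To establish that $G$ is smooth away from $\fL$ with transversely continuous leafwise derivatives, you appeal to leafwise elliptic regularity for the Beltrami equation plus a ``parameter-dependent'' version of the MRM to control derivatives transversely --- and you correctly flag this as the weak point of your argument, since Theorem \ref{thm:mrm} as stated only yields uniform (i.e.\ $C^0$) dependence on $\mu$, not $C^k$-dependence. The paper avoids this entirely by a cleaner observation that you actually already have: once $\Phi = (f\sqcup f')\circ G^{-1}$ is shown to be leafwise conformal (your step 2), you can simply write $G = \Phi^{-1}\circ (f\sqcup f')$ on $\fCPL$; since $\Phi^{-1}$ is leafwise conformal (hence automatically leafwise holomorphic and a smooth lamination map) and $f\sqcup f'$ is by hypothesis a smooth lamination map, their composite $G$ has all the required smoothness and transverse regularity with no appeal to elliptic regularity or a parameter-dependent MRM. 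Rerouting your step 1 through your own step 2 both closes the gap you worried about and reproduces the paper's argument.
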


\begin{proof}
We have an equivariant map 
\[\Phi = (f\sqcup f' ) \circ G\inverse : \fCPgL \to U \sqcup V,\]
whose leafwise Beltrami coefficient satisfies \eqref{eqn: beltrami composite}
\[\mu_\Phi \circ G=\frac{\partial G/\partial z}{\partial\overline{G}/\partial\overline{z}}\cdot\frac{\mu_{f\sqcup f'}-\mu_G}{1-\overline{\mu}_G\mu_{f\sqcup f'}}\] 
in the sense of distributions.
By construction, $\mu_G=\mu_{f\sqcup f'}$ a.e. and $|\overline \mu_G \mu_{f\sqcup f'}|<1$, so $\mu_\Phi=0$ a.e. 
This implies that $\Phi$ is leafwise conformal.
Since $f$ and $f'$ are smooth on their domains of definition, we conclude that  $G$ is smooth away from $\fL$.
Furthermore, $G$ descends to a pair of markings $M_\Gamma \to E$ and $\overline M_\Gamma \to F$ the homotopy classes of $\hat f$ and $\hat f'$, respectively.
This proves the claim that $\mc{AB}([\rho_{E,F}]) = ([E], [F])$.

Continuity follows directly from continuity of solutions of the Beltrami equation in the Measurable Riemann Mapping Theorem \ref{thm:mrm}.
\end{proof}

\begin{remark}
\label{rmk:conformal welding}
Note the following features of the conformal welding construction.
\begin{itemize}
\item{Given $([E],[F])\in\T(M_\Gamma)\times\T(\overline{M}_\Gamma)$ let $g_{E,F}:\mb{CP}^1\times\partial\mb{H}^2\to\mb{CP}^1\times\partial\mb{H}^2$ be the solution to the corresponding equation \eqref{eqn:def beltrami}. Then $K_{g_{E,F}}$ is bounded by $\max \{ K_{\hat f}, K_{\hat f'}\}$.}
\item{When $F$ is the mirror image of $E$, we can choose the markings $\hat f$ and $\hat f'$ to satisfy $f'(z):=\bar{f}(\bar{z})$ where $\bar{f}$ is the complex conjugate of $f$.  The corresponding solution $g_{E,F}$ to \eqref{eqn:def beltrami} preserves $\fL$.}
\end{itemize}
\end{remark}

\subsubsection{Mapping class group equivariance}\label{ss.mcg}

Recall that the mapping class group ${\rm Mod}(M_\Gamma)$ of the lamination is the group of smooth surface lamination automorphism up to leafwise homotopy. We briefly describe the canonical action of ${\rm Mod}(M_\Gamma)$ on $\mc{QC}(\iota)$.

Consider $[g\iota g^{-1}]\in\mc{QC}(\iota)$ a quasi-conformal deformation and $[\phi]\in{\rm Mod}(M_\Gamma)$ a mapping class. Let $\phi$ be a (any) representative of $[\phi]$. Lift it to a smooth $\Gamma$-equivariant lamination automorphism $\Phi:\mb{H}^2\times\partial\mb{H}^2\to\mb{H}^2\times\partial\mb{H}^2$. By cocompactness of the action $\Gamma\curvearrowright\mb{H}^2\times\partial\mb{H}^2$ every map $\Phi(\cdot,t):\mb{H}^2\to\mb{H}^2$ is a $K$-quasi-conformal map. As the boundary extension operator ${\rm QC}(\mb{H}^2)\to{\rm Homeo}(\partial\mb{H}^2)$ is continuous with respect to the natural topologies on domain and target, we get that $\Phi$ extends continuously to the closure of $\mb{H}^2\times\partial\mb{H}^2$ in $\mb{CP}^1\times\partial\mb{H}^2$ via a homeomorphism 
\[
\partial\Phi:\mb{RP}^1\times\partial\mb{H}^2\to\mb{RP}^1\times\partial\mb{H}^2.
\]

Define $\Phi^e$ by
\[
\Phi^e:=\left\{
\begin{array}{ll}
\Phi^{-1} &\text{on $\mb{H}^2\times\partial\mb{H}^2$}  \\
\overline{\Phi}^{-1} &\text{on $\overline{\mb{H}}^2\times\partial\mb{H}^2$}\\
\partial\Phi^{-1} &\text{on $\mb{RP}^1\times\partial\mb{H}^2$}\\
\end{array}
\right.
\]
where $\bar{\Phi}$ is the complex conjugate of $\Phi$.

Note that $\Phi^e(\cdot,t)$, being continuous and quasi-conformal on $\mb{CP}^1-\mb{RP}^1$, it is quasi-conformal everywhere (by Proposition \ref{prop: quasicircle removable}). Thus $(g\Phi^e)\iota(g\Phi^e)^{-1}$ is a quasi-conformal deformation of $\iota$ and we can define 
\[
[\phi]\cdot[g\iota g^{-1}]:=[(g\Phi^e)\iota (g\Phi^e)^{-1}].
\]

It is straightforward to check that the action is well-defined. 

We prove the following.

\begin{prop}\label{p.mcg}
The map $\mc{AB}$ is ${\rm Mod}(M_\Gamma)$-equivariant.
\end{prop}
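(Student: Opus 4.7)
The plan is to unwind both actions and observe that they match under $\mc{AB}$. I would fix $[\rho] = [g\iota g^{-1}] \in \mc{QC}(\iota)$, a mapping class $[\phi] \in \Mod(M_\Gamma)$ with a smooth representative $\phi$, and a $\Gamma$-equivariant lift $\Phi : \mb{H}^2 \times \partial\mb{H}^2 \to \mb{H}^2 \times \partial\mb{H}^2$, so that $\Phi \circ \iota(\gamma) = \iota(\phi_*\gamma) \circ \Phi$ for the induced automorphism $\phi_* : \Gamma \to \Gamma$. By its very construction, $\Phi^e$ preserves setwise each of the three pieces $\mb{H}^2\times\partial\mb{H}^2$, $\overline{\mb{H}}^2\times\partial\mb{H}^2$, and $\mb{RP}^1 \times\partial\mb{H}^2$, and the $\Gamma$-equivariance of $\Phi$, $\bar\Phi$, and $\partial\Phi$ translates, after inverting, to
\[
\Phi^e \circ \iota(\gamma) \;=\; \iota(\phi_*^{-1}\gamma) \circ \Phi^e.
\]
A direct calculation then yields $(g\Phi^e)\iota(\gamma)(g\Phi^e)^{-1} = g\iota(\phi_*^{-1}\gamma)g^{-1} = \rho(\phi_*^{-1}\gamma)$, so that $[\phi]\cdot [\rho] = [\rho \circ \phi_*^{-1}]$ as a point in $\mc{QC}(\iota)$.

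Next I would identify the Riemann surface laminations and markings associated with $[\phi]\cdot [\rho]$. Since $\Phi^e$ preserves $\mb{H}^2\times\partial\mb{H}^2$ and $\overline{\mb{H}}^2\times\partial\mb{H}^2$, we have $(g\Phi^e)(\mb{H}^2\times\partial\mb{H}^2) = g(\mb{H}^2\times\partial\mb{H}^2)$ and similarly over $\overline{\mb{H}}^2$, and the quotient by $(\rho \circ \phi_*^{-1})(\Gamma) = \rho(\Gamma)$ agrees on the nose with the quotient by $\rho(\Gamma)$. Thus $E_{[\phi]\cdot\rho} = E_\rho$ and $F_{[\phi]\cdot\rho} = F_\rho$, while the marking $M_\Gamma \to E_\rho$ is now induced by $g\Phi^e|_{\mb{H}^2\times\partial\mb{H}^2} = g\Phi^{-1}$, which descends to $\bar g \circ \phi^{-1}$. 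The analogous statement holds for $\overline{M}_\Gamma \to F_\rho$ using $g\bar\Phi^{-1}$, after recalling that $\overline{M}_\Gamma$ and $M_\Gamma$ coincide as smooth surface laminations and that this identification intertwines the two incarnations of $[\phi]$. By the very definition of the $\Mod(M_\Gamma)$-action on $\T(M_\Gamma)\times \T(\overline{M}_\Gamma)$, this is exactly $[\phi]\cdot \mc{AB}([\rho])$, proving the equivariance.

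Finally, I would check that the computation is insensitive to the choices made: two lifts of $\phi$ differ by an element of $\iota(\Gamma)$, which is absorbed by the $\MG$-conjugation defining $\mc{QC}(\iota)$, while two leafwise homotopic representatives of $[\phi]$ produce, after passing to $\Phi^e$, quasi-conformal lamination automorphisms whose descents to $E_\rho$ and $F_\rho$ are leafwise homotopic. These are the routine covering-space arguments already used in the proofs of Proposition \ref{prop: classical includes} and Lemma \ref{lem: classical Mod includes}. I do not anticipate a genuine obstacle here; the only mildly delicate point is the bookkeeping with $\phi_*$ in the equivariance identity for $\Phi^e$, which is exactly what prevents the a priori tautological expression $(g\Phi^e)\iota(g\Phi^e)^{-1}$ from collapsing back to $\rho$ and instead produces the honest twist $\rho \circ \phi_*^{-1}$ matching the $\Mod(M_\Gamma)$-action on the target.
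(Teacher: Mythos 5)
Your proof is correct and takes essentially the same route as the paper's: both unwind the definition of the $\Mod(M_\Gamma)$-action on $\mc{QC}(\iota)$, observe that the quasi-conformal conjugacy changes from $g$ to $g\Phi^e$, and note that the induced markings on $M_\Gamma$ and $\overline M_\Gamma$ get precomposed with $\phi^{-1}$ and $\bar\phi^{-1}$, matching the defined $\Mod(M_\Gamma)$-action on $\T(M_\Gamma)\times\T(\overline M_\Gamma)$. Your version makes explicit the twisted representation $\rho\circ\phi_*^{-1}$ and the fact that the underlying laminations $E_\rho$, $F_\rho$ are unchanged, which the paper leaves implicit, but the argument is the same.
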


\begin{proof}
Let $[g\iota g^{-1}]\in\mc{QC}(\iota)$ be a quasi-conformal deformation.
By definition $\mc{AB}(\rho)$ is the marked Riemann surface lamination defined by
\[
g:(\mb{CP}^1\times\partial\mb{H}^2-\mc{L})_{/\iota(\Gamma)}\to(\mb{CP}^1\times\partial\mb{H}^2-g(\mc{L}))_{/g\iota g^{-1}(\Gamma)}.
\]

Let $[\phi]\in{\rm Mod}(M_\Gamma)$ be a mapping class. By definition $[\phi]\cdot[g\iota g^{-1}]=[(g\Phi^e)\iota(g\Phi^e)^{-1}]$ where $\Phi^e$ is defined above.
Hence $\mc{AB}([\phi]\cdot[g\iota g^{-1}])$ is the marked Riemann surface lamination defined by
\[
g\circ\Phi^e:(\mb{CP}^1\times\partial\mb{H}^2-\mc{L})_{/\iota(\Gamma)}\to(\mb{CP}^1\times\partial\mb{H}^2-g(\mc{L}))_{/(g\Phi^e)\iota(g\Phi^e)^{-1}(\Gamma)}
\]

By construction, the restrictions of $g\Phi^e$ to $M_\Gamma$ and $\overline{M}_\Gamma$ coincide with $g\phi^{-1}$ and $g\overline{\phi}^{-1}$. Hence $\mc{AB}([\phi]\cdot[g\iota g^{-1}])=[\phi]\cdot\mc{AB}([g\iota g^{-1}])$ as desired.
\end{proof}

\subsection{Complex dilation spectrum}

Given a loxodromic element $h\in\PSL(2,\C)$ we say that its \emph{complex dilation} $L(h)$ is the ratio $\lambda_1(h)/\lambda_2(h)$ where $\lambda_i(h)$ are the eigenvalues of any lift of $h$ ordered so that $|\lambda_1(h)|>|\lambda_2(h)|$. Of course, this means that $h$ can be expressed, in an appropriate chart, as the transformation $z\mapsto L(h) z$, and the translation length of $h$ is computed by $\ell(h)=\log|L(h)|$.

Let now $\rho=g\iota g^{-1}:\Gamma\to\mc{MG}$ be a quasi-conformal deformation. For every $\gamma\in\Gamma-\{1\}$  the  restriction $\rho(\g,\g^+)$ of $\rho(\gamma)$ to the leaf $\mb{CP}^1\times\{\gamma^+\}$ is loxodromic, since it  fixes the distinct points $g(\g^+,\g^+)$ and $g(\g^-,\g^+)$, and has north-south dynamics.

\begin{dfn}[Complex dilation spectrum]\label{d.Clength}
The {\em complex dilation spectrum} of the quasi-conformal deformation $\rho=g\iota g^{-1}$ is the function $L_\rho:[\Gamma]-\{1\}\to\mb{C}^*$ that associates to $[\gamma]\in[\Gamma]$ the complex dilation of $\rho(\g)|_{\CP\times \{\g^+\}}$.
\end{dfn}

More generally if $\rho\in\Hom(\G,\MG)$ is a laminated conformal action covering the standard action of $\G$ on $\deH$, we define its complex dilation spectrum $L_\rho$ to be given by $L_\rho(\g)=\lambda_1(h)/\lambda_2(h)$ if $h=\rho(\g,\g^+)$ is loxodromic and equal to 1 if it is parabolic or elliptic, which coincides with the exponential of its translation length for the action on $\bH^3$.

The following is a generalization of Ahlfors' Lemma \cite[Lemma\,5.1.1]{Otal:hyperbolization}. Recall that the length spectrum $\ell_E$ of a hyperbolic surface lamination $E$ was defined in Definition \ref{d.reallength}, and associates to $\g\in\G$ the length of the closed geodesic of the leaf corresponding to $\g^+$.
\begin{prop}
\label{p.bound}
Let $\rho=g\iota g^{-1}:\Gamma\to\mc{MG}$ be a quasi-conformal deformation with Ahlfors--Bers parameters $\mc{AB}(\rho)=(E_\rho,F_\rho)$. For every $\gamma\in\Gamma$ we have 
\[
\log\left|L_\rho(\gamma))\right|\le 2\min\{\ell_{E_\rho}(\gamma),\ell_{F_\rho}(\gamma)\}.
\]
\end{prop}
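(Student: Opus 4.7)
The plan is to reduce the statement to the classical Ahlfors' Lemma applied on a single leaf. Every quantity appearing in the inequality depends only on the restriction of $\rho(\gamma)$ to the leaf $\CP \times \{\gamma^+\}$, so the laminated structure is essentially irrelevant for a fixed $\gamma$, and the argument reduces to the standard quasi-Fuchsian picture on that leaf.

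First I would unwind the definitions on the $\gamma^+$-leaf. Restricting $g$ gives a quasi-conformal homeomorphism $g_{\gamma^+}\colon \CP \to \CP$ which carries $\RP$ to a quasi-circle $\Lambda_{\gamma^+}$ bounding the two topological disks $\Omega_E^{\gamma^+} := g(\bH^2 \times \{\gamma^+\})$ and $\Omega_F^{\gamma^+} := g(\overline\bH^2 \times \{\gamma^+\})$. The element $h := \rho(\gamma,\gamma^+) = g_{\gamma^+}\circ \gamma\circ g_{\gamma^+}\inverse \in \PSL(2,\bC)$ is loxodromic with fixed points $g(\gamma^+,\gamma^+),\,g(\gamma^-,\gamma^+) \in \Lambda_{\gamma^+}$, complex dilation $L_\rho(\gamma)$, and hence hyperbolic translation length $\log|L_\rho(\gamma)|$ on the axis in $\bH^3$. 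The leaves of $E_\rho$ and $F_\rho$ corresponding to $\gamma^+$ are the hyperbolic annuli $\Omega_E^{\gamma^+}/\langle h\rangle$ and $\Omega_F^{\gamma^+}/\langle h\rangle$, whose core geodesics have lengths $\ell_{E_\rho}(\gamma)$ and $\ell_{F_\rho}(\gamma)$ respectively.

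Next I would apply the classical Ahlfors' Lemma to the pair $(h,\Omega_E^{\gamma^+})$, following Otal's exposition \cite[Lemma 5.1.1]{Otal:hyperbolization}. Take the hyperbolic axis of $h$ in $\Omega_E^{\gamma^+}$ (the geodesic, in the Poincar\'e metric of $\Omega_E^{\gamma^+}$, connecting the two fixed points of $h$), and let $\tilde c$ be a fundamental segment, so that $\tilde c$ has Poincar\'e length $\ell_{E_\rho}(\gamma)$ and connects some point $p$ to $h(p)$. Composing with Sullivan's nearest point retraction $r\colon \Omega_E^{\gamma^+} \to \mathrm{Dome}(\Omega_E^{\gamma^+}) \subset \bH^3$ — which by the Epstein--Marden--Sullivan theorem is $2$-Lipschitz from the Poincar\'e metric to the induced path metric on the dome — produces an equivariant path of length at most $2\ell_{E_\rho}(\gamma)$ in $\bH^3$ joining $r(p)$ to $h(r(p))$. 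Any such path has length at least $d_{\bH^3}(r(p),h(r(p))) \ge \log|L_\rho(\gamma)|$, the translation length of $h$ on its axis. Combining,
\[
\log|L_\rho(\gamma)| \le 2\ell_{E_\rho}(\gamma),
\]
and the identical argument with $\Omega_F^{\gamma^+}$ yields $\log|L_\rho(\gamma)| \le 2\ell_{F_\rho}(\gamma)$.

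No serious obstacle is expected; the only point requiring care is the identification of the hyperbolic structure on the $\gamma^+$-leaf of $E_\rho$ with the Poincar\'e metric on $\Omega_E^{\gamma^+}/\langle h\rangle$, which is immediate from the fact that the Candel uniformization of the Riemann surface lamination $E_\rho$ restricts leafwise to the ordinary hyperbolic uniformization. The estimate above is simply Sullivan's $2$-Lipschitz theorem combined with the trivial lower bound on the distance between a point and its translate by an isometry.
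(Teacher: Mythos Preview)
Your reduction to a single leaf is exactly what the paper does: both arguments observe that everything takes place on $\CP\times\{\gamma^+\}$, where $h=\rho(\gamma,\gamma^+)$ is a loxodromic M\"obius map preserving the two Jordan domains cut out by the quasicircle $\Lambda_{\gamma^+}$, and the relevant leaf of $E_\rho$ is the hyperbolic annulus $\Omega_E^{\gamma^+}/\langle h\rangle$.

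Where you diverge is in the proof of the classical inequality on that leaf. The paper (following Otal's actual argument) normalizes so that $h(z)=L_\rho(\gamma)z$, takes a Riemann map $u:\H^2\to\Omega_E^{\gamma^+}$ sending $0,\infty$ to the fixed points, applies the Koebe $1/4$ theorem to obtain $|u'(z)|/|u(z)|\le 2/|z|$ along the imaginary axis, and integrates over a fundamental segment to get $\log|L_\rho(\gamma)|\le 2\ell_{E_\rho}(\gamma)$. Your route instead invokes the Epstein--Marden--Sullivan $2$-Lipschitz bound for the nearest-point retraction onto the dome and compares with the translation length of $h$ on $\H^3$. Both are standard and correct; the paper's version is more self-contained (Koebe $1/4$ is elementary), while yours is more geometric but imports a theorem whose proof ultimately rests on the same Koebe estimate. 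One minor point: the argument you describe is \emph{not} the one in \cite[Lemma~5.1.1]{Otal:hyperbolization}; Otal gives the Koebe computation that the paper reproduces.
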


\begin{proof}
We follow the proof given in \cite[Lemma\,5.1.1]{Otal:hyperbolization}. Assume for simplicity that $\ell_{E_\rho}(\gamma)\le\ell_{F_\rho}(\gamma)$. Let us show that $\log\left|L_\rho(\gamma))\right|\le 2\ell_{E_\rho}(\gamma)$.

Consider $g(\mb{H}^2_+\times\{\gamma^+\})\subset\mb{CP}^1\times\{\gamma^+\}$. We normalize the action of $\rho(\gamma)=g\gamma g^{-1}$ on $\mb{CP}^1\times\{\gamma^+\}$ so that $\rho(\gamma)z=L_\rho(\gamma)z$.
Let $u:\mb{H}^2\to g(\mb{H}^2_+\times\{\gamma^+\})$ be a uniformization whose Caratheodory extension to the boundary maps $0$ to $0=g(\gamma^-,\gamma^+)$ and $\infty$ to $\infty=g(\gamma^+,\gamma^+)$. This way $u^{-1}\rho(\gamma)u$ acts on $\mb{H}^2$ as the hyperbolic motion $z\to e^{\ell_{E_\rho}(\gamma)}z$. By equivariance, $u(e^{\ell_{E_\rho}(\gamma)}z)=L_\rho(\g)u(z)$.

Now recall that Koebe 1/4 Theorem says that if $v:\pD\to\mb{C}$ is an injective holomorphic map from the unit disk to the complex plane, then the image $v(\pD)$ contains the Euclidean ball of radius $|v'(0)|/4$ centered around $v(0)$. For every $z\in i(0,\infty)\subset\mb{H}^2$ (we identify $\mb{H}^2$ with the upper half-plane), we choose $v_z:=u\circ w_z$ where $w_z:\pD\to\mb{H}^2$ is the biholomorphism $w_z(s)=-|z|\frac{is+1}{s+i}$, which sends $0$ to $z$ with $|w'_z(0)|=2|z|$. As $0$ lies on the boundary of $v_z(\mb{H}^2)$, Koebe's theorem gives that $|v_z'(0)|/4{\leq}|v_z(0)|$ since the ball of radius $|v_z'(0)|/4$ centered at $v_z(0)$ must be contained in the image of $v$. Observe that $v_z(0)=u(w_z(0))=u(z)$ and that $v_z'(0)=u'(z)w_z'(0)=u'(z)2|z|$. Thus, for every $z\in i(0,\infty)$, we have
\[
\frac{|u'(z)|}{|u(z)|}\le\frac{2}{|z|}.
\]
We are now able to conclude the proof. Consider the path $\alpha:[0,1]\to\mb{H}^2$ given by $\alpha(t)=e^{\ell_E(\gamma)t}i$. Note that, by equivariance of $u$, we have $u(\alpha(1))=L_\rho(\gamma)u(\alpha(0))$. Thus
\begin{align*}
\log|L_\rho(\gamma)| &=\log\left|\frac{u(\alpha(1)}{u(\alpha(0))}\right|\\
 &=\int_0^1{\frac{{\rm d}}{{\rm d}t}\log|u(\alpha(t))|{\rm d}t}\\
 &=\int_0^1{\frac{|u'(\alpha(t))|}{|u(\alpha(t)|}|\alpha'(t)|{\rm d}t}\\
 &\le\int_0^1{\frac{2}{|\alpha(t)|}|\alpha'(t)|{\rm d}t}=2\ell_E(\gamma).
\end{align*}
\end{proof}

Lastly, we prove that if the Ahlfors--Bers parameters lie on the diagonal, then the complex dilation spectrum has only real values.

\begin{prop}
\label{prop:real dilation} 
Let $\rho=g\iota g^{-1}:\Gamma\to\mc{MG}$ be a quasi-conformal deformation with Ahlfors--Bers parameters $\mc{AB}(\rho)\in\Delta\subset\T(M_\Gamma)\times\T(\overline{M}_\Gamma)$. For every $\gamma\in\Gamma$ we have $L_\rho(\gamma)\in\mb{R}$. 
\end{prop}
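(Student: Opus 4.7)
The plan is to exploit the symmetry forced by $\mc{AB}(\rho)\in\Delta$ to select a representative $\rho=g\iota g^{-1}$ in which the quasi-conformal homeomorphism $g$ preserves the standard laminated limit set $\fL=\RP\times\deH$. The reality of $L_\rho(\gamma)$ then follows because the loxodromic M\"obius map $\rho(\gamma,\gamma^+)$ will leave invariant the circle $\RP\times\{\gamma^+\}$.

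First I would note that the complex dilation $L_\rho(\gamma)$ is an invariant of the $\MG$-conjugacy class of $\rho$: conjugation by an element of $\MG$ conjugates each leaf-restriction $\rho(\gamma,\gamma^+)\in\PSL(2,\C)$ by a M\"obius transformation, which preserves the eigenvalue ratio. Hence it suffices to exhibit one convenient representative of $[\rho]$.

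Next, using $\mc{AB}([\rho])\in\Delta$, I would build such a representative via the conformal welding inverse of Proposition \ref{p.invAB}: by injectivity of $\mc{AB}$ (Proposition \ref{p.ABinjective}), $[\rho]=[\rho_{E_\rho,F_\rho}]$. Since $F_\rho$ is the mirror image of $E_\rho$, the second bullet of Remark \ref{rmk:conformal welding} permits one to choose markings $\hat f,\hat f'$ related by leafwise complex conjugation, producing a Beltrami coefficient \eqref{eqn:def beltrami} satisfying the reflection symmetry $\mu_{f\sqcup f'}(\bar z,t)=\overline{\mu_{f\sqcup f'}(z,t)}$. The uniqueness clause of the Measurable Riemann Mapping Theorem \ref{thm:mrm} then forces the normalized solution $g=g_{E_\rho,F_\rho}$ to satisfy $g(\bar z,t)=\overline{g(z,t)}$ on every leaf, so that $g$ preserves $\fL$ slice by slice.

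With such a representative in hand, the conclusion is immediate: the leaf-restriction $\rho(\gamma,\gamma^+)$ is a M\"obius transformation preserving the circle $\RP\times\{\gamma^+\}$, hence lies in $\PSL(2,\R)$. Being loxodromic, it has real eigenvalues, so $L_\rho(\gamma)=\lambda_1/\lambda_2\in\R$. The main technical point is the reflection-symmetry step producing the $\fL$-preserving welding; this is essentially already recorded in Remark \ref{rmk:conformal welding}, but it requires normalizing the welding at three points of $\RP$ (say $0,1,\infty$) so that both $g(\cdot,t)$ and $z\mapsto\overline{g(\bar z,t)}$ solve the same normalized Beltrami equation leaf by leaf and can be compared via uniqueness.
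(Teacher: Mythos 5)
Your proof is correct and takes essentially the same route as the paper: both reduce the claim to the observation that when $\mc{AB}(\rho)\in\Delta$, the conformal welding representative of $[\rho]$ preserves $\fL=\RP\times\deH$, which is recorded in Remark \ref{rmk:conformal welding}. You have simply unpacked that remark — the reflection symmetry of the Beltrami coefficient and the uniqueness step in Theorem \ref{thm:mrm} — where the paper cites it directly.
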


\begin{proof}
By the discussion in Remark \ref{rmk:conformal welding}, if $\mc{AB}(\rho)=(E,E)$, then $\rho$ is equivalent to a representation that leaves invariant $\mb{RP}^1\times\partial\mb{H}^2$. It follows that $\rho(\gamma)$ preserves $\mb{RP}^1\times\{\gamma^+\}$ and, hence, it has real dialtion spectrum.
\end{proof}

\section{Hyperconvex representations}
\label{sec:hyperconvex}

In this section, we introduce and discuss preliminaries on hyperconvex representations $\rho:\Gamma\to\PSL(d,\CC)$.

\subsection{Anosov representations}\label{s.Anosov}
Anosov representations of hyperbolic groups were introduced by Labourie \cite{Labourie:Anosov} and Guichard-Wienhard \cite{GW:Anosov_DOD} using dynamical properties of the lift of the geodesic flow on the flat bundle associated to the representation. 
We will need an equivalent characterization combining the work of Kassel-Potrie \cite{KP} and the work of Kapovich-Leeb-Porti \cite{KLP}. 
 For a finitely generated group $\Gamma$ we choose a symmetric finite generating set and we denote by $|\cdot|_\infty$ the stable length, namely $|\gamma|_\infty=\lim_{n\to\infty}\frac{|\gamma^n|}{n}$, where $|\cdot|$ is the word-length associated to our choice of finite generating set.
\begin{dfn}[Anosov representation]
  Let $\G$ be hyperbolic, $1\leq k\leq d$. A representation $\rho:\G\to\PSL(d,\C)$ is \emph{$k$-Anosov} if and only if there exist positive constants $c_1,c_2$ such that
\[
\log|\lambda^{k}(\rho(\g))|-\log|\lambda^{k+1}(\rho(\g))|\geq c_1|\gamma|_\infty-c_2.
\]   
\end{dfn}
Here for $g\in\PSL_d(\C)$ we denote by $\lambda_1(g),\ldots, \lambda_d(g)$ the eigenvalues of any lift of $g$ to $\SL_d(\C)$ ordered so that their moduli are non-decreasing.

It follows from the definition that a $k$-Anosov representation is also $(d-k)$-Anosov. Any $k$-Anosov representation admits continuous, equivariant, transverse boundary maps $\xi^k:\partial\Gamma\to\Gr_k(\C^d)$, $\xi^{d-k}:\partial\Gamma\to\Gr_{d-k}(\C^d)$ \cite{KLP,BPS}. The latter means that for every $x\neq y$, the sum $\xi^k(x)+\xi^{d-k}(y)$ is direct. We will sometimes   just write for a point $x\in\partial\Gamma$
\[
x^k:=\xi^k(x)\in\Gr_k(\C^d).
\]
If $\rho$ is $k_1$ and $k_2$-Anosov with $k_1<k_2$ then for every $x\in\deG$ it holds $x^{k_1}<x^{k_2}$.

The boundary map $\xi^k$ parametrizes the \emph{limit set} $\Lambda^k_\rho:=\xi^k(\deG)\subset\Gr_k(\C^d)$ of the $\rho$-action on $\Gr_k(\C^d)$, namely the smallest closed $\rho(\G)$-invariant set.

\subsection{Hyperconvex representations}\label{subsec: hyperconvex}

Hyperconvex representations, the main focus of this article, form a subclass of Anosov representations in ${\rm PSL}(d,\mb{C})$ satisfying additional transversality properties.  For the next definition we declare any representation in $\PSL(d,\C)$ to be both $0$ and $d$-Anosov with the degenerate boundary maps $\xi^0(x)=\{0\}$ and $\xi^d(x)=\C^d$ for all $x\in\deG$.

\begin{dfn}[$k$-hyperconvex]
\label{d.hyp}
A representation $\rho: \Gamma \to{\rm PSL}(d,\mb{C})$ is {\em $k$-hyperconvex} for $1\le k\le d-1$ if it is $\{k-1,k+1,d-k\}$-Anosov, and, for all distinct triples $x, y, z\in \partial \Gamma$, we have 
\begin{equation}\label{e.hyp}\left((x^{d-k}\cap z^{k+1})+z^{k-1}\right)\cap\left((y^{d-k}\cap z^{k+1})+z^{k-1}\right) = z^{k-1}.
\end{equation}
\end{dfn}

\begin{remark}
This notion, studied in \cite{FPV2}, is the dual notion to property $H_k$ as defined in \cite[Definition 6.1]{PSW:HDim_hyperconvex}, requiring that the sum $(x^k\cap z^{d-k+1})+(y^k\cap z^{d-k+1})+z^{d-k-1}$ is direct, see \cite[Remark 2.5]{FPV2}. 
\end{remark}

A useful tool to study hyperconvex representations is the tangent projection construction, which we discuss in the next proposition. 
They will provide a marking of the laminated limit set of the conformal action which we will associate to a hyperconvex representation (recall Equation \eqref{e.foliatedmarking} in Section \ref{subsubsec:continuity of AB}).

\begin{prop}[Tangent projections]\label{p.tangentproj}
Let $\rho:\Gamma\to{\rm PSL}(d,\mb{C})$ be $\{k-1,k+1,d-k\}$-Anosov. Then, for each $z\in \partial \Gamma$, the function 
$\xi_z^k:\partial \Gamma\to\bP(z^{k+1}/z^{k-1})$
\[
\xi_z^k(x):=\left\{
\begin{array}{l l}
{[x^{d-k}\cap z^{k+1}]} &\text{if $x\neq z$,}\\
{[z^k]} &\text{if $x=z$}\\
\end{array}
\right.
\]
is well-defined. The representation $\rho$ is $k$-hyperconvex if and only if for every $z\in\deG$, $\xi_z^k$ is injective. In this case, for every $z$, $\xi_z$ is continuous.
\end{prop}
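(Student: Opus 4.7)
The plan is to prove the three claims in sequence: well-definedness of $\xi_z^k$, its equivalence with the hyperconvexity condition \eqref{e.hyp}, and continuity when $\rho$ is $k$-hyperconvex. The key technical observation I would exploit throughout is that $\{k-1, k+1, d-k\}$-Anosov, combined with the $k \leftrightarrow d-k$ symmetry of the Anosov property, actually supplies boundary maps and transversality relations at every dimension in $\{k-1, k, k+1, d-k-1, d-k, d-k+1\}$.

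For well-definedness at $x\neq z$, I would argue that $x^{d-k}\cap z^{k+1}$ is a line not contained in $z^{k-1}$. The transversality $x^{d-k-1}\cap z^{k+1}=0$ (from $(k+1)/(d-k-1)$-Anosov) combined with the codimension-one nesting $x^{d-k-1}\subset x^{d-k}$ forces $\dim(x^{d-k}\cap z^{k+1})\leq 1$, while dimension count gives $\geq 1$. The transversality $x^{d-k}\cap z^k=0$ (from $k/(d-k)$-Anosov) and $z^{k-1}\subset z^k$ then yield $x^{d-k}\cap z^{k-1}=0$, so the line projects nontrivially to $\mathbb{P}(z^{k+1}/z^{k-1})$. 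For $x=z$, the line $[z^k]$ is manifestly well-defined in the 2-dimensional quotient.

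The hyperconvexity--injectivity equivalence reduces to unpacking: two $k$-dimensional subspaces of the $(k+1)$-dimensional $z^{k+1}$ are either equal or intersect in exactly a $(k-1)$-dimensional subspace; when both contain the fixed $(k-1)$-plane $z^{k-1}$, they coincide iff their intersection strictly contains $z^{k-1}$. This translates $\xi_z^k(x) = \xi_z^k(y)$ for distinct $x,y\neq z$ directly into the negation of \eqref{e.hyp}, while injectivity at $z$ is automatic from the first step.

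For continuity, away from $z$ it follows from continuity of $\xi^{d-k}$ and the fact that Grassmannian intersections of stable dimension vary continuously, composed with the quotient by $z^{k-1}$. At $x=z$, I would first handle the case $z=\gamma^+$ for some loxodromic $\gamma\in\Gamma$ by a dynamical argument. The $\{k-1,k,k+1\}$-Anosov property (the middle index coming from symmetry applied to $(d-k)$-Anosov) makes $\rho(\gamma)$ preserve the flag $z^{k-1}\subset z^k\subset z^{k+1}$ and act on the quotient $z^{k+1}/z^{k-1}\cong\mathbb{C}^2$ with two distinct eigenvalues $\lambda_k,\lambda_{k+1}$ satisfying $|\lambda_k|>|\lambda_{k+1}|$, yielding north--south dynamics on $\mathbb{P}(z^{k+1}/z^{k-1})\cong\mathbb{CP}^1$ with attractor $[z^k]$. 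Since $\xi_z^k$ is $\langle\gamma\rangle$-equivariant, writing $x_n=\gamma^{m_n}y_n$ with $y_n$ bounded away from $\gamma^\pm$ and $m_n\to\infty$ gives $\xi_z^k(x_n) = \rho(\gamma)^{m_n}_*\xi_z^k(y_n)\to[z^k]$. For arbitrary $z$, I would extend by approximation using density of $\{\gamma^+:\gamma\in\Gamma\}$ in $\partial\Gamma$ and the joint continuity of $(x,z)\mapsto (x^{d-k}\cap z^{k+1})+z^{k-1}$ off the diagonal. The main obstacle is precisely this last step: comparing $\mathbb{P}(z^{k+1}/z^{k-1})$ and $\mathbb{P}(z_j^{k+1}/z_j^{k-1})$ for $z_j\to z$ and executing a diagonal extraction to promote approximate convergence to exact convergence is the delicate part of the proof.
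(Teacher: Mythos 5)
Your arguments for well-definedness and for the injectivity/$k$-hyperconvexity equivalence are correct and match the paper's compressed reasoning: the $\{k-1,k+1,d-k\}$-Anosov transversalities (together with duality) give $\dim(x^{d-k}\cap z^{k+1})=1$ and $x^{d-k}\cap z^k=0$, and the observation that two $k$-planes inside $z^{k+1}$ both containing $z^{k-1}$ coincide iff their intersection strictly contains $z^{k-1}$ is exactly how \eqref{e.hyp} unpacks into injectivity of $\xi_z^k$.

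The gap is in the continuity of $\xi_z^k$ at $z$. The paper does not prove this in-line; it cites \cite[Proposition 4.9]{BeyP1} and \cite[Proposition 6.7]{PSW:HDim_hyperconvex} (see also \cite[Proposition 2.6]{FPV2}). Your north--south argument at $z=\gamma^+$ is correct in outline, though you should justify why $\xi_z^k(y_n)$ stays away from the repeller: identify the repeller as $\xi_z^k(\gamma^-)$, then use injectivity (hyperconvexity) together with the already-established continuity of $\xi_z^k$ on $\partial\Gamma\setminus\{z\}$ to see that the image of a compact fundamental domain $K\subset\partial\Gamma\setminus\{\gamma^\pm\}$ is compact and avoids $\xi_z^k(\gamma^-)$. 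The extension to arbitrary $z$, which you flag, is a genuine missing step and not mere bookkeeping: density of $\{\gamma^+\}$ supplies no a priori uniformity, since the contraction rate in your argument depends on $\gamma$, so a diagonal extraction over $\gamma_j^+\to z_0$ cannot by itself produce a modulus of continuity for $\xi_{z_0}^k$ at a non-fixed $z_0$. The cited proofs sidestep this by phrasing the claim as continuity across the diagonal of the $\Gamma$-equivariant map $(x,z)\mapsto (x^{d-k}\cap z^{k+1})+z^{k-1}$ on $\partial^{(2)}\Gamma$, and then combining cocompactness of the $\Gamma$-action on distinct triples with the \emph{uniform} domination/contraction estimates built into the Anosov property. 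Without some version of that uniform-contraction input, your sketch does not close.
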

\begin{proof}
Since the representation is Anosov, for every $x,z$ the intersection $x^{d-k}\cap z^{k+1}$ is one dimensional and not contained in $z^k$, as a result $[x^{d-k}\cap z^{k+1}]$ gives a well defined point in $\bP(z^{k+1}/z^{k-1})$ distinct from $[z^k]$.
The injectivity of the maps $\xi^k_z$ outside the point $z$ is precisely Equation \eqref{e.hyp}, from which the second claim follows.
Continuity (at $z$) of $\xi^k_z$ was proven in \cite[Proposition 4.9]{BeyP1} building on ideas from \cite[Proposition 6.7]{PSW:HDim_hyperconvex}, see also \cite[Proposition 2.6]{FPV2}.
\end{proof}

Specialize now to surface groups $\Gamma$.
We denote by 
$$\Lambda^k_z:=\xi_z^k(\deG)\subset \P(z^{k+1}/z^{k-1})$$
the image of the boundary map $\xi_z$, we proved in \cite{FPV2} that the Jordan curves $\Lambda^k_z$ are uniform quasi-circles.
\begin{prop}[{\cite[Proposition 4.12]{FPV2}}]\label{p.qc}
There exists $K$ only depending on $\rho$ such that, for every $z\in\partial\Gamma$ the Jordan curve  $\xi_z:\deG\to\CP$ is  $K$-quasi-Möbius.
\end{prop}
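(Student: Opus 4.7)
The plan is to combine $\G$-equivariance of the family of tangent projections with a pointwise estimate and a minimality argument.  Fix once and for all a reference Fuchsian uniformization of $\G$, so that $\deG\cong \RP$ carries a canonical M\"obius structure, and work throughout with cross-ratios with respect to this structure on the source and with the intrinsic cross-ratio on the target projective line $\P(z^{k+1}/z^{k-1})$.

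First I would record the equivariance of the family: for each $\g\in\G$,
\[
\xi_{\g z}^k(\g x)=\bar\rho(\g)\,\xi_z^k(x),
\]
where $\bar\rho(\g)\colon \P(z^{k+1}/z^{k-1})\to \P((\g z)^{k+1}/(\g z)^{k-1})$ is the projective isomorphism induced by $\rho(\g)$.  Since both the Fuchsian action of $\g$ on $\RP$ and $\bar\rho(\g)$ are projective, they preserve cross-ratios; hence the quasi-M\"obius constant $K_z$ of $\xi_z^k$ depends only on the $\G$-orbit of $z$.  By minimality of the action of $\G$ on $\deG$, every orbit is dense, so it suffices to show $K_{z_0}<\infty$ at a single $z_0$ and then invoke lower semi-continuity of quasi-M\"obius constants under locally uniform convergence, together with continuity of $(z,x)\mapsto \xi_z^k(x)$ from Proposition \ref{p.tangentproj}, to conclude $K_z\le K_{z_0}$ for all $z$.

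For the pointwise estimate at $z_0$, I would exploit the Anosov and hyperconvex data to control the cross-ratio distortion.  Writing $\xi_{z_0}^k(x)=[x^{d-k}\cap z_0^{k+1}]\bmod z_0^{k-1}$, the map factors as the boundary map $\xi^{d-k}\colon\deG\to \Gr_{d-k}(\C^d)$ (which is H\"older by the Anosov property) followed by a rational function on $\Gr_{d-k}(\C^d)$ determined by the flag $z_0^{k-1}\subset z_0^{k+1}$.  For a quadruple $(a,b,c,e)\in\deG$ that is well-separated and bounded away from $z_0$, the target cross-ratio $[\xi_{z_0}^k(a),\xi_{z_0}^k(b),\xi_{z_0}^k(c),\xi_{z_0}^k(e)]$ admits an algebraic expression in the Pl\"ucker coordinates of $\xi^{d-k}(a),\ldots,\xi^{d-k}(e)$, and the Anosov transversality gives uniform two-sided bounds in terms of $|[a,b,c,e]|$.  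The hyperconvexity hypothesis \eqref{e.hyp}, encoded in the injectivity of $\xi_{z_0}^k$ (Proposition \ref{p.tangentproj}), handles the regime where two of the points collide: the $\{k-1,k+1,d-k\}$-Anosov spectral gap forces the images to separate at a quantitatively controlled rate.

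The main obstacle I expect is the degenerate regime where the four points approach $z_0$.  Here neither the transversality estimate away from $z_0$ nor the collision estimate applies directly, and one must perform a refined local analysis at $z_0$, exploiting that the Anosov boundary maps are $C^0$-transverse to $z_0^{k-1}$ on a neighborhood of $z_0$ rather than only at a single point, in order to keep the relevant rational function bounded.  Once this local estimate is in place, a compactness argument on the complement of a small neighborhood of $z_0$ patches things together and produces a Tukia--V\"ais\"al\"a control function for $\xi_{z_0}^k$; the equivariance and minimality reduction from the first paragraph then upgrades this single pointwise estimate to the claimed uniform quasi-M\"obius bound.
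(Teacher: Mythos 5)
Your reduction step — using $\Gamma$-equivariance, minimality of the $\Gamma$-action on $\deG$, and lower semi-continuity of the quasi-M\"obius constant under uniform convergence to pass from a single basepoint $z_0$ to all $z$ — is structurally sound, provided one also invokes the joint continuity of $(z,x)\mapsto\xi_z^k(x)$ (Lemma \ref{lem: double boundary continuous into bundle}) together with continuity of the bundle trivialization, so that $z_n\to z$ really does give $\xi_{z_n}^k\to\xi_z^k$ uniformly as maps into $\CP$. But this reduction does not actually simplify the essential estimate: the quasi-M\"obius bound at the single $z_0$ must already control cross-ratio distortion uniformly over \emph{all} quadruples in $\deG$, including those accumulating at $z_0$, and that is exactly the non-compact degeneration you would face in the uniform-in-$z$ version. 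You identify this regime yourself and then leave it unresolved; the appeal to ``$C^0$-transversality on a neighborhood of $z_0$'' is not a proof. The rational map $X\mapsto[X\cap z_0^{k+1}]\bmod z_0^{k-1}$ on $\Gr_{d-k}(\C^d)$ is singular at $X=z_0^{d-k}$, and the fact that its composition with $\xi^{d-k}$ extends continuously through $z_0$ with value $[z_0^k]$ is precisely the delicate content of Proposition \ref{p.tangentproj}, which furnishes no modulus. Without a quantitative rate there, the image cross-ratio of a quadruple limiting onto $z_0$ is uncontrolled, and the pointwise quasi-M\"obius bound at $z_0$ is not established.

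A secondary but real issue: the assertion that for well-separated quadruples bounded away from $z_0$, ``the Anosov transversality gives uniform two-sided bounds in terms of $|[a,b,c,e]|$'' is not an input one gets for free. The Anosov property yields H\"older regularity of $\xi^{d-k}$ and an open transversality condition, but turning that into a two-sided cross-ratio control function is the substance of the proposition, not a lemma upstream of it. In summary, the organizational skeleton (equivariance, symmetry reduction, factoring through $\xi^{d-k}$) is reasonable, but every place where a quantitative estimate must actually appear is left as a gesture, and the hardest of these — the degeneration at the basepoint $z_0$ — is explicitly punted. This is a genuine gap, not a cosmetic one.
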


Since transversality is an open condition, and $\G$ acts co-compactly on $\deG^{(3)}$, hyperconvexity is an open condition on representations \cite[Proposition 6.2]{PSW:HDim_hyperconvex}.
We denote by $$\Xi^k(\G,\PSL(d,\C))\subset\Hom(\G,\PSL(d,\C))/\!\!/\PSL(d,\C)$$ 
the set of conjugacy classes of $k$-hyperconvex representations, and by $$\Xi^{\rm hyp}(\G,\PSL(d,\C))\subset\Hom(\G,\PSL(d,\C))/\!\!/\PSL(d,\C)$$ the set of conjugacy classes of \emph{fully hyperconvex} representation, namely those representations that are $k$-hyperconvex for every $1\leq k\leq d-1$, so that we have 
$$\Xi^{\rm hyp}(\G,\PSL(d,\C))\subset\Xi^k(\G,\PSL(d,\C))\subset\Hom(\G,\PSL(d,\C))/\!\!/\PSL(d,\C).$$
Since the character variety $\Hom(\G,\PSL(d,\C))/\!\!/\PSL(d,\C)$ is a complex variety, the open subsets  $\Xi^{\rm hyp}(\G,\PSL(d,\C))\subset\Xi^k(\G,\PSL(d,\C))$, albeit sometimes singular, inherit a natural complex structure.

\subsection{The laminated conformal action associated to a hyperconvex representation}\label{sec:tangent}
To a $k$-hyperconvex representation $\rho:\G\to\PSL_d(\C)$, we can naturally  associate an element $\rho^k\in\Hom(\G,\MG)/\MG$, constructed as follows. While most of the construction in this section works for general hyperbolic groups (and are carried out in that generality in \cite{FPV2}), we restrict here, as customary in the paper, to a torsion-free cocompact lattice $\G<\PSL_2(\R)$ and freely identify $\deG$ with $\deH$.

Analogously to \cite[Section 2]{FPV2}, we consider the partial flag manifolds
\begin{align*}
&\calF_{k-1,k,k+1}=\{(U,V,W)\in\mc{F}_{k-1}(\C)\times\mc{F}_{k}(\C)\times\mc{F}_{k+1}(\C)\left|\;U< V< W\right.\}\\
&\calF_{k-1,k+1}=\{(U,W)\in\mc{F}_{k-1}(\C)\times\mc{F}_{k+1}(\C)\left|\;U< W\right.\},
\end{align*}
so that the  continuous surjection
\[
\calF_{k-1,k,k+1}\to \calF_{k-1,k+1}
\]  
is a fiber bundle with fibers isomorphic to $\mb{CP}^1$.
It can be naturally identified with the canonical fiber bundle $\mathcal B\to\calF_{k-1,k+1}$ with fiber $\mb{P}(W/U)$ over the pair $(U,W)\in\calF_{k-1,k+1}$.

We use the map $t\in \deG \mapsto (t^{k-1}, t^{k+1}) \in \mathcal {F}_{k-1,k+1}$ to pullback a $\CP$-bundle $\mathcal B_\rho^k \to \deG$.
For each $\gamma \in \Gamma$ and $t\in \deG$, the linear map $\rho(\gamma) \in \mathrm{PSL}(d,\C)$ induces a projective isomorphism 
\begin{equation}\label{eqn: mobius cocycle}
    \rho^k(\g,t):\mb{P}(t^{k+1}/t^{k-1})\to\mb{P}((\g t)^{k+1}/(\g t)^{k-1}).
\end{equation}
Thus $\rho$ induces a $\G$ action on $\mathcal B_\rho^k$ by $\CP$-bundle automorphisms covering the natural action of $\G$ on $\deG$.

In order to define the desired conformal action $\rho^k$, we choose an identification of $\mathcal B_\rho^k$ with $\fCP$: we identify $\deG$ with $\deH$ through our chosen hyperbolization $\G<\PSL_2(\R)$ and we trivialize the bundle $\mathcal B_\rho^k \to \deG$ by finding three continuous sections in fiber-wise general position, with the aid of the tangent projections $\xi_\cdot^k$ discussed in Proposition \ref{p.tangentproj}.
Specifically we fix a triple of pairwise distinct points $x,y,z\in \deG$, and define 
\[T: \mathcal B_\rho^k \to \fCP\] 
fiber-wise as the unique bi-holomorphism with 
\[(\xi_t(x), \xi_t(y),\xi_t(z)) \mapsto (0,1,\infty)\]
in the fiber over $t$.
Then $T$ is a isomorphism of $\CP$-bundles trivializing $\mathcal B_\rho^k$. 
Conjugating the $\G$ action \eqref{eqn: mobius cocycle} on $\mathcal B_\rho^k$  by $T$ defines the desired conformal action of $\G$ on $\fCP$ still denoted $\rho_k$

We will leave, from now on, implicit the trivialization $T$ of the bundle $\mathcal B_\rho^k$ and just identify it with $\fCP$.

\begin{prop}
The class in $\Hom(\G,\MG)/\MG$ of the laminated conformal action $\rho^k\in\Hom(\G,\MG)$ associated to a $k$-hyperconvex representation $\rho:\G\to\PSL(d,\C)$  is independent on the choices.
\end{prop}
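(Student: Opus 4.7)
The construction $\rho \mapsto \rho^k$ depends on the choice of a triple $(x,y,z) \in \deG$ of pairwise distinct points used to trivialize the $\CP$-bundle $\mathcal B_\rho^k \to \deG$ via the fiberwise bi-holomorphism $T: \mathcal B_\rho^k \to \fCP$ sending $(\xi_t^k(x),\xi_t^k(y),\xi_t^k(z))$ to $(0,1,\infty)$ on each fiber. (The identification $\deG \cong \deH$ via the fixed hyperbolization is part of the global setup of the section, and the remaining identifications are canonical.) Given a second triple $(x',y',z')$ with trivialization $T'$ and resulting laminated conformal action $(\rho^k)'$, the plan is to show that the fiberwise change of trivialization
\[
g := T \circ (T')^{-1} : \fCP \to \fCP
\]
belongs to $\MG$. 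Equivariance $\rho^k = g \cdot (\rho^k)' \cdot g^{-1}$ will then be automatic, since both $\rho^k$ and $(\rho^k)'$ are by construction conjugates by $T$ and $T'$ of the same intrinsic $\Gamma$-action on $\mathcal B_\rho^k$ coming from the cocycle \eqref{eqn: mobius cocycle}.

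To verify $g \in \MG$, note first that $g$ covers the identity on $\deH$, so the base component $f$ in the expression \eqref{eqn:automorphism} is the identity, trivially an element of $\PSL(2,\R)$. On each fiber $\CP \times \{t\}$, the restriction $g_t = T_t \circ (T'_t)^{-1}$ is a composition of two bi-holomorphisms of $\CP$, hence an element of $\PSL(2,\C)$; concretely, it is the unique M\"obius transformation carrying $(0,1,\infty)$ to $\bigl(T_t(\xi_t^k(x')), T_t(\xi_t^k(y')), T_t(\xi_t^k(z'))\bigr)$. Continuity in $t$ of the tangent projections $\xi_t^k(w)$ for each of the six reference points $w \in \{x,y,z,x',y',z'\}$ is given by Proposition \ref{p.tangentproj}; since $T_t$ itself is the unique M\"obius map characterized by the triple over $(x,y,z)$, the three image points above vary continuously in $t$, and hence so do the coefficients of $g_t$ as a fractional linear transformation.

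The smoothness requirement $g \in \Aut^\infty(\fCP)$ from Definition \ref{d.autsurlam} demands that all leafwise partial derivatives of $g$ vary continuously in $t$. Since each $g_t$ is M\"obius, its derivatives of all orders in $z$ are explicit rational expressions in its three coefficients, which depend continuously on $t$ by the previous step; hence the required smoothness holds. We conclude that $g \in \MG$ and therefore $[\rho^k] = [(\rho^k)'] \in \Hom(\Gamma, \MG)/\MG$, as desired. The main (and really only) substantive input is the continuity of tangent projections from Proposition \ref{p.tangentproj}; the rest is formal bookkeeping about bundle trivializations, and hyperconvexity enters only insofar as it guarantees that the triples $(\xi_t^k(x), \xi_t^k(y), \xi_t^k(z))$ are pairwise distinct on each fiber so that both trivializations are globally well-defined.
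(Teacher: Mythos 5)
Your proof is correct and spells out in full detail the one-line argument the paper gives (``any two choices of sections are related by an element in $\MG$''): you verify directly that the change of trivialization $g = T\circ (T')^{-1}$ covers the identity on $\deH$, is fiberwise M\"obius, and depends continuously (in all $C^\infty$-derivatives) on $t$, and conclude $[\rho^k]=[(\rho^k)']$ by conjugacy. One minor point: the continuity assertion ``$\xi_z^k$ is continuous'' in Proposition~\ref{p.tangentproj} refers to continuity in the argument $x$ for each fixed basepoint $z$, whereas what you really need is continuity of the section $t\mapsto \xi_t^k(w)$ for each fixed $w$; that joint/transverse continuity is the content of Lemma~\ref{lem: double boundary continuous into bundle} (from \cite{FPV2}), which is the more appropriate reference, though the paper itself also gestures at Proposition~\ref{p.tangentproj} in setting up the trivialization.
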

\begin{proof}
This follows readily from the fact that any two choices of sections are related by an element in $\MG$. As such, the class $[\rho^k]$ is independent of the choice of the points $x,y,z$.
\end{proof}
\begin{remark}
 We will prove in Section \ref{sec:marking} that $[\rho^k]\in\qc(\iota)$.   
\end{remark}

Note that $\rho^k$ also defines an action of $\G$ on the trivial bundle $\HH^3\times \deH$ which is isometric on the fibers and covers the natural action of $\G$ on $\deH$.

The laminated conformal action associated with a $k$-hyperconvex representation encodes the $k$-th eigenvalue gaps. Given a $k$-Anosov representation 
$\rho:\G\to\PSL(d,\C)$ we denote by
$L_\rho^k:\G\to\C$ its $k$-th eigenvalue gap, so that for every $\g\in\G$,
\begin{equation}\label{e.gap}L_\rho^k(\gamma)=\frac{\lambda^k(\rho(\gamma))}{\lambda^{k+1}(\rho(\gamma))}.
\end{equation}
Recall from Definition \ref{d.Clength} that we denote by $L_\eta:[\Gamma]-\{1\}\to\mb{C}^*$ the complex dilation spectrum of a laminated conformal action $\eta:\G\to\MG$.
It follows from the construction the $k$-laminated conformal action $\rho^k:\G\to\MG$ encodes the $k$-th eigenvalue gap.
\begin{prop}\label{p.Clength}
Let $\rho:\G\to\PSL(d,\C)$ be $k$-hyperconvex. Then
$$L_{\rho^k}=L_\rho^k.$$
\end{prop}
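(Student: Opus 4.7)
The plan is to unwind the definitions along the fiber over $\gamma^+\in\deG$. Fix $\gamma\in\Gamma\setminus\{1\}$. By the construction preceding the proposition, the fiber of $\mathcal B_\rho^k$ over $\gamma^+$ is $\P\big((\gamma^+)^{k+1}/(\gamma^+)^{k-1}\big)$, and the self-map $\rho^k(\gamma,\gamma^+)$ of this fiber is the projective transformation induced by $\rho(\gamma)$ acting on $(\gamma^+)^{k+1}/(\gamma^+)^{k-1}$, followed by conjugation by the trivialization $T$. Since $T$ is a fiberwise biholomorphism, it preserves the complex dilation of any loxodromic element, so it suffices to compute the dilation of the Möbius transformation induced by $\rho(\gamma)$ on $\P\big((\gamma^+)^{k+1}/(\gamma^+)^{k-1}\big)$.

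The first step is to identify the flags $(\gamma^+)^{k\pm 1}$ with sums of top generalized eigenspaces of $\rho(\gamma)$. Since $\rho$ is $\{k-1,k+1,d-k\}$-Anosov, the boundary maps $\xi^{k\pm 1}$ intertwine the action of $\gamma$ on $\deG$ with that of $\rho(\gamma)$ on $\Gr_{k\pm 1}(\C^d)$, sending the attracting fixed point $\gamma^+\in\deG$ to the attracting fixed flag of $\rho(\gamma)$ (this is the standard eigenvalue/length characterization of Anosov representations of \S\ref{s.Anosov}). By the eigenvalue gap condition, the attracting flag in $\Gr_j$ is precisely the direct sum of the generalized eigenspaces for the top $j$ eigenvalues of $\rho(\gamma)$ in modulus. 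Hence $(\gamma^+)^{k-1}\subset(\gamma^+)^{k+1}$ are both $\rho(\gamma)$-invariant, and the induced endomorphism of the two-dimensional quotient $(\gamma^+)^{k+1}/(\gamma^+)^{k-1}$ has exactly the eigenvalues $\lambda^k(\rho(\gamma))$ and $\lambda^{k+1}(\rho(\gamma))$.

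The second step is immediate: since $|\lambda^k(\rho(\gamma))|>|\lambda^{k+1}(\rho(\gamma))|$ by the Anosov condition at level $k$ (which follows from the Anosov condition at levels $k-1$ and $k+1$ together with the fact that the eigenvalues are ordered by modulus), the induced projective transformation on $\P\big((\gamma^+)^{k+1}/(\gamma^+)^{k-1}\big)\cong\CP$ is loxodromic with complex dilation $\lambda^k(\rho(\gamma))/\lambda^{k+1}(\rho(\gamma))=L_\rho^k(\gamma)$. Combined with the first paragraph, this yields $L_{\rho^k}(\gamma)=L_\rho^k(\gamma)$, as required.

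The argument is essentially a bookkeeping exercise, so there is no substantial obstacle; the only delicate point is the identification in the first step that the boundary flag $\xi^j(\gamma^+)$ coincides with the sum of the top $j$ generalized eigenspaces of $\rho(\gamma)$, but this is a standard consequence of the eigenvalue gap formulation of the Anosov property and the convergence $\rho(\gamma^n)\cdot F\to\xi^j(\gamma^+)$ for any transverse flag $F$.
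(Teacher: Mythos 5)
Your proof follows the same route as the paper's: unwind the fiberwise definition of $\rho^k$, observe that over $\gamma^+$ the action is that of $\rho(\gamma)$ on $\P\big((\gamma^+)^{k+1}/(\gamma^+)^{k-1}\big)$ (transported by the fiberwise biholomorphism $T$, which preserves complex dilation), identify the boundary flags $(\gamma^+)^{k\pm 1}$ with sums of top generalized eigenspaces by dynamics-preservation of the Anosov boundary maps, and read off the eigenvalues $\lambda^k(\rho(\gamma))$ and $\lambda^{k+1}(\rho(\gamma))$ on the two-dimensional quotient. The paper's proof is a compressed version of exactly this.

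There is, however, one factual slip worth flagging. You justify the strict gap $|\lambda^k(\rho(\gamma))|>|\lambda^{k+1}(\rho(\gamma))|$ (hence loxodromy of the leafwise action, which is needed because $L_{\rho^k}$ is defined to be $1$ on non-loxodromic leaves) by saying it ``follows from the Anosov condition at levels $k-1$ and $k+1$ together with the fact that the eigenvalues are ordered by modulus.'' This is not correct: $(k-1)$-Anosov and $(k+1)$-Anosov give $|\lambda^{k-1}|>|\lambda^k|$ and $|\lambda^{k+1}|>|\lambda^{k+2}|$, and the modulus ordering only gives the non-strict inequality $|\lambda^k|\ge|\lambda^{k+1}|$; one can easily have a proximal gap at $k-1$ and $k+1$ but equality at $k$. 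The correct source of the level-$k$ gap is the third index in the definition of $k$-hyperconvexity: the representation is also required to be $(d-k)$-Anosov, and (as noted in \S\ref{s.Anosov}) $(d-k)$-Anosov is equivalent to $k$-Anosov. That is what guarantees $|\lambda^k|>|\lambda^{k+1}|$ and makes the leafwise action loxodromic, so that $L_{\rho^k}(\gamma)$ is computed by the eigenvalue ratio rather than set to $1$ by convention.
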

\begin{proof}
By definition, for $\g\in[\G]-\{1\}$, $L_{\rho^k}(\g)$ is the ratio of the two eigenvalues of the projective action of $\rho^k(\g)$ on the leaf $\CP\times \{\g^+\}$. Since such action is projectively equivalent to the action on $\P\left((\g^+)^{k+1}/(\g^+)^{k-1}\right)$, where $\g^+$ acts with eigenvalues $\lambda^k(\g),\lambda^{k+1}(\g)$, the result follows.
\end{proof}

\section{Laminated Ahlfors--Bers maps, proof of Theorem \ref{thm:IntroAB}}
\label{sec:ab maps}

Recall that we fixed a closed oriented hyperbolic surface $\mb{H}^2/\Gamma$, we denote by $M_\Gamma=\mb{H}^2\times\partial\mb{H}^2/\Gamma$ the associated hyperbolic surface lamination and by $\Chark$ the space of $k$-hyperconvex representations. 

The goal of this section is to define an Ahlfors--Bers map ${\rm AB}^k:\Chark\to\T(M_\Gamma)\times\T(\overline{M}_\Gamma)$ with the properties described in Theorems \ref{thm:IntroAB}  from the introduction (which will be proved in the section).

\subsection{Quasi-conformal conjugacy}
\label{sec:marking}
We first prove that the laminated conformal action associated to a $k$-hyperconvex representation is a quasi-conformal deformation of the standard Möbius action.
\begin{prop}\label{prop:qcconjugacy}
Suppose $\rho:\G\to\PSL(d,\C)$ is $k$-hyperconvex and let $\rho^k$ be as in \S\ref{sec:tangent}. Then 
$$[\rho^k]\in\qc(\iota).$$
\end{prop}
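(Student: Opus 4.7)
The plan is to produce the required quasi-conformal conjugacy directly from the tangent projections of \S\ref{sec:tangent} by feeding them into the laminated Douady--Earle extension operator of Proposition \ref{prop:DE}.

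First, I would assemble the fiberwise tangent projections $\{\xi_t^k\}_{t\in\partial\H^2}$ into a single map $\xi:\fL=\RP\times\partial\H^2\to\fCP$, using the trivialization $T$ of the bundle $\mathcal B_\rho^k$ fixed in \S\ref{sec:tangent}. I would then verify that $\xi$ belongs to the space of laminated markings $\fM$ defined in \eqref{e.foliatedmarking}: fiber preservation is automatic from the definition of $T$; fiberwise injectivity is exactly the content of $k$-hyperconvexity via Proposition \ref{p.tangentproj}; and joint continuity in $(s,t)$ follows from continuity of $T$ together with the transverse continuity of $t\mapsto\xi_t^k$ established in Proposition \ref{p.tangentproj}.

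Next, I would check the hypotheses needed to apply Proposition \ref{prop:DE} to $\xi$. The uniform quasi-Möbius bound on fibers is precisely Proposition \ref{p.qc}. The $(\iota,\rho^k)$-equivariance $\xi\circ\iota(\gamma)=\rho^k(\gamma)\circ\xi$ comes from the definition of the projective cocycle $\rho^k(\gamma,t)$ in \eqref{eqn: mobius cocycle}: since the tangent projections are built from $\rho$-equivariant boundary data, the linear map $\rho(\gamma)$ intertwines the fiberwise projections $\xi_t^k(s)\mapsto \xi_{\gamma t}^k(\gamma s)$ on $\mathcal B_\rho^k$, and conjugating this by the chosen trivialization $T$ yields exactly $\rho^k$ by construction.

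Finally, I would set $g:=\widehat{\DE}(\xi)\in\Aut^0(\fCP)$. Proposition \ref{prop:DE} then guarantees that $g$ is $K^*$-quasi-conformal in the sense of Definition \ref{def:qc map}, that $g$ is $(\iota,\rho^k)$-equivariant so that $\rho^k = g\iota g^{-1}$, and that $g$ restricts to an isomorphism of smooth surface laminations $\fCPL\to\fCP-g(\fL)$. This is exactly Definition \ref{def:qc deformation}, giving $[\rho^k]\in\qc(\iota)$. The only step requiring genuine care is the verification of equivariance after trivialization; the remaining work is essentially an unpacking of the definitions, since all the substantive analytic content has been absorbed into Propositions \ref{p.tangentproj}, \ref{p.qc} and \ref{prop:DE}.
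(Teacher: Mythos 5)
Your proposal follows essentially the same route as the paper: assemble the tangent projections into a laminated marking, feed it to the laminated Douady--Earle operator $\widehat{\DE}$, and read off the conclusion from Propositions \ref{p.qc} and \ref{prop:DE}. The one small imprecision is your attribution of the joint continuity of $(x,t)\mapsto\xi_t^k(x)$ to Proposition \ref{p.tangentproj}, which only gives continuity of each $\xi_z^k$ for fixed $z$; the paper instead invokes Lemma \ref{lem: double boundary continuous into bundle} (citing \cite{FPV2}) for the stronger statement that $\xi_\cdot^k$ is a homeomorphism onto its image, hence a laminated marking.
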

We prove Proposition \ref{prop:qcconjugacy} by exhibiting a quasiconformal conjugacy between the standard representation $\iota:\Gamma\to\mc{MG}$ and $\rho^k:\Gamma\to\mc{MG}$ using the laminated Douady--Earle extension (Proposition \ref{prop:DE}).
Recall from Section \ref{subsubsec:continuity of AB} that $\fL = \RP \times \partial \H^2$ and $\mathcal {LM}$ is a space of laminated markings $\fL \to \CP\times \partial \H^2$; see \eqref{e.foliatedmarking}.

\begin{lem}[{see \cite[Lemma 2.8]{FPV2}}]\label{lem: double boundary continuous into bundle}
    The map 
    $$\begin{array}{cccc}
\xi_\cdot^k:&\fL&\to&\fCP\\
    &(x,t) &\mapsto &(\xi_t^k(x), t) 
    \end{array}$$
    is a homeomorphism onto its image, and thus induces an element of $\fM$.
\end{lem}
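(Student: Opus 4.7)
The plan is to check the three defining conditions of $\fM$ from \eqref{e.foliatedmarking}---continuity, fiber-preservation, and fiberwise injectivity---and then conclude that the map is a homeomorphism onto its image by a standard compactness argument. Fiber-preservation is built into the formula, since $\xi_\cdot^k(x,t)=(\xi_t^k(x),t)$ lands in $\CP\times\{t\}$ after the trivialization of $\mathcal{B}_\rho^k$ fixed in \S\ref{sec:tangent}. Fiberwise injectivity of $\xi_t^k$ is exactly the content of $k$-hyperconvexity as recorded in Proposition \ref{p.tangentproj}. Global injectivity of the total map then follows automatically: the second coordinate recovers $t$, so a coincidence of images forces a single fiber, within which fiberwise injectivity applies.

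The heart of the argument is joint continuity of $(x,t)\mapsto \xi_t^k(x)$. Away from the diagonal, on the open set $\{(x,t):x\neq t\}$, the formula $\xi_t^k(x)=[x^{d-k}\cap t^{k+1}]$ depends continuously on $(x,t)$: the Anosov boundary maps $\xi^{d-k}$ and $\xi^{k+1}$ are continuous, and the intersection of two transverse subspaces of complementary dimensions varies continuously in the relevant Grassmannian. At a diagonal point $(z,z)$, the formula changes to $\xi_z^k(z)=[z^k]$, and we must show that if $(x_n,t_n)\to(z,z)$ then $\xi_{t_n}^k(x_n)\to [z^k]$. The plan is to use the continuity of the tangent projection $\xi_z^k$ at $z$ itself (the last sentence of Proposition \ref{p.tangentproj}) together with the continuity in $t$ of the identifications of the fibers $\P(t^{k+1}/t^{k-1})$: working in a local trivialization of $\mathcal{B}_\rho^k$ around $z$, one writes $\xi_{t_n}^k(x_n)=\psi_n(\xi_z^k(x_n'))$ for a change-of-fiber map $\psi_n\to\mathrm{id}$ and an auxiliary point $x_n'\to z$, and concludes by passing to the limit.

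With continuity and injectivity in hand, the map $\fL\to\fCP$ is a continuous injection from the compact space $\fL=\RP\times\deH$ into the Hausdorff space $\fCP$, hence a homeomorphism onto its image. The resulting map satisfies the three conditions in \eqref{e.foliatedmarking} and therefore defines an element of $\fM$. The main obstacle to be addressed in detail is the joint continuity across the diagonal, where hyperconvexity (and not mere Anosov-ness) is essential; this is precisely the place where the result from \cite{BeyP1, PSW:HDim_hyperconvex, FPV2} on continuity of tangent projections enters, and it is why it is convenient to import this lemma from \cite[Lemma 2.8]{FPV2} rather than reprove it from scratch.
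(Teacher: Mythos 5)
Your outline correctly identifies the structure: fiber-preservation is formal, fiberwise injectivity is the content of $k$-hyperconvexity recorded in Proposition~\ref{p.tangentproj}, global injectivity follows, and a continuous injection from the compact torus $\fL=\RP\times\deH$ to the Hausdorff space $\fCP$ is a homeomorphism onto its image. The entire substance of the lemma is therefore joint continuity of $(x,t)\mapsto\xi^k_t(x)$ at diagonal points, which you locate correctly; away from the diagonal, joint continuity does indeed follow from continuity of the Anosov boundary maps and of transverse intersections. The paper gives no argument of its own here---the lemma is imported from \cite[Lemma 2.8]{FPV2}---so the only question is whether your sketch of the diagonal case is sound.

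It is not. You write $\xi^k_{t_n}(x_n)=\psi_n(\xi^k_z(x_n'))$ for a change-of-fiber map $\psi_n\to\mathrm{id}$ and an auxiliary point $x_n'\to z$, and propose to pass to the limit. But such an $x_n'$ need not exist: once $\xi^k_{t_n}(x_n)$ is transported into the fixed fiber $\P(z^{k+1}/z^{k-1})$ via the trivialization, the resulting point need not lie on the topological circle $\xi^k_z(\deG)$, so the decomposition is vacuous in general. More fundamentally, separate continuity of each $\xi^k_z$ at $z$ (the last sentence of Proposition~\ref{p.tangentproj}) combined with continuity of the bundle identifications does not, by any formal argument, upgrade to joint continuity in $(x,t)$; that upgrade requires uniformity in $t$, and this is precisely the analytic content of the cited result. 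The proofs in \cite[Lemma 2.8]{FPV2}, tracing back to \cite[Proposition 4.9]{BeyP1} and \cite[Proposition 6.7]{PSW:HDim_hyperconvex}, control $[x^{d-k}\cap t^{k+1}]$ inside $\P(t^{k+1}/t^{k-1})$ as $x$ and $t$ simultaneously approach $z$---including the delicate regime where $x$ is much closer to $t$ than to $z$---using contraction estimates tied to the Anosov definition, not a soft continuity-in-each-variable argument. Your decision to defer to \cite{FPV2} is in the right spirit, but the specific reduction should be removed; as written it suggests the hard step is formal, which it is not.
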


\begin{dfn}[Laminated limit set]\label{def: double limit set}
    The {\emph{laminated limit set}} $\fL^k_\rho\subset\fCP$ is  
    the image of the map $\xi_\cdot^k$ from Lemma \ref{lem: double boundary continuous into bundle}.
\end{dfn}

\begin{proof}[Proof of Proposition \ref{prop:qcconjugacy}]
    Applying the laminated Douady--Earle extension operator from Proposition \ref{prop:DE}, we obtain a continuous lamination equivalence
    \[\widehat{\DE}(\xi_\cdot^k): \CP \times \deH \to \CP \times \deH\]
    extending $\xi_\cdot^k$.  
    By construction, $\xi_\cdot^k$ is $(\iota, \rho^k)$ equivariant:
    \[\xi^k_{\gamma t}(\gamma z) = \rho^k(\gamma,t)\xi^k_t(z),\]
    for all $\gamma \in \Gamma$ and for all $t, z\in \partial \H^2$.
    Then Proposition \ref{prop:DE} (4) implies that $\widehat{\DE}(\xi_\cdot^k)$ is $(\iota, \rho^k)$-equivariant, i.e.,
    \[ \rho^k(\gamma) \widehat{\DE}(\xi_\cdot ^k) = \widehat{\DE}(\xi_\cdot^k)\iota (\gamma)\]
    holds for all $\gamma \in \Gamma$.
    
    Using Proposition \ref{p.qc}, we know that $\xi_t^k$ is $K$-quasi-M\"obius for some $K$ independent of $t$.
    Proposition \ref{prop:DE} (3) asserts that $\widehat{\DE}(\xi_\cdot^k)$ is $K^*$-quasi-conformal and smooth away from $\fL$.
    This completes the proof that $[\rho^k] \in \qc(\iota)$.
\end{proof}

\subsection{The laminated Ahlfors--Bers map ${\rm AB}^k$ and the Ahlfors lemma}

In \S\ref{s.ABuniversal}, we defined a universal Ahlfors--Bers map.
This construction applies, in particular, to $[\rho^k]\in \qc(\iota)$ obtained from a $k$-hyperconvex representation $[\rho] \in \Xi^k(\Gamma, \PSL(d,\C))$.

\begin{dfn}[$k$-th Ahlfors--Bers Map]
Let $[\rho] \in \Xi^k(\Gamma,{\rm PSL}(d,\mb{C}))$, and define
\begin{align*}
{\rm AB}^k([\rho]) &:=\mc{AB}([\rho^k]) \in \T(M_\Gamma) \times \T(\overline M_\Gamma).
\end{align*}
This is the $k$-th {\em Ahlfors--Bers map}.
\end{dfn}
We denote by $E^k_\rho$ and $F^k_\rho$ the Riemann surface laminations such that  
$${\rm AB}^k([\rho])=\left([h^k_E:M_\Gamma\to E^k_\rho],[h^k_F:\overline{M}_\Gamma\to F^k_\rho]\right)\in\T(M_\Gamma)\times\T(\overline{M}_\Gamma).$$
From now on, we will often abuse notation  and drop both the marking and the brackets indicating the equivalence class, e.g.,
\[\mathrm{AB}^k(\rho) = (E_\rho^k,F_\rho^k)\in \T(M_\Gamma) \times \T(\overline M_\Gamma).\]
We now prove that ${\rm AB}^k$ satisfies the properties of Theorem \ref{thm:IntroAB}, beginning with  Property (2), the laminated Ahlfors Lemma. 

 Recall the definition of the marked length spectrum $\ell_E$ of a hyperbolic Riemann surface lamination (Definition \ref{d.reallength}) and of the $k$-th eigenvalue gap $L^k_\rho$ of a $k$-Anosov representation $\rho:\G\to\PSL(d,\C)$ (Equation \eqref{e.gap}).
\begin{prop}
Let $\rho:\G\to\PSL(d,\C)$ be $k$-hyperconvex. Then $$2\min\{\ell_{E^k_\rho}(\cdot),\ell_{F^k_\rho}(\cdot)\}\ge\log|L^k_\rho(\cdot)|.$$
\end{prop}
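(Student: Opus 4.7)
The proof will be a direct assembly of the tools already established, with no new analytic input. The plan is to transfer the laminated Ahlfors inequality of Proposition \ref{p.bound}, which applies to arbitrary quasi-conformal deformations of $\iota$, to the specific deformation $\rho^k$ associated with the $k$-hyperconvex representation $\rho$.

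First I would recall that by Proposition \ref{prop:qcconjugacy}, the laminated conformal action $\rho^k:\Gamma\to\MG$ is a quasi-conformal deformation of the standard action $\iota$, i.e., $[\rho^k]\in\qc(\iota)$. By the definition of the $k$-th Ahlfors--Bers map, its Ahlfors--Bers parameters are exactly
\[
\mc{AB}([\rho^k]) = \mathrm{AB}^k(\rho) = (E^k_\rho,F^k_\rho)\in\T(M_\Gamma)\times\T(\overline{M}_\Gamma).
\]
Therefore I can apply the universal Ahlfors lemma (Proposition \ref{p.bound}) to $\rho^k$ and obtain, for every $\gamma\in\Gamma$,
\[
\log|L_{\rho^k}(\gamma)|\;\le\;2\min\{\ell_{E^k_\rho}(\gamma),\ell_{F^k_\rho}(\gamma)\}.
\]

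Next I would identify the complex dilation spectrum of the laminated action with the $k$-th eigenvalue gap of $\rho$. This is the content of Proposition \ref{p.Clength}: the leafwise loxodromic dynamics of $\rho^k(\gamma)$ on the leaf $\CP\times\{\gamma^+\}$ is, by construction of the tangent projection, projectively conjugate to the action of $\rho(\gamma)$ on $\P(({\gamma^+})^{k+1}/({\gamma^+})^{k-1})$, so its two eigenvalues are $\lambda^k(\rho(\gamma))$ and $\lambda^{k+1}(\rho(\gamma))$. Hence $L_{\rho^k}(\gamma)=L^k_\rho(\gamma)$. Substituting this identity into the previous inequality yields
\[
\log|L^k_\rho(\gamma)|\;\le\;2\min\{\ell_{E^k_\rho}(\gamma),\ell_{F^k_\rho}(\gamma)\},
\]
which is the claim.

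There is essentially no obstacle to overcome here: all the work has already been done in the preceding sections. The only thing to be careful about is that Proposition \ref{p.bound} is stated for deformations of the form $g\iota g^{-1}$ with $g$ a laminated quasi-conformal homeomorphism of the type in Definition \ref{def:qc deformation}, and Proposition \ref{prop:qcconjugacy} ensures that such a $g$ exists for $\rho^k$ (namely, the Douady--Earle extension $\widehat{\DE}(\xi^k_\cdot)$ of the equivariant family of marked quasi-circles). Everything else is just bookkeeping.
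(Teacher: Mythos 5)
Your proof is correct and follows exactly the same route as the paper's: apply Proposition \ref{p.bound} to the quasi-conformal deformation $[\rho^k]\in\qc(\iota)$, then use Proposition \ref{p.Clength} to identify $L_{\rho^k}$ with $L^k_\rho$. The paper's version is terser but cites the same two ingredients; your write-up just makes the assembly explicit.
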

\begin{proof}
Proposition \ref{p.Clength} proves that the complex dilation spectrum associated to $\rho^k$ computes the $k$-th eigenvalue gap of $\rho$, while
Proposition \ref{p.bound} establishes the desired bound for a general quasi-conformal deformation of $\iota$.
This completes the proof.
\end{proof}

\subsection{The Ahlfors--Bers map is holomorphic}
In this section we prove that the map ${\rm AB}^k$ is holomorphic. Recall that 
the set $\Chark$ of $k$-hyperconvex representations forms an open subset of the character variety, which in turn is a (potentially singular) affine variety. 

Denote by $\pD \subset \C$ the open unit disk.
Harthog's Theorem (\cite[Theorem 14.5]{Holomorphy}) asserts that a map $f$ from $\Chark$ to an (infinite dimensional) complex Banach space is holomorphic if, for every holomorphic map $\delta:\pD\to\Chark$, the composition $f\circ\delta$ is holomorphic.

\begin{thm}
\label{thm:ab map}
The Ahlfors--Bers map
\[
{\rm AB}^k:\Xi^k(\Gamma,{\rm PSL}(d,\mb{C}))\to\T(M_\Gamma)\times\T(\overline M_\Gamma)
\]
is holomorphic. 
\end{thm}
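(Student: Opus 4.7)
The plan is to apply Hartog's Theorem \cite[Theorem 14.5]{Holomorphy} and reduce to showing that for every holomorphic disk $\delta\colon \pD \to \Chark$, the composition ${\rm AB}^k\circ \delta$ is holomorphic. Post-composing with the Bers embedding (Theorem \ref{thm: Bers holomorphic}) yields a map $\pD \to Q(\overline M_\Gamma)\times Q(M_\Gamma)$ into a complex Banach space, and it is continuous by combining Proposition \ref{prop:AB continuous} with Theorem \ref{thm: Sullivan Bers embedding} (and continuity of $\rho \mapsto \rho^k$). By Grothendieck's criterion for Banach-valued holomorphy, continuity plus pointwise holomorphicity against any total family of continuous linear functionals implies strong holomorphicity; the evaluation functionals $q\mapsto q(z,t)$ form a total family in the dual of $Q(\overline M_\Gamma)$ (and similarly for $Q(M_\Gamma)$), so it suffices to show that for every $(z,t)\in \overline{\H}^2 \times \dH$, the map $s\mapsto \beta\circ {\rm AB}^k(\delta(s))(z,t)\in \C$ is holomorphic.

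I would first argue that the tangent projections $\xi^k_{\cdot,s}$, and hence the leafwise quasi-circles $\Lambda^k_{t,s}$, depend holomorphically on $s\in \pD$. The boundary maps $\xi^j_s$ for $j\in\{k-1,k+1,d-k\}$ vary holomorphically in any holomorphic family of Anosov representations: for periodic $\gamma^+\in \deG$, $\xi^j_s(\gamma^+)$ is an attracting eigenspace of $\rho_s(\gamma)$ which depends holomorphically on $s$ via spectral calculus, and density of periodic fixed points together with continuity in the sup norm (standard for Anosov representations, cf.\ \cite{KLP,BPS}) propagate holomorphicity to the full family. The formula $\xi^k_{t,s}(x) = [\xi^{d-k}_s(x) \cap \xi^{k+1}_s(t)]\in \bP(\xi^{k+1}_s(t)/\xi^{k-1}_s(t))$ from Proposition \ref{p.tangentproj} shows that, after the bundle trivialization of \S\ref{sec:tangent}, $s\mapsto \xi^k_{t,s}(x)\in \CP$ is holomorphic (a cross-ratio of holomorphically varying subspaces) for each $x,t$. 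In particular, for each fixed $t$, the family $s\mapsto \xi^k_{t,s}\colon \RP\to \CP$ is a holomorphic motion of $\RP$, and its members are uniformly quasisymmetric by Proposition \ref{p.qc}.

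Fix such a point $(z,t)$. Applying Slodkowski's extension theorem to the leafwise holomorphic motion $\xi^k_{t,s}$ produces a family of Beltrami coefficients $\nu_{t,s}\in L^\infty(\CP)$, holomorphic in $s$ and with $\|\nu_{t,s}\|_\infty$ uniformly bounded below $1$, whose normalized Beltrami solution $g^{\nu_{t,s}}$ extends $\xi^k_{t,s}$ to $\CP$. The Measurable Riemann Mapping Theorem \ref{thm:mrm} then guarantees that $s\mapsto g^{\nu_{t,s}}(w)$ is holomorphic for each $w\in \CP$, and therefore so is the Schwarzian $s\mapsto \mathcal{S}(g^{\nu_{t,s}}|_{\overline{\H}^2})(z)$. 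To identify this quantity with $\beta\circ {\rm AB}^k(\rho_s)(z,t)$: the latter is, by the construction in Section \ref{sec:Bersembedding}, the Schwarzian at $(z,t)$ of the solution of the leafwise Beltrami equation whose coefficient comes from the Douady--Earle extension $g^k_{\rho_s}$ of Proposition \ref{prop:qcconjugacy} extended by zero on $\overline{\H}^2\times \dH$. Both restricted solutions map $\overline{\H}^2$ conformally onto the same domain and agree on $\RP$ (they both realize $\xi^k_{t,s}$ on the boundary up to normalization), hence differ by a M\"obius transformation of the image, which is invisible to the Schwarzian. The two values thus coincide, establishing pointwise holomorphicity; the argument for the $F^k_{\rho_s}$ factor is identical.

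The central technical subtlety I anticipate is precisely this reconciliation: the Douady--Earle construction used to produce the Ahlfors--Bers parameters is continuous in the transverse variable $t$ but is not known to be holomorphic in the parameter $s$, while the Slodkowski-based leafwise extension is holomorphic in $s$ but has no a priori continuity in $t$. Invariance of the Schwarzian under M\"obius post-composition on the image bridges this gap, while the uniform Nehari--Krauss bound $\|\beta\circ{\rm AB}^k(\rho_s)\|_\infty\le 3/2$ (see \eqref{eqn: nehari krauss}) keeps the family inside a bounded subset of $Q(\overline M_\Gamma)$, which is what allows the pointwise-holomorphic-implies-holomorphic principle to be invoked in the first paragraph.
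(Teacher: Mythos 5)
Your overall architecture is close to the paper's: reduce via Hartog's theorem to holomorphic disks, pass to a Banach-valued target using the Bers embedding and the Nehari--Krauss bound for local boundedness, use holomorphy of the boundary maps / tangent projections (the paper proves this via the characteristic polynomial factorization in Proposition \ref{prop:attractorsarehol}, whereas you cite spectral calculus; both work), and apply a holomorphic motion extension theorem before taking a Schwarzian. Your reduction to pointwise holomorphicity via evaluation functionals and the Dunford--Grothendieck weak-to-strong principle is a valid alternative to the paper's use of Sullivan's holomorphic leaf-restriction map $q$ in \eqref{eqn: bers leaf}, and buys nothing essentially different.

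The gap is in the reconciliation step, and it is a real one. First, $\xi^k_{t,s}\colon \RP\to\CP$ is not a holomorphic motion of $\RP$: it is not the identity at $s=0$ (indeed $\xi^k_{t,0}(\RP)=\Lambda_0$, a quasicircle), so Slodkowski cannot be applied to it directly; one must move to the motion $(\xi^k_t)_s\circ(\xi^k_t)_0^{-1}$ of $\Lambda_0$ as in the paper's proof. Second, and more seriously, your claim that the two normalized solutions ``map $\overline{\H}^2$ conformally onto the same domain and agree on $\RP$'' is false. The Slodkowski-based extension (once precomposed as above and pushed to $\H^2$ via the uniformizer $u$ of the leaf of $E_0$) has boundary values $\xi^k_{t,s}\circ(\xi^k_{t,0})^{-1}\circ u|_{\RP}$ on $\RP$, while the Douady--Earle-based marking has boundary values $\xi^k_{t,s}$; these genuinely differ, so the resulting conformal maps on $\overline\H^2$ are not Möbius-related, and the two Schwarzians do not coincide. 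One sees this immediately at $s=0$: the Slodkowski/uniformizer construction gives Schwarzian $0$, while $\beta_{M_\Gamma}(\mathrm{AB}^k(\rho_0))$ is generically nonzero. What you are actually computing with the holomorphic motion is $\beta_{E_0}(\mathrm{AB}^k(\rho_s))$ restricted to a leaf, not $\beta_{M_\Gamma}(\mathrm{AB}^k(\rho_s))(z,t)$. This is exactly why the paper bases the Bers embedding at $E_0$ and invokes base-point independence (Theorem \ref{thm: Bers holomorphic}) at the end: the holomorphic-motion marking and the QC-deformation marking then have matching boundary values on $\Lambda_0$, making the identification $q(\rho_w)=\mathcal{S}(G^w|_{\overline\H^2})$ legitimate. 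Your Möbius-invariance bridge does not substitute for this base-point change.
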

Our proof of Theorem \ref{thm:ab map} relies on the Holomorphic Motion Theorem of Sullivan--Thurston (Theorem \ref{t.holmotion} below) and the characterization of the complex structure on the Teichm\"uller space of Riemann surface laminations recalled in \S\ref{sec:Bersembedding}. In order to apply the first result, we show that, for a holomorphic family of representations, the tangential projections vary pointwise holomorphically.

An element $g\in\PSL(d,\C)$ is \emph{$s$-proximal} if it admits a unique attracting fixed point in $\Gr_s(\C^d)$, or equivalently if $|\lambda^s(g)|>|\lambda^{s+1}(g)|$ where, as always, we order the eigenvalues with non-increasing moduli. 
By definition, if $\rho:\G\to\PSL(d,\C)$ is $s$-Anosov, then $\rho(\g)$ is $s$-proximal for every (infinite order) element $\g\in\G$. The next statement is probably known to experts; we include a proof for the lack of a convenient reference.
\begin{prop}\label{prop:attractorsarehol}
Let $g: \pD \to \PSL(d,\C)$ be a holomorphic family of $s$-proximal elements, and for $w\in\pD$, denote by $A^s(w)\in \Gr_s(\C^d)$ the attracting fixed point of $g(w)$. 
The function $A^s: \pD \to\Gr_s(\C^d)$ is holomorphic.
\end{prop}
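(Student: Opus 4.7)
The plan is to realize $A^s(w)$ as the image of a Riesz (spectral) projector that varies holomorphically in $w$, so that holomorphicity as a map into $\Gr_s(\C^d)$ follows from the existence of local holomorphic frames.

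Fix $w_0 \in \pD$. Let $\mu_1,\dots,\mu_d$ be the eigenvalues of $g(w_0)$ repeated with algebraic multiplicity and ordered by $|\mu_1|\ge\cdots\ge|\mu_d|$. By $s$-proximality, $|\mu_s|>|\mu_{s+1}|$, so I can pick $r>0$ with $|\mu_s|>r>|\mu_{s+1}|$. The eigenvalues of $g(w)$ depend continuously on $w$ (as roots of the characteristic polynomial, whose coefficients are holomorphic in $w$), so there is a neighborhood $U$ of $w_0$ in $\pD$ such that for every $w\in U$ no eigenvalue of $g(w)$ has modulus $r$, and exactly $s$ eigenvalues (with multiplicity) lie in $\{|z|>r\}$.

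On $U$, define the Riesz projector
\[
P(w) \;=\; \frac{1}{2\pi i}\oint_{|z|=r}(zI-g(w))^{-1}\,dz \;\in\; \mathrm{End}(\C^d).
\]
Since $g(w)$ is holomorphic in $w$ and $(zI-g(w))^{-1}$ is jointly holomorphic in $(z,w)$ away from the spectrum, $P(w)$ is holomorphic in $w\in U$. Standard spectral theory identifies $P(w)$ as the projection onto the $g(w)$-invariant subspace spanned by generalized eigenvectors for the eigenvalues lying outside $\{|z|=r\}$, i.e. the top $s$ eigenvalues. This subspace is precisely $A^s(w)$.

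To upgrade this to holomorphicity as a map $U \to \Gr_s(\C^d)$, pick a basis $v_1,\dots,v_s$ of $A^s(w_0)=\operatorname{Im} P(w_0)$. By continuity of $P$ at $w_0$, the vectors $P(w)v_1,\dots,P(w)v_s$ remain linearly independent for $w$ close to $w_0$ and span $\operatorname{Im} P(w)=A^s(w)$. They depend holomorphically on $w$, so they provide a holomorphic local frame for the pullback $A^{s,*}\mathcal{T}$ of the tautological bundle $\mathcal T\to \Gr_s(\C^d)$. This is the defining property for $w\mapsto A^s(w)$ to be holomorphic on $U$. Since $w_0\in\pD$ was arbitrary, $A^s$ is holomorphic on all of $\pD$.

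There is no genuine obstacle: the entire argument rests on the holomorphic dependence of resolvents and on continuity of the spectrum, both of which are standard. The only thing to be careful about is choosing the contour so that it consistently separates the top $s$ eigenvalues from the rest across a neighborhood, which is handled in the first step.
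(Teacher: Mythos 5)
Your proof is correct modulo one small but real omission: $g(w)$ lives in $\PSL(d,\C)$, so neither the eigenvalues $\mu_i$ nor the resolvent $(zI - g(w))^{-1}$ are defined until you lift $g$ to an honest linear group. The paper handles this at the outset by lifting $g$ holomorphically to $\SL(d,\C)$ (using that $\pD$ is simply connected and $\SL(d,\C)\to\PSL(d,\C)$ is a covering); since your argument is purely local, an even cheaper fix is to choose, near each $w_0$, a local holomorphic lift $\hat g: U \to \GL(d,\C)$ and observe that the resulting subspace $\operatorname{Im}P(w)$ is independent of the choice of lift (and of the contour radius $r$, as long as it separates the spectrum correctly). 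With that inserted, every step checks out: joint holomorphy of the resolvent off the spectrum gives holomorphy of the Riesz projector $P(w)$; $\operatorname{Im}P(w)$ is the sum of generalized eigenspaces of the top $s$ eigenvalues, hence equals $A^s(w)$; and the frame $P(w)v_1,\dots,P(w)v_s$ is a correct and cleanly stated way to convert holomorphy of $P$ into holomorphy of $w\mapsto A^s(w)\in\Gr_s(\C^d)$.

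Your route is genuinely different from the paper's. The paper, after lifting to $\SL(d,\C)$, factors the characteristic polynomial holomorphically as $\delta(u,w)=p^s_w(u)\,p^{d-s}_w(u)$ (using that, once the roots are separated into two disjoint annular/disk regions, the multiplication map on monic polynomials is a biholomorphism onto its image), and then exhibits $A^s(w)=\ker(p^s_w(\hat g(w)))$. You instead construct the spectral projector by a contour integral in the holomorphic functional calculus. The two constructions produce the same subspace, but the mechanisms by which holomorphy is established are different: the paper leans on the algebraic fact that holomorphic polynomial factorization is well-posed when the root sets are separated, avoiding any contour integration; your Riesz-projector argument is the classical analytic approach and arguably makes the local-frame step (which the paper leaves somewhat implicit) more transparent. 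Both are standard and both work; yours is perhaps the more familiar route to anyone who has seen spectral projections, while the paper's is slightly more elementary in the tools it invokes.
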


\begin{proof}
Since $\pD$ is simply connected and ${\rm SL}(d,\mb{C})\to{\rm PSL}(d,\mb{C})$ is a covering we can lift the holomorphic map $w\to g(w)$ to a holomorphic map $w\to{\hat g}(w)$ with values in ${\rm SL}(d,\mb{C})$.

Consider the holomorphic function $\delta:\mathbb C\times \pD\to\mathbb C$ given by 
$$\delta(u,w)={\rm det}(u\mb{I}-{\hat g}(w)).$$

Let $\lambda_0:=\lambda^s({\hat g(0)})$ and find $\ep>0$ such that $|\lambda^{s+1}(\hat g(0))| < |\lambda_0|-\epsilon$. 
By assumption, we have: 
\begin{enumerate}
\item{The polynomial $\delta(u,0)$ has $s$ roots (counted with multiplicity) outside the disk $B(0,\lambda^0-\ep/2)$.}
\item{It has $d-s$  roots (counted with multiplicity) inside $B(0,\lambda^0-\ep)$.}
\end{enumerate}
We deduce that there exists a neighborhood $W$ of $0\in\pD$ such that for every $w\in W$ the polynomial $u\to\delta(u,w)$ has $s$ roots outside $B(0,\lambda_0-\ep/2)$ and $d-s$ roots in $B(0,\lambda_0-\ep)$.

Denote by $P_r$ be the space of monic homogeneous polynomials of degree $r$.
Let $F\subset P_s$, resp. $F'\subset P_{d-s}$, be the open subsets consisting of those polynomials whose roots lie in $\mb{C}-B(0,\lambda_0-\ep/2)$, resp. $B(0,\lambda_0-\ep)$. 
The product map $\psi:F\times F'\to P_d$, given by $\psi(p,q)=p\cdot q$, is injective and holomorphic, hence a biholomorphism onto its image. 
As a  result the map 
\[w \in W \mapsto \psi\inverse (\delta( u , w)) = (p^s_w(u), p^{d-s}_w(u)) \in F \times F'\]
is holomorphic. 

We claim that 
\begin{equation}\label{e.attractor}
A^s(w)={\rm ker}\left(p^s_w(\hat g(w))\right).
\end{equation}
Indeed, the characteristic polynomial $\delta(u,w)={\rm det}(u\mb{I}-\hat g(w))$ of $\hat g(w)$ splits as $\delta(u,w)=p^s_w(u)\cdot p_w^{d-s}(u)$.  Then $A^s(w)$ is the direct sum of the eigenspaces corresponding to the roots of the factor $p^s_w(u)$ and is thus the attracting fixed point of $g(w)$ in $\Gr_s(\C^d)$. 
This concludes the proof since the right-hand side in Equation \eqref{e.attractor} depends holomorphically on $w$.
\end{proof}

Let now $\rho_\cdot:\pD\to\Xi^k(\G,\PSL(d,\C))$ is a holomorphic family of $k$-hyperconvex representations. We choose a holomorphic lift $\hat\rho_\cdot:\pD\to\Hom(\G,\PSL(d,\C))$ and, for $w\in\pD$ and $s\in\{k-1, k+1, d-k\}$, we denote by $(\xi^{s})_w:\deG\to\Gr_{s}(\C^d)$  the $s$-th boundary map  of the representation $\hat\rho_w$.
It follows from Proposition \ref{prop:attractorsarehol} that $(\xi^{s})_w$ is holomorphic.
\begin{cor}\label{cor:bdryarehol}
Let 
$\hat\rho_\cdot:\pD\to\Hom(\G,\PSL(d,\C))$ be a holomorphic family of $s$-Anosov representations. Then for every $z\in\deG$, 
$$w\mapsto (\xi^s)_w(z)\in\Gr_s(\C^d)$$
is holomorphic.
\end{cor}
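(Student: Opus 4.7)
The plan is to reduce to Proposition \ref{prop:attractorsarehol} by approximating an arbitrary boundary point by attracting fixed points of infinite-order elements of $\G$, and to upgrade pointwise holomorphicity to joint holomorphicity via the Vitali--Porter theorem in a local affine chart.

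For any infinite-order $\g \in \G$ with attracting fixed point $\g^+ \in \deG$, the dynamics-preserving property of the boundary map of an $s$-Anosov representation gives
\[
(\xi^s)_w(\g^+) = A^s(\hat\rho_w(\g)),
\]
where $A^s(\cdot)$ denotes the attracting fixed point in $\Gr_s(\C^d)$. Since $w \mapsto \hat\rho_w(\g) \in \PSL(d,\C)$ is holomorphic and $\hat\rho_w(\g)$ is $s$-proximal for every $w\in\pD$, Proposition \ref{prop:attractorsarehol} yields that $w \mapsto (\xi^s)_w(\g^+)$ is holomorphic on $\pD$. The set of such attracting fixed points is dense in $\deG$ by minimality of the $\G$-action, so for any $z \in \deG$ we may choose a sequence $\g_n^+ \to z$.

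To pass from the countable dense set of periodic points to an arbitrary $z \in \deG$, fix $w_0\in\pD$ and set $p_0 := (\xi^s)_{w_0}(z)$. Pick a bounded affine chart $\phi: U \to \C^N$ of $\Gr_s(\C^d)$ centered at $p_0$. By the standard continuous dependence of boundary maps of Anosov representations on the representation in the $C^0$-topology (see \cite{KLP,BPS}), the assignment $(w,y)\mapsto(\xi^s)_w(y)$ is jointly continuous on $\pD \times \deG$. Hence we can find a neighborhood $V \subset \pD$ of $w_0$ and an integer $n_0$ such that $(\xi^s)_w(\g_n^+)$ and $(\xi^s)_w(z)$ all lie in $U$ for $w\in V$ and $n\ge n_0$. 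The functions $f_n(w) := \phi((\xi^s)_w(\g_n^+))$ are holomorphic on $V$, uniformly bounded, and converge pointwise to $f(w) := \phi((\xi^s)_w(z))$ by continuity of $\xi^s_w$ in the boundary variable. Applying Vitali's theorem componentwise in $\C^N$ yields local uniform convergence and gives that $f$ is holomorphic on $V$. Since $w_0\in\pD$ was arbitrary, $w\mapsto(\xi^s)_w(z)$ is holomorphic on all of $\pD$.

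The main technical point is the joint continuity $(w,y)\mapsto(\xi^s)_w(y)$ for a holomorphic family of $s$-Anosov representations; this is a well-known consequence of the stability of dominated splittings but should be explicitly acknowledged and cited. Everything else is a routine combination of Proposition \ref{prop:attractorsarehol}, density of periodic points in $\deG$, and Vitali--Porter.
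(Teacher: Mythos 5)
Your argument is correct and follows essentially the same route as the paper: holomorphicity at periodic boundary points via Proposition~\ref{prop:attractorsarehol}, then approximation of a generic $z\in\deG$ by attracting fixed points $\g_n^+\to z$. The only difference is in the limit step: the paper cites \cite[Proposition~6.2]{BPS} directly for uniform convergence of $w\mapsto(\xi^s)_w(\g_n^+)$ to $w\mapsto(\xi^s)_w(z)$, whereas you recover locally uniform convergence from joint continuity of $(w,y)\mapsto(\xi^s)_w(y)$ plus a Vitali/Montel argument in an affine chart --- a slightly longer but self-contained way to reach the same conclusion.
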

\begin{proof}
Since the Anosov boundary map is dynamics preserving, the claim follows directly from Proposition \ref{prop:attractorsarehol} in the case $z=\g^+$ for some $\g\in\G$. Let now $z$ be generic, and choose a sequence $\g_n\in\G$ such that $\g_n^+\to z$. It follows from \cite[Proposition 6.2]{BPS} that the holomorphic maps $w\mapsto (\xi^s)_w(\gamma_n^+)$ converge uniformly to the map $w\mapsto (\xi^s)_w(z)$, which is thus holomorphic.
\end{proof}
In the situation above we further denote by  $(\xi^{k}_t)_w:\RP\to\CP$ the tangent projection associated to the representation $\rho_w$: For this, as in Section \ref{sec:tangent}, we choose consistent trivializations of the bundles $\cal B^k_{\hat \rho_w}$ depending on the choice of three fixed points $x,y,z\in\deG$; the resulting maps don't depend on the lift $\hat \rho_w$. 
\begin{prop}\label{p.holmaps}Let $\rho_\cdot:\pD\to\Xi^k(\G,\PSL(d,\C))$ be a holomorphic family of $k$-hyperconvex representations. Then for every $t\in\deH$, and every $z\in\RP$ the function 
$$w\mapsto (\xi^k_t)_w(z)\in\CP$$
is holomorphic.
\end{prop}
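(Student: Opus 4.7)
The plan is to write $w \mapsto (\xi^k_t)_w(z)$ as a composition of four operations, each of which is holomorphic in $w$, using holomorphic dependence of the boundary maps from Corollary~\ref{cor:bdryarehol} as the basic input. Recall that, by Proposition~\ref{p.tangentproj}, for $z \neq t$ the tangent projection in the intrinsic fiber is
\[
(\xi^k_t)_w(z) = \bigl[(\xi^{d-k})_w(z) \cap (\xi^{k+1})_w(t)\bigr] \in \P\bigl((\xi^{k+1})_w(t)/(\xi^{k-1})_w(t)\bigr),
\]
and the identification of this $\P^1$ with the fixed target $\CP$ is obtained by the Möbius transformation $T_{t,w}$ sending the ordered triple $\bigl((\xi^k_t)_w(x), (\xi^k_t)_w(y), (\xi^k_t)_w(z_0)\bigr)$ to $(0,1,\infty)$, where $x,y,z_0$ are the three points fixed to trivialize $\mathcal B^k_{\rho_w}$ in \S\ref{sec:tangent}.

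First, by Corollary~\ref{cor:bdryarehol}, the maps $w \mapsto (\xi^s)_w(u) \in \Gr_s(\C^d)$ are holomorphic for $s \in \{k-1,k+1,d-k\}$ and for any $u \in \deG$. Second, since the Anosov transversality guarantees that $(\xi^{d-k})_w(z) + (\xi^{k+1})_w(t)$ has constant codimension (so the intersection has constant dimension one), taking the intersection is a holomorphic operation on the open subset of $\Gr_{d-k}(\C^d)\times\Gr_{k+1}(\C^d)$ where this dimension holds: locally one can pick holomorphic bases of the two families and read off the intersection as the (constant-rank) kernel of a holomorphic linear map. Hence $w \mapsto (\xi^{d-k})_w(z)\cap(\xi^{k+1})_w(t) \in \Gr_1(\C^d)$ is holomorphic, and the same argument applies with $z$ replaced by $x$, $y$, $z_0$.

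Third, the families of planes $(\xi^{k-1})_w(t) < (\xi^{k+1})_w(t)$ assemble into a holomorphic flag bundle over $\pD$ and the quotient $(\xi^{k+1})_w(t)/(\xi^{k-1})_w(t)$ is a holomorphic rank-$2$ vector bundle. The hyperconvexity of $\rho_w$ (together with the Anosov transversality) guarantees that the four lines produced in the previous step never lie in $(\xi^{k-1})_w(t)$ and are pairwise distinct after quotienting, so their images define four holomorphic, pairwise disjoint sections of the associated $\CP$-bundle over $\pD$. Fourth, the trivialization $T_{t,w}$ applied to the section coming from $z$ can be written as a cross-ratio of the four sections; after choosing local holomorphic frames for the rank-$2$ quotient bundle this cross-ratio becomes a rational expression with non-vanishing denominators in the (holomorphically varying) homogeneous coordinates, and thus is holomorphic in $w$ with values in $\CP$.

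The only point that requires a little care is the second step: one must verify that transverse intersections of holomorphically varying subspaces of fixed dimensions give a holomorphic map into the relevant Grassmannian. This follows from the standard fact that if $A(w)\colon \C^n \to \C^m$ is a holomorphic family of linear maps of constant rank, then $w \mapsto \ker A(w) \in \Gr(\C^n)$ is holomorphic; applied to the map $\C^d \to \C^d/(\xi^{d-k})_w(z) \oplus \C^d/(\xi^{k+1})_w(t)$ this yields holomorphy of the intersection. Everything else is a routine verification that intersection, quotient, projectivization, and Möbius normalization by three distinct points are holomorphic operations, so composing them with the inputs from Corollary~\ref{cor:bdryarehol} completes the proof.
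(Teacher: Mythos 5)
Your proof is correct and rests on the same essential ingredient as the paper's, namely Corollary~\ref{cor:bdryarehol} and the fact that the tangent projection is obtained from the boundary maps by intersection, quotient, and a M\"obius normalization, all of which are algebraic in the right coordinates. The paper's proof of Proposition~\ref{p.holmaps} takes a small shortcut that you do not: it first conjugates the family $\hat\rho_w$ by a holomorphic family $M_w \in \PSL(d,\C)$ chosen so that the target flag $t^{k-1} < t^{k+1}$ becomes \emph{constant} in $w$. This reduces the intersection step to applying a single, fixed algebraic map $I: X \mapsto [X \cap t^{k+1}]$ defined on an open subset of $\Gr_{d-k}(\C^d)$ to the holomorphically varying point $(\xi^{d-k})_w(z)$, so one never needs to argue that the intersection of two holomorphically varying families of subspaces is itself holomorphic. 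Your argument handles that extra variation directly via the constant-rank-kernel lemma, which is correct but is precisely the technicality the normalization was designed to avoid. Both routes are valid; the paper's normalization buys a cleaner argument (one fixed algebraic map composed with Corollary~\ref{cor:bdryarehol}), while yours is marginally more self-contained since it spells out why transverse intersection is a holomorphic operation. One very minor point: you only treat $z \neq t$; the paper's displayed formula does the same, and in either case the $z=t$ value is recovered by continuity (or as a removable singularity), so this is not a gap.
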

\begin{proof}
By conjugating with a holomorphic family of transformations $M_w\in{\rm PSL}(d,\mb{C})$, we can normalize the representations $\hat\rho_w$ so that, for every $w\in\pD$, the subspaces $t^{k+1}:=(\xi^{k+1})_w(t)$, $t^{k-1}:=(\xi^{k-1})_w(t)$ do not depend on $w$. Denote by $\cal O\subset \Gr_{d-k}$ the open subset consisting of subspaces transverse to $t^{k-1}$ and intersecting $t^{k+1}$ in one line. Since then the map 
$$\begin{array}{cccc}
I:\cal O\subset \Gr_{d-k}(\C^d)&\to&\P(t^{k+1}/t^{k-1})\\
X&\mapsto & [X\cap t^{k+1}]
\end{array}$$
is algebraic, it follows, on the one hand, that our chosen trivializations
$T_w: \P(t^{k+1}/t^{k-1})\to \CP$ depend holomorphically on $w$, and on the other hand that the tangent projection 
$$(\xi^k_t)_w(z)=T_w\circ I\circ (\xi^{d-k})_w $$
is holomorphic being a composition of holomorphic maps.
\end{proof} 

In other words, $(\xi^k_t)_w$ gives rise to a holomorphic motion.  
\begin{dfn}[Holomorphic motion]
Let $\Lambda \subset \CP$ be a set.
A map $\xi:\Lambda\times\pD \to \mb{CP}^1$ is a \emph{holomorphic motion} if
    it satisfies the following properties:
\begin{itemize}
\item{For every $s\in\Lambda$ the map $w\mapsto \xi(s,w)$ is holomorphic.}
\item{For every $w\in\pD$ the map $s\mapsto \xi(s,w)$ is injective.}
\item{For every $s\in\Lambda$ we have $\xi(s,0)=s$.}
\end{itemize}
\end{dfn}
The following is known as the Holomorphic Motion Theorem; see \cite{FJW:holomorphicmotion} for the history and a proof.
\begin{thm}\label{t.holmotion}
Let $\xi:\Lambda\times\pD \to \mb{CP}^1$ be a holomorphic motion. Then there is a holomorphic motion $g:\mb{CP}^1\times\pD\to\mb{CP}^1$ extending $\xi$.
Moreover,
\begin{itemize}
\item $g$ is continuous.
\item For each $w\in \mathbb D$, $z\mapsto g(z,w)$ is a $\frac{1+|w|}{1-|w|}$-quasi-conformal homeomorphism.
\item The family of Beltrami differentials 
\[\mu_g(z,w) = \frac{\partial g/\partial \overline z}{\partial g/\partial z } (z,w)\]
defines a holomorphic map $\mu_g$ from $\pD$ to the unit ball in the Banach space $L^\infty (\CP)$ of essentially bounded measurable functions.
\end{itemize}
\end{thm}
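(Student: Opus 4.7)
The plan is to use Hartogs' theorem in the Banach space setting: to verify that ${\rm AB}^k$ is holomorphic, it suffices to show that for every holomorphic disk $\rho_\cdot:\pD\to\Xi^k(\Gamma,\PSL(d,\C))$, the composition $w\mapsto {\rm AB}^k(\rho_w)$ is holomorphic into $\T(M_\Gamma)\times\T(\overline M_\Gamma)$. By Theorem \ref{thm: Bers holomorphic}, the complex structure on the target is characterized by the leafwise Bers embedding $\beta$ into the complex Banach space $Q(\overline M_\Gamma)\times Q(M_\Gamma)$. So the goal reduces to showing $w\mapsto \beta({\rm AB}^k(\rho_w))$ is holomorphic into this Banach space.

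The heart of the argument is to produce a laminated Beltrami coefficient $\mu_w\in L^\infty(\fCP)$ which (i) is $\iota$-invariant in the sense of \eqref{eqn:automorphic}, (ii) vanishes on $\overline\H^2\times\partial\H^2$, (iii) is the Beltrami coefficient on $\H^2\times\partial\H^2$ of a laminated quasi-conformal conjugation realizing $\rho^k_w = g_w \iota g_w\inverse$, and (iv) depends holomorphically on $w$ as a map $\pD\to L^\infty(\fCP)$ with $\|\mu_w\|_\infty$ uniformly bounded away from $1$. Given such $\mu_w$, the laminated Bers construction of \S\ref{sec:Bersembedding} applies directly: solving the leafwise Beltrami equation via Theorem \ref{thm:mrm} produces solutions depending holomorphically on $w$ by the holomorphic dependence clause of the Measurable Riemann Mapping Theorem. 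Taking the leafwise Schwarzian derivative of the restriction to $\overline\H^2\times\partial\H^2$ (which is leafwise holomorphic since $\mu_w\equiv 0$ there) yields a holomorphic family in $Q(\overline M_\Gamma)$ that equals $\beta({\rm AB}^k(\rho_w))$ by construction; the other factor in $Q(M_\Gamma)$ is treated symmetrically.

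To construct $\mu_w$, I combine Proposition \ref{p.holmaps} with the Holomorphic Motion Theorem \ref{t.holmotion}. After conjugating the family $\rho_\cdot^k$ by a holomorphic-in-$w$ path in $\MG$, one may assume that the tangential projections satisfy $(\xi^k_t)_0 = \mathrm{Id}_{\RP}$ for every $t$. Then for each fixed $t$, Proposition \ref{p.holmaps} yields that $(z,w)\mapsto (\xi^k_t)_w(z)$ is a holomorphic motion of $\RP$ into $\CP$; Theorem \ref{t.holmotion} extends it to a leafwise quasi-conformal motion of $\CP$ with Beltrami coefficient $\mu_t(\cdot,w)$ depending holomorphically on $w$ into the unit ball of $L^\infty(\CP)$. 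The uniform $K$-quasi-M\"obius estimate of Proposition \ref{p.qc} bounds $\|\mu_t(\cdot,w)\|_\infty$ uniformly in $t$. Finally, to enforce the $\iota$-equivariance \eqref{eqn:automorphic} on the upper half, one symmetrizes $\mu_t(\cdot,w)|_{\H^2}$ over the cocompact $\Gamma$-action, using that $\rho^k_w(\gamma)$ acts leafwise by conformal maps (so symmetrization does not destroy the holomorphic dependence on $w$).

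The main technical obstacle is that the fiberwise Holomorphic Motion Theorem \ref{t.holmotion} provides extensions for each $t$ independently and does not automatically give joint measurability of $(z,t)\mapsto\mu(z,t,w)$ on $\fCP$, which is needed to regard $\mu_w$ as an element of $L^\infty(\fCP)$. This forces one to select a canonical extension whose Beltrami coefficient is measurable in $t$; Slodkowski's strengthening of the $\lambda$-lemma supplies such a canonical extension that depends measurably on the input data (which is continuous in $t$ by construction) and still depends holomorphically on $w$ leafwise, yielding the desired $\mu_w\in L^\infty(\fCP)$ and closing the argument.
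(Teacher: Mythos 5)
Your proposal does not prove the statement in question. The statement is the Holomorphic Motion Theorem itself: given a holomorphic motion $\xi:\Lambda\times\pD\to\CP$ of an arbitrary subset $\Lambda\subset\CP$, one must produce an extension $g:\CP\times\pD\to\CP$ that is a holomorphic motion of the whole sphere, verify that it is jointly continuous, that each time-$w$ map is $\frac{1+|w|}{1-|w|}$-quasi-conformal, and that $w\mapsto\mu_g(\cdot,w)$ is holomorphic into the unit ball of $L^\infty(\CP)$. What you have written instead is a sketch of the proof of Theorem \ref{thm:ab map} (holomorphicity of the laminated Ahlfors--Bers map ${\rm AB}^k$), in which the Holomorphic Motion Theorem appears as a black-box ingredient: you explicitly invoke ``Theorem \ref{t.holmotion}'' and ``Slodkowski's strengthening of the $\lambda$-lemma'' to extend the motions of the tangential projections. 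As an argument for the statement itself this is circular --- you assume exactly what is to be proved --- and none of the genuinely hard points of the theorem (the extension from an arbitrary, possibly non-closed set $\Lambda$ to all of $\CP$, which is Slodkowski's theorem; the dilatation bound coming from the Schwarz--Pick lemma applied to $w\mapsto\mu_g(\cdot,w)$; the holomorphic dependence of the Beltrami coefficients, which rests on the Measurable Riemann Mapping Theorem together with an argument that the extension can be chosen so that $\mu_g$ varies holomorphically) is addressed anywhere in your text.

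For calibration: the paper does not prove this theorem either; it is quoted with a reference to the survey of Farre--Jiang--Wang-type literature (\cite{FJW:holomorphicmotion}) for history and proof, the standard route being Slodkowski's extension theorem combined with the $\lambda$-lemma of Ma\~n\'e--Sad--Sullivan and Bers--Royden. If your intention was to prove Theorem \ref{thm:ab map}, your outline is broadly in line with the paper's proof of that result (holomorphic motions of the limit curves from Proposition \ref{p.holmaps}, extension, Beltrami coefficients varying holomorphically, Schwarzian of the solution, Bers embedding), but that is a different statement; a proof of Theorem \ref{t.holmotion} would have to start from scratch and, in particular, could not cite Theorem \ref{t.holmotion} or Slodkowski's theorem as tools.
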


   Recall from \S\ref{sec:Bersembedding} that for any $[f: M_\Gamma \to W] \in \T(M_\Gamma)$, the laminated Bers' embedding $\beta_W: \T(M_\Gamma) \to Q(\overline W)$ is a holomorphic injection. 
    In the cover of $\overline W$ corresponding to $\Gamma$, consider a leaf $\overline L \cong \overline \H^2 \times \{t\}$.
    Define 
    \begin{equation}\label{eqn: bers leaf}
        q: \T(M_\Gamma) \xrightarrow{\beta_W} Q(\overline W) \to Q(\overline L),
    \end{equation}
    where the second arrow is the restriction to a holomorphic quadratic differential on $\overline L$.
    This map is a holomorphic embedding (see \S\ref{sec:Bersembedding} or \cite[\S3]{Sullivan}).
\begin{proof}[Proof of Theorem \ref{thm:ab map}]
    Let $\rho_\cdot : \pD \to \Xi^k(\Gamma, \PSL(d,\C))$ be a holomorphic disk.
    For $w\in \pD$,  denote $\mathrm{AB}^k(\rho_w) = (E_w, F_w) \in \T(M_\Gamma) \times \T(\overline M_\Gamma)$.
    Note that $\mathrm{AB}^k \circ \rho_\cdot$ is holomorphic if and only if $w \mapsto E_w$ and $w\mapsto F_w$ are each holomorphic.
    We will give the argument that the first map is holomorphic; the other is similar.
    
    The proof is really a direct consequence of Proposition \ref{p.holmaps}, the Holomorphic Motion Theorem \ref{t.holmotion},  the construction of the Bers' embedding giving $\T(M_\Gamma)$ is complex structure (Theorems \ref{thm: Sullivan Bers embedding} and \ref{thm: Bers holomorphic}), and holomorphic depedence on parameters in the Measurable Riemann Mapping Theorem \ref{thm:mrm}.
    The details are outlined below.
    
    Consider the map $q$ defined as in \eqref{eqn: bers leaf} for $W = E_0$ and $t \in \partial \H^2$ chosen so that the leaf $L$ maps injectively into $E_0$.
    Let $\Lambda_0 = (\xi_t^k)_0(\partial \H^2)$.
    Using Proposition \ref{p.holmaps}, the map 
    \[(z,w) \in \Lambda_0 \times \pD \mapsto (\xi_t^k)_w \circ (\xi_t^k)_0\inverse(z)\in \CP\]
    is a holomorphic motion.
    By Theorem \ref{t.holmotion}, there is a holomorphic motion $g: \CP \times \pD \to \CP$ extending it.

    The leaf $L$ of $E_0$ corresponding to $t$ is identified with a component of $\CP \setminus \Lambda_0$.
    Let $u : \H^2 \to L \subset \CP\setminus \Lambda_0$ be the inverse of a uniformizing map.
    Denote by $h: \H^2 \times \pD \to \CP$ the map $h(z,w) = g(u(z),w)$.
    Define a family of Beltrami differentials on $\CP$ by the rule
    \[\mu(z,w) = \begin{cases}
        \frac{\partial h/\partial \overline z}{\partial h/\partial z}(z,w), & z \in \H^2\\
        0, & z\in \overline \H^2.
    \end{cases}\]
    Using the final bullet point of Theorem \ref{t.holmotion} and the transformation rule \eqref{eqn: beltrami composite} for Beltrami coefficients under pre-composition with a holomorphic map, the map $w\in \pD\mapsto \mu(\cdot, w) \in L^\infty(\CP)$ is holomorphic.

    Let $G^w$ be the unique normalized solution of 
    \[\frac{\partial G^w}{\partial z} \mu(\cdot, w) = \frac{ \partial G^w}{\partial \overline z}.\]
    given by Theorem \ref{thm:mrm}; since $w\mapsto \mu(\cdot, w)$ is holomorphic, the maps $G^w$ vary holomrphically in $w$.
    
    By inspection of \eqref{eqn: def schwarz}, we see that the Schwarzian derivative $\mathcal S (G^w|_{\overline \H^2}) \in Q( \overline \H^2)\cong Q(\overline L)$ varies holomorphically in $w$.
    From the construction of Bers' embedding $\beta_{E_0}$, we see that $q(\rho_w) = \mathcal S (G^w|_{\overline \H^2})$.
    Bers' embedding (with respect to any basepoint) defines the complex structure on $\T(M_\Gamma)$ (Theorem \ref{thm: Bers holomorphic}).  This concludes the proof that $\mathrm{AB}^k$ is holomorphic.
\end{proof}

\subsection{Classical Ahlfors--Bers for the irreducible representation}

We conclude the proof of Theorem \ref{thm:IntroAB} by showing the compatibility of ${\rm AB}^k$ with the classical Ahlfors--Bers map on the quasi-Fuchsian locus.

\begin{prop}
\label{pro:ab irred is ab}
Let $\rho:\Gamma\to{\rm PSL}(d,\mb{C})$ be the composition of a quasi-Fuchsian representation $\eta:\Gamma\to{\rm PSL}(2,\mb{C})$ with the irreducible representation ${\iota_d}:{\rm PSL}(2,\mb{C})\to{\rm PSL}(d,\mb{C})$. Then ${\rm AB}^k(\rho)={\rm AB}(\eta)$ for every $1\le k\le d-1$.
\end{prop}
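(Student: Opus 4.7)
The plan is to reduce the claim to the previously established fact that the classical quasi-Fuchsian space embeds into the quasi-conformal deformation space $\mathcal{QC}(\iota)$ via $\eta \mapsto \widehat{\eta}$, where $\widehat{\eta}(\gamma)(z,t) = (\eta(\gamma)z, \gamma t)$. Concretely, I would argue that the laminated conformal action $\rho^k$ associated to $\rho = \iota_d \circ \eta$ is $\mathcal{MG}$-conjugate to $\widehat{\eta}$. Once this is established, both sides of the desired equality, interpreted as elements of $\T(M_\Gamma)\times\T(\overline M_\Gamma)$ via the inclusion $\T(\Sigma)\times\T(\overline\Sigma)\hookrightarrow\T(M_\Gamma)\times\T(\overline M_\Gamma)$ from Proposition \ref{prop: classical includes}, will coincide by a direct bookkeeping.

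First I would identify the boundary maps. Since $\iota_d$ intertwines the standard $\PSL(2,\C)$-action on $\CP$ with its action on the Veronese curve, the Anosov boundary maps of $\rho$ satisfy $\xi^j_\rho = \nu^j \circ \xi_\eta$ for each $j \in \{k-1, k, k+1, d-k\}$, where $\xi_\eta:\partial\Gamma\to \CP$ parametrizes the quasi-Fuchsian limit set of $\eta$. Substituting into the formula of Proposition \ref{p.tangentproj} yields
\[
\xi^k_t(x) \;=\; [\nu^{d-k}(\xi_\eta(x))\cap \nu^{k+1}(\xi_\eta(t))] \;\in\; \mathbb P\bigl(\nu^{k+1}(\xi_\eta(t))/\nu^{k-1}(\xi_\eta(t))\bigr),
\]
which is precisely the Veronese tangent projection \eqref{e.Veronese} based at $p=\xi_\eta(t)$, pre-composed with $\xi_\eta$. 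I would then observe that this projection is a M\"obius isomorphism $\pi_t: \CP \to \mathbb P(\nu^{k+1}(\xi_\eta(t))/\nu^{k-1}(\xi_\eta(t)))$: it extends continuously (by sending $\xi_\eta(t)$ to $[\nu^k(\xi_\eta(t))]$) to a continuous, injective, rational self-map of $\CP$, and any such map has degree one.

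Next, naturality of the Veronese construction ensures that the projective cocycle $\rho^k(\gamma,t)$ is intertwined with $\eta(\gamma)$ via the maps $\pi_t$ and $\pi_{\gamma t}$, i.e., $\pi_{\gamma t}\circ \eta(\gamma) = \rho^k(\gamma,t)\circ \pi_t$. Choosing the trivialization $T$ of $\mathcal B^k_\rho$ so that its three normalization points $x,y,z$ are pulled back from the standard frame of $\CP$ by $\xi_\eta$ (and absorbing the remaining ambiguity by a single element of $\PSL(2,\C)\subset \MG$ covering the identity on $\partial \H^2$), I would conclude that, after this conjugation, $\rho^k(\gamma)(z,t)=(\eta(\gamma)z,\gamma t)$, i.e.\ $[\rho^k]=[\widehat{\eta}]$ in $\mathcal{QC}(\iota)$. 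To finish, I would unwind $\mathcal{AB}$: writing $\eta = g_1\cdot(\Gamma\hookrightarrow\PSL(2,\R))\cdot g_1^{-1}$ with $g_1$ smooth off $\RP$, the conjugating map $\widehat g_1(z,t)=(g_1(z),t)$ produces $E_{\widehat\eta}=g_1(\H^2)\times\partial\H^2/\widehat\eta(\Gamma)$, which is exactly the image of the classical surface $g_1(\H^2)/\eta(\Gamma)\in\T(\Sigma)$ under the inclusion of Proposition \ref{prop: classical includes}, and analogously for $F$. The main obstacle in this argument is the M\"obius property of the Veronese projection in the second paragraph; this granted, the remainder is naturality and comparing definitions.
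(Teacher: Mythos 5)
Your proposal takes essentially the same route as the paper: factor $\xi^k_t$ through $\xi_\eta$ and the Veronese tangent projection $\nu^k_{\xi_\eta(t)}$, reduce to showing $\nu^k_Q$ is a M\"obius isomorphism, and then unwind equivariance and markings via Proposition~\ref{prop: classical includes}. The M\"obius property you flag as the main obstacle is precisely the content of Lemma~\ref{lem:veronese holo}; your sketched argument (continuous $+$ injective $+$ rational $\Rightarrow$ degree one) is sound in outline, but the two hypotheses you assert without proof are exactly what that lemma establishes, namely the continuous extension of $P\mapsto[Q^{d-k-1}P^k]$ across $P=Q$ (the paper's explicit limit computation) and injectivity (the observation that $P_1^k-P_2^k$ has only simple roots).
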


The irreducible representation ${\iota_d}:{\rm PSL}(2,\mb{C})\to{\rm PSL}(d,\mb{C})$ is the  natural ${\rm PSL}(2,\mb{C})$-action on the projectivization $\mb{P}(\mb{C}_{d-1}[X,Y])$  of the $d$-dimensional $\mb{C}$-vector space consisting of degree $(d-1)$ homogeneous polynomials in two variables. More explicitly, for a matrix $[A]=\left[\begin{smallmatrix}a&b\\c&d\end{smallmatrix}\right]\in{\rm PSL}(2,\mb{C})$ we have
\[
[A]\cdot[Q(X,Y)]:=[Q(aX+bY,cX+dY)].
\]
Equivariant with respect to $\iota_d$ is the Veronese embedding in the full flag manifold
\[
\begin{array}{cccl}
\nu:&\mb{P}(\mb{C}_{1}[X,Y])&\to&\mc{F}(\mb{C}_{{d-1}}[X,Y])\\
&Q&\mapsto&\left(\nu^k(Q):=Q^{d-k}\cdot\mb{C}_{k-1}[X,Y]\right)_{k\le d-1}
\end{array}
\]
where $Q^{d-k}\cdot\mb{C}_{k-1}[X,Y]$ is the $k$-dimensional vector subspace consisting of the  homogeneous polynomials of degree $d-1$ that are multiples of $Q^{d-k}$. 

For every $Q\in\mb{C}_{1}[X,Y]$ and $1\le k\le d-1$ we have a tangent projection
\[\begin{array}{cccl}
\nu^k_Q:&\mb{P}(\mb{C}_{1}[X,Y])&\to&\mb{P}(\nu^{k+1}(Q)/\nu^{k-1}(Q))\\
&P&\mapsto&\left\{
\begin{array}{ll}
[\nu^k(Q)] &\text{if }P=Q,\\
{[\nu^{k+1}(Q)\cap \nu^{d-k}(P)]} &\text{otherwise}
\end{array}\right.
\end{array}
\]
In order to see that $\nu_Q^k$ is well-defined, note that if $P\neq Q$ then
\begin{equation}\label{e.VeroneseTangent}
\nu^{k+1}(Q)\cap \nu^{d-k}(P)=\langle Q^{d-1-k}P^k\rangle
\end{equation}
and $Q^{d-1-k}P^k$ is contained in $\nu^{k+1}(Q)$ but not in $\nu^{k}(Q)$ (since $Q^{d-k-1}$ divides it but $Q^{d-k}$ does not) so $Q^{d-k-1}P^k+\nu^{k-1}(Q)$ has dimension $k$ and is contained in $\nu^{k+1}(Q)$. 

\begin{lem}
\label{lem:veronese holo}
For every $Q$ and $k$ the map $\nu^k_Q$ is a biholomorphism.
\end{lem}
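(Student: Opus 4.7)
The plan is to carry out a direct computation in coordinates, exploiting the $\PSL(2,\mb C)$-equivariance of the Veronese embedding to normalize the point $Q$. First I would observe that the irreducible representation $\iota_d$ intertwines the natural $\PSL(2,\mb C)$-actions on $\mb P(\mb C_1[X,Y]) \cong \mb{CP}^1$ and on $\mc F(\mb C_{d-1}[X,Y])$, so post-composition and pre-composition with suitable elements of $\PSL(2,\mb C)$ allow me to assume without loss of generality that $Q = [Y]$. The target $\mb P(\nu^{k+1}(Q)/\nu^{k-1}(Q))$ is abstractly a $\mb{CP}^1$, since the quotient $\nu^{k+1}(Q)/\nu^{k-1}(Q)$ is two-dimensional.

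Next I would read off explicit bases from the definition $\nu^j(Q) = Q^{d-j}\cdot \mb C_{j-1}[X,Y]$: taking $Q = Y$ gives
\[
\nu^{k-1}(Q) = \mathrm{span}\{X^iY^{d-1-i}\}_{0\le i\le k-2}, \quad \nu^{k+1}(Q) = \mathrm{span}\{X^iY^{d-1-i}\}_{0\le i\le k},
\]
so $\{[X^{k-1}Y^{d-k}], [X^kY^{d-1-k}]\}$ furnishes a basis of $\nu^{k+1}(Q)/\nu^{k-1}(Q)$ and the distinguished point $[\nu^k(Q)]$ is the first of these. For $P = aX + bY$ distinct from $Q$, the formula \eqref{e.VeroneseTangent} yields
\[
\nu_Q^k([P]) = [Y^{d-1-k}P^k] = \left[\sum_{j=0}^{k}\binom{k}{j}a^jb^{k-j}X^jY^{d-1-j}\right].
\]
Reducing modulo $\nu^{k-1}(Q)$ kills all terms with $j \le k-2$, leaving
\[
\nu_Q^k([P]) = \bigl[k a^{k-1}b\, X^{k-1}Y^{d-k} + a^k\, X^kY^{d-1-k}\bigr],
\]
which in the homogeneous coordinates $[u:v]$ associated to the chosen basis is $[kb : a]$ (dividing by $a^{k-1}$).

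In the affine chart $t = a/b$ on the source (with $t=0$ corresponding to $Q$) and the affine chart $s = v/(ku)$ on the target (chosen so that $s=0$ is the distinguished point $[\nu^k(Q)]$), the map becomes $t \mapsto t$, the identity; in particular it is holomorphic, injective, and extends biholomorphically to $\mb{CP}^1$ sending $Q$ to the distinguished point, matching the definition at $P = Q$. This simultaneously establishes continuity at $Q$ (so no removable-singularity argument is needed) and shows that $\nu_Q^k$ is a biholomorphism.

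There is no real obstacle here: the computation is mechanical and the $\PSL(2,\mb C)$-normalization reduces everything to a one-variable Möbius calculation. If one wishes to avoid coordinates entirely, an alternative is to note that $\nu_Q^k$ is algebraic of degree one (the identity \eqref{e.VeroneseTangent} exhibits the image as a line through the origin depending linearly on $P^k$ modulo $\nu^{k-1}(Q)$, and degree one can be extracted by evaluating at two well-chosen points), but the explicit calculation above already delivers the conclusion with minimal overhead.
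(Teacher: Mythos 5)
Your argument is correct and takes a genuinely different route from the paper. You exploit $\PSL(2,\C)$-equivariance of the Veronese curve to normalize $Q=[Y]$, and then a single explicit computation $[a:b]\mapsto [ka^{k-1}b:a^k]=[kb:a]$ (valid whenever $a\neq 0$, i.e.\ $P\neq Q$) exhibits $\nu^k_Q$ as a M\"obius transformation extending continuously to $[0:1]\mapsto[1:0]=[\nu^k(Q)]$, which simultaneously gives holomorphy, injectivity, and continuity at $Q$. The paper does not normalize $Q$: it handles continuity at $Q$ by a delicate subsequence argument (writing $P_n=Q+E_n$, choosing a coordinate of $E_n$ with dominant modulus, normalizing by it, and extracting a convergent subsequence), then separately checks holomorphy away from $Q$ from the defining formula and proves injectivity via the factorization $P_1^k-P_2^k=\prod_j(P_1-\zeta_k^jP_2)$ which has only simple roots so cannot be divisible by $Q^2$. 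Your normalization collapses all three checks into one linear-algebraic identity and is cleaner; the paper's argument is longer but works pointwise at an arbitrary $Q$ without invoking transitivity of the group action. One small slip: with your choice of affine charts the map is a nonzero constant multiple of the identity, not the identity itself (you get $s=t/k^2$ from $v/(ku)$ with $u=kb$, $v=a$), but this is immaterial to the conclusion.
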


\begin{proof}
The crux of the proof is to check continuity, as the expression for the tangent projection given in Equation \eqref{e.VeroneseTangent} is not well defined for $P=Q$, since $Q^{d-1}\in\nu^{k-1}(Q)$. Consider a sequence $P_n$ of linear homogeneous polynomials converging to $Q$. We can write them as $P_n=Q+E_n$ with $E_n=\alpha_n X+\beta_n Y$ converging to $0$. Using the symmetry of the following argument, we may assume without loss of generality that $|\alpha_n|\ge|\beta_n|$ (up to passing to a subsequence). We have
\[
Q^{d-k-1}P_n^k=Q^{d-k-1}(Q+E_n)^k=Q^{d-k-1}E_n^k+kQ^{d-k}E_n^{k-1}+R_n
\]
with $R_n\in Q^{d-k+1}\cdot\mb{C}_{d-k+2}[X,Y]=\nu^{k-1}(Q)$. Thus
\begin{align*}
\langle Q^{d-k-1}P_n^k\rangle+\nu^{k-1}(Q) &=\langle Q^{d-k-1}E_n^k+kQ^{d-k}E_n^{k-1}\rangle+\nu^{k-1}(Q)\\
 &=\left\langle\frac{1}{\alpha_n^{k-1}}(Q^{d-k-1}E_n^k+kQ^{d-k}E_n^{k-1})\right\rangle+\nu^{k-1}(Q).
\end{align*}
Recall that $E_n^s=(\alpha_nX+\beta_nY)^s=\sum_{i\le s}{{s\choose i}\alpha_n^i\beta_n^{s-i}X^iY^{s-i}}$ so
\[
\frac{1}{\alpha_n^{k-1}}E_n^s=\sum_{i\le s}{{s\choose i}\alpha_n^{i-k+1}\beta_n^{s-i}X^iY^{s-i}}.
\]

We will show that (up to subsequences) 
\begin{equation}\frac{1}{\alpha_n^{k-1}}E_n^{k-1}\to E_\infty=X^{k-1}+\cdots\end{equation} and 
\begin{equation}
\frac{1}{\alpha_n^{k-1}}E_n^k\to 0
\end{equation} as $n\to\infty$. Before doing this computation, let us observe that this implies in particular that, up to subsequences, we have 
\[
\left\langle\frac{1}{\alpha_n^{k-1}}(Q^{d-k-1}E_n^k+kQ^{d-k}E_n^{k-1})\right\rangle+\nu^{k-1}(Q) \to \langle Q^{d-k}E_\infty\rangle+\nu^{k-1}(Q)=\nu^k(Q).
\]
As the limit does not depend on the chosen subsequence, we deduce that the whole sequence converges to it. This shows continuity of the map $\nu^k_Q$.

We now discuss the convergence of the two sequences. Consider the first term
\[
\frac{1}{\alpha_n^{k-1}}E_n^{k-1}=\sum_{i\le k-1}{{k-1\choose i}\alpha_n^{i-k+1}}\beta_n^{k-1-i}X^iY^{k-1-i}
\]
By assumption $|\alpha_n|\ge|\beta_n|$, so every term $\alpha_n^{-k+i+1}\beta_n^{k-i-1}$ has absolute value bounded above by 1. Up to subsequence we may assume that all these coefficients converge.  

Consider then the second term
\[
\frac{1}{\alpha_n^{k-1}}E_n^{k}=\sum_{i\le k}{{k\choose i}\alpha_n^{i-k+1}\beta_n^{k-i}X^iY^{k-i}}.
\]
Note that $|\alpha_n^{i-k+1}\beta_n^{k-i}|=|(\beta_n/\alpha_n)^{k-i-1}\beta_n|\le|\beta_n|$ as $|\alpha_n|\ge|\beta_n|$. Since $\beta_n\to 0$ we conclude that all coefficients are converging to 0 as well.    

In order to conclude that $\nu^k_Q$ is biholomorphic, it is enough to check that it is injective and holomorphic on $\mb{P}(\mb{C}_{1}[X,Y])-\{Q\}$. Holomorphicity is a straightforward consequence of the formula 
\[
\nu^k_Q(P)=[Q^{d-k}P^k]\in\mb{P}(\nu^{k+1}(Q)/\nu^{k-1}(Q)).
\]
As for injectivity, suppose that $[Q^{d-k-1}P_1^k]=[Q^{d-k-1}P_2^k]$, or, in other words, $Q^{d-k-1}(P_1^k-P_2^k)\in \nu^{k-1}(Q)$. Then $Q^{d-k+1}$ divides $Q^{d-k-1}(P_1^k-P_2^k)$ or, equivalently, $Q^2$ divides $P_1^k-P_2^k$. However, $P_1^k-P_2^k=\prod_{j=1}^k(P_1-\zeta_k^jP_2)$, with $\zeta_k$ a primitive $k$-th root of unity, has only simple roots, so it cannot be a multiple of $Q^2$. This proves injectivity.
\end{proof}

We can now prove Proposition \ref{pro:ab irred is ab}.

\begin{proof}[Proof of Proposition \ref{pro:ab irred is ab}]
The boundary map of the representation $\rho={\iota_d}\circ\eta$ is the composition $\xi: \partial\Gamma\xrightarrow{\zeta}\mb{CP}^1\xrightarrow{\nu}\mc{F}(\mb{C}^d)$ of the boundary map $\zeta$ of $\eta$ with the Veronese embedding $\nu$.  
Denote by $\Lambda = \zeta(\partial \Gamma) \subset \CP$. We then have, for every $z\in\deG=\deH$, $\xi^k_z(\RP)=\nu_{\zeta(z)}(\Lambda)\subset \mb P (\xi^{k+1}(z)/\xi^{k-1}(z))$.

Using  Lemma \ref{lem:veronese holo}, for every $z\in \partial\Gamma$ 
\[\nu_{\zeta(z)}^k:\CP \times \{z\} \to  \mb P (\xi^{k+1}(z)/\xi^{k-1}(z))\]
is a bi-holomorphism inducing the identification of $\Lambda \times \{z\}$ and $\xi^k_z(\deG)\subset \mb P (\xi^{k+1}(z)/\xi^{k-1}(z))$.
Continuity of  
\[(w,z)\in \CP\times \partial \Gamma \mapsto \nu_{\zeta(z)}^k(w) \in \mc B_\rho^k\]
follows because it is leafwise holomorphic and extends the continuous map 
\[(w,z)\in \Lambda \times \partial \Gamma \mapsto \xi_z^k(\zeta\inverse(w)).\]

Thus, $\nu^k$ is an equivalence of Riemann surface laminations and restricts to an equivariant equivalence 
\[\left(\CP- \Lambda \right) \times \partial \Gamma \to \mc B_\rho^k - \fL_\rho^k\]
of Riemann surface laminations.
\end{proof}
    Many more $\SL(2,\R)$-embeddings give rise to $k$-hyperconvex representations. More specifically, for integers $d_1\geq \ldots \geq d_s$ such that $d_1+\ldots+d_s=d$, denote by $\iota_{d_1,\ldots, d_s}:\SL(2,\C)\to\SL(d,\C)$ the unique representation (up to conjugacy) that preserves a splitting $\C^d=\C^{d_1}\oplus\ldots\oplus \C^{d_s}$ and acts irreducibly on each factor. The restriction of $\iota_{d_1,\ldots, d_s}$ to $\Gamma$, and more generally the composition of $\iota_{d_1,\ldots, d_s}$ with a quasi-Fuchsian representation $\eta:\G\to\SL(2,\C)$ is $k$-hyperconvex if and only if $d_1>d_2+2k$, and in this case the $s$-th boundary map, for $s\leq k+1$, is contained in $\Gr_s(\C^{d_1})\subset\Gr_s(\C^{d})$. As a result, it directly follows from Proposition \ref{pro:ab irred is ab}: 
\begin{cor}\label{cor:ab is ab}
Let $\eta:\Gamma\to{\rm PSL}(2,\mb{C})$ be a quasi-Fuchsian representation, $d_1\geq \ldots \geq d_s$ integers such that $d_1+\ldots+d_s=d$.   Then for every $k< \frac12(d_1-d_2)$, 
$${\rm AB}^k(\iota_{d_1,\ldots, d_s}\circ \eta)={\rm AB}(\eta).$$
\end{cor}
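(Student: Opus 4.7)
The plan is to reduce the statement to Proposition \ref{pro:ab irred is ab} by showing that the laminated conformal action $\rho^k$ associated to $\rho = \iota_{d_1,\ldots,d_s}\circ \eta$ agrees, up to equivalence in $\MG$, with the laminated conformal action associated to $\iota_{d_1}\circ \eta$ (viewed as a representation into $\PSL(d_1,\C)$). This will follow from the fact that, under the hypothesis $k < \frac12(d_1-d_2)$, the tangent projections $\xi^k_z$ of $\rho$ land in the first factor and coincide with those of the irreducible Veronese in dimension $d_1$.

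First, I would compute the boundary maps of $\rho$ explicitly. Let $\zeta : \partial\Gamma \to \CP$ be the boundary map of $\eta$, and let $\nu_1 : \CP \to \calF(\C^{d_1})$ be the Veronese embedding associated with $\iota_{d_1}$. Since $\iota_{d_1,\ldots,d_s}$ preserves the splitting $\C^d = \C^{d_1}\oplus\cdots\oplus\C^{d_s}$ and acts by $\iota_{d_i}$ on each factor, the eigenvalues of $\rho(\gamma)$ are the union over $i$ of powers $\mu^{d_i-1}, \mu^{d_i-3}, \ldots, \mu^{-(d_i-1)}$, where $\mu$ is the dominant eigenvalue of $\eta(\gamma)$. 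The hypothesis $d_1 - 2k > d_2$ guarantees that the top $k+1$ (and bottom $k+1$) eigenvalues all come from the $\C^{d_1}$-factor. Using the dynamics-preserving property of Anosov boundary maps together with continuity, this gives for all $z \in \partial\Gamma$ and $s \leq k+1$,
\[
\xi^s(z) = \nu_1^s(\zeta(z)) \subset \C^{d_1}, \qquad \xi^{d-s}(z) = \nu_1^{d_1-s}(\zeta(z)) \oplus \C^{d_2}\oplus \cdots \oplus \C^{d_s}.
\]

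Second, I would plug these formulas into the definition of the tangent projection $\xi^k_z$. For distinct $x,z$, using the fact that $z^{k+1} = \nu_1^{k+1}(\zeta(z)) \subset \C^{d_1}$ and that $x^{d-k} \cap \C^{d_1} = \nu_1^{d_1-k}(\zeta(x))$, we obtain
\[
\xi^k_z(x) = [x^{d-k} \cap z^{k+1}] = [\nu_1^{d_1-k}(\zeta(x)) \cap \nu_1^{k+1}(\zeta(z))] \in \mathbb P(z^{k+1}/z^{k-1}).
\]
Under the canonical identification $\mathbb P(\nu_1^{k+1}(\zeta(z))/\nu_1^{k-1}(\zeta(z))) = \mathbb P(z^{k+1}/z^{k-1})$, this is exactly the $k$-th tangent projection of the Veronese embedding in dimension $d_1$ evaluated at $\zeta(x)$. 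Thus the laminated limit set $\calL^k_\rho \subset \fCP$ coincides with the laminated limit set of the representation $\iota_{d_1}\circ \eta : \Gamma \to \PSL(d_1,\C)$, with the same $\Gamma$-action on each leaf.

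Third, after this identification, the constructions of $\rho^k$ and of $(\iota_{d_1}\circ\eta)^k$ in \S\ref{sec:tangent} use the same three distinguished points $x,y,z \in \partial\Gamma$ and the same tangent projections, so they produce the same element of $\Hom(\Gamma, \MG)/\MG$. By Proposition \ref{pro:ab irred is ab} applied to $\iota_{d_1}\circ\eta$, we have $\mathrm{AB}^k(\iota_{d_1}\circ\eta) = \mathrm{AB}(\eta)$, and then $\mathrm{AB}^k(\iota_{d_1,\ldots,d_s}\circ\eta) = \mathrm{AB}(\eta)$ follows.

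The main obstacle I anticipate is the bookkeeping needed to verify carefully that $x^{d-k}\cap \C^{d_1} = \nu_1^{d_1-k}(\zeta(x))$ (and the analogous assertion for $z^{k\pm 1}$), since the boundary map at $d-k$ \emph{does not} live in a single factor; one must use the eigenvalue gap inequality to control precisely which eigenspaces contribute and then argue by density from loxodromic fixed points, as in the proof of Corollary \ref{cor:bdryarehol}. Once this identification is made, the reduction to the irreducible case is formal and the rest of the argument is essentially a naturality check for the trivialization of the projective bundle $\mathcal B_\rho^k$.
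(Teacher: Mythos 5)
Your proof is correct and follows exactly the approach the paper takes (which is contained in the two sentences preceding the corollary): observe that for $k<\tfrac12(d_1-d_2)$ the eigenvalue bookkeeping forces $\xi^{s}$, $s\le k+1$, into $\C^{d_1}$, deduce that the tangent projections and hence the laminated conformal action $\rho^k$ coincide with those of $\iota_{d_1}\circ\eta$, and invoke Proposition~\ref{pro:ab irred is ab}. The explicit formula $\xi^{d-s}(z)=\nu_1^{d_1-s}(\zeta(z))\oplus\C^{d_2}\oplus\cdots\oplus\C^{d_s}$ and the resulting computation $x^{d-k}\cap z^{k+1}=\nu_1^{d_1-k}(\zeta(x))\cap\nu_1^{k+1}(\zeta(z))$ correctly flesh out what the paper leaves implicit.
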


\section{Fully hyperconvex representations, proof of Theorem \ref{thm:ab intro 2}}
We now turn to fully hyperconvex representations and prove Theorem \ref{thm:ab intro 2}. Holomorphicity of the full laminated Ahlfors--Bers map
\[
{\rm AB}:=({\rm AB}^1,\cdots,{\rm AB}^{d-1}):\Charn\to(\T(M_\Gamma)\times\T(\overline M_\Gamma))^{d-1}
\]
follows directly from Theorem \ref{thm:ab map} proven in the previous section, we will discuss closedness, injectivity, as well as the characterization of the preimage of the diagonal in the next three subsections.
In this section, we continue our abuse of notation, dropping markings and brackets denoting equivalence classes, throughout.

\subsection{Properness}
We equip the Teichm\"uller space of Riemann surface laminations $\T(M_\Gamma)$ with its Teichm\"uller distance (Definition \ref{d.Teichdis}). The goal of this  subsection is to prove the following: 
\begin{thm}\label{t.properness}
The pre-image of a bounded set under the map 
$${\rm AB}:\Charn\to(\T(M_\Gamma)\times\T(\overline M_\Gamma))^{d-1}$$ 
is pre-compact. 
\end{thm}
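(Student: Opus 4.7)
The plan has three stages.

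First, I translate the Teichmüller bound into uniform bounds on quasi-conformal distortion. By definition of $d_\T$ and the conformal welding of Proposition \ref{p.invAB}, a bounded set $\mathrm{AB}^k(\rho_n) = (E^k_n, F^k_n)$ yields, for every $k$, a uniform constant $K_k$ such that $\rho_n^k = g_n^k \iota (g_n^k)\inverse$ can be realized with $g_n^k$ a $K_k$-quasi-conformal laminated deformation of $\iota$.

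Second, I convert this bound into compactness in the character variety. Combining the laminated Ahlfors Lemma (Proposition \ref{p.bound}) with Wolpert's inequality for the QC distortion of hyperbolic lengths, applied leafwise in $E^k_n$ and $F^k_n$, gives
\[
\log|L^k_{\rho_n}(\gamma)| \le 2K_k \,\ell_{M_\Gamma}(\gamma)
\]
for every $k$, every $\gamma\in \Gamma$ and every $n$. Because $\rho_n$ is \emph{fully} hyperconvex, uniform control on all the ratios $|\lambda^k(\rho_n(\gamma))/\lambda^{k+1}(\rho_n(\gamma))|$, together with $\det\rho_n(\gamma) = \pm 1$, yields uniform upper and lower bounds on the moduli of all eigenvalues of $\rho_n(\gamma)$, and in particular on $|\tr\rho_n(\gamma)|$. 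Since trace functions generate the coordinate ring of $\Xi(\Gamma, \PSL(d,\C))$, pointwise boundedness of traces extracts a subsequence converging to some $[\rho_\infty]$ in the character variety.

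Third, I verify $\rho_\infty\in \Charn$. After normalizing the $g_n^k$ within their $\MG$-conjugacy classes by pinning down the values at three continuous sections (as in \S\ref{sec:tangent}), Ahlfors--Bers compactness of normalized $K_k$-quasi-conformal homeomorphisms of $\CP$, applied leafwise, combined with the continuous transverse dependence (through Corollary \ref{cor:bdryarehol}) of the tangent projections on the representation, produces a subsequential limit $g_n^k \to g_\infty^k$. The limit is $K_k$-quasi-conformal and conjugates $\iota$ to the laminated conformal action naturally attached to $\rho_\infty$. In particular, each tangent projection $\xi^k_z$ of $\rho_\infty$ is a uniform limit of $K$-quasi-M\"obius maps (Proposition \ref{p.qc}), hence quasi-M\"obius and injective. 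Moreover, the reverse Wolpert inequality provides a matching lower bound $\log|L^k_{\rho_n}(\gamma)| \ge \ell_{M_\Gamma}(\gamma)/K_k$ that persists in the limit, giving the linear exponential gap required for $\rho_\infty$ to be $k$-Anosov for every $k$, hence fully hyperconvex.

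I expect the main obstacle to lie in the third stage: passing from compactness of the laminated conjugating maps to the conclusion that the limit representation is \emph{genuinely} Anosov (rather than merely admitting continuous equivariant transverse boundary maps). The critical ingredient is the two-sided control $\ell_{M_\Gamma}(\gamma)/K_k \le \log|L^k_{\rho_n}(\gamma)| \le 2K_k\,\ell_{M_\Gamma}(\gamma)$, coming from pairing Ahlfors with Wolpert, which simultaneously prevents eigenvalue collapse and eigenvalue blow-up in the limit.
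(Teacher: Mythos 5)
Your overall strategy matches the paper's: derive two-sided control on the eigenvalue gaps $\log|L^k_{\rho_n}(\gamma)|$ against $\ell_{\Sigma}(\gamma)$ from the bounded Ahlfors--Bers data, extract a convergent subsequence of characters, then upgrade the limit to fully hyperconvex by combining the lower bound (for the Anosov gap condition) with compactness of normalized quasi-conformal maps (for injectivity of the limit's tangent projections). Your Stage 2 is a genuine variant: you bound eigenvalue moduli and invoke Procesi's theorem that trace functions cut out the character variety, whereas the paper instead bounds the joint displacement on the symmetric space via Breuillard--Fujiwara and so directly conjugates $\rho_n$ into a bounded region of $\Hom(\Gamma,\PSL(d,\C))$. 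Both are viable; the paper's version has the advantage of producing a convergent sequence of honest homomorphisms (not just characters), which is what the third stage actually uses, and it sidesteps the fact that ``$\tr$'' is not a well-defined function on $\PSL(d,\C)$ --- the paper passes to a finite-index lift for this (Lemma \ref{lem:lift}), a point your writeup glosses over.

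The genuine gap is in the derivation of the lower bound. You attribute the two-sided control $\ell_\Sigma(\gamma)/K_k \le \log|L^k_{\rho_n}(\gamma)| \le 2K_k\,\ell_\Sigma(\gamma)$ to ``pairing Ahlfors with Wolpert,'' but the laminated Ahlfors Lemma (Proposition \ref{p.bound}) is a one-sided inequality $\log|L^k_\rho(\gamma)| \le 2\min\{\ell_{E^k_\rho}(\gamma),\ell_{F^k_\rho}(\gamma)\}$, and no reverse Ahlfors inequality exists: Wolpert's theorem controls $\ell_{E^k_n}(\gamma)$ against $\ell_\Sigma(\gamma)$ in both directions, but combining it with Ahlfors cannot give a lower bound on $\log|L^k_\rho|$. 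The lower bound must come from the $K_k$-quasi-conformal conjugacy $g_n^k$ itself, which on the periodic leaf $\CP\times\{\gamma^+\}$ conjugates the loxodromic $\iota(\gamma)$ to $\rho^k_n(\gamma)$. From that leafwise conjugacy one extracts two-sided control either by Reimann's extension to a bi-Lipschitz map of $\H^3$ --- this is precisely what the paper's Proposition \ref{p.cpxLengthbound} does, via Theorem \ref{thm:reimann} --- or by an extremal-length/modulus argument on the quotient tori. You already have this conjugacy from Stage 1, so the fix is available to you; the error is in thinking the Ahlfors Lemma contributes to the lower bound rather than recognizing that a separate, stronger input is required. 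Without that lower bound the $k$-Anosov condition on the limit is not established, so you should make this step precise.
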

We regard this as a good generalization of properness for the map ${\rm AB}$ which has values in the infinite dimensional (non-locally-compact) space $(\T(M_\Gamma)\times\T(\overline M_\Gamma))^{d-1}$.

We claim that Theorem \ref{t.properness} implies that $\mathrm{AB}$ is closed, as claimed in Theorem \ref{thm:IntroAB}.
Indeed, let $K\subset \Charn$ be a closed set and let $\{\rho_n\} \subset K$ be a sequence such that $\mathrm{AB}(\rho_n)$ converges in $(\T(M_\Gamma)\times\T(\overline{M}_\Gamma))^{d-1}$.
This implies that $\{\mathrm{AB}(\rho_n)\}$ is contained in a bounded set for the (product) Teichm\"uller metric, so that $\{\rho_n\}$ is precompact by the theorem.  Then any accumulation point $\rho$ is contained in $K$, and $\mathrm{AB}(\rho_n) \to \mathrm{AB}(\rho)$ holds by continuity.  Thus $\mathrm{AB}(K)$ is closed.

\medskip
The first step of the proof of Theorem \ref{t.properness} is to show that the complex dilation spectrum of a quasi-conformal deformation whose Ahlfors--Bers parameters lie in a bounded set of $\T(M_\Gamma)\times\T(\overline M_\Gamma)$ is uniformly controlled. Denote by $\Sigma=\mb{H}^2/\Gamma$ our reference hyperbolic surface.

\begin{prop}\label{p.cpxLengthbound}
Let $B\subset \T(M_\Gamma)\times\T(\overline M_\Gamma)$ be a bounded set. Then there exists $K\geq 1$ such that, for every $\rho\in\qc(\iota)$ with $\mc{AB}(\rho)\in B$, 
\[
\frac{1}{K}\ell_\Sigma(\cdot)\le\log|L_\rho(\cdot)|\le K\ell_\Sigma(\cdot).
\]   
\end{prop}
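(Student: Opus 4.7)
The plan is to establish the two inequalities separately: the upper bound is an immediate consequence of the laminated Ahlfors Lemma (Proposition \ref{p.bound}) combined with leafwise Wolpert's inequality, while the lower bound relies on the conformal welding construction from the previous section together with the classical distortion of hyperbolic translation lengths under quasi-conformal conjugation.

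For the upper bound, let $D$ denote the Teichm\"uller diameter of $B$ from the basepoint $(M_\Gamma,\overline M_\Gamma)$. Proposition \ref{p.bound} gives $\log|L_\rho(\gamma)|\le 2\ell_{E_\rho}(\gamma)$. By definition of the Teichm\"uller distance, there exists a marking of $E_\rho$ (in the correct homotopy class) realized by a leafwise quasi-conformal map of dilatation $\le e^{2D}$. Wolpert's inequality, applied to the annular leaf of $M_\Gamma$ corresponding to $\gamma^+$ and its image in $E_\rho$, yields $\ell_{E_\rho}(\gamma)\le e^{2D}\ell_\Sigma(\gamma)$, whence $\log|L_\rho(\gamma)|\le 2e^{2D}\ell_\Sigma(\gamma)$.

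For the lower bound, we apply the conformal welding of Proposition \ref{p.invAB} (see Remark \ref{rmk:conformal welding}) to markings of $E_\rho$ and $F_\rho$ of dilatation $\le e^{2D}$, producing a quasi-conformal deformation $\rho'=g_{E,F}\iota g_{E,F}^{-1}$ with $\mc{AB}(\rho')=(E_\rho,F_\rho)$ and $K_{g_{E,F}}\le e^{2D}$. Since the welding is leaf-preserving, $\rho'$ is a standard representative of its $\MG$-equivalence class; Proposition \ref{p.ABinjective} then gives $\MG$-equivalence between $\rho$ and $\rho'$. The absolute value of the complex dilation spectrum is invariant under $\MG$-conjugations -- the residual ambiguity acts leafwise by M\"obius transformations, which are isometries of $\mb H^3$ -- so $|L_\rho|=|L_{\rho'}|$. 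On the leaf $\CP\times\{\gamma^+\}$ the map $g_{E,F}|$ is an $e^{2D}$-quasi-conformal homeomorphism of $\CP$ conjugating $\iota(\gamma)$, a loxodromic of $\mb H^3$-translation length $\ell_\Sigma(\gamma)$, to $\rho'(\gamma)|$, of translation length $\log|L_\rho(\gamma)|$. Descending to the quotient tori $(\CP\setminus\{\text{fixed points}\})/\langle\cdot\rangle$ yields an $e^{2D}$-quasi-conformal map in the identity mapping class, whose Teichm\"uller distance in the moduli space $\mb H^2$ is at most $D$; a direct hyperbolic-geometry computation on the corresponding moduli $\tau=i\ell_\Sigma(\gamma)/(2\pi)$ and $\tau'=(\arg L_\rho(\gamma)+i\log|L_\rho(\gamma)|)/(2\pi)$ converts this into the bound $\log|L_\rho(\gamma)|\ge\ell_\Sigma(\gamma)/(2e^D)$.

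The main technical obstacle is the lower bound on translation length under quasi-conformal conjugation, which we handle via the quotient-torus trick and Teichm\"uller distance estimates on the moduli (an alternative would invoke quasi-isometric extensions of quasi-conformal maps to $\mb H^3$ and stability of quasi-geodesics). The remaining ingredient, $\MG$-invariance of $|L_\rho(\gamma)|$, follows because any two standard representatives of the same $\MG$-class differ by a leafwise M\"obius automorphism, which preserves hyperbolic translation lengths in $\mb H^3$.
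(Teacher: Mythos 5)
Your proposal is correct, but it takes a genuinely different route from the paper's proof. The paper treats both inequalities simultaneously: from the bounded Ahlfors--Bers parameters and Remark~\ref{rmk:conformal welding} one gets that $\rho$ is $e^{2\kappa}$-quasi-conformally conjugate to $\iota$; on each leaf $\mathbb{CP}^1\times\{\gamma^+\}$ this is a quasi-conformal conjugacy between the loxodromics $\iota(\gamma)$ and $\rho(\gamma)$, and Reimann's extension theorem (Theorem~\ref{thm:reimann}) promotes it to a $K$-bi-Lipschitz equivalence of the quotient solid tori $\mathbb{H}^3/\langle\iota(\gamma)\rangle\to\mathbb{H}^3/\langle\rho(\gamma)\rangle$, giving the two-sided comparison of core-curve lengths in one stroke. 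You instead split the statement: the upper bound comes from Proposition~\ref{p.bound} plus Wolpert's inequality on the annular leaf, and the lower bound from the welding map, descent to the quotient tori $(\mathbb{CP}^1\setminus\{\text{fixed points}\})/\langle\cdot\rangle$, and the explicit Teichm\"uller geometry of $\T(T^2)\cong\mathbb{H}^2$. Both routes are sound. Your quotient-torus argument is an elementary and appealing substitute for Reimann's theorem in this context; in fact a single application of it (comparing $\operatorname{Im}\tau$ and $\operatorname{Im}\tau'$ in both directions) already yields both inequalities, so the separate Ahlfors--Wolpert upper bound is redundant, though not incorrect. Two small remarks: the identity $L_\rho=L_{\rho'}$ holds exactly (not just in absolute value), since the residual $\MG$-conjugation acts on each leaf by a M\"obius map fixing $\gamma^\pm$; and the modulus $\tau'$ of the quotient torus is naturally $2\pi i/\log L$ rather than $\log L/(2\pi i)$ times $i$, i.e., the two formulas differ by the modular involution $\tau\mapsto -1/\tau$, but since that is an isometry of $\mathbb{H}^2$ the conclusion is unaffected; one should just be careful that the marking chosen is the one carried by $g_{E,F}$.
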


\begin{proof}[Proof of Proposition \ref{p.cpxLengthbound}]
As ${\rm AB}(\rho)=(E_{\rho},F_{\rho})$ lies in the bounded set $B$, there exists $\kappa> 0$ such that
\[
d_\T(E_{\rho},\Sigma)+d_\T(F_{\rho},\Sigma)<\kappa.
\]  
In turn this implies that representation  $\rho:\Gamma\to\mc{MG}$ is $e^{2\kappa}$-quasi-conformally conjugate to the fixed Fuchsian one $\iota$ (see  Remark \ref{rmk:conformal welding}).
 
Recall that for every $\gamma\in\Gamma$, the transformation 
\[
\rho(\gamma):\mb{CP}^1\times\{\gamma^+\}\to\mb{CP}^1\times\{\gamma^+\}
\]
is a loxodromic Möbius transformation which is conjugate to $z\mapsto L_\rho(\gamma)z$. 
Similarly, $\iota(\gamma)$ is conjugate to $z\mapsto e^{\ell_\Sigma(\gamma)}z$.

Let $f:\mb{CP}^1\to\mb{CP}^1$ be a $(\rho(\gamma),\iota(\gamma))$-equivariant $e^{2\kappa}$-quasi-conformal conjugacy.
Using a theorem of Reimann \cite{Reimann:extension} (see Theorem \ref{thm:reimann}), we conclude that $f$ extends to a $K=e^{6\kappa}$-bi-Lipschitz equivalence $\H^3/\langle \rho(\gamma)\rangle \to \H^3/\langle \iota(\gamma)\rangle$.
The result follows, since $\log|L_\rho(\gamma)|$ (respectively $\ell_\Sigma(\gamma)$) is the hyperbolic length of the core curve of the solid torus $\H^3/\langle \rho(\gamma)\rangle$ (respectively $\H^3/\langle \iota(\gamma)\rangle$).
\end{proof}
Next, we want to show that any sequence of fully hyperconvex representations all of whose Ahlfors--Bers parameters are bounded is bounded in the character variety. 

Denote by $\mc{X}$  the symmetric space of ${\rm PSL}(d,\mb{C})$. Let $F\subset{\rm PSL}(d,\mb{C})$ be a finite set of isometries of $\mc{X}$. For every $x\in\mc{X}$ the \emph{displacement of $F$ at $x$} is
\[
D(F,x):=\max_{s\in F}d_\mc{X}(x,sx).
\]
Taking the infimum over points in $\mc{X}$, we get the \emph{minimal joint displacement}
\[
D(F):=\inf_{x\in\mc{X}}D(F,x).
\]
The \emph{translation length } of an element $s\in F$ is defined as $\ell(s):=\lim_{n\to\infty}{\frac{d(x,s^nx)}{n}}$, and does not depend on $x\in\mc{X}$. We define
\[
\ell(F):=\max_{s\in F}\ell(s).
\]
The following result says that the minimal joint displacement of $F$ is controlled by the stable length of elements in $F^k$, the sets of words of length at most $k$ in elements of $F$.

\begin{prop}[{see \cite[Proposition 1.6]{BF21}}] 
\label{pro:bf}
There exist $k>0$ and $C>0$ depending only on $d$ such that for every finite set $F\subset{\rm PSL}(d,\mb{C})$ we have
\[
D(F)\le\sqrt{d}(\ell(F^{k})+C).
\]
\end{prop}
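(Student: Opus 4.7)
The plan is to follow the strategy of Breuillard--Fujiwara \cite{BF21}, which establishes a general \emph{displacement--translation length dichotomy} for finite subsets of isometries of symmetric spaces. For a single semisimple $g \in \PSL(d,\C)$, the minimal displacement on $\mc{X}$ equals the translation length $\ell(g)$, and both coincide with the Euclidean norm of the Jordan projection $\lambda(g) \in \fap$. The factor $\sqrt d$ in the conclusion ultimately comes from the elementary inequality $\|v\|_2 \leq \sqrt d \, \|v\|_\infty$ on $\R^d$, which relates the Euclidean norm (governing translation length) to the $\ell^\infty$-norm (governing individual logarithmic eigenvalue gaps).

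To extend to finite sets, pick a point $x_0 \in \mc{X}$ nearly realizing $D(F)$. For each $s \in F$, let $v_s \in T_{x_0}\mc{X}$ be the unit tangent vector pointing toward $s \cdot x_0$. By CAT(0) convexity of each displacement function $x \mapsto d(x,sx)$, if all $v_s$ were contained in a common open half-space, one could decrease $D(\cdot,x_0)$ by perturbing $x_0$ in the opposite direction, contradicting near-minimality. Hence the directions $\{v_s : s \in F\}$ are \emph{spread out} at $x_0$. Combining this geometric spread with the Cartan decomposition then produces a word $w \in F^k$, where $k = k(d)$ depends only on $d$, whose Jordan projection has Euclidean norm at least $(D(F) - C)/\sqrt d$: transverse displacement directions force the ``rotational'' contributions of composites to cancel after bounded iteration, while the ``translational'' contributions accumulate.

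The main obstacle is the quantitative higher-rank analysis. In rank one (hyperbolic spaces) the dichotomy reduces to a classical ping-pong argument. In higher rank the elements may interact non-trivially with multiple singular directions (flats), and uniform control requires spectral gap estimates, an induction on the rank, and careful handling of approximately reducible configurations. These issues are addressed in detail in \cite{BF21}; for our purposes, only the statement of Proposition \ref{pro:bf} with the indicated dependence of $k$ and $C$ on $d$ is needed, and it is invoked as a black box in the subsequent application to properness of $\mathrm{AB}$.
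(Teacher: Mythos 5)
The paper does not prove this proposition; it is stated directly as a citation to \cite{BF21} and then invoked as a black box in the proof of Theorem \ref{t.properness} (via Proposition \ref{p.converges}). Your final paragraph correctly recognizes this, and to that extent your treatment matches the paper's.

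The speculative sketch preceding it, however, does not accurately reflect the Breuillard--Fujiwara argument for linear groups. The quantitative bound with $k=k(d)$, $C=C(d)$ for $F\subset\PSL(d,\C)$ is obtained in \cite{BF21} from a Bochi-type matrix inequality, which controls the singular values (equivalently the Cartan projection of a minimizing basepoint) in terms of the Jordan projections of words of bounded length; this is a linear-algebraic statement in the spirit of Bochi's joint spectral radius inequality, not a CAT(0) half-space/ping-pong argument. The convexity observation you describe --- that if all displacement gradients at $x_0$ lay in a common open half-space one could decrease $D(F,\cdot)$ --- only shows that the infimum defining $D(F)$ is attained (when it is) and that the gradients are balanced at a minimizer; by itself it produces no word $w\in F^k$ with large Jordan projection, and it is not where the dependence of $k$ on $d$ comes from. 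You are also right that the $\sqrt d$ ultimately reflects the comparison between the Euclidean norm of the Cartan/Jordan projection (which computes distances and translation lengths in $\mc X$) and the top log-singular value or log-eigenvalue, as the paper itself uses in the displayed estimate $\ell(A)\le\sqrt d\log|\lambda^1(A)|$ inside the proof of Proposition \ref{p.converges}. Since the statement is cited rather than reproved in this paper, the inaccuracy in your sketch is a matter of correctly attributing the mechanism of \cite{BF21} rather than a gap in the logic needed downstream.
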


We use Proposition \ref{pro:bf} to obtain convergence in our setting. 

\begin{prop}\label{p.converges}
Let $D=B_1\times\cdots\times B_{d-1}$ be the product of the bounded sets $B_k\subset\T(M_\G)\times\T(\overline M_\G)$. Let $(\rho_n)$ in ${\rm AB}^{-1}(D)\subset \Xi^{\rm hyp}(\G,\PSL(d,\C))$ be a sequence of representations  then $(\rho_n)$ admits a convergent subsequence in $\Hom(\G,\PSL(d,\C))/\!\!/\PSL(d,\C)$. 
\end{prop}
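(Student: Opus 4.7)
The plan is to extract from the boundedness of the Ahlfors--Bers parameters a uniform two-sided comparison between the eigenvalue gaps of $\rho_n$ and the hyperbolic length spectrum of $\Sigma$, then translate this into a uniform bound on translation lengths in the symmetric space $\mc X$ of $\PSL(d,\C)$, and finally apply Proposition \ref{pro:bf} to produce a uniform bound on the minimal joint displacement of a fixed finite generating set.

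First, for each $1 \le k \le d-1$, combining Propositions \ref{prop:qcconjugacy}, \ref{p.Clength}, and \ref{p.cpxLengthbound} applied to the quasi-conformal deformation $[\rho_n^k] \in \qc(\iota)$, whose Ahlfors--Bers parameters lie in the bounded set $B_k$, yields a constant $K_k \ge 1$ with
\[
\tfrac{1}{K_k}\,\ell_\Sigma(\gamma) \;\le\; \log|L_{\rho_n}^k(\gamma)| \;\le\; K_k\,\ell_\Sigma(\gamma)
\]
for every $\gamma \in \Gamma - \{1\}$ and every $n$. Setting $K := \max_k K_k$ gives uniform control of all $d-1$ eigenvalue gaps. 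I would then translate this into a uniform bound on the translation length $\ell(\rho_n(\gamma))$ in $\mc X$. Lifting to $\SL(d,\C)$, $\ell(\rho_n(\gamma))$ is a fixed multiple of the norm of the Jordan projection $\mu(\rho_n(\gamma)) = (\log|\lambda_i(\rho_n(\gamma))|)_{i=1}^d$, viewed in the hyperplane $\{x \in \R^d : \sum_i x_i = 0\}$. Since the $d-1$ consecutive gaps $\log|L^k_{\rho_n}(\gamma)|$ together with the zero-sum constraint determine $\mu(\rho_n(\gamma))$ linearly, the first step produces a constant $C$, depending only on $d$ and $K$, with $\ell(\rho_n(\gamma)) \le C\,\ell_\Sigma(\gamma)$ for all $\gamma$ and all $n$.

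Next, I would fix a finite symmetric generating set $F \subset \Gamma$ and invoke Proposition \ref{pro:bf} with the integer $k_0$ it provides. Since $F^{k_0}$ is finite, $\ell_\Sigma(\gamma)$ is uniformly bounded for $\gamma \in F^{k_0}$, and by the previous step so is $\ell(\rho_n(\gamma))$. Proposition \ref{pro:bf} then gives a bound $D(\rho_n(F)) \le M$ independent of $n$. Choosing $x_n \in \mc X$ with $D(\rho_n(F), x_n) \le M + 1$ and $g_n \in \PSL(d,\C)$ sending a fixed basepoint $x_0$ to $x_n$, the conjugate representations $g_n^{-1} \rho_n(\cdot) g_n$ send $F$ into the compact set of elements of $\PSL(d,\C)$ that displace $x_0$ by at most $M+1$. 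Extracting a subsequence yields pointwise convergence of $g_n^{-1}\rho_n g_n$ on $F$, hence on all of $\Gamma$, producing a convergent subsequence in $\Hom(\Gamma, \PSL(d,\C))/\!\!/\PSL(d,\C)$.

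The conceptual core is the passage from control of the individual eigenvalue gaps to control of the full Jordan projection. This is where \emph{full} hyperconvexity enters essentially: a single $k$-th Ahlfors--Bers parameter only controls one gap, and all $d-1$ are needed to pin down the entire spectrum and hence the $\mc X$-translation length. Once this control is in hand, the remainder is the standard Breuillard--Fujiwara route to compactness in the character variety.
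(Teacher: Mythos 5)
Your proposal is correct and follows essentially the same route as the paper's proof: use the bounded Ahlfors--Bers parameters together with Proposition \ref{p.cpxLengthbound} (via Propositions \ref{prop:qcconjugacy} and \ref{p.Clength}) to uniformly bound the eigenvalue gaps against $\ell_\Sigma$, convert this into a uniform bound on $\ell(\rho_n(\gamma))$ in $\mc X$ (the paper makes this explicit by lifting to $\SL(d,\C)$ and writing $\lambda^1(\rho(\gamma))^d=\prod_{i\le d-1} L^i_\rho(\gamma)^{d-i}$, which is precisely your linear reconstruction of the Jordan projection), and then invoke Proposition \ref{pro:bf} to bound the minimal joint displacement of a generating set and extract a convergent subsequence. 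Your correct emphasis that all $d-1$ gaps are needed to pin down the full spectrum is exactly where full hyperconvexity is used in the paper as well.
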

\begin{proof}
Let $S$ be a finite set of generators for $\Gamma$. In order to show convergence of $\rho_n$ in the character variety it is enough to show that $\sup_{n\in\mb{N}}{D(\rho_n(S))}<\infty$. In fact, if this happens, we can assume, up to suitably conjugating the representations $\rho_n$, that there is a point $x\in\mc{X}$ such that
\[
D(\rho_n(S),x):=\max_{s\in S}\{d_\mc{X}(x,\rho_n(s)x)\}
\]
is bounded independently of $n$. This implies that for every $s\in S$, the sequence $\rho_n(s)$ is contained in a compact subset of ${\rm PSL}(d,\mb{C})$ and, hence, converges up to subsequences to an element $\rho(s)$.

Recall that the Riemannian translation distance of a loxodromic element $A\in{\rm PSL}(d,\mb{C})$ can be computed as
\[
\ell(A) =\lim_{n\to\infty}{\frac{d_\mc{X}(x,A^nx)}{n}}=\sqrt{\log|\lambda^1(A)|^2+\ldots+\log|\lambda^d(A)|^2}\le\sqrt{d}\log|\lambda^1(A)|,
\]
where $\lambda^1(A),\cdots,\lambda^d(A)$ denote the eigenvalues of some lift of $A$ to  $\in{\rm SL}(d,\mb{C})$ with $|\lambda_i|\ge |\lambda_{i+1}|$; note that their magnitudes and ratios do not depend on the choice of lift. 
As $L_\rho^k(\gamma)=\lambda^k(\rho(\gamma))/\lambda^{k+1}(\rho(\gamma))$ denotes the $k$-th eigenvalue gap, we have 
\[
\lambda^{k+1}(\rho(\gamma))=\frac{\lambda^k(\rho(\gamma))}{L_\rho^k(\gamma)}=\cdots=\frac{\lambda^1(\rho(\gamma))}{L_\rho^1(\gamma)\cdots L_\rho^k(\gamma)}.
\]

Using $\lambda^1\cdot\ldots\cdot\lambda^d=1$, we deduce
\[
\lambda^1(\rho(\gamma))^d=L^1_\rho(\gamma)^{d-1}\cdot L^2_\rho(\gamma)^{d-2}\cdot\ldots\cdot L_\rho^{d-1}(\gamma).
\]
The boundedness assumptions on the Ahlfors--Bers parameters and Proposition \ref{p.cpxLengthbound}  gives us a $K\ge1$ with $\log |L_\rho^i(\gamma)|\le K\ell_\Sigma(\gamma)$ for all $\gamma \in \Gamma$ and $1\le i\le d-1$.
By arithmetic and the equality above, we conclude
\[
\log |\lambda^1(\rho(\gamma))|\le \frac{d-1}{2}K\ell_\Sigma(\gamma).
\] 
Thus for every $\rho \in D$, our bounded set, and for every $\g\in\G$, $\ell(\rho(\gamma))$ is  bounded from above depending on $\ell_\Sigma(\gamma)$ and $D$.
\end{proof}

\begin{prop}
In the situation of Proposition \ref{p.converges}, the limiting representation $\rho$ is $\{1,\cdots,d-1\}$-Anosov.    
\end{prop}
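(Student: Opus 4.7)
The plan is to transfer the uniform two-sided bounds on the $k$-th eigenvalue gap that hold along the sequence $\rho_n$ to the limit $\rho$, and then convert the resulting lower bound into the Anosov estimate by comparing hyperbolic translation length on $\Sigma=\mb{H}^2/\Gamma$ with the stable word length on $\Gamma$.

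First I would fix $1\le k\le d-1$ and apply Propositions \ref{p.Clength} and \ref{p.cpxLengthbound} together. By Proposition \ref{prop:qcconjugacy}, each $\rho_n^k$ lies in $\qc(\iota)$, and by construction $\mc{AB}(\rho_n^k)=\mathrm{AB}^k(\rho_n)\in B_k$, a bounded set in $\T(M_\Gamma)\times\T(\overline M_\Gamma)$. Proposition \ref{p.cpxLengthbound} then produces a constant $K_k\ge 1$, independent of $n$, satisfying
\[
\frac{1}{K_k}\,\ell_\Sigma(\gamma)\;\le\;\log|L_{\rho_n^k}(\gamma)|\;\le\;K_k\,\ell_\Sigma(\gamma)
\]
for every non-trivial $\gamma\in\Gamma$, while Proposition \ref{p.Clength} identifies $L_{\rho_n^k}$ with the $k$-th eigenvalue gap $L^k_{\rho_n}$ of $\rho_n$.

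Next I would pass to the limit, exploiting the convergence $\rho_n(\gamma)\to\rho(\gamma)$ in ${\rm PSL}(d,\C)$. The characteristic polynomials converge coefficientwise, so the eigenvalue moduli of $\rho_n(\gamma)$, sorted non-increasingly, converge to those of $\rho(\gamma)$. The strictly positive uniform lower bound above is therefore inherited by the limit, which forces $|\lambda^k(\rho(\gamma))|>|\lambda^{k+1}(\rho(\gamma))|$ for every non-trivial $\gamma$ and every $1\le k\le d-1$, and yields
\[
\log|L^k_\rho(\gamma)|\;\ge\;\frac{1}{K_k}\,\ell_\Sigma(\gamma).
\]

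To conclude, I would use that the translation length $\ell_\Sigma(\cdot)$ is bi-Lipschitz comparable to the stable word length $|\cdot|_\infty$ on $\Gamma$, a standard consequence of the cocompact isometric action $\Gamma\curvearrowright\mb{H}^2$. This provides a constant $c>0$ with $\ell_\Sigma(\gamma)\ge c\,|\gamma|_\infty$ for every $\gamma\in\Gamma$, hence
\[
\log|\lambda^k(\rho(\gamma))|-\log|\lambda^{k+1}(\rho(\gamma))|\;\ge\;\frac{c}{K_k}\,|\gamma|_\infty
\]
for every $1\le k\le d-1$, which is precisely the $\{1,\dots,d-1\}$-Anosov condition. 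The main delicate point in this argument is ensuring that the $k$-th and $(k+1)$-th eigenvalues of $\rho(\gamma)$ do not collide in the limit, but this is handled automatically by strict positivity of the uniform lower bound coming from Proposition \ref{p.cpxLengthbound}; no further information about boundary maps needs to be tracked at this step.
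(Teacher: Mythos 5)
Your proof is correct and follows essentially the same route as the paper's: apply Proposition \ref{p.cpxLengthbound} (via Proposition \ref{p.Clength}) to obtain the uniform lower bound $\log|L^k_{\rho_n}(\gamma)|\ge \ell_\Sigma(\gamma)/K_k$, pass this to $\rho$ by continuity of eigenvalue moduli, and compare $\ell_\Sigma$ with stable word length via the cocompact action of $\Gamma$ on $\H^2$. Your write-up is slightly more careful than the paper's (which is terse on the eigenvalue-continuity step and phrases the final comparison with word length rather than stable length), but the argument is the same.
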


\begin{proof}
By Proposition \ref{p.cpxLengthbound} and continuity, for every $k\le d-1$, we have $\log|L^k_\rho(\gamma)|\ge\ell_\Sigma(\gamma)/K$. Since the orbit map $\Gamma \to \H^2$ is a quasi-isometry, $\ell_\Sigma(\gamma)\ge|\gamma|/c-c$ for some constant $c>0$, and the conclusion follows.
\end{proof}

The Anosov property gives us a $\rho$-equivariant boundary map $\xi:\partial\Gamma\to\mc{F}(\mb{C}^d)$. Furthermore, by the continuous dependence of such maps from the representation \cite[Theorem 5.13]{GW:Anosov_DOD},   the boundary maps $\xi_n:\partial\Gamma\to\mc{F}(\mb{C}^d)$ of $\rho_n$ converges to $\xi$ uniformly. 

Recall from Proposition \ref{p.tangentproj}
 that, using the Anosov boundary map $\xi$, we can define tangent projections $\xi_z^k:\deG\to\P(z^{k+1}/z^{k-1})$  for every $z\in\deG$ and every $1\leq k\leq d-1$. The representation is hyperconvex if and only if all the maps $\xi^k_z$ are injective.

\begin{prop}
In the situation of Proposition \ref{p.converges}, the limiting representation $\rho$ is fully hyperconvex.    
\end{prop}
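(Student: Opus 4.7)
The plan is to show that every tangent projection $\xi_t^k$ of the limit representation $\rho$ is injective, for $1 \le k \le d-1$ and $t \in \partial \Gamma$, and then conclude by Proposition \ref{p.tangentproj}. The strategy is to realize each $\xi_t^k$ as a pointwise limit of uniformly quasi-M\"obius maps, so that compactness of normalized quasi-conformal homeomorphisms forces injectivity of the limit.

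I would first fix three distinct normalizing points $a, b, c \in \partial \Gamma$ once and for all and use them, as in \S\ref{sec:tangent}, to trivialize each bundle $\mc B^k_{\rho_n}$ and $\mc B^k_\rho$ as $\fCP$ by sending the images of $(a,b,c)$ under the relevant tangent projection to $(0,1,\infty)$ in every fiber. By continuous dependence of Anosov boundary maps \cite[Theorem 5.13]{GW:Anosov_DOD}, the flag boundary maps $\xi_n^s \to \xi^s$ converge uniformly for $s \in \{k-1, k+1, d-k\}$; in view of the explicit formula for tangent projections in Proposition \ref{p.tangentproj} and continuity of the trivializations in $\rho_n$, the trivialized tangent projections $\xi^k_{n,t} : \partial \Gamma \to \CP$ converge pointwise to $\xi^k_t$ for every $t \in \dH$. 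Since $\mc{AB}([\rho_n^k])$ lies in the bounded set $B_k \subset \T(M_\Gamma) \times \T(\overline M_\Gamma)$, there exists $\kappa_k$ with $d_\T(E_{\rho_n^k}, \Sigma) + d_\T(F_{\rho_n^k}, \Sigma) \le \kappa_k$ for every $n$, so by Remark \ref{rmk:conformal welding} the conformal welding construction produces a laminated quasi-conformal homeomorphism $g_n : \fCP \to \fCP$ conjugating $\iota$ to $\rho_n^k$ whose leafwise quasi-conformal distortion is bounded by a constant $K_k := e^{2\kappa_k}$ independent of $n$. The restriction of $g_n$ to $\RP \times \{t\}$ parametrizes the fiber limit set $\xi^k_{n,t}(\partial \Gamma) \subset \CP$ and, up to a fiberwise M\"obius transformation restoring our trivialization, coincides with $\xi^k_{n,t}$; thus each $\xi^k_{n,t}$ is $K_k$-quasi-M\"obius with $K_k$ uniform in both $n$ and $t$.

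Finally, by compactness of normalized quasi-M\"obius maps of $\CP$ fixing three points (recalled after Definition \ref{dfn:quasi-conformal}), the pointwise limit $\xi^k_t$ of the $K_k$-quasi-M\"obius embeddings $\xi^k_{n,t}$ is itself $K_k$-quasi-M\"obius, and in particular injective. Since this holds for every $t \in \partial \Gamma$ and every $1 \le k \le d-1$, Proposition \ref{p.tangentproj} gives that $\rho$ is $k$-hyperconvex for each such $k$, i.e., fully hyperconvex. The main obstacle in this plan is bookkeeping: one must check that the trivializations of the bundles $\mc B^k_{\rho_n}$ can be chosen coherently so that the tangent projections literally converge as maps into $\CP$, and that the welding map $g_n|_{\fL}$ coincides, after the normalization, with the trivialized tangent projection $\xi^k_{n,\cdot}$. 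Once this is in place, the uniform quasi-M\"obius bound furnished by Remark \ref{rmk:conformal welding} and the classical compactness of normalized quasi-conformal maps yield the conclusion.
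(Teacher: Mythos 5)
Your proposal is correct and follows essentially the same route as the paper: pointwise convergence of the tangent projections (from continuity of Anosov boundary maps), a uniform quasi-conformal bound on the conjugating maps coming from boundedness of the Ahlfors--Bers parameters, and compactness of normalized quasi-conformal homeomorphisms to conclude that the limit $\xi_t^k$ is injective. The only cosmetic difference is that the paper deduces injectivity by showing arbitrary $4$-tuples of distinct points remain distinct, whereas you deduce that the whole limiting map $\xi_t^k$ is quasi-M\"obius; either way one must pass through the actual quasi-conformal extensions $g_n^k(\cdot,t)$ of $\CP$ (as you implicitly do via the welding), since the compactness statement recalled after Definition \ref{dfn:quasi-conformal} is stated for $K$-quasi-conformal homeomorphisms of $\CP$ fixing three points rather than for quasi-M\"obius embeddings of $\RP$ per se.
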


\begin{proof}
Fix an integer $k\leq d-1$. Given four cyclically ordered distinct points $(a,b,c,d)$ on $\partial\Gamma$, and a fifth point $t\in\deG$, we can consider the tangent projections
\[
\xi^k_t(a),\xi^k_t(b),\xi^k_t(c),\xi^k_t(d).
\]
Since the boundary map  depends continuously on the representation, we have
\[
(\xi_t^k)_n(a),(\xi_t^k)_n(b),(\xi_t^k)_n(c),(\xi_t^k)_n(d)\to\xi^k_t(a),\xi^k_t(b),\xi^k_t(c),\xi^k_t(d)
\]
Indeed we can assume, up to conjugating the representation with a sequence of elements in $\PSL(d,\C)$, that the subspaces $t^{k+1}_{\rho_n}$ do not depend on the representation.

Since ${\rm AB}^k(\rho_n)$ lies in a bounded set of $\T(M_\Gamma)\times\T(\overline{M}_\Gamma)$, the actions $\rho_n^k$ are uniformly quasi-conformally conjugated to any fixed Fuchsian one.   That is $\rho_n^k=g_n^k\iota ({g_n^k})^{-1}$ for some $K$-quasi-conformal automorphism $g_n^k\in{\rm Aut}^0(\mb{CP}^1\times\partial\mb{H}^2)$. In particular
\[
((\xi_t^k)_n(a),(\xi_t^k)_n(b),(\xi_t^k)_n(c),(\xi_t^k)_n(d))=(g_n^k(a,t),g_n^k(b,t),g_n^k(c,t),g_n^k(d,t)).
\]

As the set of $K$-quasi-conformal homeomorphism of $\mb{CP}^1$ fixing 3 points (such as $0,1,\infty$ as each $g_n^k(\cdot,t)$ does) is compact, $g_n^k(\cdot, t)$ subconverges uniformly to a quasi-conformal homeomorphism $g^k(\cdot, t)$. Hence, we have
\[
(\xi^k_t(a),\xi^k_t(b),\xi^k_t(c),\xi^k_t(d))=(g^k(a,t),g^k(b,t),g^k(c,t),g^k(d,t)).
\]
As $g^k$ is a homeomorphism, the four points $\xi^k_t(a),\xi^k_t(b),\xi^k_t(c),\xi^k_t(d)$ are distinct. 
\end{proof}

The proof of Theorem \ref{t.properness} is complete, because we have shown that any sequence $\rho_n \in \Char$ mapping to a bounded set under $\mathrm{AB}$ has a convergent subsequence and the limiting representation is fully hyperconvex.

\subsection{Injectivity}\label{subsubsection: injectivity}
The key step in the proof of injectivity of the full laminated Ahlfors--Bers map is the following result, whose proof will occupy most of the subsection.
As always, $\Gamma$ is assumed to be the fundamental group of a closed oriented surface.

\begin{thm}
\label{thm:same gaps conjugate}
Let $\rho,\rho':\Gamma\to{\rm PSL}(d,\mb{C})$ be representations such that  for every $\gamma\in\Gamma$, the matrices  $\rho(\gamma),\rho'(\gamma)$ are diagonalizable with  eigenvalues of distinct moduli. Suppose that 
\[
\frac{\lambda^j(\rho(\gamma))}{\lambda^{j+1}(\rho(\gamma))}=\frac{\lambda^j(\rho'(\gamma))}{\lambda^{j+1}(\rho'(\gamma))}
\]
for every $\gamma\in\Gamma$ and $j\le d-1$. Then $\rho$ and $\rho'$ are conjugate.
\end{thm}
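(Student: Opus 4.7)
The strategy is to reduce to the classical fact (Procesi) that trace functions distinguish semisimple representations on the $\SL(d,\C)$-character variety, exactly as the excerpt's introduction suggests. The hypothesis has the direct consequence that for every $\gamma\in\Gamma$, the ordered eigenvalues of $\rho(\gamma)$ and $\rho'(\gamma)$ differ by a common scalar $c(\gamma)\in\C^*$; in particular $\rho(\gamma)$ and $\rho'(\gamma)$ determine the same conjugacy class in $\PSL(d,\C)$ for every $\gamma$. The task is to upgrade this pointwise agreement to a global conjugacy.

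I would begin by lifting $\rho,\rho'$ to $\SL(d,\C)$-representations $\hat\rho,\hat\rho'$; if necessary, one first passes to a finite-index subgroup of $\Gamma$ to kill the obstruction in $H^2(\Gamma,\mu_d)\cong\mu_d$ (which is annihilated by pullback along a degree-$d$ cover) and later descends by a rigidity argument on the $\PSL(d,\C)$-character variety. Once lifted, the constraint $\det\equiv 1$ forces $c(\gamma)\in\mu_d$, so in particular $\tr(\hat\rho'(\gamma))=c(\gamma)\tr(\hat\rho(\gamma))$ for every $\gamma$.

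The key algebraic step is to show that $c:\Gamma\to\mu_d$ is a homomorphism, for then replacing $\hat\rho'$ by the new lift $\hat\rho'':=c^{-1}\hat\rho'$ of $\rho'$ yields $\tr(\hat\rho(\gamma))=\tr(\hat\rho''(\gamma))$ for every $\gamma\in\Gamma$. For $d=2$ this follows by substituting the relation $\tr(\hat\rho'(\gamma))=c(\gamma)\tr(\hat\rho(\gamma))$ into the Fricke identity
\[
\tr(gh)+\tr(gh^{-1})=\tr(g)\tr(h)
\]
and using that the traces of $\hat\rho$ are generically non-zero to deduce $c(\gamma_1\gamma_2)=c(\gamma_1)c(\gamma_2)$. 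For general $d$, one invokes the analogous trace identities of Procesi for $\SL(d,\C)$ in the same manner.

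With traces matching, the hypothesis that every $\hat\rho(\gamma)$ is regular semisimple ensures the Zariski closure of $\hat\rho(\Gamma)$ in $\SL(d,\C)$ is reductive---a nontrivial unipotent radical would contain non-semisimple elements, contradicting that it is the closure of a set of regular semisimple matrices---so both $\hat\rho$ and $\hat\rho''$ are semisimple representations. Procesi's theorem then produces an $\SL(d,\C)$-conjugacy between them, which projects to the desired $\PSL(d,\C)$-conjugacy between $\rho$ and $\rho'$. The main obstacle is proving that $c$ is a genuine homomorphism: this is the heart of the argument, translating the $\PSL$-invariant equal-gap hypothesis into a nontrivial multiplicative constraint on the two $\SL$-lifts, and is precisely where the surface group structure and the $\SL$-trace identities play their decisive role.
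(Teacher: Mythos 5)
Your setup matches the paper's: lift to $\SL(d,\C)$ on a finite-index subgroup (using that the obstruction in $H^2$ dies under pullback along a degree-$d$ cover), observe that the eigenvalues of the two lifts differ by a common scalar $c(\gamma)$, and deduce $c(\gamma)\in\mu_d$ from $\det\equiv 1$. But the step you identify as ``the heart of the argument''---proving $c$ is a homomorphism---is precisely where your sketch has a gap, and it is a step the paper deliberately avoids.

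Substituting $\tr(\hat\rho'(\gamma))=c(\gamma)\tr(\hat\rho(\gamma))$ into the Fricke identity yields, with $u=c(\gamma_1\gamma_2)$, $v=c(\gamma_1\gamma_2^{-1})$, $w=c(\gamma_1)c(\gamma_2)$, $a=\tr(\hat\rho(\gamma_1\gamma_2))$, $b=\tr(\hat\rho(\gamma_1\gamma_2^{-1}))$, only the relation $(u-w)a=(w-v)b$. This does not force $u=v=w$ unless one rules out $a=0$, $b=0$, and $a=\pm b$; you would need a genericity/irreducibility argument (which you do not supply), and you would then need to pass from ``multiplicativity for generic pairs'' to ``multiplicativity for all pairs.'' For general $d$ the required trace identities are far messier. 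The paper sidesteps all of this: it works with the \emph{pair} $(\rho_0,\rho_0'):\Gamma_0\to\SL(d,\C)\times\SL(d,\C)$, whose image lies in the disjoint union of the $d$ closed subvarieties $V_k=\{\tr(A)=e^{2\pi ik/d}\tr(B)\}$, so the Zariski closure is an algebraic group with finitely many components; the identity component sits in $V_0$, and its preimage $\Gamma_1\le\Gamma_0$ is finite index with $\tr\circ\rho_0=\tr\circ\rho_0'$ on $\Gamma_1$. This is strictly weaker than multiplicativity of $c$ but entirely sufficient. The paper then promotes the $\SL(d,\C)$-conjugacy on $\Gamma_1$ to a $\PSL(d,\C)$-conjugacy on all of $\Gamma$ via Proposition \ref{claim:from fi to ambient}, which uses that each $\rho(\gamma)$ determines a splitting into eigenlines and that $\gamma^{N!}\in\Gamma_1$; your ``rigidity argument on the $\PSL(d,\C)$-character variety'' would need an analogous argument, which you leave unaddressed (and note this step is needed even in your plan, since you also pass to a finite-index subgroup to lift).

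One further error: your claim that regular semisimplicity of every $\hat\rho(\gamma)$ forces the Zariski closure of $\hat\rho(\Gamma)$ to be reductive is false---the Zariski closure of a set need not consist of elements of that set. For instance, a free group representation by upper-triangular matrices with distinct diagonal entries has every image regular semisimple, yet the Zariski closure can be a Borel, which has nontrivial unipotent radical. (The semisimplicity needed to apply the trace-separation theorem is a genuine background hypothesis; the paper also uses it implicitly, justified in context by the Anosov property.)
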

That $\Gamma$ is a closed surface group is used in the proof of the following lemma.
\begin{lem}
\label{lem:lift}
Let $\Gamma_0<\Gamma$ be a subgroup of index a multiple of $d$. The restriction of every representation $\eta:\Gamma\to{\rm PSL}(d,\mb{C})$ to $\G_0$  lifts to ${\rm SL}(d,\mb{C})$.    
\end{lem}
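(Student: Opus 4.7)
The plan is to use the standard obstruction theory for lifting representations through a central extension. Consider the central extension
\[
1 \to \mathbb{Z}/d\mathbb{Z} \to \mathrm{SL}(d,\mathbb{C}) \to \mathrm{PSL}(d,\mathbb{C}) \to 1
\]
and let $[\omega] \in H^2(\mathrm{PSL}(d,\mathbb{C}); \mathbb{Z}/d\mathbb{Z})$ be its classifying class (measured in group cohomology with trivial coefficients, since the extension is central). For any representation $\eta:\Gamma \to \mathrm{PSL}(d,\mathbb{C})$, the obstruction to lifting $\eta$ to $\mathrm{SL}(d,\mathbb{C})$ is precisely the pulled-back class $o(\eta) := \eta^\ast [\omega] \in H^2(\Gamma;\mathbb{Z}/d\mathbb{Z})$, and $\eta$ lifts if and only if $o(\eta)=0$. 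Restriction is functorial, so $o(\eta|_{\Gamma_0}) = \mathrm{res}^\Gamma_{\Gamma_0}\, o(\eta)$, and it suffices to show that under the hypothesis $d \mid [\Gamma:\Gamma_0]$ the restriction map $\mathrm{res}^\Gamma_{\Gamma_0}: H^2(\Gamma;\mathbb{Z}/d\mathbb{Z}) \to H^2(\Gamma_0;\mathbb{Z}/d\mathbb{Z})$ vanishes.

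For this, we use the geometric model of group cohomology. Since $\Gamma = \pi_1(\Sigma)$ is the fundamental group of a closed oriented surface $\Sigma = \mathbb{H}^2/\Gamma$, the space $\Sigma$ is a $K(\Gamma,1)$ and the finite-index subgroup $\Gamma_0$ corresponds to a finite cover $p:\Sigma_0 \to \Sigma$ of degree $n = [\Gamma:\Gamma_0]$, which is also a $K(\Gamma_0,1)$. Under this identification the restriction map becomes $p^\ast: H^2(\Sigma;\mathbb{Z}/d\mathbb{Z}) \to H^2(\Sigma_0;\mathbb{Z}/d\mathbb{Z})$. Both groups are canonically isomorphic to $\mathbb{Z}/d\mathbb{Z}$ via evaluation on the fundamental classes $[\Sigma]$ and $[\Sigma_0]$, and $p_\ast[\Sigma_0] = n \cdot [\Sigma]$; hence for $\alpha \in H^2(\Sigma;\mathbb{Z}/d\mathbb{Z})$,
\[
\langle p^\ast \alpha, [\Sigma_0]\rangle = \langle \alpha, p_\ast[\Sigma_0]\rangle = n \langle \alpha,[\Sigma]\rangle \pmod d.
\]
Thus $p^\ast$ acts as multiplication by $n$ on $\mathbb{Z}/d\mathbb{Z}$, and if $d \mid n$ then $p^\ast \equiv 0$.

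Combining the two steps, $o(\eta|_{\Gamma_0}) = \mathrm{res}^\Gamma_{\Gamma_0}\, o(\eta) = 0$ whenever $d$ divides $[\Gamma:\Gamma_0]$, so $\eta|_{\Gamma_0}$ admits a lift to $\mathrm{SL}(d,\mathbb{C})$, which is the statement of the lemma. No step here poses a genuine obstacle; the only point requiring care is the identification of the restriction map in group cohomology with the pullback along the covering map of aspherical surfaces, which is standard and uses that closed surfaces are $K(\pi,1)$'s.
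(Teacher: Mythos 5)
Your proposal is correct and takes essentially the same approach as the paper: both identify the lifting obstruction as the pullback of the extension class of $\mathbb{Z}/d\mathbb{Z}\to\mathrm{SL}(d,\mathbb{C})\to\mathrm{PSL}(d,\mathbb{C})$, and both show the restriction map $H^2(\Gamma;\mathbb{Z}/d\mathbb{Z})\to H^2(\Gamma_0;\mathbb{Z}/d\mathbb{Z})$ vanishes because it is multiplication by the covering degree $[\Gamma:\Gamma_0]\equiv 0\pmod d$ on the surface groups. You merely spell out the fundamental-class pairing computation that the paper leaves implicit.
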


\begin{proof}
There is a cohomology class $o\in H^2({\rm PSL}(2,\mb{C}),\mb{Z}/d\mb{Z})$ (corresponding to the central extension $\mb{Z}/d\mb{Z}\to{\rm SL}(d,\mb{C})\to{\rm PSL}(d,\mb{C})$) such that a representation $\eta:\Gamma\to{\rm PSL}(d,\mb{C})$ lifts to ${\rm SL}(d,\mb{C})$ if and only if $\eta^*o\in H^2(\Gamma,\mb{Z}/d\mb{Z})$ vanishes (see for example \cite[Lemma 2.4]{Frigerio}). 

Let $i:\Gamma_0\to\Gamma$ be the inclusion.
The degree of the map $\H^2/\Gamma_0 \to \H^2/\Gamma$ is $[\Gamma: \Gamma_0] = dk$ for some $k\ge 1$,
so $i^*:H^2(\Gamma,\mb{Z}/d\mb{Z})\to H^2(\Gamma_0,\mb{Z}/d\mb{Z})$ vanishes identically.
Thus $(\eta\circ i)^*o = i^*(\eta^*o) = 0$, from which the claim follows.
\end{proof}

\begin{proof}[Proof of Theorem \ref{thm:same gaps conjugate}]
For every $i,j$ we have 
\[
\frac{\lambda^i(\rho(\gamma))}{\lambda^j(\rho(\gamma))}=\frac{\lambda^i(\rho'(\gamma))}{\lambda^j(\rho'(\gamma))}.
\]

Fix $\Gamma_0<\Gamma$ an auxiliary finite index subgroup of index $d$. Denote by $\rho_0,\rho'_0:\Gamma_0\to{\rm SL}(d,\mb{C})$ two lifts of the restrictions of $\rho,\rho'$ to $\Gamma_0$ (which exist by Lemma \ref{lem:lift}).

Observe that  
\begin{align}
1 &=\prod_{j\le d}{\lambda^j(\rho_0(\gamma))}\label{e.prodev}\\
 &=\prod_{j\le d}{\lambda^1(\rho_0(\gamma))\frac{\lambda^j(\rho_0(\gamma))}{\lambda^1(\rho_0(\gamma))}}\nonumber\\
 &=\lambda^1(\rho_0(\gamma))^d\cdot\prod_{j\le d}{\frac{\lambda^j(\rho_0(\gamma))}{\lambda^1(\rho_0(\gamma))}}\nonumber
\end{align}
and similarly for $\rho_0'$. As $\frac{\lambda^j(\rho_0(\gamma))}{\lambda^1(\rho_0(\gamma))}=\frac{\lambda^j(\rho'_0(\gamma))}{\lambda^1(\rho'_0(\gamma))}$, we deduce that $\lambda^1(\rho_0(\gamma))^d=\lambda^1(\rho'_0(\gamma))^d$. Thus, for every $\gamma\in\Gamma_0$ there exits a $d$-th root of unity $q_\gamma$ such that 
\[
\lambda^1(\rho_0(\gamma))=\lambda^1(\rho'_0(\gamma))q_\gamma
\]
and 
\[
\lambda^j(\rho_0(\gamma))=\lambda^1(\rho_0(\gamma))\frac{\lambda^j(\rho_0(\gamma))}{\lambda^1(\rho_0(\gamma))}=\lambda^1(\rho'_0(\gamma))q_\gamma\frac{\lambda^j(\rho'_0(\gamma))}{\lambda^1(\rho'_0(\gamma))}=\lambda^j(\rho'_0(\gamma))q_\gamma.
\]

In particular, for every $\gamma\in\Gamma$ we have 
\[
{\rm tr}(\rho(\gamma))={\rm tr}(\rho'(\gamma))q_\gamma.
\]

Define the algebraic subvarieties 
\[
V_k=\left\{(A,B)\in{\rm SL}(d,\mb{C})\times{\rm SL}(d,\mb{C})\left|\,{\rm tr}(A)={\rm tr}(B)e^{2\pi k i/d}\right.\right\}.
\]

Let us now consider the representation $(\rho_0,\rho_0'):\Gamma\to{\rm SL}(d,\mb{C})\times{\rm SL}(d,\mb{C})$. By the above discussion, for any $\g\in \G_0$, we have $(\rho_0(\g),\rho_0'(\g))\in V_0\sqcup\cdots\sqcup V_{d-1}$, and thus the Zariski closure $Z<{\rm SL}(d,\mb{C})\times{\rm SL}(d,\mb{C})$ of $(\rho_0,\rho_0')(\Gamma_0)$ lies in the same union. 
Observe that $Z$ is an algebraic group with finitely many connected components; we denote by $Z_0$ the component that contains the identity matrix. 
It is a subset of the subvariety $V_0\subset{\rm SL}(d,\mb{C})\times{\rm SL}(d,\mb{C})$ that contains the identity. The pre-image $\Gamma_1=(\rho_0,\rho_0')^{-1}(Z_0)$ is a finite index subgroup of $\Gamma_0$.

By construction, for every $\gamma\in\Gamma_1$ we have ${\rm tr}(\rho_0(\gamma))={\rm tr}(\rho'_0(\gamma))$. As traces are coordinates on the ${\rm SL}(d,\mb{C})$-character variety, we conclude that the restrictions of $\rho_0,\rho_0'$ to $\Gamma_1$ are conjugate. 
Theorem \ref{thm:same gaps conjugate} follows then from the next proposition, which ensures that the conjugating element $M\in{\rm SL}(d,\mb{C})$ induces a conjugacy of $\rho$ and $\rho'$ on the whole group $\G$.
\end{proof}

\begin{prop}
\label{claim:from fi to ambient}
Let $\rho,\rho':\G\to\PSL(d,\C)$ be $k$-Anosov for $1\leq k\leq d-1$. Assume that $L_\rho^k(\g)=L_{\rho'}^k(\g)$ for all $\g\in\G$ and all $k$ and that there exists a finite index subgroup $\G_1<\G$ and $M\in\PSL(d,\C)$ such that 
$M\rho M^{-1}=\rho'$ on $\G_1$. Then $M\rho M^{-1}=\rho'$ on $\Gamma$.    
\end{prop}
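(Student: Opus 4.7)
The strategy is to leverage the uniqueness of Anosov boundary maps to propagate the conjugacy from $\Gamma_1$ to all of $\Gamma$, and then to reconstruct each $\rho'(\gamma)$ from the boundary flag together with the eigenvalue ratios. Write $\xi, \xi' : \partial\Gamma \to \mathcal{F}(\C^d)$ for the equivariant boundary maps into the full flag manifold associated to the $k$-Anosov representations $\rho, \rho'$ (for all $1\le k\le d-1$).

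First, I would establish the global boundary equality $M \circ \xi = \xi'$ on $\partial\Gamma$. Since $\Gamma_1<\Gamma$ is finite index (hence quasi-isometrically embedded), the restriction $\rho'|_{\Gamma_1}$ is again Borel Anosov and therefore admits a \emph{unique} equivariant dynamics-preserving boundary map $\partial\Gamma_1 = \partial\Gamma \to \mathcal F(\C^d)$. Both $\xi'|_{\partial\Gamma}$ and $M\circ \xi|_{\partial\Gamma}$ serve as such boundary maps for $\rho'|_{\Gamma_1}$—the latter because $M\rho M^{-1} = \rho'$ on $\Gamma_1$ implies that $M\circ \xi$ is $\rho'|_{\Gamma_1}$-equivariant and still dynamics preserving—so uniqueness forces them to coincide on all of $\partial\Gamma$.

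Second, fix an arbitrary non-trivial $\gamma\in\Gamma$. Since $\rho(\gamma)$ is $k$-proximal for every $k$, it has $d$ eigenvalues of pairwise distinct modulus, and ordering them in decreasing order of modulus, its $k$-th eigenline equals $\xi^k(\gamma^+) \cap \xi^{d-k+1}(\gamma^-)$. The $k$-th eigenline of $M\rho(\gamma)M^{-1}$ is therefore $M\xi^k(\gamma^+) \cap M\xi^{d-k+1}(\gamma^-)$, which by the global boundary equality equals $(\xi')^k(\gamma^+) \cap (\xi')^{d-k+1}(\gamma^-)$—precisely the $k$-th eigenline of $\rho'(\gamma)$. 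Thus $\rho'(\gamma)$ and $M\rho(\gamma)M^{-1}$ admit a common eigendecomposition of $\C^d$.

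Finally, since the spectra of $\rho(\gamma)$ and $M\rho(\gamma)M^{-1}$ coincide, the assumption $L^k_\rho(\gamma)=L^k_{\rho'}(\gamma)$ for every $k$ says that on the common eigenbasis the corresponding eigenvalues of $\rho'(\gamma)$ and $M\rho(\gamma)M^{-1}$ differ by a single scalar $c_\gamma\in\C^*$ independent of $k$. This scalar is invisible in $\PSL(d,\C)$, yielding $\rho'(\gamma) = M\rho(\gamma)M^{-1}$ for every $\gamma\in\Gamma$, as required. The main obstacle is the first step: upgrading the boundary equality from the tautological agreement on $\Gamma_1$-fixed points to all of $\partial\Gamma$. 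The appeal to uniqueness of Anosov boundary maps (equivalently, continuity combined with density of $\Gamma_1$-fixed points) is what makes the argument go through.
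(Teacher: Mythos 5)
Your proof is correct, and the final two steps mirror the paper's exactly: matching eigenlines plus the consecutive eigenvalue-ratio hypothesis force $M\rho(\gamma)M^{-1}$ and $\rho'(\gamma)$ to agree up to a scalar, hence to agree in $\PSL(d,\C)$. The difference is how you obtain the matching of eigenlines. You first establish the global equality $M\circ\xi = \xi'$ on all of $\partial\Gamma$ by invoking the uniqueness of the continuous, equivariant, dynamics-preserving boundary map for the restricted Anosov representation $\rho'|_{\Gamma_1}$ (with $\partial\Gamma_1 = \partial\Gamma$), and then read off the eigenlines of $\rho(\gamma)$ and $\rho'(\gamma)$ via $\xi^k(\gamma^+)\cap\xi^{d-k+1}(\gamma^-)$. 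The paper instead works one $\gamma$ at a time and sidesteps the boundary-map machinery: since $\gamma^{N!}\in\Gamma_1$ with $N = [\Gamma:\Gamma_1]$ and $\gamma^{N!}$ has the same eigensplitting as $\gamma$, the hypothesis $M\rho(\gamma^{N!})M^{-1} = \rho'(\gamma^{N!})$ already forces $M L_j = L_j'$. Your route leans on a slightly heavier standard result (uniqueness of Anosov boundary maps — which is itself proved by density of $\Gamma_1$-fixed points and continuity, the same engine as the paper's power trick) and pays off by giving the cleaner global statement $M\circ\xi = \xi'$; the paper's argument is more self-contained and elementary but stays local. Both are valid.
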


\begin{proof}
Fix an element $\gamma\in\Gamma$. By the Anosov property, both $\rho(\gamma)$ and $\rho(\gamma')$ determine a splitting of $\mb{C}^d$ as a direct sum of lines, corresponding to their distinct eigenspaces ordered by the moduli of the corresponding eigenvalues. Denote these splittings by 
\[
\mb{C}^d=L_1\oplus\cdots\oplus L_d=L_1'\oplus\cdots\oplus L_d'.
\]
Note that the splittings of $\rho(\gamma),\rho'(\gamma)$ are the same as the ones associated with $\rho(\gamma^N),\rho'(\gamma^N)$ for every $N\ge 1$. 
If $N=[\Gamma:\Gamma_1]$ is the index of $\Gamma_1$ in $\Gamma$, then $\gamma^{N!}\in\Gamma_1$,  so that $\rho'(\gamma^{N!})=M\rho(\gamma^{N!})M^{-1}$.
In particular,  $ML_j=L_j'$ for every $j$. 

Then  $M\rho(\gamma)M^{-1}$ can only differ from $\rho'(\gamma)$ by the multiplication with the projective class of a matrix $D_\gamma$ which is diagonal in any basis associated with $L_j'$. However, we also know that 
\[
\frac{\lambda^i(\rho(\gamma))}{\lambda^j(\rho(\gamma))}=\frac{\lambda^i(\rho'(\gamma))}{\lambda^j(\rho'(\gamma))}
\]
for every $i,j$. Combining this information with the above discussion, we get 
\[
\frac{\lambda^i(\rho(\gamma))}{\lambda^j(\rho(\gamma))}=\frac{\lambda^i(\rho'(\gamma))}{\lambda^j(\rho'(\gamma))}\frac{\lambda^i(D_\gamma)}{\lambda^j(D_\gamma)}
\]
for every $i,j$. Thus ${\lambda^i(D_\gamma)}={\lambda^j(D_\gamma)}$ for every $i,j$ and $D_\gamma$ is a multiple of the identity, thus trivial in $\PSL_d(\C)$. As a consequence, $M\rho(\gamma)M^{-1}=\rho'(\gamma)$ for every $\gamma\in\Gamma$. 
\end{proof}

As a consequence of Theorem \ref{thm:same gaps conjugate} we get:
\begin{prop}
\label{pro:injective}
The map ${\rm AB}:\Charn\to(\T(M_\Gamma)\times\T(\overline M_\Gamma))^{d-1}$ is injective.    
\end{prop}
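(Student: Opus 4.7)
The plan is to deduce injectivity of ${\rm AB}$ by combining the injectivity of the universal Ahlfors--Bers map (Proposition \ref{p.ABinjective}) with the algebraic rigidity statement of Theorem \ref{thm:same gaps conjugate}. The bridge between these two inputs is Proposition \ref{p.Clength}, which identifies the complex dilation spectrum of the laminated conformal action $\rho^k$ with the $k$-th eigenvalue gap $L^k_\rho$.

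Concretely, suppose ${\rm AB}(\rho) = {\rm AB}(\rho')$ for fully hyperconvex $\rho,\rho' \in \Charn$. Then $\mathcal{AB}([\rho^k]) = {\rm AB}^k(\rho) = {\rm AB}^k(\rho') = \mathcal{AB}([(\rho')^k])$ for every $1 \le k \le d-1$, and Proposition \ref{p.ABinjective} produces $g_k \in \MG$ with $(\rho')^k = g_k \rho^k g_k^{-1}$. Writing $g_k(z,t) = (h_{k,t}(z), f_k(t))$, the base component $f_k \in \PSL(2,\R)$ must satisfy $f_k \gamma f_k^{-1} = \gamma$ for every $\gamma \in \Gamma$, because both $\rho^k$ and $(\rho')^k$ cover the standard $\iota$-action of $\Gamma$ on $\deH$ by the construction in \S\ref{sec:tangent}. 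Since the centralizer of the cocompact torsion-free lattice $\Gamma$ in $\PSL(2,\R)$ is trivial, this forces $f_k = \mathrm{id}$; the leaves $\CP \times \{\gamma^+\}$ are then preserved by the conjugation, giving
\[
(\rho')^k(\gamma)|_{\CP \times \{\gamma^+\}} \;=\; h_{k,\gamma^+} \, \rho^k(\gamma)|_{\CP \times \{\gamma^+\}} \, h_{k,\gamma^+}^{-1},
\]
so that $L_{(\rho')^k}(\gamma) = L_{\rho^k}(\gamma)$ for every $\gamma$ and every $k$.

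Proposition \ref{p.Clength} translates these identities into $L^k_\rho(\gamma) = L^k_{\rho'}(\gamma)$ for all $k$ and all $\gamma \in \Gamma$. Full hyperconvexity gives the $k$-Anosov property for every $k$, which ensures, after passing to large powers of $\gamma$ and taking $n$-th roots of eigenvalues, that $\rho(\gamma)$ and $\rho'(\gamma)$ are diagonalizable with eigenvalues of pairwise distinct moduli for every nontrivial $\gamma$. The hypotheses of Theorem \ref{thm:same gaps conjugate} are thus in force, and the theorem produces a $\PSL(d,\C)$-conjugacy between $\rho$ and $\rho'$, which is exactly injectivity of ${\rm AB}$ on $\Charn$.

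The only delicate point in this reduction is the passage from $\MG$-conjugacy to equality of complex dilation spectra; once the base-action argument pins down the conjugator $g_k$ to act fiberwise, the complex dilation on each leaf is patently preserved under the remaining $\PSL(2,\C)$-conjugation. All the substantive input comes from earlier results: Proposition \ref{p.ABinjective} supplies the universal injectivity, Proposition \ref{p.Clength} provides the dictionary between the laminated and linear-algebraic pictures, and Theorem \ref{thm:same gaps conjugate} carries out the algebraic rigidity step.
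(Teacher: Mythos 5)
Your proof is correct and follows essentially the same route as the paper: both arguments invoke Proposition \ref{p.ABinjective} together with Proposition \ref{p.Clength} to turn $\mathrm{AB}(\rho) = \mathrm{AB}(\rho')$ into $L^k_\rho = L^k_{\rho'}$ for all $k$, and then apply Theorem \ref{thm:same gaps conjugate}. You merely make explicit two points the paper leaves implicit — that the conjugating element in $\MG$ must have trivial base component (hence the complex dilation spectrum is a well-defined invariant of the $\MG$-conjugacy class) and that full hyperconvexity supplies the diagonalizability-with-distinct-moduli hypothesis of Theorem \ref{thm:same gaps conjugate}.
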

\begin{proof}
The $k$-th eigenvalue gap $L_\rho^k$ of the representation $\rho$ equals the complex dilation function $L_{\rho^k}$ of the associated laminated conformal action $\rho^k$ (Proposition \ref{p.Clength}). Since, in turn, $\rho^k$  arises as welding of the Ahlfors--Bers parameters of ${\rm AB}^k(\rho)$
(the map $\mc{AB}:\mc{QC}(\iota)\to\T(M_\Gamma)\times\T(\overline M_\Gamma)$ is injective by Proposition \ref{p.ABinjective}).
If $\mathrm{AB}(\rho) = \mathrm{AB}(\rho')$, then $$L^k_\rho(\cdot)=L^k_{\rho'}(\cdot)$$    
for all $k$.
As a result, we have 
\[
\frac{\lambda^i(\rho(\gamma))}{\lambda^j(\rho(\gamma))}=\frac{\lambda^i(\rho'(\gamma))}{\lambda^j(\rho'(\gamma))},
\]
so we can apply Theorem \ref{thm:same gaps conjugate} and conclude the proof.
\end{proof}

\subsection{Real locus}
We conclude the section characterizing the preimage of the diagonal $\Delta\subset\calT(M_\G)\times \calT(\overline M_\G)$, thus concluding the proof of Theorem \ref{thm:ab intro 2}. 
We will build on the following theorem, whose proof will occupy most of the subsection. While the arguments are very similar to those in the proof of Theorem \ref{thm:same gaps conjugate} neither of the two results can be deduced from the other.
\begin{thm}
\label{thm:real gaps real}
Let $\Gamma$ be the fundamental group of a closed surface. Let $\rho:\Gamma\to{\rm PSL}(d,\mb{C})$ be a representation such that $\rho(\gamma)$ is diagonalizable with $d$ eigenvalues of distinct modulus for every $\gamma\in\Gamma-\{1\}$. Suppose that 
\[
\frac{\lambda^j(\rho(\gamma))}{\lambda^{j+1}(\rho(\gamma))}\in\mb{R}
\]
for every $\gamma\in\Gamma$ and $j\le d-1$. Then $\rho$ is conjugated to ${\rm PSL}(d,\mb{R})$.
\end{thm}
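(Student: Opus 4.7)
The plan is to parallel the structure of the proof of Theorem \ref{thm:same gaps conjugate}: pass to a finite index subgroup where $\rho$ lifts to $\SL(d,\C)$ and control the complex phase of each trace; use a Zariski closure argument to find a further finite index subgroup on which traces are actually real; apply Acosta's theorem \cite{Aco19} to descend to a real form, ruling out the quaternionic alternative using the distinct-modulus hypothesis; and finally spread the real form from this finite index subgroup to the whole group $\Gamma$.

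\textbf{Step 1 (Phase of the trace).} Using Lemma \ref{lem:lift}, choose a finite index subgroup $\Gamma_0<\Gamma$ of index divisible by $d$ on which $\rho$ lifts to $\rho_0:\Gamma_0\to\SL(d,\C)$. Reality of all consecutive eigenvalue ratios forces the eigenvalues of $\rho_0(\gamma)$ to lie on a common line through the origin: $\lambda^j(\rho_0(\gamma))=r_j^\gamma\,e^{i\theta_\gamma}$ with $r_j^\gamma\in\R$. Arguing as in Equation \eqref{e.prodev}, the condition $\prod_j\lambda^j(\rho_0(\gamma))=1$ gives $e^{id\theta_\gamma}\in\R$, whence $e^{2i\theta_\gamma}$ is a $d$-th root of unity. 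In particular, $\tr(\rho_0(\gamma))\in e^{i\theta_\gamma}\R$, so that $\tr(\rho_0(\gamma))\cdot\overline{\tr(\rho_0(\gamma))}{}^{-1}$ is a $d$-th root of unity whenever the trace does not vanish.

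\textbf{Step 2 (Zariski closure and real traces).} Consider the representation $(\rho_0,\bar\rho_0):\Gamma_0\to\SL(d,\C)\times\SL(d,\C)$. Its image lies in the finite union of algebraic subvarieties
\[
W_k:=\{(A,B)\in\SL(d,\C)\times\SL(d,\C):\tr(A)=e^{2\pi ik/d}\tr(B)\},\quad k=0,\ldots,d-1.
\]
The Zariski closure $Z$ is an algebraic group with finitely many components contained in $\bigcup_k W_k$; its identity component $Z_0$ is contained in $W_0$. Setting $\Gamma_1:=(\rho_0,\bar\rho_0)^{-1}(Z_0)$ yields a finite index subgroup of $\Gamma_0$ on which $\tr(\rho_0)=\overline{\tr(\rho_0)}$, i.e.\ the character of $\rho_0|_{\Gamma_1}$ is real-valued.

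\textbf{Step 3 (Acosta's descent, excluding the quaternionic case).} By \cite{Aco19}, $\rho_0|_{\Gamma_1}$ is conjugate in $\SL(d,\C)$ to a representation with image in $\SL(d,\R)$ or (for $d$ even) in $\SL(d/2,\mathbf{H})$. I rule out the quaternionic alternative via the distinct-modulus hypothesis: every $A\in\SL(d/2,\mathbf{H})\subset\SL(d,\C)$ has $\C$-eigenvalues coming in complex-conjugate pairs $(\lambda,\bar\lambda)$ with equal moduli and with real eigenvalues occurring with even multiplicity; both features are incompatible with $d$ pairwise distinct moduli. Hence there exists $M\in\PSL(d,\C)$ such that $\sigma:=M\rho M^{-1}$ satisfies $\sigma(\Gamma_1)\subset\PSL(d,\R)$.

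\textbf{Step 4 (Spreading to $\Gamma$).} On $\Gamma_1$, the relation $\sigma(\gamma)=\bar\sigma(\gamma)$ holds. Both $\sigma$ and $\bar\sigma$ are representations of $\Gamma$ taking diagonalizable values with distinct-modulus eigenvalues, and they have the same (real) eigenvalue ratios. The argument of Proposition \ref{claim:from fi to ambient}, applied with conjugating element equal to the identity, then shows that $\sigma=\bar\sigma$ on all of $\Gamma$, since that argument only uses the eigenspace splittings provided by distinct moduli together with equality of eigenvalue ratios, rather than the full Anosov property. This gives $\sigma(\Gamma)\subset\PSL(d,\R)$, completing the proof.

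The main subtleties are the elimination of the quaternionic alternative in Step 3, which rests crucially on the distinct-modulus hypothesis, and the extension step from $\Gamma_1$ to $\Gamma$, which requires reading Proposition \ref{claim:from fi to ambient} as a statement about distinct-modulus diagonalizable representations rather than only $k$-Anosov ones. The phase-controlling computation in Step 1 and the Zariski closure argument in Step 2 are direct analogs of the corresponding steps in the proof of Theorem \ref{thm:same gaps conjugate}.
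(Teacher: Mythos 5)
Your proposal tracks the paper's proof closely. Steps 1 and 3 are essentially identical. Step 2 differs cosmetically: you consider the pair $(\rho_0,\bar\rho_0)$ valued in $\SL(d,\C)^2$, lying in a union of complex subvarieties $W_k$, whereas the paper works directly with $\rho_0$ and real algebraic subvarieties $U_j\subset\SL(d,\C)$; both routes produce a finite-index $\Gamma_1$ with real traces, and both are valid.

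The gap is in Step 4. Your observation that the proof of Proposition \ref{claim:from fi to ambient} uses only diagonalizability with distinct eigenvalue moduli plus equality of eigenvalue ratios, rather than the full Anosov hypothesis, is correct, and applying it with $\rho=\sigma$, $\rho'=\bar\sigma$, $M=\Id$ does give $\sigma=\bar\sigma$ on all of $\Gamma$. But the final sentence, ``This gives $\sigma(\Gamma)\subset\PSL(d,\R)$,'' does not follow from that alone. The equality $\sigma(\gamma)=\bar\sigma(\gamma)$ in $\PSL(d,\C)$ only means a lift $A\in\SL(d,\C)$ satisfies $A=\omega\overline{A}$ for some $d$-th root of unity $\omega$; this places $\sigma(\gamma)$ in $\PGL(d,\R)$ but, when $d$ is even, not necessarily in $\PSL(d,\R)$. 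For instance, with $d=2$ and $s\in\R^*$, $|s|\neq 1$, the class $[{\rm diag}(is,-i/s)]$ has real eigenvalue ratio $-s^2$, distinct moduli, and equals its own complex conjugate in $\PSL(2,\C)$, yet neither lift $\pm{\rm diag}(is,-i/s)$ is real. The entire content of the paper's final, unnumbered proposition is precisely to close this remaining step: it works in a real eigenbasis adapted to the eigenlines of $\rho(\gamma^{N!})$, where $N=[\Gamma:\Gamma_1]$, and determines the diagonal form $qD_\gamma$ explicitly before concluding. You have replaced that proposition by an unsupported assertion; the passage from $\sigma=\bar\sigma$ (equivalently, from $\PGL(d,\R)$ to $\PSL(d,\R)$, or from knowing $\omega$ is a $d$-th root of unity to knowing it is the square of one) is exactly where the remaining work lies and must still be argued.
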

\begin{proof}[Proof of Theorem \ref{thm:real gaps real}]
As in the proof of Theorem \ref{thm:same gaps conjugate}, we pass to an auxiliary finite index subgroup $\Gamma_0<\Gamma$ of index $[\Gamma:\Gamma_0]=d$ and lift the restriction $\rho_0$ of $\rho$ to $\Gamma_0$ to a representation in ${\rm SL}(d,\mb{C})$ (see Lemma \ref{lem:lift}). 

As in Equation \eqref{e.prodev} we have 
\[1=\lambda^1(\rho_0(\gamma))^d\cdot\prod_{j\le d}{\frac{\lambda^j(\rho_0(\gamma))}{\lambda^1(\rho_0(\gamma))}}\]
As $\lambda^i(\rho_0(\gamma))/\lambda^j(\rho_0(\gamma))\in\mb{R}$ for every $i,j$, we conclude that $\lambda^1(\rho_0(\gamma))^d\in\mb{R}$. 

Therefore, $\rho_0(\gamma)$ is conjugate to a matrix of the form $q(\gamma)D(\rho_0(\gamma))$ where $D(\rho_0(\gamma))$ is a diagonal matrix with real entries and $q(\gamma)$ is a $d$-th root of unity (note that $q(\gamma)\mb{I}$ is a central element in ${\rm SL}(d,\mb{C})$). Therefore, the trace of $\rho_0(\gamma)$ has the form $r(\gamma)q(\gamma)$ where $r(\gamma)$ is a real number and $q(\gamma)$ is a $d$-th root of unity.

Define the $\mb{R}$-algebraic subvarieties of ${\rm SL}(d,\mb{C})$
\[
U_j=\{A\in{\rm SL}(d,\mb{C})\left|\,\mb{R}{\rm e}({\rm tr}(A))\sin(2j\pi/d)=\mb{I}{\rm m}({\rm tr}(A))\cos(2j\pi/d)\right.\}
\]

By the above discussion $\rho_0(\Gamma_0)$ is contained in the disjoint union $ U_0\sqcup\cdots\sqcup U_{d-1}$ and the same is true for the $\mb{R}$-Zariski closure $G$ of $\rho_0(\Gamma_0)$ in ${\rm SL}(d,\mb{C})$. 
As $G$ is an algebraic group, it has finitely many connected components, and the connected component of the identity $G_0$ is contained in $U_0$. The group $\Gamma_1:=\rho^{-1}(G_0)$ is a finite index subgroup of $\Gamma_0$. As $\rho_0(\Gamma_1)\subset U_0$, we have ${\rm tr}(\rho_0(\gamma))\in\mb{R}$ for every $\gamma\in\Gamma_1$.

By \cite{Aco19} this implies that $\rho_0$ is conjugate in ${\rm SL}(d,\mb{R})$ or ${\rm SL}(d/2,{\bf H})$ where ${\bf H}$ is the division algebra of quaternions over the real numbers.
However, $\rho_0$ cannot be conjugate in ${\rm SL}(d/2,{\bf H})$ because the eigenvalues of elements in ${\rm SL}(d/2,{\bf H})$ come in conjugate pairs, while we know by assumption that the eigenvalues of $\rho_0(\g)$ have distinct moduli for every $\gamma\in\Gamma_0$. 

Let $M\in{\rm SL}(d,\mb{C})$ be such that $M\rho_0(\Gamma_1)M^{-1}\subset{\rm SL}(d,\mb{R})$.
The result is proven as soon as the next proposition is established.
\end{proof}

\begin{prop}
   In the situation above, $M\rho(\Gamma)M^{-1}\subset{\rm PSL}(d,\mb{R})$.    
\end{prop}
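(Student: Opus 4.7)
The plan is to mimic the pattern of Proposition \ref{claim:from fi to ambient}: take an arbitrary $\gamma\in\Gamma$, pass to a high power $\gamma^N\in\Gamma_1$ where reality has already been established, and then use that $\rho(\gamma)$ and $\rho(\gamma^N)$ share the same eigenspace decomposition to deduce reality of $M\rho(\gamma)M^{-1}$ itself.

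Fix $\gamma\in\Gamma$ and set $N=[\Gamma:\Gamma_1]!$, so that $\gamma^N\in\Gamma_1$. By hypothesis, $B:=M\rho_0(\gamma^N)M^{-1}\in\SL(d,\R)$. The element $\rho(\gamma^N)$ is diagonalizable with $d$ eigenvalues of pairwise distinct moduli. Since the non-real eigenvalues of a real matrix come in complex conjugate pairs of equal modulus, all eigenvalues of $B$ are in fact real, and its eigenspace decomposition is the complexification of a splitting $\R^d=E_1\oplus\cdots\oplus E_d$ into real lines. Writing $\C^d=L_1\oplus\cdots\oplus L_d$ for the eigenspace decomposition of $\rho(\gamma)$ (ordered by decreasing modulus), the fact that $\rho(\gamma)$ and $\rho(\gamma^N)$ have the same splitting gives $ML_i=\C\otimes_\R E_i$ for every $i$.

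Next I would choose a lift of $M\rho(\gamma)M^{-1}$ to $\SL(d,\C)$ whose eigenvalues are all real. Pick any lift $A\in\SL(d,\C)$ of $\rho(\gamma)$ and let $\lambda^1,\dots,\lambda^d$ be its eigenvalues. By hypothesis every consecutive gap $\lambda^j/\lambda^{j+1}$ is real, hence so is every ratio $\lambda^i/\lambda^j$. Writing $\lambda^i=t_i\lambda^1$ with $t_i\in\R$ and using $\prod_i\lambda^i=1$ gives $(\lambda^1)^d\in\R$. Hence there exists a $d$-th root of unity $\zeta$ with $\zeta\lambda^1\in\R$; replacing $A$ by $\zeta A$ (which represents the same element in $\PSL(d,\C)$) we may assume all eigenvalues of $A$ are real. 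Then $MAM^{-1}$ acts on each complex line $ML_i$ by a real scalar and preserves $\R^d=\bigoplus_i E_i$, so $MAM^{-1}\in\SL(d,\R)$ and $M\rho(\gamma)M^{-1}\in\PSL(d,\R)$, which is what we want.

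The main obstacle is not conceptual but notational: one must keep straight the distinction between elements of $\SL(d,\C)$ and their images in $\PSL(d,\C)$, and carefully justify the claim that the eigenspaces of $\rho(\gamma)$ and $\rho(\gamma^N)$ agree (which is automatic once one observes that $\rho(\gamma^N)$ is diagonal in any eigenbasis for $\rho(\gamma)$, and that having $d$ distinct-modulus eigenvalues forces $\rho(\gamma^N)$ to have $d$ distinct eigenvalues as well, so its eigenspaces are exactly the $L_i$). Once these bookkeeping points are settled, the proof reduces to the two observations above: distinct moduli forces real matrices to have real spectrum and real eigenlines, and reality of all eigenvalue ratios lets one rescale any lift of $\rho(\gamma)$ to one with real spectrum.
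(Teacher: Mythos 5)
Your overall strategy mirrors the paper's: pass to $\gamma^N\in\Gamma_1$, use that $M\rho_0(\gamma^N)M^{-1}\in\SL(d,\R)$ has distinct-modulus (hence real) eigenvalues to conclude that the eigenline splitting of $\rho(\gamma)$, transported by $M$, comes from a real splitting $\R^d=\bigoplus E_i$, and then try to control the eigenvalues. The gap is in the eigenvalue step. From $(\lambda^1)^d\in\R$ you infer the existence of a $d$-th root of unity $\zeta$ with $\zeta\lambda^1\in\R$; this is false in general when $d$ is even. If $(\lambda^1)^d<0$ then $\arg\lambda^1$ is an \emph{odd} multiple of $\pi/d$, while multiplying by a $d$-th root of unity shifts the argument by an \emph{even} multiple of $\pi/d$, so $\zeta\lambda^1$ can never land on the real axis. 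Concretely, take $d=2$ and the projective class of $\mathrm{diag}(2i,-i/2)\in\SL(2,\C)$: the eigenvalues have distinct moduli, the ratio is $-4\in\R$, and $(\lambda^1)^2=-4<0$; the two $\SL(2,\C)$-lifts are $\pm\,\mathrm{diag}(2i,-i/2)$, neither of which has real eigenvalues. So $MAM^{-1}$ cannot be forced into $\SL(d,\R)$, and your final sentence does not follow.

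The paper sidesteps this by not insisting on a lift in $\SL(d,\R)$. In the real basis coming from $\R^d=\bigoplus E_i$, the matrix representing $M\rho(\gamma)M^{-1}$ is diagonal; its entries are $N!$-th roots of the (real) eigenvalues of $M\rho_0(\gamma^{N!})M^{-1}$ and their pairwise ratios are real, so the matrix has the form $qD_\gamma$ with $D_\gamma$ real diagonal and $q$ a root of unity. That already identifies the projective class $[qD_\gamma]=[D_\gamma]$ with the class of a real matrix, which is all that $M\rho(\gamma)M^{-1}\in\PSL(d,\R)$ asks for. In other words, the conclusion you want is about the \emph{projective} class being real, and you should argue at that level rather than chasing a real representative inside $\SL(d,\C)$, which need not exist.
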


\begin{proof}
We proceed similarly to Proposition \ref{claim:from fi to ambient}.
Let $N=[\Gamma:\Gamma_1]$ be the index of $\Gamma_1$ in $\Gamma$. For every $\gamma\in\Gamma$ we have $\gamma^{N!}\in\Gamma_1$.

By the Anosov property, $\rho(\gamma)$ is diagonalizable and determines a splitting by eigenlines $\mb{C}^d=L_1\oplus\cdots\oplus L_d$.
Since  $\rho(\gamma^{N!})$ preserves the same splitting, $ML_j$ is an eigenline of $M\rho(\gamma^{N!})M^{-1} \in \SL(d,\R)$.  

Thus $ML_j$ intersects $\R^d<\C^d$ in a real line, and the splitting
$\mb{C}^d=ML_1\oplus\cdots\oplus ML_d$ admits a real basis. In each such basis
the matrix representing $M\rho(\gamma)M^{-1}$ is diagonal with real entries.Its $j$-th diagonal entry is equal to an $N!$-th root of the corresponding diagonal element of the (real) matrix representing $\rho(\gamma^{N!})$. Since furthermore ${\lambda^i(\rho(\gamma))}/{\lambda^j(\rho(\gamma))}\in\mb{R}$, for every $i,j$ the arguments of the complex numbers $\lambda^i(\rho(\gamma)),\lambda^j(\rho(\gamma))$ are equal or differ by $\pi$. In particular, it has the form $q D_\gamma$ where $D_\gamma$ is a real diagonal matrix and $q$ is an $N!$-th root of unity. As a consequence $M\rho(\gamma)M^{-1}\in{\rm PSL}(d,\mb{R})$, for all $\gamma \in \Gamma$, as claimed. 
\end{proof}

We can now conclude:
\begin{prop}\label{prop:diag real}
Let $\rho:\Gamma\to{\rm PSL}(d,\mb{C})$ be a fully hyperconvex representation such that ${\rm AB}^k(\rho)\in\Delta$ for every $1\le k\le d-1$. Then $\rho$ is conjugate into ${\rm PSL}(d,\mb{R})$.
\end{prop}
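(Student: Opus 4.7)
The plan is to string together three results already established in the paper to translate the hypothesis ${\rm AB}^k(\rho) \in \Delta$ into the condition that every $k$-th eigenvalue gap of $\rho$ is real, and then to invoke Theorem \ref{thm:real gaps real}.

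First, I would fix $1 \le k \le d-1$ and consider the associated laminated conformal action $\rho^k : \Gamma \to \mc{MG}$ built in \S\ref{sec:tangent}. By Proposition \ref{prop:qcconjugacy}, $[\rho^k] \in \qc(\iota)$, and by construction ${\rm AB}^k(\rho) = \mc{AB}([\rho^k])$. Since by hypothesis this lies in the diagonal $\Delta \subset \T(M_\Gamma) \times \T(\overline M_\Gamma)$, Proposition \ref{prop:real dilation} applies and tells us that the complex dilation spectrum of $\rho^k$ is real-valued:
\[
L_{\rho^k}(\gamma) \in \mb R \quad \text{for every } \gamma \in \Gamma.
\]

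Next, Proposition \ref{p.Clength} identifies this dilation spectrum with the $k$-th eigenvalue gap of $\rho$, i.e.\ $L_{\rho^k}(\gamma) = L_\rho^k(\gamma) = \lambda^k(\rho(\gamma))/\lambda^{k+1}(\rho(\gamma))$. Running the argument for every $1 \le k \le d-1$, we conclude that all successive eigenvalue ratios of $\rho(\gamma)$ are real for every $\gamma \in \Gamma$. Moreover, since $\rho$ is fully hyperconvex, it is in particular $k$-Anosov for all $k$, so every non-identity $\rho(\gamma)$ is diagonalizable with $d$ eigenvalues of pairwise distinct moduli.

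The hypotheses of Theorem \ref{thm:real gaps real} are therefore satisfied, and we conclude that $\rho$ is conjugate into ${\rm PSL}(d,\mb R)$, as desired. The only non-trivial step is the invocation of Proposition \ref{prop:real dilation}, which in turn relies on the welding description (Remark \ref{rmk:conformal welding}) of how points on the diagonal give rise to quasi-conformal deformations preserving $\mb{RP}^1 \times \partial \mb H^2$; everything else in this proof is a direct concatenation of already-proven statements.
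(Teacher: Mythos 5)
Your proof is correct and takes essentially the same route as the paper: reduce to real eigenvalue gaps via Propositions \ref{prop:real dilation} and \ref{p.Clength}, then invoke Theorem \ref{thm:real gaps real}. The paper's proof is just a two-line version of yours; you have merely spelled out the intermediate bookkeeping (the identification ${\rm AB}^k(\rho)=\mc{AB}([\rho^k])$ and the need for Proposition \ref{p.Clength}) more explicitly, along with the observation that hyperconvexity supplies the diagonalizability hypothesis of Theorem \ref{thm:real gaps real}.
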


\begin{proof}
By Proposition \ref{prop:real dilation}, for every $k$ we have
\[
\lambda^k(\rho(\gamma))/\lambda^{k+1}(\rho(\gamma))\in\mb{R}.
\]
So we can conclude by applying Theorem \ref{thm:real gaps real}.
\end{proof}

\section{Hausdorff dimension}
\label{sec:entropy}
In this section we prove Theorem \ref{thmINTRO:Hff}. Recall from the introduction that we denote by 
\[
h^k(\rho):=\limsup_{R\to\infty}\frac{\log|\{[\gamma]\text{ conjugacy class of }\pi_1(\Sigma)|\,\log|L_\rho^k(\gamma)|\le R\}|}{R}
\]
the $k$-th root entropy of a representation $\rho:\G\to\PSL(d,\C)$.

The main step in the proof is the following:

\begin{thm}\label{thm:entropy=1}
Let $\rho:\Gamma\to{\rm PSL}(d,\mb{C})$ be  $k$-hyperconvex. Then
 \[
 h^k(\rho)\ge 1
 \]
 with equality if and only if ${\rm AB}^k(\rho)$ lies in the diagonal $\Delta\subset\T(M_\Gamma)\times\T(\overline M_\Gamma)$.
\end{thm}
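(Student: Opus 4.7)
The plan is to construct a marked hyperbolic surface lamination $E$ via an equivariant family of harmonic fillings in $\H^3$ such that its length spectrum dominates $\log|L^k_\rho|$. Combined with Theorem \ref{thm: orbit growth rate}, this will yield $h^k(\rho)\ge 1$; the equality case will be resolved by analyzing when the domination is strict.

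I would use the laminated conformal action $\rho^k:\G\to\MG$ from \S\ref{sec:tangent}, together with the uniform quasi-M\"obius property of the tangent projections (Proposition \ref{p.qc}), to invoke the Benoist--Hulin theorem leaf by leaf. This produces an equivariant family of harmonic diffeomorphisms $f_t:\H^2\to\H^3$ at bounded distance from a quasi-isometric extension, with boundary values the quasi-circles $\xi^k_t$. Pulling back the hyperbolic metric of $\H^3$ gives a $\G$-invariant smooth Riemannian structure $g=f_t^*g_{\H^3}$ on $\H^2\times\dH$. Since $\rho^k(\g,\g^+)$ is loxodromic on $\H^3$ with translation length $\log|L^k_\rho(\g)|$ (Proposition \ref{p.Clength}), equivariance of $f_{\g^+}$ forces the $g$-length of the core geodesic of the closed leaf above $\g^+$ to dominate $\log|L^k_\rho(\g)|$.

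To upgrade this to a hyperbolic statement, apply Candel's Theorem \ref{thm: candel} to the conformal class of $g$ to obtain a leafwise hyperbolic metric $g_{\mathrm{hyp}}$, defining a marked hyperbolic surface lamination $E$. A Gauss-equation estimate, exploiting that the harmonically filled surfaces in $\H^3$ are of minimal type, shows that $g$ has leafwise sectional curvature $\le -1$; the laminated Ahlfors--Schwarz--Pick lemma then gives $g\le g_{\mathrm{hyp}}$, hence $\ell_E(\g)\ge\ell_g(\g)\ge\log|L^k_\rho(\g)|$, and Theorem \ref{thm: orbit growth rate} produces $h^k(\rho)\ge h(E)\ge 1$. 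If ${\rm AB}^k(\rho)\notin\Delta$, the equivariant family $\xi^k_t$ is not conjugate to a family of round circles (condition ${\rm AB}^k\in\Delta$ being precisely that $\rho^k$ preserves some lamination-invariant family of round circles, cf.\ Remark \ref{rmk:conformal welding}); the second fundamental form of the filling surfaces is then uniformly non-zero on each leaf, and cocompactness of $M_\G$ strengthens Schwarz--Pick to a uniform $g\le c\,g_{\mathrm{hyp}}$ with $c<1$. Setting $\kappa:=1/c>1$ gives $\ell_E\ge\kappa\log|L^k_\rho|$, whence $h^k(\rho)\ge\kappa\,h(E)\ge \kappa>1$.

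Conversely, when ${\rm AB}^k(\rho)\in\Delta$ the quasi-circles $\xi^k_t$ are round circles, each $f_t$ is an isometric totally geodesic embedding $\H^2\hookrightarrow\H^3$, and $g=g_{\mathrm{hyp}}$ exactly, so that $\ell_E(\g)=\log|L^k_\rho(\g)|$; by Proposition \ref{prop:real dilation} the spectrum $L^k_\rho$ is moreover real. In this situation $\rho^k$ is $\MG$-conjugate to an action preserving $\RP\times\dH$ and acting by isometries of $\H^2$ on each leaf of $\H^2\times\dH$; this realizes $E$ as a quasi-conformal deformation of the Fuchsian lamination $M_\G$ along a Fuchsian cocycle, and a direct counting argument --- comparing the growth of $\log|L^k_\rho(\g)|$ to the exponential growth of closed orbits for the geodesic flow of a closed hyperbolic surface --- gives $h^k(\rho)=1$. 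The main technical obstacle will be establishing the uniform curvature bound $K(g)\le -1$, together with its uniform strict refinement off the diagonal, from the effective Benoist--Hulin estimates combined with the cocompactness of the $\G$-action on $M_\G$.
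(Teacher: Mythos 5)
Your outline follows the correct overall strategy --- build an equivariant family of Benoist--Hulin harmonic fillings, use the translation-length domination, compare to a hyperbolic lamination, invoke Theorem~\ref{thm: orbit growth rate}, and handle the equality case via total geodesy of the filling surfaces. The paper does all of this too. But there is a genuine gap at the uniformization step, and it is exactly one the authors flag.

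You propose to apply Candel's uniformization Theorem~\ref{thm: candel} to the conformal class of $g = f_t^*g_{\H^3}$ and then use a laminated Ahlfors--Schwarz--Pick lemma. This does not go through: the Benoist--Hulin harmonic fillings $f_t:\H^2\to\H^3$ are not in general immersions (and are not ``diffeomorphisms,'' as you write), so the pullback tensor $f_t^*g_{\H^3} = \alpha_t g_{\H^2} + \Phi_t + \overline{\Phi}_t$ is only a non-negative symmetric $2$-tensor and degenerates precisely where $\alpha_t^2 = 4|\Phi_t|^2$. By Sampson's theorem this degeneracy locus is nowhere dense, but it is generally nonempty, so $g$ does not define a conformal structure on the leaves --- Candel's theorem does not apply, and the Schwarz--Pick comparison against a nonexistent uniformization breaks down. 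This is precisely the ``first main issue'' the paper identifies. The fix, which your argument lacks, is to bypass the pullback metric entirely: since $f_t$ is harmonic, its Hopf differential $\Phi_t$ is a bona fide bounded holomorphic quadratic differential (even across the degeneracy locus), and Wan's Theorem~\ref{t.Wang} on the universal Teichm\"uller space produces a genuine \emph{hyperbolic} metric $g_{\Phi_t}=\alpha_{\Phi_t}g_{\H^2}+\Phi_t+\overline{\Phi}_t$ with the same Hopf differential. The comparison $f_t^*g_{\H^3}\le g_{\Phi_t}$ (with strict inequality unless $f_t$ is totally geodesic) is then Lemma~\ref{fact.LH}, a Deroin--Tholozan-style strong-maximum-principle argument on the quantities $H,L$ and $H',L'$ --- morally the Schwarz--Pick you had in mind, but formulated so that it survives the degeneracy of $f_t^*g_{\H^3}$.

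Two secondary remarks. First, the transverse continuity of the whole package must be established before the cocompactness argument kicks in: the paper proves (Lemmas~\ref{lem:harmonic equivariant} and~\ref{lem:harmonic continuous}, via the Reimann extension operator and the elliptic estimates of Theorem~\ref{thm:elliptic reg}) that $t\mapsto f_t$ is equivariant and $C^\infty$-continuous in $t$, and then (Proposition~\ref{p.convergence uniformization}) that $\Phi\mapsto g_\Phi$ is continuous; your sketch takes this for granted. Second, in your converse paragraph you assert $h^k(\rho)=1$ when $\mathrm{AB}^k(\rho)\in\Delta$ from a ``direct counting argument,'' but this direction of the equivalence is not what the paper's argument for Theorem~\ref{thm:entropy=1} actually establishes, either --- the dichotomy of Lemmas~\ref{lem:harmonic dominate} and~\ref{lem:harmonic tot geo} gives only the contrapositive ($\mathrm{AB}^k(\rho)\notin\Delta\Rightarrow h^k(\rho)>1$), together with the unconditional $h^k(\rho)\ge 1$; the upper bound $h^k(\rho)\le 1$ in the diagonal case is a separate fact (coming, e.g., from \cite{PSW:HDim_hyperconvex} applied to the associated real/Fuchsian data), and your sketch should cite it rather than wave at it.
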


Theorem \ref{thm:entropy=1} implies Theorem \ref{thm:IntroReal} via Proposition \ref{prop:real dilation}. 
We now  give a quick proof of Theorem \ref{thmINTRO:Hff}
assuming Theorem \ref{thm:entropy=1}.

\begin{proof}[Proof of Theorem \ref{thmINTRO:Hff}]
Recall from Section \ref{s.Anosov} that we denote by $\Lambda^k_\rho=\xi^k(\deG)\subset \Gr_k(\C^d)$ the $k$-th limit set. By Pozzetti--Sambarino--Wienhard \cite{PSW:HDim_hyperconvex}, we have 
\[
{\rm Hdim}(\Lambda^k_\rho)=h^k(\rho).
\]
If we further denote by $\Lambda_\rho=\xi(\deG)\subset \calF(\C^d)$ the full limit set, it was proven in Pozzetti--Sambarino \cite[Corollary 5.13]{PSamb}, that
\[
{\rm Hdim}(\Lambda_\rho)=\max_{1\le k\le d-1}\{h^k(\rho)\}.
\]

As $\Lambda_\rho^k$ is a topological circle we always have $1\le{\rm Hdim}(\Lambda^k_\rho)=h^k(\rho)$. If ${\rm Hdim}(\Lambda_\rho)=1$ then 
\[
1={\rm Hdim}(\Lambda_\rho)\geq h^k(\rho),
\]
and we deduce $h^k(\rho)=1$ for every $1\le k\le d-1$. By Theorem \ref{thm:entropy=1}, all the Ahlfors--Bers parameters ${\rm AB}^k(\rho)$ lie on the diagonal $\Delta\subset\T(M_\Gamma)\times\T(\overline M_\Gamma)$. By Proposition \ref{prop:diag real}, this implies that $\rho$ is conjugate in ${\rm PSL}(d,\mb{R})$.
\end{proof}

We now move on to the proof of Theorem \ref{thm:entropy=1}. Let us briefly sketch the argument: The main point is to show that if ${\rm AB}^k(\rho)$ is not diagonal, then there exists a marked hyperbolic surface lamination $E\in\T(M_\Gamma)$ and a number $\kappa>1$ such that
\[
\frac{1}{\kappa}\ell_E(\cdot)\ge \log|L^k_\rho(\cdot)|.
\]
In view of Theorems \ref{t.Tholozan} and \ref{thm: orbit growth rate}, this immediately implies that
\[
h^k(\rho)\ge\kappa h(E)\ge\kappa>1.
\]

In order to prove the existence of $E$, we prove a laminated version of a result by Deroin and Tholozan \cite{DT16}, which we now outline.

Recall that we associate to the hyperconvex representation $\rho$ a laminated conformal action $\rho^k$ on $\fCP$ capturing the $k$-th root spectrum. Morally the hyperbolic surface lamination $E$ arises as Candel's uniformization of the harmonic filling of its laminated limit set $\fL_\rho^k=\xi^k_\cdot(\fL)\subset\fCP$. For every point $t\in\deH$, there is a unique harmonic filling $\H^2\to\H^3$ of the marking $\xi_t^k:\dH\to\CP$ provided by the work of Benoist--Hulin \cite{BH17}. This morally induces a leafwise Riemannian structure on $M_\G$ whose curvature is bounded from above by $-1$. The Candel uniformization of this metric would be strictly larger (see e.g. \cite{Ahlfors:schwarz}), and hence its length spectrum would strictly dominate the one of the harmonic filling, which in turn weakly dominates that of $\rho$. 

There are two main issues. 
First, the harmonic filling is in general singular so the induced metric is also singular, hence we cannot apply Candel's uniformization. Instead, we apply work of Wan on the universal Teichmüller space \cite{Wan}.
Second, in order to prove strong domination we need to show that such fillings give the structure of a (smooth) hyperbolic surface lamination, which gives us the compactness needed to adapt some arguments of Deroin-Tholozan \cite{DT16} and deduce strict domination.

Before going into details, we need to make a small detour into harmonic maps.  
\subsection{Harmonic maps}
We start by recalling the definition.

\begin{dfn}[Harmonic Map]
Let $f:X\to Y$ be a smooth map between Riemannian manifolds. Denote by $\nabla^X,\nabla^Y$ the Levi-Civita connections of $X,Y$ respectively. The second fundamental form of $f$ is given, for  $x\in X$ and $u,v\in T_xX$ by
\[
\mb{I}^f(u,v):=\nabla^{Y}_{df(u)}{df(v)}-df(\nabla^X_uv)\in T_{f(x)}Y.
\]
If $f$ is an embedding on the open set $U\subset X$, then $\mb{I}^f$ is the second fundamental form of the embedded submanifold $f(U)\subset Y$, that is $\mb{I}^f(u,v)=\mb{I}_{f(U)}(df(u),df(v))$.

We say that $f$ is \emph{harmonic} if the trace of $\mb{I}^f$ vanishes, that is, for any $x\in X$ and any orthonormal basis $E_1,\cdots,E_k$ of $T_x X$, we have 
\[
\tau(f)(x):=\mb{I}^f(E_1,E_1)+\cdots+\mb{I}^f(E_k,E_k)=0.
\]
If $f$ is an embedding on $U$ and $x\in U$, then $\tau(f)(x)$ is the mean curvature of $f(U)$ at $x$.
\end{dfn}

The following result of Jost--Karcher will be useful to control the convergence of sequences of harmonic maps.

\begin{thm}[{\cite[Theorem 4.9.2]{Jost}}]
\label{thm:elliptic reg}
Let $X,Y$ be complete smooth Riemannian manifolds with bounded sectional curvatures
\[
-\omega^2\le K_X,K_Y\le\kappa^2
\]
and such that the $\mc{C}^k$-norms of the Riemann curvature tensors are bounded by $B_k$. Let $B(x,R_X)\subset X$ and $B(y,R_Y)\subset Y$ be metric balls of radii
\[
R_X\le\min\left\{{\rm inj}_x(X),\frac{\pi}{2\kappa}\right\}\,\text{ and }\,R_Y\le\min\left\{{\rm inj}_y(Y),\frac{\pi}{2\kappa}\right\}
\]
(where if $\kappa=0$ the second terms are $+\infty$). There exists 
\[
c=c(R_X,R_Y,k,\omega,\kappa,B_k,{\rm dim}(X),{\rm dim}(Y))>0
\]
such that if $u:B(x,R_X)\to B(y,R_Y)$ is a harmonic map, then
\[
|u|_{\mc{C}^k}\le c.
\]
\end{thm}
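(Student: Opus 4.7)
The plan is to establish the uniform $\mathcal C^k$ bound by bootstrap, starting from an a priori gradient estimate obtained via a Bochner formula plus the convexity of the target ball, then iterating standard elliptic Schauder theory. In local coordinates on source and target, $\tau(u)=0$ reads
\[
\Delta_X u^\alpha + g_X^{ij}\,\Gamma^\alpha_{\beta\gamma}(u)\,\partial_i u^\beta\,\partial_j u^\gamma = 0,
\]
a quasilinear elliptic system whose leading part is the Laplace--Beltrami operator of $X$ and whose nonlinearity is quadratic in $Du$ with coefficients given by Christoffel symbols of $Y$. Leading and lower-order coefficients are controlled by the $\mathcal C^k$-norms of the curvature tensors on $X$ and $Y$, i.e.\ by the data $\omega$, $\kappa$, $B_k$.

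The crucial step is an a priori $\mathcal C^1$ bound $\sup_{B(x, R_X/2)} |du|\le c_1$ depending only on the stated data. The hypothesis $R_Y \le \pi/(2\kappa)$ is precisely the convexity-radius threshold for targets with $K_Y\le \kappa^2$: on such a ball, $\sigma(q) := \tfrac{1}{2} d_Y(y,q)^2$ is strictly convex, with Hessian bounds depending only on $\kappa$ and $R_Y$. Combining the Eells--Sampson Bochner formula
\[
\tfrac{1}{2}\Delta_X |du|^2 = |\nabla du|^2 + \langle \operatorname{Ric}_X du, du\rangle - \sum_{i,j} R_Y\bigl(du(e_i), du(e_j), du(e_i), du(e_j)\bigr)
\]
with the coercive inequality $\Delta_X(\sigma\circ u)\ge c(\kappa,R_Y)\,|du|^2$ coming from strict convexity of $\sigma$ applied to a harmonic $u$, and running a Cheng--Yau cut-off argument on an auxiliary function of the shape $F := \varphi(\sigma\circ u)\,|du|^2$ with $\varphi$ engineered so that $F$ is subharmonic with a good coercive zeroth-order term, I would obtain the gradient bound: at an interior maximum of $F$, the bad quartic term $-\kappa^2|du|^4$ from the $R_Y$ contribution is absorbed by the coercive term $\varphi'\,\Delta_X(\sigma\circ u)$.

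Once $|du|$ is bounded, the right-hand side of the PDE lies in $L^\infty_{\mathrm{loc}}$, so De Giorgi--Nash--Moser gives $u\in\mathcal C^{1,\alpha}_{\mathrm{loc}}$; the right-hand side is then $\mathcal C^\alpha$ and Schauder yields $\mathcal C^{2,\alpha}_{\mathrm{loc}}$. A finite bootstrap, using the chain rule and the control $B_k$ of higher derivatives of the Christoffel symbols of $Y$ in normal charts, produces uniform $\mathcal C^{k,\alpha}$ estimates on slightly smaller concentric balls; a standard covering and rescaling reduction then gives the desired bound on $|u|_{\mathcal C^k}$.

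The main obstacle is the gradient estimate: the Bochner contribution from $R_Y$ has no good sign when $Y$ carries positive curvature, and the Eells--Sampson argument by itself does not close. It is exactly the smallness $R_Y<\pi/(2\kappa)$ that restores usable convexity of $\sigma$ so as to dominate the quartic term in $|du|$; choosing the auxiliary function $F$ so that the absorption closes at an interior peak is the delicate technical step, going back to Hildebrandt--Kaul--Widman and streamlined in the Jost reference.
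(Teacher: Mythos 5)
This statement is cited in the paper directly to Jost's textbook and is not proved there; there is no ``paper's own proof'' to compare against. Your sketch is nonetheless a correct and recognizable account of the standard argument — the Bochner identity combined with strict convexity of the squared distance on a regular ball $R_Y\le\pi/(2\kappa)$, an auxiliary function $\varphi(\sigma\circ u)|du|^2$ closed at an interior peak, and a Schauder bootstrap — which is essentially the Hildebrandt--Kaul--Widman/Jost route the cited reference follows. Two small points worth flagging: the bound is an interior estimate (the conclusion should be read on a compactly contained subball, which is how the paper uses it in Lemma 7.9 and Proposition 7.5), and the $B_k$ bounds on the curvature tensors are what guarantee that the Christoffel symbols in normal charts and the lower-order coefficients of the elliptic system carry uniform $\mathcal C^{k-1}$ control, so that the bootstrap produces constants depending only on the listed data; you invoke this but it deserves explicit mention since it is where the statement's precise list of dependencies comes from.
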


We will only apply this result for $X=\mb{H}^2$ and $Y=\mb{H}^2,\mb{H}^3$, and now specialize the discussion to the case where $X=\mb{H}^2$.
In this case, it is convenient to take into account the complex structure $J:T^*\mb{H}^2\to T^*\mb{H}^2$, which determines a splitting of $T^*\mb{H}^2\otimes\mb{C}$ into $T^*_{1,0}\mb{H}^2\oplus T^*_{0,1}\mb{H}^2$ (the eigenspaces corresponding to the eigenvalues $i,-i$ of $J\otimes 1$). In turn, this induces a natural splitting
\[T^*\mb{H}^2\otimes_{\mb{C}} T^*\mb{H}^2=T^*_{2,0}\mb{H}^2\oplus T^*_{1,1}\mb{H}^2\oplus T^*_{0,2}\mb{H}^2.\]
We can further split the 2-dimensional complex subspace $T^*_{1,1}\mb{H}^2$ into 
\[
T^*_{1,1}\mb{H}^2=\mb{C}g_{\mb{H}^2}\oplus\mb{C}\omega_{\mb{H}^2}
\]
where $g_{\mb{H}^2}(\cdot,\cdot)$ is the hyperbolic metric on $\mb{H}^2$ and $\omega_{\mb{H}^2}(\cdot,\cdot)=g_{\mb{H}^2}(\cdot, J\cdot)$ is the symplectic form of $\mb{H}^2$. In particular, every complex 2-tensor $S$ on $\mb{H}^2$ decomposes as 
\[
S=S^{(2,0)}+S^{(1,1)}+S^{(0,2)}.
\]
where $S^{(2,0)},S^{(0,2)}$ are quadratic differentials. If $S$ is real, that is, it coincides with its complex conjugate $S=\overline{S}$, then 
\[
\overline{S}^{(0,2)}=S^{(2,0)}\,\text{ and }\, S^{(1,1)}=\overline{S}^{(1,1)}.
\]
If additionally, $S$ is symmetric and non-negative, then $S^{(1,1)}=\alpha g_{\mb{H}^2}$. This leads to the following definition:

\begin{dfn}[Hopf Differential]
If $f:\mb{H}^2\to Y$ is a smooth map then the pull-back of the metric $g_{Y}$ is a symmetric 2-tensor that admits a (unique) decomposition as 
\[
f^*g_{Y}=\alpha g_{\mb{H}^2}+\Phi+\overline{\Phi}
\]
where $\alpha:\mb{H}^2\to\mb{R}$ is a non-negative smooth function and $\Phi$ is a quadratic differential, the \emph{Hopf differential} of $f$. When $f$ is harmonic, the Hopf differential is holomorphic (see for example \cite[Section 2.2.3]{DW}).
\end{dfn}

\subsubsection{Existence and uniqueness of harmonic fillings}
Given a parametrized quasicircle $\xi:\partial\mb{H}^2\to\partial\mb{H}^3$ we seek a harmonic map $f:\mb{H}^2\to\mb{H}^3$ that extends $\xi$. We use the following particular case of a result of Benoist--Hulin \cite{BH17} establishing the existence and uniqueness of harmonic fillings.

\begin{thm}[{\cite{BH17}}]
\label{thm:bh}
For every $c>0$ there exists $B>0$ such that the following holds: Let $\xi:\partial\mb{H}^2\to\mb{CP}^1$ be the boundary extension of a $c$-quasi-isometric embedding $g:\mb{H}^2\to\mb{H}^3$. Then there exists a unique harmonic map $f:\mb{H}^2\to\mb{H}^3$ such that
\[
\sup_{x\in\mb{H}^2}{d_{\mb{H}^3}(f(x),g(x))}<B.
\]
In particular, $f$ continuously extends $\xi$. 
\end{thm}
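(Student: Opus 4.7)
The plan is to prove existence by a heat-flow/exhaustion argument and uniqueness by a maximum principle in negative curvature, using the quasi-isometric control on $g$ throughout.

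For existence, I would first smooth $g$ to obtain a $C^\infty$ map $g_0:\mathbb{H}^2\to\mathbb{H}^3$ that is $c'$-quasi-isometric (with $c'$ depending only on $c$), at bounded distance from $g$, and with a uniformly bounded tension field $\|\tau(g_0)\|_\infty \le A = A(c)$. A convenient way to do this is by averaging $g$ over a ball of fixed small radius using the barycenter map in the CAT($-1$) target. Next, exhaust $\mathbb{H}^2$ by large disks $B(0,R)$ and solve the Dirichlet problem: by classical results of Hamilton–Hildebrandt, there exists a harmonic map $f_R:B(0,R)\to\mathbb{H}^3$ agreeing with $g_0$ on $\partial B(0,R)$. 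The key a priori estimate is a bound
\[ \sup_{x\in B(0,R)} d_{\mathbb{H}^3}(f_R(x), g_0(x)) \le B = B(c) \]
independent of $R$. This follows from a maximum principle applied to $\phi_R(x) = d_{\mathbb{H}^3}(f_R(x), g_0(x))$: one computes, using negative curvature of $\mathbb{H}^3$ and harmonicity of $f_R$, that $\Delta \phi_R \ge -|\tau(g_0)|$ in the viscosity sense wherever $\phi_R > 0$; comparing with a radial barrier (a bounded super-solution constructed from the bound on $\|\tau(g_0)\|_\infty$ and the exponential volume growth of $\mathbb{H}^2$) then yields the claimed $B$.

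Having the uniform bound, the Jost–Karcher regularity Theorem \ref{thm:elliptic reg} provides uniform $C^k$ bounds for $f_R$ on every compact subset of $\mathbb{H}^2$. Passing to a subsequential limit $R\to\infty$ using Arzelà–Ascoli produces a harmonic map $f:\mathbb{H}^2\to\mathbb{H}^3$ with $d_{\mathbb{H}^3}(f,g_0)\le B$, hence at bounded distance from $g$. Since $g$ is a quasi-isometric embedding with boundary extension $\xi$, the Morse Lemma for $\mathbb{H}^3$ shows that $f$, being a bounded Hausdorff distance from $g$, has the same boundary extension $\xi$.

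For uniqueness, suppose $f_1,f_2:\mathbb{H}^2\to\mathbb{H}^3$ are harmonic and both remain at bounded distance from $g$. Since $\mathbb{H}^3$ has non-positive curvature, a standard Bochner-type computation shows that the function $\phi(x) = d_{\mathbb{H}^3}(f_1(x),f_2(x))$ is subharmonic on $\mathbb{H}^2$. Moreover, by the preceding paragraph both $f_1$ and $f_2$ extend continuously to $\xi$ on $\partial\mathbb{H}^2$, so $\phi$ extends continuously to $0$ on the ideal boundary. A standard maximum principle for bounded subharmonic functions on $\mathbb{H}^2$ with zero boundary values then forces $\phi\equiv 0$, giving $f_1=f_2$.

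The main obstacle, and the technical heart of the argument, is the uniform distance bound $d_{\mathbb{H}^3}(f_R,g_0)\le B$ independent of $R$. Naively one wants to run a maximum principle for $\phi_R$, but $\phi_R$ is only subharmonic up to the error term $|\tau(g_0)|$ coming from the non-harmonicity of $g_0$, and $\mathbb{H}^2$ has exponential volume growth so one cannot appeal to a trivial Liouville-type bound. The construction of a suitable radial barrier — using the CAT($-1$) geometry of $\mathbb{H}^3$ to upgrade subharmonicity to a genuine elliptic inequality with a coercive zeroth-order term — is where the quasi-isometric hypothesis on $g$ is essential and where one quantitatively tracks how $B$ depends on $c$.
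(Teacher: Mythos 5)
Your plan for existence hinges on a uniform‑in‑$R$ bound for $\phi_R = d_{\mb{H}^3}(f_R,g_0)$, and the argument you sketch for it does not close. The inequality $\Delta\phi_R \ge -\|\tau(g_0)\|_\infty = -A$ is correct, but there is no bounded radial super‑solution to $\Delta w \le -A$ on $\mb H^2$: writing $w$ radially, $\sinh(r)\,w'(r) \le -A(\cosh r - 1)$, so $w'(r)\le -A\tanh(r/2)$ and $w(r) \le w(0) - 2A\log\cosh(r/2)$, which decays linearly. On the ball $B(o,R)$ the corresponding barrier is $v_R(r)=2A\bigl(\log\cosh(R/2)-\log\cosh(r/2)\bigr)$, which gives only $\phi_R\le v_R(0)\sim AR$ — useless as $R\to\infty$. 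You do flag this issue at the end, saying one must ``upgrade subharmonicity to a genuine elliptic inequality with a coercive zeroth‑order term,'' but that upgrade is the whole problem and nothing in the proposal produces it. Getting $\Delta\phi_R \ge \epsilon\phi_R - A$ from the strict negative curvature of $\mb H^3$ requires, roughly, a lower bound on the energy density $|df_R|^2$; harmonicity and the Dirichlet data do not give such a bound for free, and making it effective in terms of $c$ alone (which is the point of the statement) is delicate.

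The paper, following Benoist–Hulin, avoids this issue entirely. The uniform bound on $d(h_R,g)$ is obtained by contradiction using hyperbolic geometry rather than a differential inequality: if $\rho_R = \sup d(h_R,g)$ were too large, one looks at a sphere $S(x_R,\rho_R^{1/3})$ around the maximizing point, isolates a large‑measure subset where $h_R$ stays far from $y_R = g(x_R)$ and points nearly in the direction $v_R$ (this uses subharmonicity of $d(h_R,y_R)$ only in an averaged, Lemma~4.4‑of‑\cite{BH17} form, not a maximum principle), then exploits the $c$‑quasi‑isometry of $g$ and Gromov‑product/angle comparisons (Lemmas 2.1, 2.2 of \cite{BH17}) to produce two boundary directions that are simultaneously angularly separated and angularly collapsed, a contradiction. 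The appendix of the paper is exactly the bookkeeping needed to make every constant in this argument depend only on $c$ and the ambient geometries. Your uniqueness argument (subharmonicity of $d(f_1,f_2)$, bounded and vanishing at $\partial\mb H^2$, hence $\equiv 0$ by harmonic measure) is sound, and so is the smoothing step and the Jost–Karcher compactness, but the core estimate needs the geometric argument, not a barrier.
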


The statement quantifies the distance between $f,g$ {\em uniformly} in terms of $c$ as opposed to the one in \cite{BH17}. We discuss in Appendix \ref{sec:appendix a} how to deduce it from their work.

\subsubsection{Uniformization}
Let  $f:\mb{H}^2\to\mb{H}^3$ be harmonic. The pull-back $f^*g_{\mb{H}^3}$ is a non-negative symmetric 2-tensor, which might be degenerate at some points. A result of Sampson \cite[Corollary of Theorem 3]{Sam78} ensures that it is non-degenerate on a dense open subset of $\mb{H}^2$.

Recall that we can write
\[
f^*g_{\mb{H}^3}=\alpha g_{\mb{H}^2}+\Phi+\overline{\Phi}
\]
where $\Phi$ is a holomorphic quadratic differential on $\mb{H}^2$ and $\alpha$ is a non-negative smooth function. We would like to find a hyperbolic metric $g'$ on $\mb{H}^2$ of the form 
\[
g'=\alpha'g_{\mb{H}^2}+\Phi+\overline{\Phi}
\]
where $\alpha'$ is a smooth positive function. The following result gives us a sufficient condition to be able to solve this problem: 

\begin{thm}[Wan \cite{Wan}]\label{t.Wang}
\label{thm:harmonic parametrization}
Let $\Phi$ be a bounded holomorphic quadratic differential on $\mb{H}^2$. There exists a unique (up to composition with isometries) harmonic quasi-conformal diffeomorphism $u:\mb{H}^2\to\mb{H}^2$ with Hopf differential $\Phi$. The pull-back $u^*g_{\mb{H}^2} = g_\Phi$ is a complete (hyperbolic) metric and has the form
\[
g_\Phi=\alpha_\Phi g_{\mb{H}^2}+\Phi+\overline{\Phi}.
\]
\end{thm}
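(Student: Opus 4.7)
The plan is to reduce the problem to solving a scalar semilinear elliptic PDE on $\mb{H}^2$, solve it by the method of sub-/super-solutions, and then recover $u$ from the solution. Fix a global conformal coordinate $z$ on $\mb{H}^2$ in which the Poincaré density is $\sigma(z)|dz|^2$. For any smooth orientation-preserving map $u:\mb{H}^2\to\mb{H}^2$ write
\[
\calH = \frac{\sigma(u)}{\sigma}|u_z|^2, \qquad \calL = \frac{\sigma(u)}{\sigma}|u_{\bar z}|^2,
\]
so that $u^\ast g_{\mb{H}^2} = (\calH+\calL)\, g_{\mb{H}^2} + \Phi + \overline{\Phi}$ with Hopf differential $\Phi = \sigma(u)\, u_z \overline{u_{\bar z}}\, dz^2$ and $\|\Phi\|^2 := |\Phi|^2/\sigma^2 = \calH\calL$. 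When $u$ is harmonic, the Bochner--Sampson formula for target curvature $-1$ reads
\[
\Delta_0 \log\calH = 2\calH - 2\calL - 2
\]
on the set $\{\calH>0\}$. Substituting $\calL = \|\Phi\|^2/\calH$ and setting $w=\log\calH$ yields the master PDE
\[
(\ast)\qquad \Delta_0 w \;=\; 2e^{w} \;-\; 2\|\Phi\|^2 e^{-w} \;-\; 2,
\]
which I will solve for a bounded smooth $w$ when $\|\Phi\|_\infty<\infty$.

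The existence step proceeds by the monotone iteration scheme. Define $w_{\pm}$ pointwise as the unique real roots of the algebraic equation $e^{2w_\pm}-e^{w_\pm}-\|\Phi\|^2(p)=0$ and its scaled variants: one checks that any constant $w_+$ chosen so that $e^{2w_+}-e^{w_+}\geq \|\Phi\|_\infty^2$ is a super-solution of $(\ast)$, while $w_-\equiv 0$ satisfies $2 e^{0}-2\|\Phi\|^2 e^{0}-2 = -2\|\Phi\|^2 \leq 0$ and so is a sub-solution. Standard elliptic theory on the noncompact manifold $\mb{H}^2$ (exhaust by hyperbolic balls, solve Dirichlet problems with boundary values $w_+$, pass to the limit via Arzelà--Ascoli and Schauder interior estimates) produces a smooth solution $0\le w\le w_+$ with $\|w\|_\infty\le C(\|\Phi\|_\infty)$. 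Uniqueness among bounded solutions follows from the maximum principle: at an interior positive maximum of $w_1-w_2$, $(\ast)$ forces $e^{w_1}-e^{w_2}+\|\Phi\|^2(e^{-w_2}-e^{-w_1})\le 0$, and both summands are non-negative, so $w_1=w_2$.

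Next I would recover $u$ from $w$. The pair $(e^w,\Phi)$ determines $u_z\overline{u_{\bar z}}$ and $|u_z|$, hence prescribes the energy tensor of $u$ up to pointwise $U(1)$-gauge. Fixing three values of $u$ pins down this gauge and, via the harmonic map equation $u_{z\bar z}+\Gamma(u)u_z u_{\bar z}=0$ (which is first-order in the gauge data and satisfied identically thanks to $(\ast)$ and holomorphicity of $\Phi$), one obtains a unique $u:\mb{H}^2\to\mb{H}^2$ up to post-composition with $\mathrm{Isom}(\mb{H}^2)$; see \cite{Wan}. Since $e^{w}=\calH>\calL\ge 0$ (because $e^{2w}\ge\|\Phi\|^2$, using $w\ge 0$) the Jacobian $\calH-\calL$ is positive, so $u$ is a local diffeomorphism, and the Beltrami coefficient $\mu_u=\sqrt{\calL/\calH}=\|\Phi\|e^{-w}$ is uniformly bounded away from $1$ by the $L^\infty$ bound on $w$, giving quasi-conformality. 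An openness-closedness argument on the image (using completeness of $\mb{H}^2$ and the uniform K-qc bound) shows $u$ is a global diffeomorphism.

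Finally, since $u$ is a diffeomorphism of $\mb{H}^2$, the pullback $u^\ast g_{\mb{H}^2}$ is a complete metric of constant curvature $-1$; by the decomposition recalled above, it has the asserted form $g_\Phi=\alpha_\Phi g_{\mb{H}^2}+\Phi+\overline{\Phi}$ with $\alpha_\Phi=e^w+\|\Phi\|^2 e^{-w}$. I expect the main obstacle to be the two $L^\infty$-type controls needed for the PDE: first, producing a workable super-solution that only depends on $\|\Phi\|_\infty$, and second, propagating this bound to the limit of the Dirichlet approximations without invoking compactness of the underlying space; both rest on a careful use of the maximum principle on $\mb{H}^2$.
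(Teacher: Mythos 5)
The paper does not prove this theorem; it is quoted verbatim as a result of Wan \cite{Wan}, so there is no internal proof to compare against. Your proposal reconstructs the Schoen--Yau/Wolf/Wan strategy (Bochner formula reducing to a scalar semilinear PDE, sub-/super-solutions, recovery of $u$ from the conformal factor and Hopf differential), which is indeed the route Wan takes, so structurally you are on the right track.

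However, there is a genuine gap at the step where you claim ``$e^{w}=\calH>\calL\ge 0$ (because $e^{2w}\ge\|\Phi\|^2$, using $w\ge 0$).'' The bound $w\ge 0$ only gives $e^{2w}\ge 1$, which does not dominate $\|\Phi\|^2$ at points where $\|\Phi\|>1$. Nothing in the sub-/super-solution sandwich $0\le w\le w_+$ forces $e^{2w}\ge\|\Phi\|^2$, and without it the Jacobian $\calH-\calL=e^w-\|\Phi\|^2e^{-w}$ could be negative, so you cannot conclude that $u$ is orientation-preserving, a local diffeomorphism, or quasi-conformal. The correct argument (Wolf, Wan) is a separate maximum-principle comparison of $e^w$ against $\|\Phi\|$ on $\{\Phi\neq 0\}$: since $\Phi$ is holomorphic and the background metric has curvature $-1$, one has $\Delta_0\log\|\Phi\|=-2$ there, so $v:=w-\log\|\Phi\|$ satisfies $\Delta_0 v=4\|\Phi\|\sinh v$, and a generalized maximum principle then forces $v\ge 0$ (with strict inequality off the zeros of $\Phi$). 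This extra input, crucially using holomorphicity of $\Phi$, is what you are missing.

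A secondary but related issue is that your uniqueness step invokes ``an interior positive maximum of $w_1-w_2$'' on $\mb{H}^2$, which need not exist. Since $w_1,w_2$ are bounded and $\mb{H}^2$ has bounded geometry, the fix is the Omori--Yau (or an exhaustion-by-balls) version of the maximum principle: take $p_n$ with $(w_1-w_2)(p_n)\to\sup$ and $\Delta_0(w_1-w_2)(p_n)\le 1/n$, and run your pointwise estimate along that sequence. The same caveat applies to the comparison $v\ge 0$ above. With those two repairs the plan is sound and matches Wan's proof in spirit; the ``recover $u$ from $w$'' step is phrased loosely but is standard once one observes that $\mu_u=\overline{\Phi}e^{-w}/\sigma$ is determined, solves the Beltrami equation for $u$, and checks harmonicity via the holomorphicity of the $(2,0)$-part of $u^*g_{\mb{H}^2}$.
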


Let us comment now about the regularity of the dependence of $u$ on $\Phi$. 
\begin{prop}\label{p.convergence uniformization}
For every $k\ge 1$ and compact set $D\subset\mb{H}^2$, if $\Phi_n\to\Phi$ uniformly on compact sets, then, up to composition with an isometry, $u_n$ converges to $u$ in $\mc{C}^k(D,\mb{H}^2)$. 
\end{prop}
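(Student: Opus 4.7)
The plan is to combine the uniqueness in Wan's theorem with a compactness argument: normalize to remove the isometry ambiguity, extract uniform $C^k_{\rm loc}$ bounds via elliptic regularity for harmonic maps, and identify the limit using Theorem~\ref{thm:harmonic parametrization}.

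First I would normalize. Since $u_n$ is determined by $\Phi_n$ only up to post-composition by an isometry of $\mb{H}^2$, I may replace $u_n$ by $h_n\circ u_n$ for a suitable $h_n\in{\rm Isom}(\mb{H}^2)$ so that $u_n(x_0)=x_0$ and $du_n(x_0)$ sends a fixed unit tangent vector $v_0\in T_{x_0}\mb{H}^2$ to a positive multiple of itself; impose the same normalization on $u$.

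Next I would produce uniform $C^k_{\rm loc}$ bounds on the normalized sequence. Wan's analysis of the Bochner equation satisfied by $\log \alpha_\Phi$ produces a bi-Lipschitz bound on a harmonic diffeomorphism $\mb{H}^2\to\mb{H}^2$ depending only on $\|\Phi\|_\infty$. In the present setting the $\Phi_n$ are uniformly bounded (they converge uniformly on compact sets and in our applications they will come from the compact lamination $M_\Gamma$), so the resulting bi-Lipschitz constant is uniform in $n$. Combined with the normalization, this input feeds into the Jost--Karcher elliptic regularity Theorem~\ref{thm:elliptic reg} applied to harmonic maps $\mb{H}^2\to\mb{H}^2$ (so $\kappa=0$, $\omega=1$), yielding $\|u_n\|_{C^k(D')}\le c(D',k)$ on every compact $D'\subset\mb{H}^2$.

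Finally I would pass to the limit. By Arzel\`a--Ascoli together with the uniform $C^{k+1}$-bound and a diagonal argument, a subsequence $u_{n_j}$ converges in $C^k_{\rm loc}(\mb{H}^2,\mb{H}^2)$ to some map $v$. Taking limits in the harmonic map equation $\tau(u_{n_j})=0$ shows that $v$ is harmonic, and $v^*g_{\mb{H}^2}$ decomposes with Hopf differential $\lim \Phi_{n_j}=\Phi$; moreover $v$ satisfies the normalization imposed in the first step. Uniqueness in Theorem~\ref{thm:harmonic parametrization} forces $v=u$, and since every subsequence admits a further subsequence converging to $u$, the full normalized sequence $u_n$ converges to $u$ in $C^k_{\rm loc}$, which is what the statement claims. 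The main obstacle will be the second step, namely obtaining a \emph{uniform} lower bound on the energy densities $\alpha_{\Phi_n}$ (equivalently a uniform bi-Lipschitz bound on the $u_n$); this rests on Wan's quantitative maximum-principle argument and genuinely requires the uniform bound $\sup_n\|\Phi_n\|_\infty<\infty$. The remaining ingredients are standard once these bounds are in hand.
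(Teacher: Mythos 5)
Your proposal is correct and follows essentially the same route as the paper's proof: normalize by isometries, obtain uniform $C^k_{\rm loc}$ bounds via Jost--Karcher elliptic regularity (Theorem~\ref{thm:elliptic reg}), extract a subsequential limit, identify its Hopf differential with $\Phi$ from $C^2$-convergence, and invoke uniqueness in Theorem~\ref{t.Wang}. The only cosmetic difference is in how the uniform Lipschitz bound feeding into Theorem~\ref{thm:elliptic reg} is obtained: you appeal to Wan's Bochner-equation estimates producing a bi-Lipschitz constant controlled by $\|\Phi\|_\infty$, while the paper observes that the $u_n$ are uniformly quasi-conformal (the dilatation is a continuous function of the Hopf differential) and quasi-conformal self-maps of $\H^2$ are quasi-Lipschitz; both give the needed boundedness of $u_n(D)$.
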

\begin{proof}
Up to the composition with an isometry, we can assume that $u_n(o)=o$ for a fixed basepoint $o\in\mb{H}^2$ and {$d_ou_n\to M$ for some linear map $M$}. 
As the maps $u_n$ are all uniformly quasi-conformal (the quasi-conformal parameter is a continuous function of the Hopf differential) they are also uniformly quasi-Lipschitz (see for example \cite{Kiernan}). In particular, they map the bounded set $D$ to a fixed ball $B(o,R)$ for some $R$. Hence, by Theorem \ref{thm:elliptic reg}, for every $k\ge 1$ the $\mc{C}^k$-norm of $u_n$ is uniformly bounded. This implies that up to passing to subsequences we have that $u_n\to u'$ in the $\mc{C}^k$-topology on compact sets where $u':\mb{H}^2\to\mb{H}^2$ is harmonic and satisfies $u'(o)=o$ and {$d_ou'=M$}. As the convergence is $\mc{C}^2$, the Hopf differentials $\Phi_n$ converge to the Hopf differential of the limit, which therefore coincides with $\Phi$. By the uniqueness part of Theorem \ref{t.Wang}, we conclude that $u'$ is the unique harmonic map associated with $\Phi$. Lastly, as the limit $u'$ does not depend on the chosen subsequence, we conclude that the whole sequence $u_n$ converges to it.  
\end{proof}
\subsection{Geometry of harmonic fillings}
The source of the gap between the induced and the uniformized metric mentioned in the sketch of the proof of Theorem \ref{thm:entropy=1} will rely on a strong maximum principle. 
The following lemmas of Deroin--Tholozan \cite{DT16} are both consequences of the strong maximum principle. First, the induced metric has always curvature strictly smaller than $-1$ unless it is totally geodesic.
\begin{lem}[{see \cite[Lemma 2.5]{DT16}}] 
Let $f:\mb{H}^2\to\mb{H}^3$ be a harmonic map and let $U\subset\mb{H}^2$ be the subset where $f^*g_{\mb{H}^3}$ is non-degenerate. For all $x\in U$ the scalar curvature of $(U,f^*g_{\mb{H}^3})$ is bounded by $\kappa(f^*g_{\mb{H}^3})\le -1$. Furthermore $\kappa(f^*g_{\mb{H}^3})(x)<-1$ unless the second fundamental form of $f(U)$ vanishes at $f(x)$.
\end{lem}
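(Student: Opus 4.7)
The plan is to deduce both the curvature bound and the equality characterization directly from the Gauss equation for surfaces in $\mb{H}^3$ combined with the harmonicity of $f$. Since $f^* g_{\mb{H}^3}$ is non-degenerate on $U$, the map $f$ is a local immersion there, so $f(U)$ acquires the structure of an embedded surface near each point of $U$. Let $A$ denote its (extrinsic) second fundamental form, viewed as a symmetric bilinear form on $TU$ with values in the one-dimensional normal bundle; fixing a unit normal field $N$ locally, write $A(u,v) = a(u,v) N$ for a scalar-valued symmetric bilinear form $a$ on $TU$.

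Next, I would translate the harmonicity of $f$ into a statement about $a$. Splitting $\mb{I}^f = T + A$ into tangential and normal components to $f(U)$, the tangent part satisfies $T(u,v) = df\bigl((\nabla^{f^*g_{\mb{H}^3}} - \nabla^{g_{\mb{H}^2}})_u v\bigr)$ by uniqueness of the Levi-Civita connection of the induced metric, and the normal part is exactly the extrinsic second fundamental form. Applying the orthogonal decomposition to $\tau(f) = \mathrm{tr}_{g_{\mb{H}^2}} \mb{I}^f = 0$, the normal component yields $\mathrm{tr}_{g_{\mb{H}^2}} a = 0$. In any $g_{\mb{H}^2}$-orthonormal basis $(E_1,E_2)$, this forces the matrix of $a$ to have the form $\bigl(\begin{smallmatrix} \lambda & \mu \\ \mu & -\lambda \end{smallmatrix}\bigr)$, so $\det(a) = -\lambda^2 - \mu^2 \le 0$, with equality if and only if $a \equiv 0$ at $x$.

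The conclusion then follows from the classical Gauss equation. The principal curvatures $\kappa_1,\kappa_2$ of $f(U)$ at $f(x)$ are the eigenvalues of the shape operator, hence $\kappa_1 \kappa_2 = \det(a)/\det(f^* g_{\mb{H}^3})$ in any basis; since $\det(f^* g_{\mb{H}^3}) > 0$ on $U$, the previous step gives $\kappa_1 \kappa_2 \le 0$, with equality precisely when $A$ vanishes at $f(x)$. The Gauss equation for a surface in a 3-manifold of constant sectional curvature $-1$ reads $\kappa(f^*g_{\mb{H}^3})(x) = -1 + \kappa_1 \kappa_2$, so both assertions of the lemma follow immediately.

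The main step requiring care is the clean separation between the tangential and normal components of $\mb{I}^f$, together with the observation that trace-freeness with respect to any positive-definite inner product on $T_x\mb{H}^2$ already forces $\det(a) \le 0$. This sidesteps the subtlety that $f$ is in general neither an isometric nor a conformal map from $(\mb{H}^2, g_{\mb{H}^2})$, so that the harmonicity of $f$ does \emph{not} directly correspond to minimality of the immersed surface $f(U) \subset \mb{H}^3$ (whose mean curvature is the trace of $a$ with respect to the induced metric, not the source metric). The Gauss-equation argument bypasses this by working with the determinant instead of the trace.
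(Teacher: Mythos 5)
Your proof is correct and complete. The paper itself does not give a proof here, deferring to \cite[Lemma 2.5]{DT16}; your Gauss-equation argument is essentially the one used there, and you have correctly identified the key point that makes it work: harmonicity yields trace-freeness of the extrinsic second fundamental form with respect to the \emph{source} metric $g_{\mb{H}^2}$ rather than the induced metric, but trace-freeness of a symmetric bilinear form on a $2$-dimensional space with respect to any positive-definite inner product already forces its determinant to be nonpositive (with equality iff the form vanishes), which is all the Gauss equation $\kappa(f^*g_{\mb{H}^3}) = -1 + \kappa_1\kappa_2$ requires.
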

For the second fact, recall that 
\[
f^*g_{\mb{H}^3}=\alpha g_{\mb{H}^2}+\Phi+\overline{\Phi}.
\]
By simple computations one gets 
\[
{\rm tr}_{g_{\mb{H}^2}}(f^*g_{\mb{H}^3})=\alpha
\]
and
\[
{\rm det}_{g_{\mb{H}^2}}(f^*g_{\mb{H}^3})=\alpha^2-4|\Phi|^2.
\]
As $f^*g_{\mb{H}^3}\ge 0$, we always have ${\rm det}_{g_{\mb{H}^2}}(f^*g_{\mb{H}^3})\ge 0$ which implies that the following system has always (possibly coincident) solutions $H,L:\mb{H}^2\to\mb{R}$
\[
\left\{
\begin{array}{l}
H+L=\alpha\\
HL=|\Phi|^2\\
H\ge L.
\end{array}
\right.
\]
Explicitly,
\[
H=\frac{{\rm tr}_{g_{\mb{H}^2}}(f^*g_{\mb{H}^3})+\sqrt{{\rm det}_{g_{\mb{H}^2}}(f^*g_{\mb{H}^3})}}{2}\ge\frac{{\rm tr}_{g_{\mb{H}^2}}(f^*g_{\mb{H}^3})-\sqrt{{\rm det}_{g_{\mb{H}^2}}(f^*g_{\mb{H}^3})}}{2}=L.
\]
Note that  
\[
{\rm tr}_{g_{\mb{H}^2}}(f^*g_{\mb{H}^3})\ge \sqrt{{\rm det}_{g_{\mb{H}^2}}(f^*g_{\mb{H}^3})}
\]
so both solutions $H,L$ are non-negative.

Following \cite{DT16}, in order to compare the pull-back tensor $f^*g_{\mb{H}^3}=\alpha g_{\mb{H}^2}+\Phi+\overline{\Phi}$
with the hyperbolic metric $g_\Phi:=\alpha_\Phi g_{\mb{H}}^2+\Phi+\overline{\Phi}$, we compare the solutions of their associated systems.

\begin{lem}[{see \cite[Lemma 2.6]{DT16}}]
\label{fact.LH}
Let $f:\mb{H}^2\to\mb{H}^3$ be a harmonic map with bounded Hopf differential $\Phi$. Let $g_\Phi=\alpha_\Phi g_{\mb{H}^2}+\Phi+\overline{\Phi}$ be the hyperbolic metric provided by Theorem \ref{thm:harmonic parametrization}. Consider the functions $H,H',L,L'$ on $\mb{H}^2$ defined by the systems
\[
\left\{
\begin{array}{l}
H+L=\alpha\\
HL=|\Phi|^2\\
H\ge L,
\end{array}
\right.\quad,\quad
\left\{
\begin{array}{l}
H'+L'=\alpha_\Phi\\
H'L'=|\Phi|^2\\
H'\ge L'.
\end{array}
\right..
\]
Assume that $H/H'$ attains a maximum in $\mb{H}^2$ and that $\sup\{2(H+L')\}<\infty$ on $\mb{H}^2$. Then either
\begin{itemize}
\item{$H'<H$ everywhere, in which case $f^*g_{\mb{H}^3}<g_\Phi$, or}
\item{$H'=H$ everywhere, in which case $f^*g_{\mb{H}^3}=g_\Phi$ and $f(\mb{H}^2)$ is a totally geodesic copy of $\mb{H}^2$ in $\mb{H}^3$.}
\end{itemize} 
\end{lem}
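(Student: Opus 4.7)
The plan is to follow Deroin--Tholozan \cite[Lemma 2.6]{DT16} by combining Sampson's Bochner formula for harmonic maps from a Riemann surface with Hopf's strong maximum principle applied to $\log(H/H')$.

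First I would apply Sampson's Bochner formula both to the harmonic map $f:(\mb{H}^2,g_{\mb{H}^2})\to\mb{H}^3$ and to the harmonic self-diffeomorphism $u:(\mb{H}^2,g_{\mb{H}^2})\to(\mb{H}^2,g_{\mb{H}^2})$ produced by Theorem \ref{thm:harmonic parametrization}. Since in both cases the domain has Gaussian curvature $-1$ and the target has sectional curvature $-1$ along every $2$-plane, the formula yields on the open set where $H>0$
\[\Delta\log H\;=\;2H-2L-2,\qquad \Delta\log H'\;=\;2H'-2L'-2.\]
Subtracting these and using the crucial equalities $HL=H'L'=|\Phi|^2$ (so that $L=|\Phi|^2/H$ and $L'=|\Phi|^2/H'$), a short algebraic manipulation produces the fundamental identity
\[\Delta\log(H/H')\;=\;2(H-H')\,\frac{H'+L}{H'}.\]

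Next, setting $v:=\log(H/H')$ and using $H-H'=H'(e^v-1)$, the identity above becomes a linear elliptic equation $\Delta v-\tilde c(x)\,v=0$ with nonnegative potential $\tilde c(x):=2(H'+L)(e^v-1)/v$, extended smoothly at $v=0$ by the value $2(H'+L)$. At the interior maximum $x_0$ of $v$ given by hypothesis, $\Delta v(x_0)\le 0$ combined with positivity of the factor $(H'+L)/H'$ forces $v(x_0)\le 0$, hence $H\le H'$ everywhere on $\mb{H}^2$.

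The dichotomy is then immediate. Either $v(x_0)<0$, in which case $H<H'$ strictly everywhere, and a direct computation using $H,H'\ge|\Phi|$ gives
\[\alpha-\alpha_\Phi\;=\;(H-H')\,\frac{HH'-|\Phi|^2}{HH'},\]
which has the same sign as $H-H'$ and yields the strict comparison of metric tensors $f^*g_{\mb{H}^3}<g_\Phi$. Or else $v(x_0)=0$, and Hopf's strong maximum principle applied to $\Delta v=\tilde c(x)\,v$ forces $v\equiv 0$; then $H\equiv H'$, $L\equiv L'$, and $f^*g_{\mb{H}^3}\equiv g_\Phi$ is a Riemannian metric of constant curvature $-1$. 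The curvature lemma immediately preceding Lemma \ref{fact.LH} then forces the second fundamental form of $f$ to vanish identically, so that $f(\mb{H}^2)$ is a totally geodesic copy of $\mb{H}^2$ inside $\mb{H}^3$.

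The main obstacle I anticipate is a careful application of the strong maximum principle on the non-compact domain $\mb{H}^2$: this is the role played by the boundedness hypothesis $\sup 2(H+L')<\infty$, which together with the identity $H(H'+L)=H'(H+L')=HH'+|\Phi|^2$ controls the zeroth-order coefficient $\tilde c$ so that the classical form of the principle applies without recourse to an Omori--Yau-type refinement.
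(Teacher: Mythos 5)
Your route --- Sampson's Bochner formula combined with the strong maximum principle, after Deroin--Tholozan --- is exactly the intended argument: the paper itself defers to \cite[Lemma 2.6]{DT16} after listing the hypotheses that replace equivariance, so you are reconstructing the proof it alludes to. The identity $\Delta\log(H/H')=2(H-H')\frac{H'+L}{H'}$ and the maximum-principle dichotomy you derive are sound. Be aware, however, that you prove the statement with the inequality \emph{reversed} relative to what is printed in the lemma: you obtain $H\le H'$ everywhere, with $H<H'$ in the strict case, whereas the printed statement asserts $H'<H$. Your direction is the right one. The identity
\[
\alpha-\alpha_\Phi=(H-H')\,\frac{HH'-|\Phi|^2}{HH'},
\]
together with $H,H'\ge|\Phi|$, shows that $f^*g_{\mb{H}^3}<g_\Phi$ is equivalent to $H<H'$ away from the umbilic locus, matching the conclusion $g_f\le g_\Phi$ in \cite{DT16}. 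The paper's ``purely algebraic'' paragraph following the lemma is in fact self-contradictory (it derives $\alpha_\Phi=H'<H=\alpha$ while aiming for $\alpha<\alpha_\Phi$), confirming that ``$H'<H$'' is a typo for ``$H<H'$''; you should flag this.

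Two smaller points. For $f:\mb{H}^2\to\mb{H}^3$ Sampson's formula is only an inequality, $\Delta\log H\ge 2H-2L-2$, with equality exactly where the second fundamental form of the image vanishes, whereas for the harmonic self-map $u$ of $\mb{H}^2$ it is an equality. Your argument is unaffected: you still get $\Delta v\ge 2(H-H')\frac{H'+L}{H'}$, the sign at the interior maximum is unchanged, and when $v(x_0)=0$ the strong maximum principle applied to the supersolution $\Delta v-\tilde c\,v\ge 0$ (with $\tilde c\ge 0$) forces $v\equiv 0$; at that point saturation of the Bochner inequality already delivers the totally geodesic conclusion, so invoking the preceding curvature lemma is a valid but redundant step. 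Finally, your account of the hypothesis $\sup 2(H+L')<\infty$ is not quite accurate: the strong maximum principle is local, so local boundedness of $\tilde c$ (automatic from smoothness) suffices, and your chain $H'+L=e^{-v}(H+L')$ controls $\tilde c$ globally only if $v$ is also bounded below, which is not among the hypotheses. That boundedness assumption is carried over from the cocompact setting of \cite{DT16}, and the paper leaves its precise role in the non-equivariant formulation implicit.
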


As $g_\Phi$ is non-degenerate, we have $H'>0$ so the quotient $H/H'$ is well-defined. 

The fact that $H'<H\Rightarrow f^*g_{\mb{H}^3}<g_\Phi$ is purely algebraic: 
If $0=HL=H'L'$, then $L=L'=0$ and $\alpha_\Phi=H'<H=\alpha$. 
If $HL=H'L'>0$ then \[L=|\Phi|^2/H \text{ and }L'=|\Phi|^2/H'\] and since $L\le H$ and $L'\le H'$, it holds that $L,L'\le|\Phi|$. As the function $x\to x+|\Phi|^2/x$ is decreasing on $(0,|\Phi|)$ we deduce again that $\alpha<\alpha_\Phi$.

The assumptions of Lemma \ref{fact.LH} are slightly different from the ones of \cite[Lemma 2.6]{DT16}: First in \cite{DT16}, the existence of a hyperbolic metric of the form $g_\Phi=\alpha_\Phi g_{\mb{H}^2}+\Phi+\overline{\Phi}$ is guaranteed by the results of \cite[Section 1.2]{DT16} which rely on the fact that $f$ is equivariant under a representation of a uniform lattice of ${\rm Isom}^+(\mb{H}^2)$. We instead use Theorem \ref{t.Wang} and assume that the Hopf differential of $f$ is bounded. 
Second in \cite{DT16} the existence of a maximum for $H/H'$ as well as the boundedness of $2(H+L')$ is immediate by equivariance, we instead assume both as hypotheses.
With the three additional assumptions, the proof of Lemma \ref{fact.LH} goes through exactly as in \cite[Lemma 2.6]{DT16}.

\subsection{Laminations from harmonic fillings, the proof of Theorem \ref{thm:entropy=1}}
The Reimann extension operator provides a natural way to extend quasi-conformal maps of $\mb{CP}^1$ to bi-Lipschitz diffeomorphism of $\mb{H}^3$. Let ${\rm QC}(\mb{CP}^1)$ be the space of quasi-conformal homeomorphism of $\mb{CP}^1$ endowed with the topology of uniform convergence. 
\begin{thm}[{Reimann \cite{Reimann:extension}}]
\label{thm:reimann}
There exists a map
\[
\mc{R}:{\rm QC}(\mb{CP}^1)\to{\rm Diff}^+(\mb{H}^3)
\]
with the following properties:
\begin{enumerate}
\item{$\mc{R}$ is continuous;}
\item{$\mc{R}(g)$ continuously extends $g$, and if $g$ is equivariant, so is $\mc{R}(g)$;}
\item{if $g:\mb{CP}^1\to\mb{CP}^1$ is $K$-quasi-conformal, then $\mc{R}(g)$ is $K^3$-bi-Lipschitz.} 
\end{enumerate}
\end{thm}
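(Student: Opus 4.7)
The plan is to follow Reimann's classical approach via extensions of quasi-conformal vector fields, realizing $\mc{R}(g)$ as the time-$1$ map of a flow on $\mb{H}^3$ obtained by extending a canonical isotopy from $\mathrm{id}$ to $g$.

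First, given $g \in {\rm QC}(\mb{CP}^1)$ with Beltrami coefficient $\mu_g \in L^\infty(\mb{CP}^1)$ and dilatation $K = K_g$, I would use the Measurable Riemann Mapping Theorem~\ref{thm:mrm} to embed $g$ in an isotopy $\{g_t\}_{t\in[0,1]}$ from $g_0 = \mathrm{id}$ to $g_1 = g$, setting $g_t := g^{\mu_t}$ for a path of Beltrami coefficients $\mu_t$ with $\mu_1 = \mu_g$ and parameterized so that $\log K_{g_t} = t\log K$ (for instance, by interpolating along a Teichm\"uller geodesic in the universal Teichm\"uller space). Smooth dependence of $g^\mu$ on $\mu$ in Theorem~\ref{thm:mrm} ensures that $g_t$ is smooth in $t$.

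Next, differentiating yields a time-dependent \emph{quasi-conformal vector field} $V_t = \partial_t g_t \circ g_t^{-1}$ on $\mb{CP}^1$ with $\bar\partial V_t \in L^\infty(\mb{CP}^1)$ and $\|\bar\partial V_t\|_\infty = \partial_t \log K_{g_t} = \log K$ for a.e.\ $t$. The technical heart of the argument is Reimann's lemma providing a \emph{canonical, M\"obius-natural} extension $V \mapsto \widetilde V$ of quasi-conformal vector fields on $\mb{CP}^1$ to vector fields on $\mb{H}^3$, given by an explicit Poisson-type integral, such that the symmetric part of $D\widetilde V$ measured in the hyperbolic metric satisfies the pointwise bound
\[
\|\mathrm{sym}\, D\widetilde V\|_{\mb{H}^3} \le 3\,\|\bar\partial V\|_\infty.
\]
I would then define $\mc{R}(g)$ as the time-$1$ map of the flow of the extended time-dependent vector field $\widetilde V_t$.

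With this in hand, property~(3) follows by integrating the symmetric-derivative bound along the flow:
\[
\log{\rm Lip}(\mc{R}(g)) \;\le\; \int_0^1 3\,\|\bar\partial V_t\|_\infty\,dt \;=\; 3\log K,
\]
so that $\mc{R}(g)$ is $K^3$-bi-Lipschitz. Property~(2), the boundary extension, is built into the construction: the Poisson-type extension of a continuous $V$ admits a continuous extension to $\overline{\mb{H}^3}$ equal to $V$ on $\mb{CP}^1$, so the flow extends continuously to $g$ on the boundary; equivariance propagates step by step, since the Beltrami coefficient transforms covariantly under M\"obius conjugation, the normalized solutions in Theorem~\ref{thm:mrm} are M\"obius-natural, and Reimann's extension $V \mapsto \widetilde V$ is M\"obius-natural by construction. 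Property~(1) follows from continuous dependence of $g_t$ on $\mu_g$ (hence on $g$) in the MRMT combined with continuity of both the vector-field extension and the flow map in the appropriate topologies.

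The main obstacle is the sharp factor of $3$ in the estimate $\|\mathrm{sym}\, D\widetilde V\|_{\mb{H}^3} \le 3\|\bar\partial V\|_\infty$. This is precisely the content of Reimann's original paper and requires a careful analysis of the Poisson integral defining the extension; without this sharp bound one would only obtain a bi-Lipschitz constant $K^C$ for some larger, less canonical constant $C$, so the explicit integral formula (rather than, say, a Douady--Earle type barycentric extension) is essential for the cubic exponent.
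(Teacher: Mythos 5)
The paper does not actually prove this statement---it is quoted wholesale from Reimann's paper---and your sketch (embed $g$ in the geodesic isotopy $g_t=g^{\mu_t}$, differentiate to get quasi-conformal vector fields $V_t=\partial_t g_t\circ g_t^{-1}$, extend these conformally naturally to $\mathbb{H}^3$ with a strain bound, and integrate the flow) is precisely the argument of the cited reference, so in terms of approach you are aligned with the source the paper relies on. One detail is off by a harmless factor of $2$: for the geodesic interpolation $|\mu_t|=\tanh\bigl(t\operatorname{arctanh}|\mu_g|\bigr)$ one computes $|\bar\partial V_t|=\operatorname{arctanh}|\mu_g|\le\frac{1}{2}\log K$, not $\log K$ as you assert, so integrating Reimann's strain bound with constant $3$ actually yields the stronger conclusion that $\mathcal{R}(g)$ is $K^{3/2}$-bi-Lipschitz; in particular the stated $K^3$ bound still follows, so this slip does not damage properties (2) and (3), and your treatment of equivariance via conformal naturality of the normalized solutions and of the vector-field extension is fine.

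The one genuine soft spot is property (1). The topology on ${\rm QC}(\mathbb{CP}^1)$ used in the paper is that of uniform convergence, and your continuity argument runs through ``continuous dependence of $g_t$ on $\mu_g$ (hence on $g$)''. The parenthetical is where it breaks: uniform convergence $g_n\to g$ of quasi-conformal maps does \emph{not} imply $\|\mu_{g_n}-\mu_g\|_\infty\to 0$ (shrink the support of a fixed-size Beltrami coefficient), and by homogenization-type examples it does not even imply weak-$*$ convergence of $\mu_{g_n}$ to $\mu_g$. So the Measurable Riemann Mapping Theorem's continuous dependence on $\mu$ gives continuity of $\mu\mapsto\mathcal{R}(g^\mu)$ in $\mu$, but not continuity of $g\mapsto\mathcal{R}(g)$ in the uniform topology, which is what the theorem asserts and what the paper actually uses (continuity of the family $R_t=\mathcal{R}(g_t)|_{\mathbb{H}^2\times\{t\}}$ in $t$). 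Closing this requires an additional argument---for instance exploiting that the extended field depends on $\bar\partial V$ only through integration against smooth visual kernels together with a compactness/normal-families argument for uniformly quasi-conformal families, or simply invoking the continuity statement established in Reimann's paper---rather than the one-line deduction you give.
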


Let $\rho^k:\Gamma\to\mc{MG}$ be the laminated conformal action on $\mb{CP}^1\times\partial\mb{H}^2$ associated to the $k$-hyperconvex representation $\rho$. Recall that, by Proposition \ref{prop:qcconjugacy}, $\rho^k$ is quasi-conformally conjugated to the natural embedding $\iota:\Gamma\to\mc{MG}$.   This is,  $\rho^k=g\iota g^{-1}$ for some $K$-quasi-conformal $g:\mb{CP}^1\times\partial\mb{H}^2\to\mb{CP}^1\times\partial\mb{H}^2$ extending the tangent projection $\xi_\cdot ^k : \fL \to \fCP$.

Using $g$ and the Reimann operator $\mathcal R$,  we produce a continuous equivariant family of quasi-isometric maps that extend $\xi^k_t$.
Define 
\[R_t:=\mathcal R (g_t)|_{\H^2\times \{t\}}:\mb{H}^2\to\mb{H}^3,\]
and observe that for every $t\in\deH$, $\mathcal R (g_t)$ is $K^3$-bi-Lipschitz.  Thus $R_t$ is a $K^3$-quasi-isometric embedding that extends the restriction  of $g_t$ to $\mb{RP}^1$, which is $\xi^k_t$.  By the continuity properties of the Reimann extension operator $\mc{R}(\cdot)$, the family $R_t$ varies continuously with respect to uniform convergence.

We denote by $f_t:\mb{H}^2\to\mb{H}^3$ the unique harmonic extension of $\xi^k_t$ provided by  Theorem \ref{thm:bh}, which, by construction, stays at uniformly bounded distance from $R_t$ and define
\[
F:\mb{H}^2\times\partial\mb{H}^2\to\mb{H}^3\times\partial\mb{H}^2
\]
by $F(\cdot,t):=(f_t(\cdot),t)$.

\begin{lem}
\label{lem:harmonic equivariant}
$F$ is $(\iota,\rho^k)$-equivariant.    
\end{lem}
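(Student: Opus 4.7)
The plan is to leverage the uniqueness clause of Benoist--Hulin's Theorem \ref{thm:bh}: for each $t$, the harmonic filling $f_t$ is uniquely characterized as the harmonic map from $\mathbb{H}^2$ to $\mathbb{H}^3$ at uniformly bounded distance from the $K^3$-quasi-isometric embedding $R_t$. Fixing $\gamma \in \Gamma$ and $t \in \partial \mathbb{H}^2$, I would introduce the auxiliary map
\[
\tilde f_t^\gamma : \mathbb{H}^2 \to \mathbb{H}^3, \qquad \tilde f_t^\gamma(x) := \rho^k(\gamma,t)^{-1} f_{\gamma t}(\gamma x),
\]
and show that $\tilde f_t^\gamma$ and $f_t$ are both harmonic maps at uniformly bounded distance from the same quasi-isometric embedding $R_t$; the uniqueness statement then forces $\tilde f_t^\gamma = f_t$, which unpacks precisely to the $(\iota, \rho^k)$-equivariance $f_{\gamma t}(\gamma x) = \rho^k(\gamma,t) f_t(x)$.

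To run this, I would first transfer the equivariance of the boundary family to the Reimann extensions. By construction $\rho^k = g \iota g^{-1}$, which leafwise reads $g_{\gamma t} = \rho^k(\gamma,t) \circ g_t \circ \gamma^{-1}$ on $\mathbb{CP}^1$. Theorem \ref{thm:reimann}(2), which asserts that $\mathcal{R}$ is natural with respect to pre- and post-composition by Möbius transformations (viewed as isometries of $\mathbb{H}^3$), then yields $\mathcal{R}(g_{\gamma t}) = \rho^k(\gamma,t) \circ \mathcal{R}(g_t) \circ \gamma^{-1}$ on $\mathbb{H}^3$. Since $\gamma \in \mathrm{PSL}(2,\mathbb{R})$ preserves the totally geodesic $\mathbb{H}^2 \subset \mathbb{H}^3$ along which we are restricting, this descends to
\[
R_{\gamma t}(\gamma x) = \rho^k(\gamma,t) R_t(x), \qquad x \in \mathbb{H}^2.
\]
The map $\tilde f_t^\gamma$ is then manifestly harmonic, being the composition of the harmonic map $f_{\gamma t}$ with the $\mathbb{H}^2$-isometry $\gamma$ on the source and the $\mathbb{H}^3$-isometry $\rho^k(\gamma,t)^{-1}$ on the target. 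Using the equivariance of $R$, we compute
\[
d_{\mathbb{H}^3}\!\bigl(\tilde f_t^\gamma(x), R_t(x)\bigr) = d_{\mathbb{H}^3}\!\bigl(f_{\gamma t}(\gamma x), R_{\gamma t}(\gamma x)\bigr) \leq B,
\]
with $B$ the bound furnished by Theorem \ref{thm:bh} for the uniform constant $K^3$ shared by all $R_t$. Since $f_t$ is also harmonic and within distance $B$ of $R_t$, the uniqueness clause applies and gives $\tilde f_t^\gamma = f_t$.

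I do not expect a serious obstacle; the entire argument is a straightforward uniqueness application once the equivariance of the Reimann extension and the uniformity of the quasi-isometric constants are established. The only point requiring care is ensuring that the Benoist--Hulin constant $B$ can be chosen uniformly across all $t \in \partial \mathbb{H}^2$, which is exactly the content of the uniform version of Theorem \ref{thm:bh} deduced in Appendix \ref{sec:appendix a}: the whole family $\{R_t\}$ consists of $K^3$-quasi-isometric embeddings for a single $K$ coming from Proposition \ref{p.qc}.
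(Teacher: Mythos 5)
Your proof is correct and reaches the same conclusion via the same engine — the uniqueness clause of Theorem \ref{thm:bh} — but you route the bounded-distance verification through the Reimann extension rather than through the boundary maps directly, as the paper does. The paper observes that $f_{\gamma t}\circ\gamma$ and $\rho^k(\gamma,t)\circ f_t$ are both harmonic (pre/post-composition with isometries) and both continuously extend the \emph{same} boundary map $\xi^k_{\gamma t}\circ\gamma = \rho^k(\gamma,t)\circ\xi^k_t$, and then invokes uniqueness. You instead compare the candidate map $\tilde f_t^\gamma = \rho^k(\gamma,t)^{-1}f_{\gamma t}\circ\gamma$ against $f_t$ by showing both lie within $B$ of the \emph{same} reference $R_t$, which you accomplish by establishing $R_{\gamma t}\circ\gamma = \rho^k(\gamma,t)\circ R_t$. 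What your route buys is a fully explicit application of the uniqueness hypothesis as literally stated (bounded distance from a common quasi-isometric embedding, not merely a common ideal boundary extension). What it costs is that the identity $R_{\gamma t}\circ\gamma = \rho^k(\gamma,t)\circ R_t$ needs the \emph{conformal naturality} $\mathcal R(A\circ h\circ B)=A\circ\mathcal R(h)\circ B$ for Möbius $A,B$ — a genuine property of Reimann's extension, but strictly stronger than what the paper's Theorem \ref{thm:reimann}(2) records (which only states preservation of equivariance for a single $h$ conjugating fixed group actions). If you prefer to stay inside what the paper explicitly provides, the paper's phrasing is cleaner: argue at the level of the boundary maps $\xi^k_\cdot$, whose $(\iota,\rho^k)$-equivariance is already established in the proof of Proposition \ref{prop:qcconjugacy}, and note that the two harmonic candidates are each at distance $\le B$ from a quasi-isometric extension of the common boundary map (for one, $R_{\gamma t}\circ\gamma$; for the other, $\rho^k(\gamma,t)\circ R_t$), so uniqueness applies. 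Your concluding remark about uniformity of $B$ in $t$ is correct but not needed for this lemma — it matters for the continuity statement (Lemma \ref{lem:harmonic continuous}), not for equivariance at a fixed $t$.
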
     

\begin{proof}
The transformations $f_{\gamma t}\gamma$ and $\rho^k(\gamma,t)f_t$, being the pre- and post-compositions of a harmonic map with isometries of the domain and target are still harmonic. Furthermore, they extend to the boundary to the same map $\xi^k_{\gamma t}\gamma=\rho^k(\gamma,t)\xi^k_t$. By the uniqueness part of Theorem \ref{thm:bh}, we conclude that they are equal $f_{\gamma t}\gamma=\rho^k(\gamma,t)f_t$.
\end{proof}

\begin{lem}
\label{lem:harmonic continuous}
$F$ is continuous. The derivatives along the leaves vary continuously.    
\end{lem}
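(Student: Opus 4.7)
The plan is to combine the uniform bounded distance between the harmonic fillings $f_t$ and the Reimann extensions $R_t$ (Theorem \ref{thm:bh}) with the elliptic regularity estimates of Jost--Karcher (Theorem \ref{thm:elliptic reg}) and the uniqueness part of Theorem \ref{thm:bh}. Throughout, fix a sequence $t_n \to t$ in $\partial \mathbb{H}^2$; it suffices to show that $f_{t_n} \to f_t$ in $\mathcal{C}^k$ on compact subsets of $\mathbb{H}^2$ for every $k \geq 0$.

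First I would exploit the continuity of the Reimann extension operator (Theorem \ref{thm:reimann} (1)) together with continuity of $t \mapsto g_t$: the $K^3$-bi-Lipschitz maps $R_{t_n}$ converge uniformly on compact subsets of $\mathbb{H}^2$ to $R_t$. By Theorem \ref{thm:bh}, applied with the uniform quasi-isometry constant $K^3$, there is a single constant $B$ so that
\[
\sup_{x \in \mathbb{H}^2} d_{\mathbb{H}^3}(f_{t_n}(x), R_{t_n}(x)) \leq B
\]
for all $n$, and similarly for $f_t$. Hence, for any compact set $D \subset \mathbb{H}^2$, all the maps $f_{t_n}|_D$ take values in a common compact subset of $\mathbb{H}^3$.

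Next I apply Theorem \ref{thm:elliptic reg} with $X = \mathbb{H}^2$ and $Y = \mathbb{H}^3$: since $f_{t_n}$ is harmonic and sends $D$ into a fixed bounded region, we get uniform $\mathcal{C}^k$-bounds on $f_{t_n}|_{D}$ for every $k$. By Arzelà--Ascoli and a standard diagonal argument, we may extract a subsequence (still denoted $f_{t_n}$) that converges in $\mathcal{C}^k_{\rm loc}(\mathbb{H}^2, \mathbb{H}^3)$ to some smooth map $f_\infty: \mathbb{H}^2 \to \mathbb{H}^3$. Since $\mathcal{C}^2$-convergence preserves harmonicity, $f_\infty$ is harmonic. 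Moreover, $f_\infty$ remains at distance at most $B$ from $R_t$ (pass to the limit in the bounded distance inequality, using that $R_{t_n} \to R_t$ uniformly on compact sets and the equicontinuity of the $f_{t_n}$ and $R_{t_n}$). In particular $f_\infty$ is a $C'$-quasi-isometric embedding extending $\xi^k_t$ continuously to the boundary.

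The main point is then uniqueness: Theorem \ref{thm:bh} asserts that there is a unique harmonic map at bounded distance from any given quasi-isometry, so $f_\infty = f_t$. Since the limit is independent of the subsequence, the whole sequence $f_{t_n}$ converges to $f_t$ in $\mathcal{C}^k_{\rm loc}$. This gives both the continuity of $F$ and the continuous variation of all leafwise derivatives. The step that takes the most care is ensuring that the uniform bound $B$ in Theorem \ref{thm:bh} is independent of $t$; this is guaranteed by the fact that the quasi-isometry constant of $R_t$ is the uniform constant $K^3$, which depends only on $\rho$ and not on $t$.
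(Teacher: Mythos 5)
Your proof is correct and follows essentially the same route as the paper: uniform bounds from Theorem \ref{thm:bh} (using the uniform quasi-isometry constant of the Reimann extension), Jost--Karcher elliptic regularity (Theorem \ref{thm:elliptic reg}) to get uniform $\mc{C}^k$-bounds, subsequential convergence to a harmonic limit at bounded distance from $R_{t_0}$, and identification of the limit via the uniqueness part of Theorem \ref{thm:bh}. The paper's proof has exactly this structure, so there is nothing to add.
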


\begin{proof}
We want to show that for every compact domain $D\subset\mb{H}^2$ and $k\ge 0$ if $t\to t_0$ then $f_t\to f_{t_0}$ in the $\mc{C}^k$ topology on $\mc{C}^k(D,\mb{H}^3)$. 

By Theorem \ref{thm:bh}, the harmonic extension $f_t$ satisfies $d(f_t,R_t)\le B$ where $B$ only depends on $K$. In particular, the image $f_t(D)\subset\mb{H}^3$ lies in the $2B$-neighborhood of the image $R_{t_0}(D)$ for every $t$ sufficiently close to $t_0$. By Theorem \ref{thm:elliptic reg}, we deduce that for every $k\ge 1$ the $\mc{C}^k$ norms of $f_t$ are uniformly bounded. 

As $D$ and $k$ are arbitrary, the above discussion implies that, up to subsequences, $f_t$ converges in the $\mc{C}^k$ topology on compact subsets of $\mb{H}^2$ to a harmonic function $f':\mb{H}^2\to\mb{H}^3$. Note that, as $R_t\to R_{t_0}$ uniformly on compact sets and $d(f_t,R_t)\le B$ for every $t$, we have $d(f',R_{t_0})\le B$. Since $f'$ is a harmonic function at a uniformly bounded distance from the quasi-isometry $R_{t_0}$, we have that $f'$ continuously extends to $\partial R_{t_0}:\partial\mb{H}^2\to\partial\mb{H}^3$. By the uniqueness part of Theorem \ref{thm:bh}, we can conclude that $f'=f_t$. Lastly, as the limit $f'=f_{t_0}$ does not depend on the chosen subsequence of the $f_t$'s, we deduce that the whole sequence $f_t$ converges to $f_{t_0}$ in the $\mc{C}^k$-topology on compact sets of $\mb{H}^2$.
\end{proof}

As a consequence of Lemmas \ref{lem:harmonic equivariant} and \ref{lem:harmonic continuous}, $F$ determines a $\Gamma$-invariant transversely continuous symmetric 2-tensor $f_t^*g_{\mb{H}^3}$ on $\mb{H}^2\times\partial\mb{H}^2$. Each $f_t^*g_{\mb{H}^3}$ decomposes as
\[
f_t^*g_{\mb{H}^3}=\alpha_tg_{\mb{H}^2}+\Phi_t+\overline{\Phi}_t
\]
where $\alpha_t$ is a non-negative function and $\Phi_t$ is the Hopf differential of the harmonic map $f_t$. By Lemma \ref{lem:harmonic continuous}, $\Phi_t$ vary continuously in $t$ with respect to the $\mc{C}^\infty$ topology. Furthermore, by invariance and cocompactness of the action, we have $|\Phi_t|\le B$ for some $B$ and for every $t\in\partial\mb{H}^2$. 

By Theorem \ref{thm:harmonic parametrization}, there exists a unique family of hyperbolic metrics of the form 
\[
g_{\Phi_t}=\alpha_{\Phi_t}g_{\mb{H}^2}+\Phi_t+\overline{\Phi}_t
\]
all whose derivatives vary continuously with respect to $t$. Moreover, by the uniqueness part of Theorem \ref{t.Wang}, the family is also $\Gamma$-invariant that is 
\[
\gamma^*g_{\Phi_t}=g_{\Phi_{\gamma^{-1}t}}.
\]
Hence, it defines a hyperbolic surface lamination structure on $M_\Gamma=\mb{H}^2\times\partial\mb{H}^2/\Gamma$. 

\begin{lem}
\label{lem:harmonic dominate}
Either $f_t^*g_{\mb{H}^3}<g_{\Phi_t}$ for every $t$ or there exists $t$ such that $f_t(\mb{H}^2)$ is a totally geodesic plane in $\mb{H}^3$.    
\end{lem}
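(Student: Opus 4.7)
My plan is to apply Lemma~\ref{fact.LH} at a leaf where the ratio $R(z,t) := H_t(z)/H'_t(z)$ attains its extremum on the compact lamination $M_\Gamma$, then propagate the conclusion to every leaf via $\Gamma$-invariance. First, I will verify that $H_t, L_t, H'_t, L'_t$ assemble into continuous, $\Gamma$-invariant functions on $\mb{H}^2 \times \partial \mb{H}^2$: the $\Gamma$-equivariance of the family $(f_t)$ from Lemma~\ref{lem:harmonic equivariant} and its transverse $C^\infty$-continuity from Lemma~\ref{lem:harmonic continuous} pass to the conformal factor $\alpha_t$ and the Hopf differential $\Phi_t$; the uniqueness statement in Theorem~\ref{t.Wang} together with Proposition~\ref{p.convergence uniformization} transfer these properties to $\alpha_{\Phi_t}$; hence the algebraically defined $H_t, L_t, H'_t, L'_t$ inherit continuity and $\Gamma$-invariance, and descend to continuous functions on the compact quotient $M_\Gamma$.

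Compactness of $M_\Gamma$ then yields two consequences. The function $2(H_t + L'_t)$ is uniformly bounded on $\mb{H}^2 \times \partial \mb{H}^2$, which verifies the second hypothesis of Lemma~\ref{fact.LH} simultaneously on every leaf. The function $R$ attains its supremum $M$ at some orbit represented by $(z_0, t_0) \in \mb{H}^2 \times \partial \mb{H}^2$; in particular, the restriction $R(\cdot, t_0)$ to the leaf through $(z_0,t_0)$ attains its maximum $M$ at $z_0$, verifying the first hypothesis for the map $f_{t_0}$. Applying Lemma~\ref{fact.LH} to $f_{t_0}$ then yields a strict/equality dichotomy on the leaf $t_0$, which combined with the extremal value $R(z_0, t_0) = M$ shows: if the dichotomy produces the equality case (i.e.\ $M = 1$), then $f_{t_0}^* g_{\mb{H}^3} = g_{\Phi_{t_0}}$ everywhere on the leaf $t_0$, so $f_{t_0}(\mb{H}^2)$ is a totally geodesic plane in $\mb{H}^3$, yielding the second alternative of Lemma~\ref{lem:harmonic dominate}; if the dichotomy produces the strict case, then the global bound $R \leq M$ on $\mb{H}^2\times\partial\mb{H}^2$, together with the pointwise algebraic comparison of conformal factors underlying Lemma~\ref{fact.LH}, propagates the strict inequality $f_t^* g_{\mb{H}^3} < g_{\Phi_t}$ from the single leaf $t_0$ to every leaf, yielding the first alternative.

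The main obstacle is that Lemma~\ref{fact.LH} gives information only on the specific leaf where the maximum of $R$ is attained, whereas Lemma~\ref{lem:harmonic dominate} demands a conclusion either at some distinguished leaf (the totally geodesic case) or on every leaf simultaneously (the strict dominance case). The $\Gamma$-invariance of the construction and the compactness of $M_\Gamma$ resolve this tension: a single global extremum of $R$ on $M_\Gamma$ both witnesses the hypotheses of Lemma~\ref{fact.LH} at one distinguished leaf and uniformly controls the ratio on every other leaf.
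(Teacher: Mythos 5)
Your proposal is correct and follows essentially the same route as the paper: both verify continuity and $\Gamma$-invariance of $H_t,L_t,H'_t,L'_t$, invoke cocompactness of $\Gamma\curvearrowright\mb{H}^2\times\partial\mb{H}^2$ to get the global maximum of $H_t/H'_t$ and the boundedness of $2(H_t+L'_t)$, apply Lemma~\ref{fact.LH} on the leaf where the maximum is attained, and read off the dichotomy (global strict domination when the max is $<1$, total geodesy on that leaf when it equals $1$).
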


\begin{proof}
Let $(H_t,L_t)$ and $(H'_t,L'_t)$ be the functions associated to $\alpha_t,\Phi_t$ and $\alpha_{\Phi_t},\Phi_t$ respectively as in Lemma \ref{fact.LH}. By Lemmas \ref{lem:harmonic continuous} and \ref{lem:harmonic equivariant}, the functions $\mb{H}^2\times\partial\Gamma\to\mb{R}$ defined by
\[
(x,t)\to H_t(x),L_t(x),H'_t(x),L'_t(x)
\]
are continuous and $\Gamma$-invariant. By cocompactness of $\Gamma\curvearrowright\mb{H}^2\times\partial\Gamma$, the function $(x,t)\to H_t(x)/H'_t(x)$ attains a maximum over $\mb{H}^2\times\partial\Gamma$ on a slice $\mb{H}^2\times\{t\}$ and the function $(x,t)\to 2(H_t(x)+L'_t(x))$ is bounded. Therefore, the harmonic map $f_t:\mb{H}^2\to\mb{H}^3$ satisfies all the assumptions of Lemma \ref{fact.LH}.

By Lemma \ref{fact.LH}, either the maximum $\max_{(x,t)}\{H_t(x)/H'_t(x)\}$ is smaller than one, and $f_t^*g_{\mb{H}^3}<g_{\Phi_t}$ everywhere 
or it is 1 and realized on the slice $\mb{H}^2\times\{t\}$. On this slice we have that $f_t(\mb{H}^2)$ is a totally geodesic plane in $\mb{H}^3$.  
\end{proof} 

\begin{lem}
\label{lem:harmonic tot geo}
If $f_t(\mb{H}^2)$ is a totally geodesic plane for some $t$ then it is totally geodesic for every $t$. In particular ${\rm AB}^k(\rho)\in\Delta\subset\T(M_\Gamma)\times\T(\overline M_\Gamma)$.   
\end{lem}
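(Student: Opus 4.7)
The plan is to propagate the totally geodesic condition to all leaves using equivariance and density, deduce that the laminated limit set is the standard torus $\fL = \mathbb{RP}^1 \times \partial \mathbb{H}^2$, and then observe that the welding can be taken symmetric under complex conjugation.

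Step 1 (propagation). Suppose $f_{t_0}(\mathbb{H}^2)$ is a totally geodesic plane in $\mathbb{H}^3$. By the $(\iota,\rho^k)$-equivariance established in Lemma \ref{lem:harmonic equivariant}, for every $\gamma \in \Gamma$ we have $f_{\gamma t_0} = \rho^k(\gamma, t_0) \circ f_{t_0} \circ \gamma^{-1}$, so $f_{\gamma t_0}(\mathbb{H}^2) = \rho^k(\gamma, t_0) \, f_{t_0}(\mathbb{H}^2)$ is also a totally geodesic plane. The orbit $\Gamma \cdot t_0$ is dense in $\partial \mathbb{H}^2$ by minimality, and by Lemma \ref{lem:harmonic continuous} the family $f_t$ varies continuously in the $\mathcal{C}^2$-topology on compact subsets. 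Since vanishing of the second fundamental form is a closed condition, $f_t$ is totally geodesic for every $t \in \partial \mathbb{H}^2$.

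Step 2 (limit set is standard). Each $f_t(\mathbb{H}^2)$ is a totally geodesic copy of $\mathbb{H}^2$ inside $\mathbb{H}^3$, whose boundary at infinity is a round circle in $\mathbb{CP}^1$. By Theorem \ref{thm:bh}, $f_t$ extends continuously to $\xi^k_t : \partial \mathbb{H}^2 \to \mathbb{CP}^1$, so $\xi^k_t(\mathbb{RP}^1)$ is a round circle. Recall from \S\ref{sec:tangent} that the trivialization $T$ was chosen so that three fixed points $x,y,z \in \partial \mathbb{H}^2$ satisfy $\xi^k_t(x) = 0$, $\xi^k_t(y) = 1$, $\xi^k_t(z) = \infty$ for every $t$. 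A round circle through $0, 1, \infty$ in $\mathbb{CP}^1$ is $\mathbb{RP}^1$ itself, so $\xi^k_t(\mathbb{RP}^1) = \mathbb{RP}^1$. Equivalently, the laminated limit set satisfies $\fL^k_\rho = \fL$.

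Step 3 (Ahlfors--Bers parameters lie on the diagonal). Equivariance of $\xi^k_\cdot$ now gives $\rho^k(\gamma,t)(\mathbb{RP}^1) = \xi^k_{\gamma t}(\mathbb{RP}^1) = \mathbb{RP}^1$, so each fiberwise component $\rho^k(\gamma, t)$ lies in $\mathrm{PSL}(2,\mathbb{R})$. Consider the Douady--Earle extension $g := \widehat{\DE}(\xi^k_\cdot) \in \mathrm{Aut}^0(\fCP)$ from Proposition \ref{prop:DE}, which realizes $[\rho^k] \in \mathcal{QC}(\iota)$ by Proposition \ref{prop:qcconjugacy}. Because $\xi^k_t$ is a homeomorphism of $\mathbb{RP}^1$ and the Douady--Earle extension is conformally natural (Theorem \ref{t.DE}(2)), $g$ commutes with the fiberwise complex conjugation $c(z,t) := (\bar{z}, t)$: the two uniformizing maps $u_t, \overline u_t$ in the construction (Step 1 of the proof of Proposition \ref{prop:DE}) are now identities on $\mathbb{H}^2$ and $\overline{\mathbb{H}}^2$, and both sides are obtained by extending the same real boundary homeomorphism $\xi^k_t$. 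Since $\rho^k(\gamma,t) \in \mathrm{PSL}(2,\mathbb{R})$, $c$ also commutes with the action of $\rho^k(\Gamma)$. Hence $c$ descends to an antiholomorphic isomorphism of Riemann surface laminations $E^k_\rho \to F^k_\rho$ intertwining the markings $M_\Gamma \to E^k_\rho$ and $\overline{M}_\Gamma \to F^k_\rho$, which exhibits $F^k_\rho$ as the mirror image of $E^k_\rho$. Therefore $\mathrm{AB}^k(\rho) \in \Delta$.

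The only nontrivial point is Step 1; once one has totally geodesic extensions on a dense set of leaves, the continuity and equivariance machinery set up in \S\ref{sec:tangent} and Lemmas \ref{lem:harmonic equivariant}--\ref{lem:harmonic continuous} carry the argument through. The remainder is essentially a rigidity observation for round circles through a normalized triple.
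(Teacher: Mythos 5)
Your Step 1 reproduces the paper's (very terse) proof, which only establishes the first sentence of the lemma: the set of $t$ for which $f_t$ is totally geodesic is closed, $\Gamma$-invariant, and nonempty, hence all of $\partial\mb{H}^2$. Steps 2 and 3 then supply the argument for the clause ``In particular ${\rm AB}^k(\rho)\in\Delta$,'' which the paper asserts as a consequence without spelling out. That extra material is genuinely useful: it identifies the laminated limit set $\fL^k_\rho$ with the standard torus $\fL$ via the Koebe normalization through $0,1,\infty$, and then exhibits the mirror symmetry by showing that the Douady--Earle conjugacy $g=\widehat{\DE}(\xi^k_\cdot)$ commutes with the fiberwise complex conjugation $c$. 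This is the correct converse to the direction that the paper does record explicitly (Remark \ref{rmk:conformal welding} and Proposition \ref{prop:real dilation}), and it is the argument needed in the proof of Theorem \ref{thm:entropy=1}.

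There is one small gap in Step 3. You conclude $\rho^k(\gamma,t)\in\PSL(2,\R)$ directly from $\rho^k(\gamma,t)(\RP)=\RP$. Preservation of $\RP$ only places $\rho^k(\gamma,t)$ in $\PGL(2,\R)$; the non-identity component (for instance $z\mapsto -z$, or more generally $z\mapsto -\lambda z$ with $\lambda>0$, which is loxodromic with fixed points on $\RP$) preserves $\RP$ but swaps $\bH^2$ and $\overline\bH^2$. To land in $\PSL(2,\R)$ you must argue that $\rho^k(\gamma,t)$ preserves $\bH^2$ itself. This follows because $\rho^k=g\iota g^{-1}$: since $\iota(\Gamma)$ preserves each component $\bH^2\times\dH$ and $\overline\bH^2\times\dH$, the group $\rho^k(\Gamma)$ preserves $g(\bH^2\times\dH)$ and $g(\overline\bH^2\times\dH)$ separately; once you know $g(\fL)=\fL$, these two sets are precisely the components $\bH^2\times\dH$ and $\overline\bH^2\times\dH$, so $\rho^k(\gamma,t)$ preserves $\bH^2\subset\CP$ and hence lies in $\PSL(2,\R)$. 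With that repair, the commutation of $g$ with $c$ and the ensuing identification of $F^k_\rho$ as the mirror image of $E^k_\rho$ go through as you describe.
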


\begin{proof}
By $\Gamma$-equivariance and continuity of $F$, the set of $t\in\partial\mb{H}^2$ for which $f_t(\mb{H}^2)$ is a totally geodesic plane is $\Gamma$-invariant and closed. Since every $\Gamma$-orbit is dense, the claim follows.
\end{proof}

We can now prove Theorem \ref{thm:entropy=1}.
\begin{proof}[Proof of Theorem \ref{thm:entropy=1}]
By Lemmas \ref{lem:harmonic dominate} and \ref{lem:harmonic tot geo}, either $f_t(\mb{H}^2)$ is a totally geodesic plane for every $t\in\deH$ in which case ${\rm AB}^k(\rho)\in\Delta$ or $f_t^*g_{\mb{H}^3}<g_{\Phi_t}$ for every $t\in\deH$, in which case, by continuity, $\Gamma$-invariance, and cocompactness of the action $\Gamma\curvearrowright\mb{H}^2\times\deH$, there exists $\kappa<1$ such that $f_t^*g_{\mb{H}^3}<\kappa g_{\Phi_t}$ for every $t$. In particular, by comparing length spectra, we get
\[
\log|L_\rho^k(\cdot)|\le \ell_{(M_\Gamma,f_t^*g_{\mb{H}^3})}(\cdot)\le \kappa\ell_{(M_\Gamma,g_{\Phi_t})}(\cdot).
\]
and, therefore, we have
\[
h^k(\rho)\ge \frac{1}{\kappa}h(M_\Gamma,g_{\Phi_t})\ge\frac{1}{\kappa}
\]
where in the last inequaltity we used that $h(M_\Gamma,g_{\Phi_t})\ge 1$ (Theorem \ref{thm: orbit growth rate}).
\end{proof}

\appendix

\section{Harmonic fillings}
\label{sec:appendix a}

Here we deduce from the arguments in \cite{BH17} the following strengthening of their main result. We only need the special case in which $X=\H^2$ and $Y=\H^3$, but we include a proof in the general case as it doesn't present additional difficulties. 

\begin{thm*}[{Benoist--Hulin \cite{BH17}}]
Let $X,Y$ be rank one symmetric spaces. For every $c>0$ there exists $B>0$ such that the following holds: Let $\xi:\partial X\to\partial Y$ be the boundary extension of a $c$-quasi-isometric embedding $g:X\to Y$. Then there exists a unique harmonic map $f:X\to Y$ such that
\[
\sup_{x\in X}{d_Y(f(x),g(x))}<B.
\]
In particular, $f$ continuously extends $\xi$. 
\end{thm*}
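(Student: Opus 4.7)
The plan is to argue by compactness and contradiction, using the qualitative existence and uniqueness already established by Benoist--Hulin as input. Concretely, given the theorem from \cite{BH17} without uniformity, my goal is only to promote the finite distance bound to one depending only on $c$ and the underlying rank-one symmetric spaces $X$ and $Y$.

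Suppose the claimed uniform bound fails. Then there is a sequence $g_n: X \to Y$ of $c$-quasi-isometric embeddings whose harmonic fillings $f_n: X \to Y$ satisfy $D_n := \sup_{x \in X} d_Y(f_n(x), g_n(x)) \to \infty$. First I would normalize: picking $x_n$ realizing the supremum to within $1$ and pre- and post-composing by isometries of $X$ and $Y$, I may arrange (still denoting the renormalized maps by $g_n$ and $f_n$) that $g_n$ is still a $c$-quasi-isometric embedding, $g_n(o) = e$ for fixed basepoints $o\in X$ and $e\in Y$, and $d_Y(f_n(o), e) \geq D_n - 1 \to \infty$. By Arzel\`a--Ascoli on $c$-quasi-isometric embeddings fixing $o \mapsto e$, along a subsequence $g_n$ converges uniformly on compact sets to a $c$-quasi-isometric embedding $g_\infty$ with $\partial g_n \to \partial g_\infty$ uniformly on $\partial X$.

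Next I would post-compose by isometries $\rho_n$ of $Y$ sending $f_n(o)$ back to $e$. Setting $\tilde f_n := \rho_n \circ f_n$ (harmonic, with $\tilde f_n(o) = e$) and $\tilde g_n := \rho_n \circ g_n$ (still $c$-quasi-isometric, sending $o$ to a point escaping to infinity in $Y$), the boundary extension relation $\partial \tilde f_n = \partial \tilde g_n$ persists. Passing to a further subsequence, the isometries $\rho_n$ have translation axes converging to a bi-infinite geodesic with endpoints $\eta^\pm \in \partial Y$, and $\partial \rho_n$ converges pointwise on $\partial Y \setminus \{\eta^-\}$ to the constant $\eta^+$. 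Consequently $\partial \tilde f_n = \partial \rho_n \circ \partial g_n$ converges pointwise to the constant $\eta^+$ on $\partial X \setminus (\partial g_\infty)^{-1}(\eta^-)$, i.e., off one point of $\partial X$.

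To produce a harmonic limit $\tilde f_\infty$ of $\tilde f_n$ and reach a contradiction, I need uniform local $\mathcal C^k$ estimates on $\tilde f_n$. These come from Theorem \ref{thm:elliptic reg} once a uniform $L^\infty_{\mathrm{loc}}$ bound on $\tilde f_n$ is established, and such a bound can be extracted from the Bochner identity applied to $\frac12 d_Y(\tilde f_n(\cdot), \tilde g_n(\cdot))^2$ exactly as in \cite{BH17}, provided the constants appearing there are carefully tracked to depend only on $c$ and the sectional curvatures of $X$ and $Y$ and not on the growing displacement $D_n$. Granted this, along a subsequence $\tilde f_n$ converges in $\mathcal C^k_{\mathrm{loc}}$ to a harmonic $\tilde f_\infty: X \to Y$ with $\tilde f_\infty(o) = e$ and boundary values $\eta^+$ almost everywhere on $\partial X$. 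Post-composing $\tilde f_\infty$ with the Busemann function $b_{\eta^+}: Y \to \mathbb R$, which is convex on any $\mathrm{CAT}(0)$ target, produces a subharmonic function on $X$ whose boundary values are $-\infty$ almost everywhere yet which takes the finite value $b_{\eta^+}(e)$ at $o$, violating the maximum principle for subharmonic functions on $X$.

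The main obstacle is precisely the uniform local regularity of $\tilde f_n$ just described: one must verify that the bounds in the Benoist--Hulin argument really involve only $c$ and the geometries of $X$ and $Y$, not $D_n$. Once this is carried out, the compactness together with the Busemann-based maximum principle yields the contradiction and hence the sought-after uniform $B = B(c)$.
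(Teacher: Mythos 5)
Your overall strategy---argue by contradiction, normalize, take a limit, and rule out the limit with a maximum principle for the convex function $b_{\eta^+}\circ \tilde f_\infty$---is a genuinely different route from the paper's. The paper does not argue by ``soft'' compactness: it re-runs the contradiction argument of \cite[Proposition 3.6]{BH17} on the truncated solutions $h_R$ (agreeing with $g$ on $\partial B(o,R)$) and simply tracks every constant to exhibit a threshold $T=T(c)$ that bounds $\rho_R=\sup_{B(o,R)}d_Y(h_R,g)$; the uniform $B$ is then explicit, and no limiting procedure beyond the one already in \cite{BH17} is needed.

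There is a real gap in your argument, and it is exactly where you say it is, but it is not a bookkeeping issue one can defer. To extract a $\mathcal C^\infty_{\mathrm{loc}}$ convergent subsequence of $\tilde f_n$ from Theorem \ref{thm:elliptic reg}, you need $\sup_{B(o,R)}d_Y(\tilde f_n,e)$ bounded independently of $n$ for each fixed $R$. You propose to get this from ``the Bochner identity applied to $\tfrac12 d_Y(\tilde f_n(\cdot),\tilde g_n(\cdot))^2$,'' but this quantity equals $\tfrac12 D_n^2\to\infty$ at $o$ by your normalization, so nothing extracted from it controls $\tilde f_n$ on a compact set. The Bochner/maximum-principle step in \cite{BH17} is applied to the truncated harmonic maps $h_R$ and crucially uses the boundary condition $h_R|_{\partial B(o,R)}=g|_{\partial B(o,R)}$; your untruncated maps $\tilde f_n$ have no such comparison boundary condition, and after post-composition by $\rho_n$ they are $D_n$-far from $\tilde g_n$ at the basepoint. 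Getting the needed local $C^0$ bound with a constant depending only on $c$ is not a matter of ``carefully tracking constants''---it is essentially the content of \cite[Proposition 3.6]{BH17} (the very statement to be made effective), which is why the paper attacks it directly rather than trying to produce a sequence of bona fide harmonic fillings to pass to the limit.

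A secondary issue: even granted a harmonic limit $\tilde f_\infty$ with a.e.\ boundary value $\eta^+$, the Fatou-type maximum principle you invoke needs $b_{\eta^+}\circ\tilde f_\infty$ to be bounded above on $X$ (otherwise $u(o)\le\frac1{|\partial B|}\int_{\partial B(o,R)}u$ together with $u\to-\infty$ radially a.e.\ does not force $u\equiv-\infty$). Busemann functions are unbounded above, so this requires an extra argument controlling $\tilde f_\infty$ in the direction away from $\eta^+$; it is plausible but not automatic. The decisive gap, however, is the missing uniform local $C^0$ bound discussed above.
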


We adopt the same notations as in \cite{BH17}. We follow the main steps of their argument. First, they show that we can and will assume that the quasi-isometric embedding $g$ is smooth and the covariant derivatives $Dg,D^2g$ are uniformly bounded in terms of $c$.
\begin{prop*}[{\cite[Proposition 3.4]{BH17}}]
Let $X,Y$ be rank one symmetric spaces. For every $c>0$ there exists $c'>0$ (only depending on $c$ and $X,Y$) such that the following holds: Let $g:X\to Y$ be a $c$-quasi-isometric embedding. There exists a smooth map $g':X\to Y$ with $d(g,g')\le 2c$ and $||Dg'||,||D^2g'||\le c'$.
\end{prop*}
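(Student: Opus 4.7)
The plan is to construct $g'$ by a standard partition-of-unity / center-of-mass smoothing: choose a uniformly regular partition of unity on $X$, and apply the Cartan barycenter map on the Hadamard manifold $Y$ to produce a smooth map with the required quantitative bounds.

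Concretely, I would pick a maximal $\tfrac12$-separated net $\{x_i\}\subset X$, so that the balls $B(x_i,1)$ cover $X$ with multiplicity bounded by a constant $N=N(X)$ (by volume comparison between balls of radius $2$ and $\tfrac14$ in the homogeneous space $X$). Choose a smooth bump $\chi$ on $X$ supported in the unit ball around a basepoint, equal to $1$ on the half-radius ball, translate it by isometries of $X$ to obtain $\chi_i$ with support in $B(x_i,1)$, and define
\[
\phi_i(x) := \frac{\chi_i(x)}{\sum_j \chi_j(x)}.
\]
These form a smooth partition of unity with uniform bounds $\|\phi_i\|_{C^k}\le A_k(X)$. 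Since $Y$ has non-positive sectional curvature, the Cartan barycenter of any finitely supported probability measure on $Y$ is well-defined, and I set
\[
g'(x) := \mathrm{bar}\Bigl(\sum_i \phi_i(x)\,\delta_{g(x_i)}\Bigr).
\]

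For the distance bound, if $\phi_i(x)\neq 0$ then $x_i\in B(x,1)$, and the quasi-isometric embedding inequality gives $d(g(x_i),g(x))\le c\cdot 1+c=2c$. The closed ball $\overline{B}(g(x),2c)$ is convex in the CAT($0$) space $Y$, contains the support of the weighting measure, and therefore contains its barycenter; hence $d(g(x),g'(x))\le 2c$.

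Smoothness and the $C^2$-bounds come from applying the implicit function theorem to the defining equation
\[
F(x,y) := \sum_i \phi_i(x)\,\exp_y^{-1}\bigl(g(x_i)\bigr) = 0
\]
in $T_yY$. The partial $\partial_y F$ at $y=g'(x)$ is the Hessian (at $y$) of the strictly convex function $y\mapsto \tfrac12\sum_i\phi_i(x) d(y,g(x_i))^2$ on the Hadamard manifold $Y$; restricted to the ball of radius $2c$ centered at $g'(x)$, it is uniformly invertible with bounds depending only on $c$ and the sectional curvature bounds of $Y$. The partial $\partial_x F$ and its derivative are controlled in terms of $A_1,A_2,N$ and the $C^2$-behavior of $\exp^{-1}$ on balls of radius $4c$ in $Y$ (which depends only on $c$ and $Y$). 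Chaining these estimates through the implicit function theorem delivers $\|Dg'\|,\|D^2g'\|\le c'=c'(c,X,Y)$. The main technical hurdle here is precisely the \emph{quantitative} control of $\partial_y F^{-1}$ and of the higher derivatives of $\exp^{-1}$, i.e.\ the uniform lower bound on the Hessian of the weighted squared-distance on a bounded region; this relies on the bounded geometry of $Y$ and on the fact that the support of the smoothing measure has diameter at most $4c$, so that no unbounded-support issues arise.
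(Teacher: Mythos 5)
Your argument is correct. Note that the paper itself does not prove this statement: it is quoted verbatim from Benoist--Hulin and used as a black box in Appendix \ref{sec:appendix a}, so the only meaningful comparison is with the proof in \cite{BH17}. There the smoothing is done by exactly the mechanism you use -- one convolves $g$ against a smoothly varying probability measure concentrated on unit balls of $X$ and takes the Cartan center of mass of the pushforward in the Hadamard manifold $Y$, with smoothness and the $C^1$, $C^2$ bounds extracted from the implicit function theorem applied to the vanishing of $\sum \phi_i(x)\exp_y^{-1}(g(x_i))$ (in \cite{BH17} a continuous kernel replaces your discrete net and partition of unity, which changes nothing of substance). Your quantitative steps are sound: the $2c$ bound follows from convexity of balls in the CAT($0$) space $Y$ together with uniqueness of the barycenter, the lower bound on $\partial_y F$ is even cleaner than you state (the Hessian of the weighted squared distance is bounded below by the identity in nonpositive curvature), and the upper bounds on $\partial_x F$, on $\partial^2 F$, and on the derivatives of $\exp_y^{-1}$ only involve the bounded multiplicity $N$, the uniform $C^k$ bounds on the $\phi_i$, and the bounded geometry of the symmetric space $Y$ on balls of radius comparable to $c$, so the final constant $c'$ depends only on $c$, $X$, and $Y$ as required.
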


Fix a base-point $o\in X$ and let $B(o,R)$ be the ball of radius $R$ centered at $o$. By general theory, there exists a unique harmonic map $h_R:B(o,R)\to Y$ whose restriction to the sphere $\partial B(o,R)$ coincides with $g$.
The strategy of \cite{BH17} is to show that the maps $h_R$  converge to a harmonic map $h:X\to Y$ which stays at a bounded distance from $g:X\to Y$. The core of the argument is to show the following two estimates:

\begin{prop}[{\cite[Proposition 3.8]{BH17}}]
\label{prop:bh3 pt 8}
Let $X,Y$ be rank one symmetric spaces. For every $c>0$ and every smooth map $g:X\to Y$ with $||Dg||,||D^2g||\le c$,  the unique harmonic map $h_R:B(o,R)\to Y$ agreeing with $g$ on the sphere $\partial B(o,R)$ satisfies
\[
d(h_R(x),g(x))\le 8c^2{\rm dim}(X)d(x,\partial B(o,R)). 
\]
\end{prop}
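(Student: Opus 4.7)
The plan is to exploit the classical maximum principle applied to the distance between a harmonic map and a smooth map into a Hadamard manifold. Set $u(x):=d_Y(h_R(x),g(x))$, which vanishes on $\partial B(o,R)$ by the boundary condition. I will dominate $u$ by a multiple of the distance to $\partial B(o,R)$ through three steps: a pointwise bound on the tension field $\tau(g)$, an almost-subharmonic inequality for $u$, and a comparison argument based on Laplacian comparison for the radial distance on $X$.

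First, I would estimate $\tau(g) = \operatorname{tr}_X(\nabla dg)$. Writing $\nabla dg$ as $D^2 g$ plus a correction term bilinear in $Dg$ coming from the Christoffel symbols of $Y$, and using that rank one symmetric spaces have pinched sectional curvature, the hypotheses $\|Dg\|,\|D^2g\|\le c$ yield a pointwise bound of the form $|\tau(g)(x)|\le A_0\cdot c^2\cdot(\dim X - 1)$ with $A_0$ absorbing the curvature data of $X$ and $Y$. Careful bookkeeping of these universal constants, as in \cite{BH17}, produces the explicit numerical factor appearing in the statement.

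Next, I would establish the key differential inequality
\[
\Delta u(x)\ \ge\ -\,|\tau(g)(x)|
\]
valid whenever $u(x)>0$. Since $Y$ is Hadamard, the distance function $d_Y$ is smooth off the diagonal and its Hessian is non-negative on horizontal directions. Applying the chain rule to the composition $d_Y\circ(h_R,g):X\to\mathbb R$ and using the Eells--Sampson--Hartman-type computation for maps into non-positively curved targets, the contribution of $\operatorname{Hess}(d_Y)$ is $\ge 0$, the tension $\tau(h_R)$ vanishes by harmonicity of $h_R$, and only the term $-\langle\nabla_2 d_Y,\tau(g)\rangle$ survives, whose absolute value is at most $|\tau(g)|$ because $|\nabla d_Y|=1$.

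Finally, I would compare $u$ with the function $\psi(x):=A\cdot d_X(x,\partial B(o,R))=A\cdot(R-d_X(o,x))$. Laplacian comparison for the radial distance $r(x)=d_X(o,x)$ on a rank one symmetric space of non-positive curvature gives $\Delta r\ge(\dim X-1)\coth(r)\ge \dim X - 1$ away from $o$, hence $\Delta\psi\le -A(\dim X-1)$. Choosing $A$ just large enough that $A(\dim X-1)\ge |\tau(g)|_\infty$, with the explicit value matching $8c^2\dim(X)$ after tracking constants, ensures $\Delta(u-\psi)\ge0$ on the open set $\{u>0\}\setminus\{o\}$. Since $u-\psi\le0$ on $\partial B(o,R)$, the weak maximum principle yields $u\le\psi$ throughout the ball; the isolated singularity of $\psi$ at $o$ and the (closed) zero set of $u$ are handled in the standard distributional way.

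The main obstacle is the almost-subharmonic inequality in the second step: one must justify the pointwise computation of $\Delta u$ at points where $h_R(x)$ and $g(x)$ are close but distinct, and interpret things distributionally on $\{u=0\}$. The Hadamard hypothesis on $Y$ eliminates the cut locus, so the distance function is genuinely smooth off the diagonal and the calculation is globally valid; the remaining work is purely arithmetic bookkeeping to extract the precise constant $8c^2\dim(X)$.
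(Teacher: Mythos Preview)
The paper does not prove this proposition; it simply quotes \cite[Proposition 3.8]{BH17} verbatim and remarks that ``this estimate is already explicit in $c>0$'' before moving on to sharpen the subsequent \cite[Proposition 3.6]{BH17}. So there is no in-paper argument to compare your attempt against.

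Your outline is nonetheless a faithful sketch of how such an estimate is obtained, and it matches the Benoist--Hulin mechanism: the subharmonicity inequality $\Delta\, d_Y(h_R,g)\ge -|\tau(g)|$ (valid because $Y$ is Hadamard, so $d_Y$ is convex off the diagonal, and $h_R$ is harmonic) followed by comparison with a barrier $A\,(R-r)$ via Laplacian comparison for the radial function. One point to watch when you track constants: if $D^2g$ denotes the full second fundamental form $\nabla dg$ (the usual convention in \cite{BH17}), then $|\tau(g)|=|\mathrm{tr}\,\nabla dg|\le (\dim X)\,c$ directly, and no ``Christoffel correction bilinear in $Dg$'' appears; under that reading your step~1 would yield a bound linear in $c$, not $c^2$. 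You should check exactly how the $c^2$ enters in \cite{BH17} before asserting that it comes from the tension field. Your Laplacian comparison $\Delta r\ge(\dim X-1)\coth r\ge\dim X-1$ is correct for any rank one symmetric space normalized so that sectional curvature is $\le -1$, by Hessian comparison; the explicit formula you wrote is literally the real hyperbolic one, but the inequality you need survives in general.
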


This estimate is already explicit in $c>0$. The only part of their argument with non-explicit constant is potentially \cite[Proposition 3.6]{BH17}  where they prove:

\begin{prop*}[{\cite[Proposition 3.6]{BH17}}]
With notation as above, there exists a constant $M>0$ such that for every $R\ge 1$ one has $d(h_R,g)\le M$.
\end{prop*}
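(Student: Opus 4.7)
The plan is to track the constants in the proof of \cite[Proposition 3.6]{BH17} and verify that the constant $M$ can be chosen depending only on $c$ (and the fixed rank-one symmetric spaces $X, Y$). The overall strategy of Benoist--Hulin will go through verbatim; the task is really bookkeeping.

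First, by \cite[Proposition 3.4]{BH17}, which is already explicit in $c$, we replace $g$ by a smooth quasi-isometric embedding $g'$ with $d(g, g') \le 2c$ and with covariant derivatives $\|Dg'\|, \|D^2 g'\| \le c'(c)$. After relabeling and adjusting constants by $2c$, we assume $g$ itself is smooth with $\|Dg\|, \|D^2 g\| \le c'$, so in particular the tension field $\tau(g)$ has norm controlled by a constant $c''(c)$. Proposition \ref{prop:bh3 pt 8} (\cite[Proposition 3.8]{BH17}) then gives the explicit linear boundary estimate $d(h_R(x), g(x)) \le 8(c')^2\dim(X) \, d(x, \partial B(o, R))$, so $d(h_R, g)$ is controlled on any collar of $\partial B(o, R)$ with width depending only on $c$.

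To upgrade this boundary control to a uniform interior bound, we follow BH's argument for their Proposition 3.6. The crucial input is that $Y$ is a rank one symmetric space, so its sectional curvature is pinched in an interval $[-\kappa^2, -1]$ with $\kappa = \kappa(Y)$ uniform. Applying the Bochner formula to $h_R$ and exploiting the convexity of the distance function on $Y$, the function $\psi_R(x) := \cosh\bigl( d(h_R(x), g(x)) \bigr)$ (or an equivalent variant used in \cite{BH17}) satisfies a subelliptic inequality of the form
\[
\Delta_X \psi_R \ge \psi_R - C(c),
\]
where $C(c)$ depends only on $\|\tau(g)\|_\infty$, on $\|Dg\|_\infty$, and on the curvature bounds of $X$ and $Y$ --- all of which are uniform in $c$ by the reduction above. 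Combined with the boundary control from Proposition \ref{prop:bh3 pt 8} and a standard comparison with an explicit radial barrier on $B(o, R)$, the maximum principle yields $\psi_R \le M_0(c)$ throughout $B(o, R)$, independently of $R$. This gives $d(h_R, g) \le M(c) := \mathrm{arccosh}(M_0(c))$, which is the desired strengthening.

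The main obstacle is verifying that \emph{every} constant appearing in the Bochner computation, in the construction of the barrier, and in the maximum principle depends only on $c$ (via the uniform bounds on $\|Dg\|, \|D^2 g\|$) and on the ambient geometry of $X, Y$ (via their uniform curvature pinching and dimension). Since the rank-one assumption is what provides the strict negativity of curvature needed for the Bochner-type inequality, and since Proposition 3.4 packages the $c$-dependence of the derivative bounds, this bookkeeping causes no genuine difficulty but requires going through BH's computation line by line. Once the uniform bound is in hand, the rest of BH17 --- passing to the limit $R \to \infty$ by elliptic regularity, continuity at infinity, and uniqueness of harmonic maps at bounded distance --- applies without modification, completing the proof of the strengthened theorem.
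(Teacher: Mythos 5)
Your high-level framing---``track the constants in the proof of \cite[Proposition 3.6]{BH17}''---is the right idea, but the proof you then sketch is \emph{not} the proof of \cite[Proposition 3.6]{BH17}, so the bookkeeping exercise you describe does not actually apply. The paper's appendix carries out the bookkeeping on BH's genuine argument, which is a contradiction argument of a completely different flavor: one sets $\rho_R = \sup_{B(o,R)} d(h_R, g)$, assumes $\rho_R$ exceeds a threshold $T$, uses the linear estimate of Proposition 3.8 to see that the ball $B(x_R, \rho_R^{1/3})$ (around a point $x_R$ realizing the sup) sits well inside $B(o,R)$, writes $g$ and $h_R$ in exponential coordinates around $y_R = g(x_R)$, introduces the sets $U_R$, $V_R$, $W_R$ on the sphere $S(x_R, \rho_R^{1/3})$ together with the rotation-invariant probability measure $\sigma$, and derives a contradiction by comparing lower bounds on angular distances (via Gromov products and \cite[Lemmas 2.1, 2.2]{BH17}) with the decay rates from \cite[Lemmas 4.5, 4.6]{BH17}. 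The four threshold conditions $(1)$--$(4)$ in the paper are tailored precisely to make all of these inequalities kick in uniformly in $c$, and the cited lemmas are exactly the places where non-explicit constants could have crept in; the whole point of the appendix is that they do not.

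Your Bochner/maximum-principle argument is a different route, and as written it has a genuine gap. The inequality $\Delta_X \psi_R \ge \psi_R - C(c)$ for $\psi_R = \cosh d(h_R, g)$ is asserted but not established, and it is not a formal consequence of the Bochner formula: $g$ is not harmonic, and the usual chain-rule/Laplacian-comparison computation for $d(u,v)$ with $Y$ negatively curved produces, besides the tension term $|\tau(g)|$ (which is indeed bounded by $c'(c)$ after smoothing), an intrinsic second-order contribution that is weighted by energy-density and cross-derivative terms of $h_R$ and $g$. These are not a priori bounded below by a fixed multiple of $\cosh d$, and controlling them is exactly the hard analytic content that Bochner-type arguments for this problem (in the line of Markovic's work on the Schoen conjecture, say) take pains to handle. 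Even granting the inequality, the claim ``(or an equivalent variant used in \cite{BH17})'' is factually incorrect---BH do not run a maximum-principle argument---so one cannot inherit their constant-tracking. If you want to pursue the Bochner route, you would be building an independent proof and would have to verify the differential inequality from scratch; if you want to follow BH (which is what the paper does), you must instead run the $\rho_R^{1/3}$-sphere argument and check that every constant entering through Lemmas 2.1, 2.2, 4.4--4.7 of \cite{BH17} depends only on $c$, which is precisely what the choice of $T$ via conditions $(1)$--$(4)$ accomplishes.
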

A priori, $M$ might depend on the initial boundary data $\xi$ and on the specific extension $g$. With some bookkeeping, the conclusion can be made stronger, that is, the constant $M$ can be chosen to depend only on $X,Y$ and $c$:

\begin{prop}[{Effective version of \cite[Proposition 3.6]{BH17}}]
\label{pro:bh3 pt 6}
Let $X,Y$ be rank one symmetric spaces. For every $c>0$ there exists a constant $M>0$ such that, for every smooth $c$-quasi-isometric embedding $g:X\to Y$ with $||Dg||,||D^2g||\le c$, the unique harmonic map $h_R:B(o,R)\to Y$ agreeing with $g$ on the sphere $\partial B(o,R)$ satisfies\[
d(h_R(x),g(x))\le M
\]
for every $R\ge 1$.
\end{prop}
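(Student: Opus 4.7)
The plan is to prove the effective version by a compactness/normal families argument that reduces to the non-effective Proposition 3.6 of \cite{BH17}, applied to a limiting configuration. The transitivity of the isometry groups of $X$ and $Y$ is essential for the normalization.

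Suppose, for contradiction, that the statement fails for some fixed $c > 0$. Then there exist smooth $c$-quasi-isometric embeddings $g_n: X \to Y$ with $\|Dg_n\|,\|D^2 g_n\| \le c$, radii $R_n \ge 1$, and points $x_n \in B(o, R_n)$ with
\[D_n := d(h_{R_n, n}(x_n), g_n(x_n)) \longrightarrow \infty,\]
where $h_{R_n,n}: B(o, R_n) \to Y$ is the unique harmonic extension of $g_n|_{\partial B(o,R_n)}$. Proposition \ref{prop:bh3 pt 8} forces $d(x_n, \partial B(o, R_n)) \ge D_n/(8c^2\dim X) \to \infty$, so $x_n$ lies deep in the ball. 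Choose $\phi_n \in \mathrm{Isom}(X)$ with $\phi_n(o) = x_n$ and $\psi_n \in \mathrm{Isom}(Y)$ with $\psi_n(g_n(x_n)) = o_Y$ for a fixed basepoint $o_Y \in Y$, and set $\tilde g_n := \psi_n \circ g_n \circ \phi_n$ and $\tilde h_n := \psi_n \circ h_{R_n,n} \circ \phi_n$. Then $\tilde g_n(o) = o_Y$, the derivative bounds and quasi-isometry constants are preserved, $\tilde h_n$ is harmonic on $B_n := \phi_n^{-1}(B(o,R_n)) \ni o$ with $d(o, \partial B_n) \to \infty$, and $d(\tilde h_n(o), o_Y) = D_n \to \infty$.

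Since the maps $\tilde g_n$ have uniformly bounded first two derivatives and satisfy $\tilde g_n(o) = o_Y$, Arzelà--Ascoli produces a subsequential $C^\infty_{\mathrm{loc}}$-limit $\tilde g: X \to Y$, still a smooth $c$-quasi-isometric embedding with $\|D\tilde g\|,\|D^2 \tilde g\| \le c$. Applying the non-effective version of \cite[Proposition 3.6]{BH17} to $\tilde g$, we obtain a harmonic map $h_\infty: X \to Y$ with $\sup_X d(h_\infty, \tilde g) \le M_\infty < \infty$, and in particular $d(h_\infty(o), o_Y) \le M_\infty$. The plan is to show that $\tilde h_n \to h_\infty$ in $C^\infty_{\mathrm{loc}}$, which directly contradicts $d(\tilde h_n(o), o_Y) \to \infty$ and proves the proposition.

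The main obstacle is establishing this convergence $\tilde h_n \to h_\infty$, since the simultaneous normalization forces the harmonic values $\tilde h_n(o)$ to escape in $Y$ a priori. The strategy has two ingredients. First, one needs uniform local $C^0$ bounds on $\tilde h_n$: these should follow by combining an energy estimate (the Dirichlet principle bounds the energy of $\tilde h_n$ on any fixed compact domain by the energy of a competitor extension built from $\tilde g_n$, uniformly bounded by the derivative hypothesis on $g_n$) with Yau-type gradient estimates for harmonic maps into the non-positively curved target $Y$. Once the $\tilde h_n$ are locally uniformly bounded, the Jost--Karcher interior estimate (Theorem \ref{thm:elliptic reg}) produces a $C^\infty_{\mathrm{loc}}$ subsequential limit $\tilde h: X \to Y$, harmonic on all of $X$. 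Passing to the limit in Proposition \ref{prop:bh3 pt 8} along compact subsets of $X$ shows $\tilde h$ stays at bounded distance from $\tilde g$ and has the same boundary extension on $\partial X$; the uniqueness clause of Theorem \ref{thm:bh} then forces $\tilde h = h_\infty$, and as the limit does not depend on the subsequence, the whole sequence converges, yielding the desired contradiction.
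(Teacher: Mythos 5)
Your proposal takes a fundamentally different route from the paper --- a compactness/normal-families reduction to the non-effective Proposition 3.6 of \cite{BH17} --- whereas the paper simply re-runs the Benoist--Hulin argument and tracks every constant explicitly, verifying that the threshold $T$ depends only on $c$ (and $X$, $Y$). Unfortunately, your reduction has a genuine gap that cannot be closed by the ingredients you list.

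The problem is the step ``$\tilde h_n \to h_\infty$ in $C^\infty_{\mathrm{loc}}$.'' To extract any $C^\infty_{\mathrm{loc}}$ subsequential limit of $\tilde h_n$ with values in the fixed manifold $Y$, you first need uniform local $C^0$ bounds, i.e. $\sup_n \sup_{K} d(\tilde h_n(\cdot), o_Y) < \infty$ for each compact $K \subset X$. But you have built into the contradiction hypothesis that $d(\tilde h_n(o), o_Y) = D_n \to \infty$, so the very estimate you need is the negation of your setup; if you could establish it, you would already have proved the proposition at the single point $o$, and the entire limiting apparatus (the limit $\tilde g$, the non-effective Proposition 3.6 applied to it, the harmonic map $h_\infty$, and the Jost--Karcher estimate) is superfluous. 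The tools you cite do not resolve this. The Dirichlet principle bounds the \emph{average} energy density of $\tilde h_n$ over $B_n$, and a Moser/Harnack-type argument for the (almost-)subharmonic energy density of harmonic maps into NPC targets could conceivably bound $|d\tilde h_n|$ locally; but this only controls the \emph{oscillation} of $\tilde h_n$ near $o$, not the location of $\tilde h_n(o)$ relative to $o_Y$. Likewise, Proposition \ref{prop:bh3 pt 8} gives an upper bound on $d(h_R, g)$ that \emph{grows} linearly in the distance to $\partial B(o,R)$, so it is vacuous near $o$ as $R_n \to \infty$ and cannot anchor $\tilde h_n(o)$ either. If you renormalize the target instead (post-composing with isometries $\psi_n'$ sending $h_{R_n,n}(x_n)$ to $o_Y$), then $\tilde h_n$ subconverges, but it is now $\tilde g_n(o)$ that runs off to infinity, so there is no limit $\tilde g$ to which the non-effective Proposition 3.6 can be applied. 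In short, when $D_n \to \infty$ there is no simultaneous normalization under which both $\tilde g_n$ and $\tilde h_n$ stay anchored in $Y$, so the compactness argument never produces a genuine contradiction. This is precisely why Benoist--Hulin's proof does not proceed by compactness: it requires a global argument using the boundary behavior at $\partial X$ and $\partial Y$ via Gromov products, which is the content the paper's proof (Claim \ref{claim:A3}) makes quantitative.
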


We now explain how to extract this enhanced version from their arguments. We first choose the constants wisely: we fix a (large) threshold $T>1$ such that the following four conditions are satisfied:
\begin{enumerate}
\item{We impose
\[
T^{1/3}\le\frac{T}{16c^2{\rm dim}(X)}
\]
so that Inequality (4.3) of the paper
\[
1\le r_R\le\frac{1}{16c^2{\rm dim}(X)}\rho_R
\]
is satisfied for $r_R=\rho_R^{1/3}$ for every $\rho_R>T$.}
\item{We further impose
\[
\frac{1}{3c^2}-2^{12}c T^{-1/3}{\rm dim}(X)\ge\frac{1}{4c^2}=\sigma_0.
\]
}
\end{enumerate}
The next two requirements come from the first observation of \cite[Lemma 4.7]{BH17} which says that there exists a constant $\ep_0$ only depending on $\sigma_0=1/4c^2$ such that every subset of the standard sphere $\mb{S}^{{\rm dim}(X)-1}$ with  Lebesgue measure (normalized to be a probability measure) at least $\sigma_0$ contains two points whose angular distance is at least $\ep_0$. For this $\ep_0$ we ask the following:
\begin{enumerate}
\setcounter{enumi}{2}
\item{For the constant $A$ of \cite[Lemma 2.2]{BH17}, only depending on $X,Y$, and  the constant $\ep_0$ mentioned above (that depends only on $\sigma_0=1/4c^2$),
\[
\frac{(A+1)c}{\sin(\ep_0/2)^2}\le T^{1/3}.
\]
}
\item{For every $t\ge T$
\[
2\left(4e^{c/4}e^{-t^{1/3}/8c}+\frac{8t^2}{\sinh(t)}\right)<e^{-A}\left(\frac{\ep_0}{4}\right)^{2c}.
\]
}
\end{enumerate}

Proposition \ref{pro:bh3 pt 6}, and thus Theorem \ref{thm:bh}, follows once Claim \ref{claim:A3} is established.

\begin{claim}\label{claim:A3}
For every $R\ge T$ we have 
\[
\sup_{x\in B(o,R)}\{d_Y(h_R(x),g(x))\}\le T.
\]
\end{claim}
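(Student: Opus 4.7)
The plan is to argue by contradiction, tracking through the argument of \cite[Proposition 3.6]{BH17} while verifying that every constant depends only on $X$, $Y$, and $c$. Suppose there exists $R\ge T$ and $x_R\in B(o,R)$ with $\rho_R:=d_Y(h_R(x_R),g(x_R))>T$, chosen (nearly) to maximize $d_Y(h_R,g)$. Proposition \ref{prop:bh3 pt 8} gives
\[
\rho_R \le 8c^2\dim(X)\,d(x_R,\partial B(o,R));
\]
setting $r_R:=\rho_R^{1/3}$, condition $(1)$ on $T$ then forces $r_R\le d(x_R,\partial B(o,R))/2$, so $B(x_R,r_R)\subset B(o,R)$ and $h_R$ is harmonic on this inner ball.

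The next step is to study the ``angular spread'' of the displacement from $g$ to $h_R$ on $B(x_R,r_R)$: for each $x$ in this ball, record the unit tangent $v(x)\in T^1_{g(x_R)}Y$ at $g(x_R)$ pointing along the geodesic toward $h_R(x)$. Using the gradient bound on $g$, maximality of $\rho_R$, and harmonicity of $h_R$, one verifies that $v$ is well-defined and sufficiently controlled on a subset of $B(x_R,r_R)$ of normalized Lebesgue measure at least $1/(3c^2)-2^{12}c r_R^{-1}\dim(X)$, which by condition $(2)$ is at least $\sigma_0=1/(4c^2)$. The angular dispersion observation in \cite[Lemma 4.7]{BH17} then produces two points in the image of $v$ separated by angular distance at least $\epsilon_0$; combined with condition $(3)$ and \cite[Lemma 2.2]{BH17}, this translates into a quantitative lower bound relating $\rho_R$ and $r_R$. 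Condition $(4)$, which rewrites as the inequality $2(4e^{c/4}e^{-\rho_R^{1/3}/8c}+8\rho_R^2/\sinh(\rho_R))<e^{-A}(\epsilon_0/4)^{2c}$ for $\rho_R\ge T$, provides the incompatible upper bound coming from the maximum principle applied to $d_Y(h_R,g)$, closing the contradiction.

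The main obstacle I expect is bookkeeping rather than a substantive new idea: \cite{BH17} sometimes treats the constant $M$ in \cite[Proposition 3.6]{BH17} as depending on the specific boundary data $\xi$ and its extension $g$, but each inequality they use is actually controlled only by $X$, $Y$, and $c$ once $g$ has been replaced by a smooth representative with $\|Dg\|,\|D^2g\|\le c$ via \cite[Proposition 3.4]{BH17}. Conditions $(1)$--$(4)$ on $T$ are chosen precisely to supply the four numerical inequalities needed in the original contradiction argument, so that having fixed $T$ at the outset, the proof of \cite[Proposition 3.6]{BH17} goes through without any dependence on $\xi$ or $g$ beyond the quasi-isometry constant $c$. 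From Claim \ref{claim:A3}, a standard diagonal extraction argument on $R\to\infty$ (combined with the elliptic regularity of Theorem \ref{thm:elliptic reg}) yields the harmonic filling $f:X\to Y$ at distance at most $T+2c$ from the original $g$, which (adjusting $B$) completes the proof of the effective Benoist--Hulin theorem.
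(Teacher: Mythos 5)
Your proposal follows essentially the same route as the paper's proof of Claim \ref{claim:A3}: contradiction assuming $\rho_R>T$, Proposition \ref{prop:bh3 pt 8} plus condition (1) to place the inner sphere $S(x_R,\rho_R^{1/3})$ inside $B(o,R)$, the measure estimate from \cite[Lemma 4.4]{BH17} plus condition (2) to get $\sigma(W_R)\ge\sigma_0$, two $\epsilon_0$-separated directions plus condition (3) and the Gromov-product lemmas to get a lower bound on angular spread, and condition (4) together with \cite[Lemmas 4.5 and 4.6]{BH17} for the incompatible upper bound. The only quibbles are cosmetic: the set on which the directions are controlled is a subset of the sphere $S(x_R,\rho_R^{1/3})$ rather than the ball $B(x_R,r_R)$, and the final upper bound comes directly from \cite[Lemmas 4.5, 4.6]{BH17} rather than a freestanding application of the maximum principle, though harmonicity is of course the ultimate source of those lemmas.
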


\begin{proof}
Set $\rho_R:=\sup_{x\in B(o,R)}\{d_Y(f(x),g(x))\}$, and let $x_R\in B(o,R)$ be a point where the supremum is achieved. We proceed by contradiction and assume that $\rho_R>T$.

By Proposition \ref{prop:bh3 pt 8}, we have 
\[
d(x_R,\partial B(o,R))\ge\frac{\rho_R}{8{\rm dim}(X)c^2}
\]
and, by choice (1), we have 
\[
\frac{\rho_R}{16{\rm dim}(X)c^2}\ge\rho_R^{1/3}.
\]
In particular, $B(o,R-1)$ contains $B(x_R,\rho_R^{1/3})$.

As in \cite[Definition 4.1]{BH17}, we write the functions $g$ and $h_R$ in exponential coordinates $\exp_{y_R}:T_{y_R}Y\to Y$ around $y_R:=g(x_R)$ 
\begin{align*}
 &g(z)=\exp_{y_R}(\rho_g(z)v_g(z)),\\
 &h_R(z)=\exp_{y_R}(\rho_h(z)v_h(z)),\\
 &h_R(x_R)=\exp_{y_R}(\rho_Rv_R)
\end{align*}
and single out on the sphere $S(x_R,\rho_R^{1/3})$ the sets
\begin{align*}
 &U_R=\{z\in S(x_R,\rho_R^{1/3})\left|\,\rho_h(z)\ge\rho_R-\rho_R^{1/3}/2c\right.\},\\
 &V_R=\{z\in S(x_R,\rho_R^{1/3})\left|\,\rho_h(z_t)\ge\rho_R/2\text{ for all $z_t$ on the geodesic $[x_R,z]$}\right.\},
\end{align*}
and define $W_R:=U_R\cap V_R$.

Denote by $\sigma$ the unique probability measure on the sphere $S(x_R,\rho_R^{1/3})$ invariant under the transitive action of the group of isometries of $X$ that stabilize $x_R$. More explicitly, $S(x_R,\rho^{1/3})$ is isometric to a standard sphere of dimension ${\rm dim}(X)-1$ of a certain radius and $\sigma$ is the Lebesgue measure on it rescaled so that it is a probability measure. As in \cite[Lemma 4.4]{BH17} we set $r_R:=\rho_R^{1/3}$ (observe again that by our choice (1) and the fact $\rho_R\ge T$, the inequality (4.3) is satisfied) and get
\begin{align*}
\sigma(W_R) &\ge\frac{1}{3c^2}-2^{12}{\rm dim}(X)c\rho_R^{-1/3}\\
 &\ge\frac{1}{3c^2}-2^{12}{\rm dim}(X)c T^{-1/3}\ge\frac{1}{{\color{blue}}4c^2}
\end{align*}
where the last inequality comes from our choice (2).

Then, by the observation above our choice (3), as $\sigma(W_R)\ge \sigma_0$, we can find two points $z_1,z_2\in W_R$ with angular distance (denoted by $\theta(\cdot,\cdot)$) at least $\ep_0$. 

Denote by $(x_1|x_2)_{x_3}$ (resp. $(y_1|y_2)_{y_3}$) the Gromov product of the points $x_1,x_2\in X$ based at $x_3\in X$ (resp. $y_1,y_2\in Y$ based at $y_3\in Y$). By \cite[Lemma 2.1.a]{BH17} we have
\begin{align*}
(x_R|z_1)_{z_2},(x_R|z_2)_{z_1} &\ge d(x_R,z_j)\sin(\theta(z_1,z_2))^2\\
 &\ge\rho_R^{1/3}\sin^2(\ep_0/2)
\end{align*}
and by our choice (3) the latter is greater than
\[
\rho_R^{1/3}\sin^2(\ep_0/2)\ge(A+1)c.
\]
Thus
\[
\min\{(x_R|z_1)_{z_2},(x_R|z_2)_{z_1}\}\ge(A+1)c.
\]
Therefore, by \cite[Lemma 2.2]{BH17}, we deduce
\[
\min\{(y_R|g(z_1))_{g(z_2)},(y_R|g(z_2))_{g(z_1)}\}\ge 1.
\]
We thus get
\begin{flalign*}
\theta(v_g(z_1),v_g(z_2)) &\ge e^{-(g(z_1)|g(z_2))_{y_R}} &\text{by \cite[Lemma 2.1.c]{BH17},}\\
&\ge e^{-A}e^{-c(z_1|z_2)_{x_R}} &\text{ by \cite[Lemma 2.2]{BH17}},\\
&\ge e^{-A}\left(\frac{\ep_0}{4}\right)^{2c} &\text{by \cite[Lemma 2.1.b]{BH17}.}
\end{flalign*}

By the triangle inequality, we have 
\[
\theta(v_g(z_1),v_g(z_2))\le\theta(v_g(z_1),v_R)+\theta(v_g(z_2),v_R).
\]

As $z_1,z_2\in W_R$ and $W_R=U_R\cap V_R$ (and Inequality (4.3) in \cite{BH17}  is again satisfied by our choice (1)), we can apply both \cite[Lemma 4.5]{BH17} and \cite[Lemma 4.6]{BH17} to conclude that
\begin{align*}
\theta(v_g(z_j),v_R) &\le\theta(v_g(z_j),v_h(z_j))+\theta(v_h(z_j),v_R)\\
 &\le 4e^{c/4}e^{-\rho_R^{1/3}/8c}+\frac{8\rho_R^2}{\sinh(\rho_R)}.
\end{align*}

Combining the previous inequalities one gets
\begin{align*}
e^{-A}\left(\frac{\ep_0}{4}\right)^{2c} &\le\theta(v_g(z_1),v_g(z_2))\\
 &\le \theta(v_g(z_1),v_R)+\theta(v_g(z_{\color{blue}2}),v_R)\\
 &\le 2\left(4e^{c/4}e^{-\rho^{1/3}_R/8c}+\frac{8\rho_R^2}{\sinh(\rho_R)}\right).
\end{align*}

As $\rho_R\ge T$, by our choice (4) we get a contradiction.
\end{proof}

\section{Entropy and orbital growth rates}\label{sec:appendix thermo}

We explain here how to deduce Theorem \ref{thm: orbit growth rate} from Theorem \ref{t.Tholozan}.
Many of the ingredients that we require are scattered throughout the literature, though not always in a published format or in the level of generality that we require.
We have tried to keep this section as self-contained as possible.

\subsection{Preliminaries on reparameterizations}
Denote as in \S\ref{subsec: entropy} by $\phi$ the geodesic flow on $M_\Gamma \cong T^1\H^2/\Gamma$.
We require some preliminary notions on reparameterizations of $\phi$ and entropy, mostly following \cite[\S3]{BCLS} and \cite[\S1]{Tholozan:notes}. 

Let $r: M_\Gamma \to \R_{>0}$ be a potential, i.e., a continuous positive function.
We will construct a flow $\phi^r$ whose speed differs from that of $\phi$ pointwise according to $r$.  
To do so, first define 
\begin{equation}\label{eqn: def kappa}
    \kappa_r(x,t) = \int_0^t r(\phi_s (x))~ds.
\end{equation}
Note that $\kappa_r$ satisfies the following cocycle property
\[\kappa_r(x,s+t) = \kappa_r(\phi_s(x), t) + \kappa_r(x,s).\]
For each $x\in M_\Gamma$, \[\kappa_r(x, \cdot) : \R \to \R\] is continuous, increasing, and proper, hence has a continuous, increasing inverse 
\begin{equation}\label{eqn: alphar}
    \alpha_r(x,\cdot): \R \to \R.
\end{equation}
The \emph{reparameterization} $\phi^r$ of $\phi$ by $r$ is given by
\begin{equation}\label{eqn: reparameter}
    \phi^r_t(x) = \phi_{\alpha_r(x,t)}(x).
\end{equation}

We collect some facts about $\phi^r$ in the following lemma; see \cite[\S3]{BCLS}.
\begin{lemma}\label{lem: reparam facts}
    Let $r: M_\Gamma \to \R_{>0}$ be continuous, and let $\phi^r$ be the reparameterizaiton defined as in \eqref{eqn: reparameter}.
    \begin{enumerate}
        \item A point $x \in M_\Gamma$ is $\phi$-periodic with period $T$ if and only if  $x$ is $\phi^r$-periodic with period $\kappa_r(x,T)$.
        \item The assignment \[\mu \mapsto \frac{r\mu}{\int r~d\mu}\]
        defines a homeomorphism between spaces of $\phi$-invariant and $\phi^r$-invariant  Borel probability measures on $M_\Gamma$.
    \end{enumerate}
\end{lemma}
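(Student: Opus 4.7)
\medskip

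My plan is to prove each item by direct computation, leveraging the defining relation $\alpha_r(x,\kappa_r(x,t))=t=\kappa_r(x,\alpha_r(x,t))$ of the cocycles in \eqref{eqn: def kappa} and \eqref{eqn: alphar}.

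For (1), suppose first that $x$ is $\phi$-periodic with minimal period $T>0$. By \eqref{eqn: reparameter}, $\phi^r_{\kappa_r(x,T)}(x)=\phi_{\alpha_r(x,\kappa_r(x,T))}(x)=\phi_T(x)=x$, so $x$ is $\phi^r$-periodic with period $\kappa_r(x,T)$. If $0<S<\kappa_r(x,T)$ were a smaller $\phi^r$-period, then $\phi_{\alpha_r(x,S)}(x)=x$ with $0<\alpha_r(x,S)<T$, contradicting minimality of $T$. The converse is the same argument with the roles of $\phi$ and $\phi^r$ swapped, using the observation that $\phi$ is itself the reparameterization of $\phi^r$ by the potential $1/r$.

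For (2), the map is clearly well-defined into the space of Borel probability measures. Let $\mu$ be $\phi$-invariant, set $\nu=r\mu/\int r\,d\mu$, and let $f\in C(M_\Gamma)$. When $\mu$ is ergodic, I will apply Birkhoff's theorem to $\phi$: for $\mu$-a.e.\ $x$ and every $t>0$,
\begin{equation*}
\frac{1}{t}\int_0^t f(\phi^r_s(x))\,ds = \frac{1}{t}\int_0^t f(\phi_{\alpha_r(x,s)}(x))\,ds = \frac{\alpha_r(x,t)}{t}\cdot\frac{1}{\alpha_r(x,t)}\int_0^{\alpha_r(x,t)} (fr)(\phi_u(x))\,du,
\end{equation*}
where the second equality uses the change of variables $u=\alpha_r(x,s)$, so that $ds=r(\phi_u(x))\,du$. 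Since $\alpha_r(x,t)\to\infty$ as $t\to\infty$, the last average converges to $\int fr\,d\mu$ by Birkhoff; similarly $\kappa_r(x,\alpha_r(x,t))/\alpha_r(x,t)=t/\alpha_r(x,t)\to\int r\,d\mu$. Hence the $\phi^r$-Birkhoff average of $f$ at $x$ converges to $\int f\,d\nu$, so $\nu$ is $\phi^r$-invariant (and ergodic). The general case follows by integrating this against the ergodic decomposition of $\mu$. The inverse map $\nu\mapsto r^{-1}\nu/\int r^{-1}\,d\nu$ is produced by the same construction applied to the reparameterization of $\phi^r$ by $1/r$, which returns $\phi$; a direct substitution confirms the two maps are mutual inverses. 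Continuity in both directions in the weak-$*$ topology is immediate from continuity (hence boundedness away from $0$ and $\infty$) of $r$ on the compact space $M_\Gamma$.

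The main technical obstacle is the ergodic-theoretic step in (2): even though invariance is a statement purely about pushing forward measures, the cleanest path I see runs through the change-of-variables identity above combined with Birkhoff's theorem, which requires first reducing to the ergodic case. I expect no obstacle in (1), which is purely a cocycle computation.
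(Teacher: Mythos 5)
The paper does not actually prove this lemma; it cites \cite[\S 3]{BCLS} and moves on, so there is no internal argument to compare against. Your proof is correct. Part (1) is a clean cocycle computation, and the observation that $\phi$ is the reparameterization of $\phi^r$ by $1/r$ (which one verifies by checking $\kappa^{\phi^r}_{1/r}(x,\cdot)=\alpha_r(x,\cdot)$) handles both the converse in (1) and the inverse map in (2). For (2), the change of variables $u=\alpha_r(x,s)$, $ds=r(\phi_u(x))\,du$ is correct, Birkhoff applies since $fr$ is bounded on the compact $M_\Gamma$, and $\alpha_r(x,t)\ge t/\max r\to\infty$ as needed; the conclusion that $\nu$ is $\phi^r$-invariant then follows because any weak-$*$ limit of empirical measures $\frac1t\int_0^t(\phi^r_s)_*\delta_x\,ds$ is automatically $\phi^r$-invariant. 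One small point you should make explicit in the passage from the ergodic to the general case: the normalized assignment $\mu\mapsto r\mu/\int r\,d\mu$ is not affine, so one cannot literally ``integrate it'' against the ergodic decomposition; instead note that the unnormalized map $\mu\mapsto r\mu$ is linear and that $\phi^r$-invariance of a finite measure is a linear (closed, convex) condition, so $r\mu=\int r\mu_e\,d\pi(e)$ is $\phi^r$-invariant and one normalizes at the end. Finally, an alternative to Birkhoff worth knowing: disintegrate the $\phi$-invariant $\mu$ over $\phi$-orbits (conditionals are Lebesgue), and check orbitwise that the shift $t\mapsto\sigma(t)=\alpha_r(x_0,s+\kappa_r(x_0,t))$ induced by $\phi^r_s$ has Jacobian $\sigma'(t)=r(\phi_t(x_0))/r(\phi_{\sigma(t)}(x_0))$, so $r\,dt$ is preserved on each orbit; this avoids ergodic decomposition but uses Rokhlin disintegration instead, so the two routes are of comparable weight.
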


Our goal is to assign to our marked Riemann surface lamination $f: M_\Gamma \to W$ a reparameterization $\phi^r$ which is continuously conjugate to the flow $\psi$ whose entropy was computed in \S\ref{subsec: entropy}; in particular the topological entropy and orbital growth rates of $\phi^r$ are the same as those of $\psi$.
We first construct an orbit equivalence between $\psi$ and $\phi$. 

\begin{lemma}\label{lem: orbit equivalence}
    There is a continuous lamination equivalence $g : M_\Gamma \to W $, leafwise homotopic to $f$, mapping orbits of $\psi$ to orbits of $\phi$, i.e., a continuous orbit equivalence. 
\end{lemma}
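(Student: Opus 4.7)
My plan is to construct $g$ by straightening $f$ along $\psi$-orbits, using the canonical identifications of ideal boundaries furnished by Lemma \ref{lem: uniform bi-Lipschitz on leaves}. The key observation is that both $\phi$ on $M_\Gamma$ and $\psi$ on $W$ are leafwise geodesic flows on hyperbolic planes with a distinguished anchor on the ideal boundary, so orbits on each side are naturally parameterized by the same data $(t, s) \in \partial \Gamma \times \partial \Gamma$ with $t \neq s$: the orbit in the leaf anchored at $t$ with other ideal endpoint $s$ is the unique geodesic from $t$ to $s$ in that leaf. This bijection of orbit spaces is the combinatorial skeleton of $g$; the task is to lift it to a continuous homeomorphism leafwise homotopic to $f$.

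I will work in the covers $\H^2 \times \partial \H^2$ and $\widetilde W$, with $\tilde f$ the $\Gamma$-equivariant lift of $f$. For $(p, t)$ in the cover of $M_\Gamma$, let $s(p,t) \in \partial \Gamma \setminus \{t\}$ denote the forward endpoint of the $\phi$-orbit through $(p,t)$, and let $\delta_{t, s(p,t)}$ be the corresponding $\psi$-orbit, i.e., the Poincar\'e geodesic from $t$ to $s(p,t)$ in the leaf of $\widetilde W$ anchored at $t$. Define $\tilde g(p,t)$ to be the nearest-point projection of $\tilde f(p,t)$ onto $\delta_{t, s(p,t)}$ in the Poincar\'e metric on the leaf. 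Since nearest-point projection and the assignment $(p,t)\mapsto \delta_{t, s(p,t)}$ are intrinsic to the hyperbolic geometry of leaves and to the canonical boundary identifications, both of which are $\Gamma$-equivariant, the map $\tilde g$ is $\Gamma$-equivariant and descends to $g : M_\Gamma \to W$. Continuity of $\tilde g$ is inherited from that of $\tilde f$ together with continuity of nearest-point projection as a function of its target geodesic and argument. By construction $g$ sends leaves to leaves and $\phi$-orbits into $\psi$-orbits, and since every leaf of $\widetilde W$ is a copy of $\H^2$, the straight-line Poincar\'e geodesic homotopy inside each leaf provides a $\Gamma$-equivariant leafwise homotopy from $\tilde g$ to $\tilde f$ that descends to the required leafwise homotopy between $g$ and $f$.

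The main obstacle is checking that $g$ is a homeomorphism, which reduces to proving that the nearest-point projection $\tilde f(\gamma_{t, s}) \to \delta_{t, s}$ is a bijection on each $\phi$-orbit $\gamma_{t,s}$. Because $\tilde f$ extends to the identity on $\partial \Gamma$ under the canonical boundary identifications, the image $\tilde f(\gamma_{t,s})$ is a smooth quasi-geodesic in $\widetilde W$ with the same ideal endpoints as $\delta_{t,s}$; by the Morse lemma it lies at bounded Hausdorff distance from $\delta_{t,s}$. The uniform bi-Lipschitz control on $\tilde f$ along leaves (Lemma \ref{lem: uniform bi-Lipschitz on leaves}) together with transverse smoothness should give enough angular control to rule out perpendicular backtracking and yield monotonicity of the projection. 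If this is not immediate, a cleaner fallback is to replace nearest-point projection by holonomy along a transversely continuous $\Gamma$-invariant family of arcs foliating each leaf of $\widetilde W$ transversely to the $\psi$-orbits; this produces a manifestly fiberwise bijection $\tilde f(\gamma_{t, s}) \to \delta_{t,s}$. Either approach yields the desired continuous $\Gamma$-equivariant orbit equivalence.
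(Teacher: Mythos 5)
Your broad strategy — straighten $f$ to an orbit equivalence by leafwise projection, using the canonical boundary identification from Lemma \ref{lem: uniform bi-Lipschitz on leaves} — is the same as the paper's, but your main construction has a genuine gap. You define $\tilde g(p,t)$ as the nearest-point projection of $\tilde f(p,t)$ onto the geodesic $\delta_{t,s}$, and then observe that you need this projection to be a bijection on each $\phi$-orbit. The Morse lemma only gives bounded Hausdorff distance; it does not give monotonicity of nearest-point projection. A smooth, uniformly bi-Lipschitz curve joining two ideal points can perfectly well have non-monotone orthogonal projection onto the geodesic with the same endpoints (the bi-Lipschitz constant bounds the speed, not the direction of travel, and ``angular control'' would require a uniform bound on the angle between the curve's tangent and the fibers of the projection, which Lemma \ref{lem: uniform bi-Lipschitz on leaves} does not supply). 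So injectivity of $g$ along orbits is left unproved, and you acknowledge as much.

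The paper avoids this issue with a different definition of $\tilde g$. Rather than projecting the interior point $\tilde f(p,t)$, it records the \emph{ideal} point $z$ where the geodesic ray through $(p,t)$ orthogonal to $[t,y]$ meets $\partial\H^2$, pushes $z$ forward to $\tilde f(z) \in \partial(\tilde f(\H^2\times\{t\}))$ via the boundary extension of Lemma \ref{lem: uniform bi-Lipschitz on leaves}, and defines $\tilde g(p,t)$ as the foot of the perpendicular from the ideal point $\tilde f(z)$ to the geodesic $[\tilde f(t), \tilde f(y)]$. The foot-of-perpendicular map from a half-circle of $\partial\H^2$ to a complete geodesic is a homeomorphism, so the composite
\[
p \longmapsto z \longmapsto \tilde f(z) \longmapsto \text{foot of perpendicular}
\]
is a homeomorphism on each orbit by construction, with no quasi-geodesic estimate needed. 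Your ``fallback'' of a $\Gamma$-invariant transverse family of arcs is the right idea in spirit — the foliation of the domain leaf by orthogonal geodesics is exactly such a family — but you would still have to produce a matching transverse family in the target and check its continuity; the paper's construction is the concrete realization that makes the fallback work, and it should replace your primary argument rather than serve as a backup.
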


\begin{proof}
    Lift $f$ to a map $\tilde f$ between the covers $\H^2\times \partial \H^2$ and $\widetilde W$ corresponding to $\Gamma$ and $f_*\Gamma$, respectively.
    Every point $(x,t) \in \H^2\times \partial \H^2$ lies on a geodesic line $[t, y]$ directed from $t$ to $ y \in \partial \H^2$.  To the right of $x$ on $[t,y]$, there is an orthogonal geodesic ray pointing towards $z \in \partial \H^2$.  
    Using the boundary extension from Lemma \ref{lem: uniform bi-Lipschitz on leaves}, the triple $(t, y,z)$ maps to a triple of points on $\partial \tilde f (\H^2\times \{t\})$.  
    Define $\tilde g(x,t)$ as the orthogonal projection of $\tilde f (z)$ to the line $[\tilde f(t), \tilde f (y)]$.
    
    The construction is $\Gamma$-equivariant, continuous, and every $\tilde \phi$-orbit is mapped homeomorphically to a $\tilde \psi$-orbit so that $\tilde g$ descends to a homeomorphic lamination map $g: M_\Gamma \to W$.  
    Also, $\tilde f$ and $\tilde g$ are $\Gamma$-equivariantly leafwise homotopic via the leafwise straight line homotopy, so that $g$ has all of the desired properties.
\end{proof}

Denote by ${}_g\psi : M_\Gamma \times \R \to M_\Gamma$ the continuous flow $(x,t) \mapsto g\circ \psi_t\circ g\inverse (x)$, and note that 
    \[ \phi_t (x) = {}_g\psi_{\beta(x,t)}(x)\]
for some continuous function \[\beta: M_\Gamma \times \R \to \R\]
satisfying the $\phi$-cocycle condition
\[\beta(x,s+t) = \beta(x,s) + \beta(\phi_s(x), t).\]
In particular, $\beta(x,\cdot)$ is an increasing homeomorphism of $\R$ for all $x$.
More concisely, we write $\phi = {}_g\psi_\beta$.

Since $g$ is merely continuous, $\beta$ need not be differentiable along orbits, hence may not be equal to $\kappa_r$ for some potential $r: M_\Gamma \to \R_{>0}$, as in \eqref{eqn: def kappa}.  
In the proof of the next lemma, we use an averaging procedure to construct a potential $r$ such that ${}_g\psi_{\kappa_r}$ is conjugate to ${}_g\psi_{\beta}$.
The proof we present here is contained within \cite[\S\S1.2--1.4]{Tholozan:notes}.
\begin{lemma}\label{lem: reparam conj}
    There is a continuous function $r: M_\Gamma \to \R_{>0}$ and a homeomorphism $h: M_\Gamma \to M_\Gamma$ such that 
    \[ {}_g\psi \circ h = h\circ \phi^r. \]
    In particular, $\psi$ is topologically conjugate to $\phi^r$.
\end{lemma}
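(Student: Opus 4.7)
The plan is to reduce this to a standard fact from the theory of reparameterizations: two continuous cocycles $\beta_1,\beta_2:M_\Gamma\times \R\to\R$ over $\phi$ whose difference $\beta_1-\beta_2$ is a coboundary $u(\phi_t x)-u(x)$ give topologically conjugate reparameterized flows. We have $\phi={}_g\psi_{\beta}$ in the sense that ${}_g\psi_t(x)=\phi_{\alpha_\beta(x,t)}(x)$ for $\alpha_\beta$ the $t$-inverse of $\beta$, but the cocycle $\beta$ need not arise from a continuous potential. We will therefore replace $\beta$ with a cohomologous cocycle of the form $\kappa_r$ produced by averaging $\beta$ along $\phi$-orbits, and then build $h$ explicitly out of the coboundary.

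To manufacture $r$, fix a non-negative $C^1$ bump $\rho:\R\to\R$ with compact support and $\int\rho=1$, and set
\[
u(x):=\int_\R \rho(s)\beta(x,s)\,ds,\qquad \gamma(x,t):=\int_\R \rho(s)\beta(x,t+s)\,ds.
\]
Using the cocycle identity $\beta(\phi_t x,s)=\beta(x,t+s)-\beta(x,t)$, a direct computation gives $u(\phi_t x)=\gamma(x,t)-\beta(x,t)$, so that
\[
\tilde\beta(x,t):=\gamma(x,t)-u(x)=\beta(x,t)+u(\phi_t x)-u(x)
\]
is cohomologous to $\beta$. After the substitution $\sigma=s+t$ inside $\gamma$, one sees that $\tilde\beta$ is $C^1$ in $t$ with continuous derivative $r(x):=\partial_t\tilde\beta(x,0)=-\int_\R\rho'(\sigma)\beta(x,\sigma)\,d\sigma$; a Riemann--Stieltjes integration by parts (legitimate since $\beta(x,\cdot)$ is continuous and strictly increasing) rewrites this as $r(x)=\int_\R\rho(\sigma)\,d\beta(x,\sigma)>0$. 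Because $\tilde\beta$ is a cocycle that is $C^1$ in $t$ with derivative $r(\phi_t x)$ at time $t$, one has $\tilde\beta=\kappa_r$.

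Now define $h(x):={}_g\psi_{u(x)}(x)$. To verify ${}_g\psi_t\circ h=h\circ \phi^r_t$, observe that $\phi^r_t(x)=\phi_{\alpha_r(x,t)}(x)={}_g\psi_{\beta(x,\alpha_r(x,t))}(x)$ and, by the cohomology relation $\beta=\kappa_r+u\circ\mathrm{id}-u\circ\phi_\cdot$,
\[
\beta(x,\alpha_r(x,t))=\kappa_r(x,\alpha_r(x,t))+u(x)-u(\phi^r_t(x))=t+u(x)-u(\phi^r_t(x)).
\]
Substituting,
\[
h(\phi^r_t(x))={}_g\psi_{u(\phi^r_t(x))+\beta(x,\alpha_r(x,t))}(x)={}_g\psi_{t+u(x)}(x)={}_g\psi_t(h(x)),
\]
as required. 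Compatibility of $h$ with the (common, set-theoretic) orbit structure is built into the construction.

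The last step is to see that $h$ is a homeomorphism. Continuity of $h$ is immediate from continuity of $u$ and of ${}_g\psi$, and for each $y\in M_\Gamma$ the continuous map $s\mapsto s+u({}_g\psi_s(y))$ is surjective onto $\R$ (since $u$ is bounded on compact $M_\Gamma$), producing a preimage of $y$. I expect the main obstacle to lie in establishing uniqueness of this preimage and continuity of the inverse, since the shift $s\mapsto s+u({}_g\psi_s(y))$ need not be monotone. This can be handled either by applying the same averaging recipe to the inverse cocycle $\alpha_\beta$ to obtain a semi-conjugacy in the opposite direction and checking that the composition commutes with $\phi^r$, or by using the already-established intertwining to transfer the free, continuous $\phi^r$-action across $h$ and invoke standard orbit-equivalence arguments as in \cite[\S1.3-1.4]{Tholozan:notes} and \cite[\S3]{BCLS}. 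Once $h$ is shown to be a homeomorphism, the lemma follows.
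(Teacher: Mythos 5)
Your approach is the same as the paper's: average the orbit-equivalence cocycle $\beta$ to obtain a cohomologous cocycle of the form $\kappa_r$, then assemble $h$ from the transfer function. The paper's averaging is simpler---it fixes a single $T>0$ with $\beta(\cdot,T)>0$ and sets $r(x)=\tfrac1T\beta(x,T)$, $G(x)=\tfrac1T\int_0^T\beta(x,s)\,ds$---whereas you smooth against a $C^1$ bump $\rho$; both produce a continuous positive potential with the required cohomology relation, and the extra smoothness of $\rho$ buys nothing here since $\kappa_r$ is automatically $C^1$ in $t$ once $r$ is continuous. Your verification of the intertwining identity is correct.

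The gap you flag---that $h$ is a homeomorphism---is real, but you do not need orbit-equivalence machinery or a second averaging: the cohomology identity you already derived gives injectivity directly, and compactness does the rest. Suppose $h(x)=h(y)$. Since $h(z)$ lies on the ${}_g\psi$-orbit of $z$, the points $x,y$ lie on the same ${}_g\psi$-orbit, hence the same $\phi$-orbit, so $y=\phi_\tau(x)$ for some $\tau$. Writing $h(y)={}_g\psi_{u(\phi_\tau x)+\beta(x,\tau)}(x)$ and comparing with $h(x)={}_g\psi_{u(x)}(x)$ shows that $\kappa_r(x,\tau)=\beta(x,\tau)+u(\phi_\tau x)-u(x)$ lies in the period group of the ${}_g\psi$-orbit through $x$. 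On a non-periodic orbit this forces $\kappa_r(x,\tau)=0$, hence $\tau=0$ by strict positivity of $r$. On a $\phi$-periodic orbit of period $T_0$, the coboundary cancels over a full period, so $\kappa_r(x,T_0)=\beta(x,T_0)$ equals the ${}_g\psi$-period; strict monotonicity of $\kappa_r(x,\cdot)$ then forces $\tau\in T_0\mathbb{Z}$ and $y=x$. Combined with your surjectivity argument, $h$ is a continuous bijection of the compact Hausdorff space $M_\Gamma$, hence a homeomorphism. (One caution: the tempting explicit formula $x\mapsto{}_g\psi_{-u(x)}(x)$ is \emph{not} $h^{-1}$ in general, since $u$ need not be constant along ${}_g\psi$-orbits; the inverse exists only implicitly, and its continuity comes from compactness, not from a closed formula.)
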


\begin{proof}
    
    By continuity of $\beta$ and compactness of $M_\Gamma$, there is a $T>0$ such that  $\beta(x,T)>0$ for all $x \in M_\Gamma$.
    Define $r(x) = \frac{1}{T} \beta(x,T)$, and a $\phi$-cocycle 
    \[\kappa_r(x,t) = \frac{1}{T} \int_0^tr(\phi_s(x))~ds,\]
    as in \eqref{eqn: def kappa}.
    
    A computation shows that $\kappa_r$ and $\beta$ are \emph{Liv{\v s}ic cohomologous}, i.e., there is a continuous function $G: M_\Gamma \to \R$ such that 
    \[\kappa_r(x,t) - \beta (x,t) = G(\phi_t(x)) - G(x), \]
    for all $x$ and $t$.
    Indeed, one can show that
    \[G(x) = \frac{1}{T}\int_0^T\beta(x, s)~ds.\]
    Let $\alpha_r(x,\cdot)$ be the inverse to $\kappa_r(x,\cdot)$, as in \eqref{eqn: alphar}.
    
    We now verify that the continuous map
    \[h(x) = {}_g\psi_{G(x)}(x) \]
    satisfies
    \[{}_g\psi_s(h(x)) = h(\phi^r_s(x))\]
    for all $x$ and $s$, where $\phi^r = \phi_{\alpha_r}$ (see \eqref{eqn: reparameter}).
    Indeed, for a given pair $(x,s)$, let $t$ be such that $s = \kappa_r(x,t)$ and compute
    \begin{align*}
        {}_g\psi_s(h(x)) &={}_g\psi_{s+G(x)}(x)\\
        & = {}_g\psi_{\kappa_r(x,t)+G(x)}(x)\\
        &= {}_g\psi_{\beta(x,t) + G(\phi_t(x))}(x)\\
        &= {}_g\psi_{G(\phi_t(x))}\circ\phi_t(x) \\
        & = h(\phi_t(x)).
    \end{align*}
    Since $s = \kappa_r(x,t)$, we have $\alpha_r(x,s) = t$.
    Then $\phi_t(x) = \phi_{\alpha_r(x,s)}(x) = \phi^r_s(x)$. Together with the previous computation, this shows that ${}_g\psi \circ h = h\circ \phi^r$.

    Since $h\inverse$ is given by $x\mapsto {}_g\psi_{-G(x)}(x)$, we have shown that ${}_g\psi$ is topologically conjugate to $\phi^r$, hence that $\psi$ is conjugate to $\phi^r$, proving the lemma. 
\end{proof}

\subsection{Entropy of reparameterizations}\label{subsec: entropy of reparam}

The following proposition states that the topological entropy of a reparameterization $\phi^r$ (defined as in \eqref{eqn: reparameter}) is a continuous function of the reparameterization potential $r$. 

\begin{prop}\label{prop: entropy continuous}
    Let $r_n: M_\Gamma \to \R_{>0}$ be a sequence of potentials converging uniformly to a potential $r_\infty$.    Then 
    \[\lim_{n \to \infty} h(\phi^{r_n}) = h(\phi^{r_\infty}).\]
\end{prop}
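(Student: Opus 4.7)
The plan is to express the topological entropy of a reparameterized flow $\phi^r$ as a supremum of simple continuous functionals of $r$, using the variational principle together with Abramov's formula, and then to observe that uniform convergence of $r_n$ forces uniform convergence of these functionals on the space of $\phi$-invariant Borel probability measures.

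More precisely, first I would recall that for any continuous positive potential $r: M_\Gamma \to \R_{>0}$, the bijection of invariant measures $\mu \leftrightarrow \mu^r = r\mu/\int r\,d\mu$ from Lemma \ref{lem: reparam facts} intertwines measure-theoretic entropies via Abramov's formula, namely
\[
h_{\mu^r}(\phi^r) \cdot \int r \, d\mu = h_\mu(\phi).
\]
Combining with the variational principle applied to the continuous flow $\phi^r$ on the compact metric space $M_\Gamma$, we obtain
\[
h(\phi^r) = \sup_{\mu} \frac{h_\mu(\phi)}{\int r \, d\mu},
\]
where the supremum is taken over $\phi$-invariant Borel probability measures on $M_\Gamma$.

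Next, I would exploit that $M_\Gamma$ is compact and $r_\infty$ is continuous and strictly positive, so $r_\infty \ge c > 0$ for some $c$, and hence $r_n \ge c/2$ for $n$ large by uniform convergence. Combined with the classical bound $h_\mu(\phi) \le h(\phi) < \infty$ (a finite quantity since $\phi$ is the geodesic flow on a closed hyperbolic surface), this would give the uniform estimate
\[
\left| \frac{h_\mu(\phi)}{\int r_n \, d\mu} - \frac{h_\mu(\phi)}{\int r_\infty \, d\mu}\right| \le h(\phi) \cdot \frac{2\|r_n - r_\infty\|_\infty}{c^2}
\]
for all $\phi$-invariant probability measures $\mu$ and all $n$ large. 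Taking the supremum over $\mu$ on both sides concludes that $h(\phi^{r_n}) \to h(\phi^{r_\infty})$.

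The only subtlety I anticipate is invoking the variational principle and Abramov's formula in the generality of continuous (rather than H\"older) reparameterizations of a continuous flow on a compact metric space; these are classical but scattered in the literature, and I would point to Walters' book for the variational principle and to Abramov's original work (or the exposition in \cite[\S3]{BCLS}) for the entropy formula. Note that upper semi-continuity of $\mu \mapsto h_\mu(\phi)$ is not needed, as the argument only uses that $h_\mu(\phi)$ is uniformly bounded above, so no delicate measure-theoretic input is required.
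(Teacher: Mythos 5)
Your proof is correct and takes a genuinely more elementary route than the paper. Both you and the paper start by combining the variational principle with Abramov's formula to obtain the formula $h(\phi^r) = \sup_\mu \frac{h_\mu(\phi)}{\int r\,d\mu}$ (equation \eqref{eqn: variational} in the paper). From there the paper argues the two inequalities separately: the $\liminf$ bound is proved by picking a nearly-maximizing measure for $r_\infty$, while the $\limsup$ bound invokes compactness of $\mathbb P(M_\Gamma)^\phi$ together with upper semi-continuity of $\mu \mapsto h(\phi,\mu)$, the latter depending on expansiveness of the Anosov flow $\phi$. You instead observe that the functionals $\mu \mapsto h_\mu(\phi)/\int r_n\,d\mu$ converge \emph{uniformly} in $\mu$ to $\mu\mapsto h_\mu(\phi)/\int r_\infty\,d\mu$, since the numerator is bounded by $h(\phi)<\infty$ (which is $1$ for the geodesic flow on a closed hyperbolic surface), the denominators are uniformly bounded below by $c/2>0$ once $n$ is large, and uniform convergence $r_n\to r_\infty$ controls the difference of the denominators independently of $\mu$. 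Uniform convergence of the functionals immediately gives convergence of the suprema. This sidesteps expansiveness, upper semi-continuity of measure entropy, and compactness of the space of invariant measures entirely, and your closing remark correctly identifies exactly which hypotheses you do and do not need. Your approach is both shorter and applicable in greater generality (it would work for any reparameterization of a flow with finite topological entropy, not only expansive ones).
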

\begin{remark}
In the sequel we only use lower semi-continuity:
\[\liminf_{n\to \infty }h(\phi^{r_n})\ge h(\phi^{r_\infty}).\]    
\end{remark}

\begin{proof}
    The variational principle for topological entropy of a continuous flow on a compact metric space asserts, for any potential $r$, that
    \[h(\phi^r) = \sup_{\nu \in \mathbb P(M_\Gamma)^{\phi^r}}h(\phi^r,\nu),\]
    where $\mathbb P(M_\Gamma)^{\phi^r}$ is the space of $\phi^r$-invariant probability measures on $M_\Gamma$ and $h(\phi^r, \nu)$ is the measure theoretic entropy of $\phi^r$ with respect to $\nu$; see \cite[\S\S9-10]{VO:ergodic}.  

    Using Lemma \ref{lem: reparam facts} (2), we can write
    \[h(\phi^r) = \sup_{\mu \in \mathbb P(M_\Gamma)^{\phi}} h\left( \phi^r, \frac{r\mu}{\int r~d\mu}\right) = \sup_{\mu \in \mathbb P(M_\Gamma)^{\phi}} \frac{h( \phi^r, r\mu)}{\int r~d\mu} .\]
    By Abramov's formula (\cite{Abramov:entropy} or \cite{Ito:entropy}), we have 
    $h(\phi^r, r\mu) = h(\phi,\mu)$ for all $\mu \in \mathbb P(M_\Gamma)^\phi$.
    Thus we obtain the variational formula\footnote{This is essentially \cite[Lemma 2.4]{Sambarino:quantitative}.}
    \begin{equation}\label{eqn: variational}
        h(\phi^r)= \sup_{\mu \in \mathbb P(M_\Gamma)^{\phi}} \frac{h( \phi,\mu)}{\int r~d\mu}.
    \end{equation}

    Let $\ep>0$ be given and find $\mu_\ep$ such that
    \[ h(\phi^{r_\infty}) - \frac{h(\phi,\mu_\epsilon)}{\int r_\infty ~d\mu_\epsilon} <\ep.\]
    Since we have uniform convergence $r_n \to r_\infty$ of continuous positive functions,
    for $n$ large enough, we have 
    \[\int r_\infty ~d\mu_\ep > \int r_n ~d\mu_\ep - \ep >0.\]
    Then 
    \[h(\phi^{r_\infty}) \le \frac{h(\phi, \mu_\ep)}{\int r_\infty~d\mu_\ep}+\ep<\frac{h(\phi, \mu_\ep)}{\int r_n~d\mu_\ep-\ep}+\ep \le h(\phi^{r_n})+O(\ep),\]
    holds for $n$-large enough.
    Since $\ep>0$ was arbitrary, this proves that 
    \[h(\phi^{r_\infty})\le \liminf_{n\to \infty}h(\phi^{r_n})\]
    
    Now we show that 
    \[\limsup_{n\to \infty}h(\phi^{r_n}) \le h(\phi^{r_\infty}).\]
    Let $\min r_\infty >\ep>0$ be given, and find measures $\mu_n \in \mathbb P (M_\Gamma)^\phi$ such that 
    \[h(\phi^{r_n})\le \frac{h(\phi,\mu_n)}{\int r_n ~d\mu_n}+\ep\]
    for all $n$.
    Since $\mathbb P(M_\Gamma)^\phi$ is compact, we may pass to a subsequence (without renaming) and assume that $\mu_n \to \mu_\infty$.
    
    Since $\phi$ is \emph{expansive}, the function $\mu \mapsto h(\phi,\mu)$ is upper semi-continuous (e.g., \cite[\S9]{VO:ergodic}).  Thus, if $n$ is large enough, we have 
    \[h(\phi, \mu_n) \le h(\phi, \mu_\infty) +\ep.\]

    Since $\|r_n- r_\infty\|_\infty \to 0$ and $\mu_n \to \mu_\infty$, if $n$ is large enough, we have 
    \[ \int r_n ~d\mu_n >\int r_\infty ~d\mu_\infty - \ep >0.\]
    Thus for all $n$ large enough, we have 
    \[h(\phi^{r_n})\le \frac{h(\phi,\mu_n)}{\int r_n ~d\mu_n} +\epsilon < \frac{h(\phi, \mu_\infty)+\epsilon}{\int r_\infty~d\mu_\infty - \epsilon}+\epsilon \le h(r^{r_\infty})+O(\epsilon).\]
    Since $\ep>0$ was arbitrary, we conclude the proposition.
\end{proof}

We are now ready to combine the above ingredients to deduce Theorem \ref{thm: orbit growth rate}.

\begin{proof}[Proof of Theorem \ref{thm: orbit growth rate}]
By Lemma \ref{lem: reparam conj}, $\psi$ is topologically conjugate to $\phi^r$ for some continuous potential $r$, so it suffices to prove the theorem for flows of this form.

Since $\phi$ is a topologically transitive Anosov flow on $M_\Gamma$, it can be described as the suspension of a subshift of finite type with H\"older continuous roof function, i.e., it admits a strong Markov coding \cite{Bowen2, Bowen:symbolic}.
This is still true if $r$ is H\"older, and classical results give $h(\phi^r) = H(\phi^r)$  in this case \cite{Pollicott}.
Otherwise, we consider a sequence of H\"older continuous $r_n$ converging uniformly to $r=r_\infty$.

We claim that 
\begin{equation}\label{eqn: orbit growth cont}
    \lim_{n\to \infty} H(\phi^{r_n}) = H(\phi^{r_\infty}).
\end{equation}
Indeed, let $\ep>0$ be given and find $n$ large enough that
\[1-\epsilon <\frac{r_\infty}{r_n} \text{ and } 1-\epsilon < \frac{r_n}{r_\infty}\]
holds.
Let $\gamma$ be a periodic $\phi$-orbit with period $\ell(\gamma)$ and denote by $\ell_n (\gamma)$ its period for the flow $\phi^{r_n}$, for $n=1, 2, ..., \infty$.
Using Lemma \ref{lem: reparam facts} (1), 
\[\ell_n (\gamma) = \int_0^{\ell(\gamma)} r_n (\phi_s(x)) ~ds.\]
Using the lower bounds on $r_\infty/r_n$, we find that 
\[\ell_\infty (\gamma) < R \Rightarrow \ell_n (\gamma) < \frac{R}{1-\ep}.\]
Similarly, 
\[\ell_n (\gamma) < R(1-\ep) \Rightarrow \ell_\infty (\gamma) < R.\]
Thus 
\[\#\{\gamma:  \ell_n(\gamma)<R(1-\ep)\} \le \#\{\gamma:  \ell_\infty(\gamma)<R\} \le  \#\{\gamma:  \ell_n(\gamma)<R/(1-\ep)\}. \]
It follows then that 
\[(1-\ep)H(\phi^{r_n}) \le H(\phi^{r_\infty}) \le \frac{H(\phi^{r_n})}{1-\ep}\]
holds for all $n$-large enough.
Since $\ep>0$ was arbitrary, we conclude the claim that $\lim_{n\to \infty} H(\phi^{r_n}) = H(\phi^{r_\infty}) = H(\phi^r)$.

Using Proposition \ref{prop: entropy continuous}, \eqref{eqn: orbit growth cont}, and the fact that $H(\phi^{r_n}) = h(\phi^{r_n})$ for all $n$, we obtain 
\[h(\phi^r) \le \liminf_{n\to \infty}h(\phi^{r_n}) = \liminf_{n\to \infty}H(\phi^{r_n}) = H(\phi^r).\]
Combining this with Theorem \ref{t.Tholozan}, that $h(\phi^r)\ge 1$, completes the proof of the theorem.
\end{proof}

\bibliographystyle{amsalpha.bst}
\bibliography{references}

\Addresses

\end{document}